\numberwithin{equation}{section}
\newtheorem{theorem}{Theorem}[section]
\newtheorem{definition}[theorem]{Definition}
\newtheorem{corollary}[theorem]{Corollary}
\newtheorem{lemma}[theorem]{Lemma}
\newtheorem{proposition}[theorem]{Proposition}
\newtheorem{remark}{Remark}[section]
\newenvironment{proof-th}[1][Proof]{\medskip\noindent\textbf{\textit{#1}}}{\hfill$\Box$\medskip}
\newcommand{\R}{\mathbb{R}}
\begin{document}

\title{Fractional Schr\"odinger-Poisson-Slater equations in Coulomb-Sobolev spaces}

\author{\textbf{Elisandra Gloss}\\
\small Departamento de Matem\'atica\\
\small Universidade Federal da Para\'iba\\
\small Cidade Universit\'aria, 58051-900, João Pessoa, Brazil\\
\small \textit{elisandra.gloss@academico.ufpb.br}\medskip\\\textbf{Carlo Mercuri}\\
\small Dipartimento di Scienze Fisiche, Informatiche e Matematiche\\\small 
Universit\`a di Modena e Reggio Emilia\\\small
Via Campi 213/b, 41125 Modena, Italy\\\small
\textit{carlo.mercuri@unimore.it}\medskip\\\textbf{Kanishka Perera}\\
\small Department of Mathematics\\\small Florida Institute of Technology\\\small
150 W University Blvd, Melbourne, FL 32901-6975, USA\\\small
\textit{kperera@fit.edu}\medskip\\\textbf{Bruno Ribeiro}\\
\small Departamento de Matem\'atica\\
\small Universidade Federal da Para\'iba\\
\small Cidade Universit\'aria, 58051-900, João Pessoa, Brazil\\
\small \textit{bhcr@academico.ufpb.br}
}

\date{}

\maketitle

\begin{abstract}
\noindent We prove existence and multiplicity results for the fractional Schr\"odinger–Poisson–Slater equation
\[
(-\Delta)^{s}u+\bigl(I_{\alpha}*u^{2}\bigr)u=f(|x|,u)
\quad\text{in }\mathbb{R}^{N},
\]
with $0<s<1$, $\alpha\in(1,N).$
We seek solutions in a fractional Coulomb–Sobolev space by employing new tools in critical point theory, which connect the  behavior of the nonlinearity $f$ at zero and at infinity with the scaling properties of the left hand side of the equation. In various regimes for the nonlinearity $f,$
we establish compactness results for an associated action
functional and find multiple solutions as critical points, whose number is sensitive to the interaction of $f$ with a sequence of eigenvalues $\{\lambda_k\}$ defined
via the $\mathbb{Z}_2$–cohomological index of Fadell and Rabinowitz. The use of this index, instead of the classical Krasnoselskii genus, is essential for us to use new critical group estimates and scaling-based linking geometries. In this fractional setting, new regularity results and necessary conditions for solutions to exist are also proved. \\
\textbf{2020 Mathematics Subject Classification}: 35R11, 35J60; 35A15, 35B33, 35J20. 
\end{abstract}

\newpage

\tableofcontents

\newpage

\section{Introduction}

In this paper we prove existence and multiplicity results for the fractional Schr\"odinger-Poisson-Slater equation
\begin{equation}\label{P}
(-\Delta)^su+\left(I_\alpha*u^2\right)u= f(|x|,u), \quad\textrm{in}\quad\mathbb{R}^N
\tag{$\mathcal{P}$}
\end{equation}
where $s\in(0,1)$, $N\geq2,$   $\alpha\in(1,N)$ and
$f:[0,\infty)\times\R\to\mathbb{R}$ is a Caratheodory function subject to new growth conditions, as we shall describe together with our main results. Here $I_\alpha$ is the Riesz kernel of order $\alpha$, defined as
$$
I_\alpha(x)=\frac{C_\alpha}{|x|^{N-\alpha}},\forall x\in\mathbb{R}^N\backslash\{0\}, \quad\textrm{with}\quad C_\alpha = \frac{\Gamma(\frac{N-\alpha}{2})}{2^\alpha\pi^{\frac{N}{2}}\Gamma(\frac{\alpha}{2})}.
$$
We define the fractional Laplacian $(-\Delta)^s$ via the Fourier transform as
$$
\widehat{((-\Delta)^su)}(\xi) = |\xi|^{2s}\widehat{u}\,\,\,\mbox{for all} \,\,s>0.
$$
For any $s>0$, $\alpha\in(0,N)$ and $q\geq1,$ as pointed out in \cite{BE-GHI-MER-MO-SC-2018},
in the framework of the so-called fractional Coulomb-Sobolev space
\begin{equation*}\label{space definition}
    \mathcal{E}^{s,\alpha,q}(\mathbb{R}^N)=\left\{u:\mathbb{R}^N\to\R:\iint_{\mathbb{R}^N\times\mathbb{R}^N}\frac{
    |u(x)|^q|u(y)|^q}{|x-y|^{N-\alpha}}dxdy <\infty\quad\mathrm{and}\quad\int_{\mathbb{R}^N}|\xi|^{2s}|\hat{u}(\xi)|^2d\xi<\infty \right\},
\end{equation*}
the expression $|\xi|^{s}\widehat{u}$ is a well-defined tempered distribution on $\mathbb R^N\setminus\{0\}$ and can be represented by an $L^2$ function. As noted in \cite{BE-GHI-MER-MO-SC-2018, Mercuri-Moroz-VS-2016} this space is a natural functional setting for a variational formulation of interesting variants of our PDE. We recall that $\mathcal{E}^{s,\alpha,q}(\mathbb{R}^N)$ is
a Banach space equipped with the norm
$$
\|u\|=\left[\|(-\Delta)^\frac{s}{2}u\|_{L^2}^2+\left(\iint_{\mathbb{R}^N\times \mathbb{R}^N}\frac{|u(x)|^q|u(y)|^q}{|x-y|^{N-\alpha}}dxdy\right)^\frac{1}{q}\right]^\frac{1}{2}.
$$
In fact, the Coulomb space 
$$
Q^{\alpha,q}(\mathbb{R}^N)=\left\{u:\mathbb{R}^N\to\R:\iint_{\mathbb{R}^N\times\mathbb{R}^N}\frac{
    |u(x)|^q|u(y)|^q}{|x-y|^{N-\alpha}}dxdy <\infty\right\}
$$
 is also a Banach space for all $\alpha\in(0,N)$ and $q\geq1$, with norm 
$$\|u\|_{Q^{\alpha,q}(\mathbb R^N)}=\left(\iint_{\mathbb{R}^N\times\mathbb{R}^N}\frac{|u(x)|^q|u(y)|^q}{|x-y|^{N-\alpha}}dxdy\right)^{\frac{1}{2q}},
$$
and it turns out to be uniformly convex when $q>1$ and hence reflexive. 
Since the operator $T:\mathcal{E}^{s,\alpha,q}(\mathbb{R}^N)\to L^2(\mathbb{R}^N)\times Q^{\alpha,q}(\mathbb{R}^N)$, given by $T(u)=((-\Delta)^\frac{s}{2}u,u)$, is an isometry, we observe that $\mathcal{E}^{s,\alpha,q}(\mathbb{R}^N)$ is also uniformly convex. 

Hereafter we deal with the case $q=2$ and work with $E=\mathcal{E}_{rad}^{s,\alpha,2}(\mathbb{R}^N)$, the closed subspace of radial functions. 
By \cite[Theorems 1.1, 1.4 and 1.5]{BE-GHI-MER-MO-SC-2018} (see also \cite{BE-FR-VI-2014}), when $\alpha\in(1,N)$ and $4s+\alpha\neq N$, it is worth observing that a continuous embedding $E\hookrightarrow L^p(\mathbb{R}^N)$ holds in either of the following cases
%\begin{enumerate}
  %  \item When $\alpha\in(0,1]$: then, $p$ ranges through the same limits of the embeddings of the whole space $\mathcal{E}^{s,\alpha,2}(\mathbb{R}^N)\hookrightarrow L^p(\mathbb{R}^N)$,  which are
%\begin{eqnarray}\label{eq1}
  %  p\in\left[\frac{2(4s+\alpha)}{2s+\alpha}, \frac{2N}{N-2s}\right]&\mathrm{if}&4s+\alpha>N,\\
    %\label{eq2}p\in\left[\frac{2N}{N-2s},\frac{2(4s+\alpha)}{2s+\alpha}\right]&\mathrm{if}&4s+\alpha<N.
%\end{eqnarray}
%\item When $\alpha>1$, there is a bigger range for $p$, given by
\begin{eqnarray}
    \label{eq3}p\in\left(p_{\rm rad}, \frac{2N}{N-2s}\right]&\mathrm{and}&4s+\alpha>N,
   \\
    \label{eq4}p\in\left[\frac{2N}{N-2s},p_{\rm rad}\right)&\mathrm{and}&4s+\alpha<N,\nonumber
\end{eqnarray}
where 
$$
p_{\rm rad}:= 2+\frac{4s(N-\alpha)}{2s(N+\alpha-2)+N-\alpha}.
$$
%\end{enumerate}
Moreover, the embedding $E\hookrightarrow L^p(\mathbb{R}^N)$ is compact if $p$ is not an endpoint of the above intervals. 
We set
$$
2^*_{s}=\frac{2N}{N-2s} \quad \mbox{and} \quad 2^*_{s,\alpha}=\frac{2(4s+\alpha)}{2s+\alpha}
$$
and observe that for $\alpha>1,$ $p=2^*_{s,\alpha}$ lies in a compactness range. This property is an essential tool in our approach to existence and multiplicity (hence that $\alpha>1$), as it allows us to set up and solve a {\it scaled} nonlinear eigenvalue problem, based on which we are able to classify the possibly different asymptotic behavior of $f,$ and apply accordingly, suitable and new critical point theorems. Unless stated otherwise, we shall focus on the range of parameters
$$
s\in(0,1),\quad 1<\alpha<N\quad{and}\quad 4s+\alpha> N.
$$

We point out that the condition \(4s + \alpha \ne N\) ensures that \(2^*_{s,\alpha} \ne 2^*_s\), so that the embedding of $E$ into \(L^p(\mathbb{R}^N)\)  for \(p = 2^*_{s,\alpha}\) is compact for $\alpha>1$. We believe that the condition \(4s + \alpha > N\) is only technical and stress that it is used in this paper to ensure enough regularity for a Pohozaev-type identity to hold when the nonlinearity is \(f(u) = |u|^{2^*_{s,\alpha} - 2}u\). 
\begin{remark}
 When \(4s + \alpha < N\), we note that \(2^*_{s,\alpha}\) is supercritical with respect to the classical Sobolev exponent \(2^*_s\), bringing in additional challenges in regularity theory which we hope to overcome in future, with a unified approach. 
\end{remark}

\vspace{2mm}

Motivated by the above embedding properties, we assume that
$f:[0,\infty)\times\R\to\mathbb{R}$ is a Caratheodory function satisfying the growth conditions
\begin{equation}\label{fgrowth}
    |f(|x|,t)|\leq a_1|t|^{q_1-1}+a_2|t|^{q_2-1}+a(x),\quad\mbox{a.e}\,\,x\in\mathbb{R}^N,\,\,\mbox{all} \,\, t\in\R,
\end{equation}
for some constants $a_1,a_2>0$, $a(\cdot)\in L^{r'}(\mathbb{R}^N)$ with $r, q_1,q_2$ satisfying \eqref{eq3}, where $1/r+1/r'=1$. 

A variational formulation to problem \eqref{P} is provided introducing the action functional $\Phi: E\to\mathbb{R}$ defined as
\begin{equation*}\label{Phi}
\Phi(u)=\frac{1}{2}\int_{\mathbb{R}^N}|(-\Delta)^\frac{s}{2}u|^2dx+\frac{C_\alpha}{4}\iint_{\mathbb{R}^N\times \mathbb{R}^N}\frac{u^2(x)u^2(y)}{|x-y|^{N-\alpha}}dxdy - \int_{\mathbb{R}^N}F(|x|,u)dx,
\end{equation*}
which turns out to be $C^1.$ It is standard to check that 
\begin{equation*}
\Phi'(u)v=\int_{\mathbb{R}^N}(-\Delta)^\frac{s}{2}u(-\Delta)^\frac{s}{2}vdx+C_\alpha\iint_{\mathbb{R}^N\times\mathbb{R}^N}\frac{u^2(x)u(y)v(y)}{|x-y|^{N-\alpha}}dxdy - \int_{\mathbb{R}^N}f(|x|,u)vdx.
\end{equation*}
and that weak solutions to \eqref{P} coincide with the critical points of $\Phi$.

\vspace{2mm}

For a few decades there has been a growing interest in the study of nonlinear and nonlocal partial differential equations that arise in mathematical physics from models related to the quantum many-body, mainly dealing with the case $s=1$ see e.g. \cite{Ambrosetti, Bokanowski et al., Catto et al., Le Bris Lions, Lieb, Liebb, Mauser}, which are related to Thomas–Fermi–Dirac–von Weizs\"acker
(TFDW) models of density functional theory.  When $s = 1,$ \eqref{P} reduces to a classical Schr\"odinger–Poisson–Slater equation, on which a great deal of results is available on existence, non-existence, multiplicity, and qualitative properties of the solutions.  See, for instance \cite{Ianni-Ruiz-2012,Mercuri-Moroz-VS-2016,Mercuri-Perera,Ruiz-2006,Ruiz-ARMA-2010} and references therein.

\vspace{2mm}

When $s\neq 1$ much less is known; models in this case, mainly dealing with $s=1/2,$ have mostly been introduced to describe relativistic matter, see e.g. \cite{Lieb-Seiringer}. From a variational and a PDE perspective, the interplay between the fractional Laplacian operator and the Coulomb-Riesz potential requires a careful analysis due to the presence of various critical numbers, such as the classical Sobolev exponent $2^*_s = (2N)/(N-2s)$ related to well-known embedding properties of fractional Sobolev spaces, as well as of a new Coulomb–Sobolev exponent $2^*_{s,\alpha} = 2(4s+\alpha)/(2s+\alpha).$  As highlighted in \cite{BE-GHI-MER-MO-SC-2018, Mercuri-Moroz-VS-2016}, their critical role is mostly due to loss of local and global compactness properties of embeddings into $L^p$ spaces,  which generate a rich variational scenario not yet explored in full. Recent papers on systems related to \eqref{P} which are worth mentioning are \cite{Zh-doO-Sq-2016} and  \cite{Hu-Li-Zhao-2021}. In \cite{Zh-doO-Sq-2016}  subcritical and critical systems are considered in $\mathbb{R}^3$, with $s\in(0,1)$, $\alpha\in(0,2]$ such that $4s+\alpha\geq3$ and $f\in C^1(\mathbb{R},\mathbb{R})$ satisfying a Berestycki-Lions type condition, which allows to show existence of solutions in $H^s(\mathbb{R}^3).$ In this work, unlike in our problem \eqref{P}, the Riesz term takes the form  $\mu(I_\alpha*u^2)$,  where $\mu>0$ is a small perturbation parameter. 
In \cite{Hu-Li-Zhao-2021} existence results of groundstate solutions to \eqref{P} are obtained in the space 
$\mathcal{E}^{s,2,2}(\mathbb R^3),$ with $s\in (3/4,1),$  $f(u)=|u|^{2^*_s-2}u+\mu|u|^{p-2}u,$ $\mu>0$ and $p\in(3,2^*_s).$ In a recent paper \cite{Feng-Su-2024}, \eqref{P} is considered in a more general range of parameters, with $N \geq 2$, $s\in(0,1)$, $\alpha\in(0,1]$ such that $4s+\alpha \neq N,$ and $f(u)=\lambda|u|^{2^*_{s,\alpha}-2}u+\mu|u|^{p-2}u+|u|^{2^*_s-2}u$ where $p$ lies between $2^*_{s,\alpha}$ and $2^*_{s}.$ In that context, existence of a groundstate solution is found by constrained minimisation in $\mathcal{E}_{rad}^{s,\alpha,2}$ and on a manifold defined combining Pohozaev and Nehari type conditions, provided $\lambda\in(0,C(s,\alpha))$ and $\mu>0$ is sufficiently large. \\
Finally, we refer the reader to other recent and interesting works set on classical Sobolev spaces, such as \cite[Chapter~13]{book-Ambrosio} and \cite{Jia-Feng-Xiao-Qing,Liu-Zhang,Murcia-Siciliano,Teng-2016,Wang-Wang}.

\vspace{2mm}

Our main contribution is to extend to the fractional Laplacian operator, some recent existence and multiplicity results by Mercuri and Perera \cite{Mercuri-Perera}, which deal with the case $s=1$. Adapting their variational approach, we extend many of their results to the case $0<s<1,$ allowing in problem \eqref{P} local nonlinearities $f$ of sublinear, linear, superlinear, or critical power like behavior. Although this terminology may still be appropriate in our context, we borrow from \cite{Mercuri-Perera} terms like ``subscaled'', ``scaled'', and ``superscaled'' to reflect and highlight more effectively some novel variational feautures of our PDE in different regimes, depending on the behavior of our local nonlinearity $f$; see\eqref{classification} below.  At the same time, the word ``critical'' when referred to $f,$ is used in a fairly broad sense in relation to certain growth regimes which may allow loss of compactness  phenomena, and which are not necessarily covered by the literature on the classical critical Sobolev exponent; see e.g. \cite{Mercuri-Moroz-VS-2016}. 

\vspace{2mm}

Unlike for the scaled and subscaled cases, we stress that in the superscaled regime we will assume an Ambrosetti-Rabinowitz type condition. Because of the Coulomb-Riesz term, this condition requires that $2^*_s>4$, namely \( 4s > N \). Since we focus \( 0 < s < 1 \), this restricts us to  \( N = 2 \) and \( N = 3 \), which however are physical dimensions of major significance. Removing this condition seems an interesting issue to be addressed in future.

\vspace{2mm}

Some of the novelties of this paper are new tools which reflect the fractional nature of our PDE, such as the density of compactly supported smooth functions in fractional Coulomb–Sobolev spaces. Additional new devices we provide take the form of some nontrivial regularity estimates for weak solutions, which in turn allow us to prove a novel Pohozaev-type identity for the fractional Schr\"odinger–Poisson–Slater equation, in the spirit of some recent works; see e.g. \cite{Hu-Li-Zhao-2021,Teng-2019}.

\subsection*{Main results}
\addcontentsline{toc}{subsection}{Main results}
\noindent To state and prove our results, we introduce a \emph{scaled problem} associated with our PDE, namely
\begin{equation}\label{NE}
(-\Delta)^su+\left(I_\alpha*u^2\right)u= \lambda|u|^{2^*_{s,\alpha}-2}u, \quad\textrm{in}\quad\mathbb{R}^N.
\end{equation}
It is easy to check that $u\in E$ satisfies this PDE if and only if for all $t>0,$ $u_t:=t^\frac{2s+\alpha}{2} u(t\cdot)$ does, too. Whenever \eqref{NE} possesses a nontrivial solution, we refer to $\lambda$ as an {\it eigenvalue} to \eqref{NE}. In Section \ref{section SOp} we show that the map $(u,t)\to u_t$ is a \textit{scaling} on $E$ (see Definition \ref{def scaling}) 
 and list some relevant properties for scaled operators related to this nonlinear eigenvalue problem, which ensure the existence of a sequence $\lambda_k\nearrow \infty$ of eigenvalues for \eqref{NE} (see Theorem \ref{lambdak}). \newline
Problem \eqref{NE} yields a natural classification of possibly different growth regimes for the local nonlinearity $f$ appearing in \eqref{P}. We set
\begin{equation}\label{classification}
l_\infty=\lim_{|t|\to\infty}\frac{f(|x|,t)}{|t|^{2^*_{s,\alpha}-2}t}\quad\mbox{uniformly}\,\,a.e.\,\, x\in\mathbb{R}^N.
\end{equation}
We say that $f$ is:
\begin{enumerate}
   \item[(i)] \textit{Subscaled} if $l_\infty=0$;
  \item[(ii)] \textit{Asymptotically scaled} if $l_\infty\in\mathbb{R}\backslash\{0\}$;
 \item[(iii)] \textit{Superscaled} if $l_\infty=\pm\infty$.
\end{enumerate}

For convenience, let us write
\begin{equation}\label{f=B+g}
f(|x|,t)=\lambda|t|^{2^*_{s,\alpha}-2}t+g(|x|,t)
\end{equation}
where \( g \) satisfies either 
\begin{equation}\label{ggrowth}
|g(|x|,t)|\leq a_3|t|^{q_3-1}+a_4|t|^{q_4-1},\quad a.e.\,\, x\in\mathbb{R}^N,\,\, \mbox{all}\,\, t\in\mathbb{R},
\end{equation}
for some constants $a_3,a_4>0$ and 
\begin{equation}\label{ggrowthc}
       2^*_{s,\alpha}<q_3<q_4<2^*_{s},
\end{equation}
or
\begin{equation}\label{ggrowth2}
|g(|x|,t)|\leq a_5|t|^{q_5-1}+h(x),\quad a.e.\,\, x\in\mathbb{R}^N,\,\, \mbox{all}\,\, t\in\mathbb{R},
\end{equation}
for some constant $a_5>0$, $h\in L^{r'}(\mathbb{R}^N)$ with
 \begin{equation}\label{ggrowth2c}
        p_{\rm rad} <q_5<2^*_{s,\alpha}\quad\mbox{and}\quad p_{\rm rad}<r\leq 2^*_{s}.
 \end{equation}
We also set
$$
F(|x|,t)=\int_0^tf(|x|,\tau)d\tau\quad\mbox{and}\quad G(|x|,t)=\int_0^tg(|x|,\tau)d\tau.
$$
We are now in the position to state our results for subcritical nonlinearities, whose proofs are postponed to Section~\ref{sec subcritical}.  
We begin with the subscaled case.

%%%%%%%%%%%%%%%%%%%%%%%%%%%%%%%%%%%%%%%%%%%%%%%%%

\begin{theorem}\label{THsubscaled}% Theorem 1.16 in Mercuri-Perera
Assume that $f$ satisfies \eqref{fgrowth} with $p_{\rm rad} <q_1,q_2<2^*_{s,\alpha}$, \eqref{f=B+g}-\eqref{ggrowthc} hold and that $\lambda$ is not an eigenvalue of problem \eqref{NE}.
\begin{enumerate}
   \item[(i)]  If $\lambda>\lambda_1,$ then \eqref{P} has a nontrivial solution.
   \item[(ii)] If $\lambda>\lambda_2,$ then \eqref{P} has two nontrivial solutions.
\end{enumerate}
\end{theorem}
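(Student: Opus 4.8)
The plan is to exploit the scaled variational framework and the $\mathbb Z_2$-cohomological index. Write $\Phi=A-\mathcal F$, where $A(u)=\tfrac12\|(-\Delta)^{s/2}u\|_{L^2}^2+\tfrac{C_\alpha}{4}D(u)$ with $D(u)=\iint_{\mathbb R^N\times\mathbb R^N}u^2(x)u^2(y)|x-y|^{\alpha-N}\,dx\,dy$, and $\mathcal F(u)=\int_{\mathbb R^N}F(|x|,u)\,dx$. A change of variables gives $A(u_t)=t^{\,4s+\alpha-N}A(u)$ for the scaling $u\mapsto u_t:=t^{(2s+\alpha)/2}u(t\,\cdot)$ of Section~\ref{section SOp}, so $A$ is positively homogeneous of degree $\sigma:=4s+\alpha-N>0$ and $A(u)>0$ for $u\ne0$. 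Using \eqref{f=B+g}, also write $\Phi=\Psi_\lambda-\mathcal G$, where $\Psi_\lambda(u)=A(u)-\tfrac{\lambda}{2^*_{s,\alpha}}\|u\|_{L^{2^*_{s,\alpha}}}^{2^*_{s,\alpha}}$ is itself $\sigma$-homogeneous under the scaling and has the solutions of the scaled problem \eqref{NE} as its critical points (hence only $0$, since $\lambda$ is not an eigenvalue), and $\mathcal G(u)=\int_{\mathbb R^N}G(|x|,u)\,dx$. The two growth hypotheses play complementary roles: the subcritical bound \eqref{fgrowth} with $p_{\rm rad}<q_1,q_2<2^*_{s,\alpha}$ controls $\mathcal F$ by \emph{subcritical} norms and will force coercivity (it governs large scales), while \eqref{ggrowthc} with $2^*_{s,\alpha}<q_3<q_4<2^*_s$ makes $\mathcal G$ \emph{supercritical near the origin}, so that $\mathcal G(u_t)=o(t^\sigma)$ as $t\to0^+$ (it governs small scales).

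\emph{Coercivity and compactness.} First I would show that $\Phi$ is coercive and bounded below. Indeed $|\mathcal F(u)|\le C\sum_i\|u\|_{L^{q_i}}^{q_i}+\|a\|_{L^{r'}}\|u\|_{L^r}$, and by the Gagliardo--Nirenberg-type inequality in the Coulomb--Sobolev space (cf.\ \cite{BE-GHI-MER-MO-SC-2018,Mercuri-Moroz-VS-2016}) each $\|u\|_{L^{q_i}}^{q_i}$ is bounded by $\bigl(\|(-\Delta)^{s/2}u\|_{L^2}^2\bigr)^{\beta_i}D(u)^{\gamma_i}$ with $\beta_i+\gamma_i<1$ \emph{precisely because} $q_i<2^*_{s,\alpha}$, and $\|u\|_{L^r}$ is of strictly lower order still; Young's inequality then absorbs all these into $\tfrac12 A(u)+C$, so $\Phi(u)\ge\tfrac12 A(u)-C\to+\infty$. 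Consequently every Palais--Smale sequence is bounded, and a bounded one converges strongly in $E$: the compactness of $E\hookrightarrow L^{q_i}(\mathbb R^N)$ and — pivotally, since $\alpha>1$ and $4s+\alpha\ne N$ — of $E\hookrightarrow L^{2^*_{s,\alpha}}(\mathbb R^N)$ lets the nonlinear term and the Coulomb term pass to the limit, while the convexity of $A$ on the uniformly convex space $E$ upgrades weak to strong convergence by the usual Kadec--Klee argument. Thus $\Phi\in C^1$ satisfies $(\mathrm{PS})$ and attains its infimum.

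\emph{Producing the solutions.} For (i): since $\lambda>\lambda_1$ and, by Theorem~\ref{lambdak}, $\lambda_1=\inf_{u\ne0}2^*_{s,\alpha}A(u)/\|u\|_{L^{2^*_{s,\alpha}}}^{2^*_{s,\alpha}}$, there is $v\in E$ with $\Psi_\lambda(v)<0$; then $\Phi(v_t)=t^\sigma\Psi_\lambda(v)-\mathcal G(v_t)=t^\sigma\bigl(\Psi_\lambda(v)+o(1)\bigr)<0$ for $t>0$ small, so $\inf_E\Phi\le\Phi(v_t)<0=\Phi(0)$ and the global minimiser of $\Phi$ is a nontrivial solution $u_1$ of \eqref{P}. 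For (ii): when $\lambda>\lambda_2$, the index characterisation of $\lambda_2$ forces the $\mathbb Z_2$-cohomological index of the cone $\{u\ne0:\Psi_\lambda(u)<0\}$ to be at least $2$; running the scaling-down estimate along a symmetric compact set $A$ in this cone with $i(A)\ge2$ shows $\Phi<0$ on a dilated copy of $A$, while coercivity provides, on a symmetric set of the complementary coindex, a region where $\Phi\ge\delta>0$, so that this dilated copy links it in the sense of the cohomological index at a level $c\ge\delta>0$. Since $\Phi(0)=0<c$, the associated critical point $u_2$ is nontrivial, and $u_2\ne u_1$ because $\Phi(u_2)=c>0>\Phi(u_1)$; alternatively one argues by Morse theory, using the critical-group estimates — for which the index of Fadell--Rabinowitz rather than the genus is essential — namely $C_q(\Phi,0)=0$ for $q\le1$ and $C_{q_0}(\Phi,0)\ne0$ for some $q_0\ge2$, together with $C_q(\Phi,u_1)=\delta_{q,0}\,\mathbb Z_2$, to conclude from the Morse relations (legitimate as $\Phi$ is coercive, bounded below and satisfies $(\mathrm{PS})$) that a further nontrivial critical point must exist.

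\emph{Main obstacle.} The delicate point is the geometry underlying part (ii): turning ``$i\bigl(\{\Psi_\lambda<0\}\bigr)\ge2$'' into a genuine cohomological-index linking at a positive level and extracting exactly two nontrivial solutions via the critical-group/Morse machinery. Compared with the case $s=1$ of \cite{Mercuri-Perera}, the fractional setting additionally requires care with the scaling-down estimate $\mathcal G(v_t)=o(t^\sigma)$ (which rests on the near-origin growth control \eqref{ggrowthc}) and with the regularity needed to legitimate the scaling identities, but the combinatorial heart of the matter is the index-$2$ linking, together with the sharp Coulomb--Sobolev interpolation underpinning coercivity.
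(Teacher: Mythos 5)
Your proposal follows the same overall scheme as the paper, which, after decomposing $f$ via \eqref{f=B+g}, reduces everything to the two scaling estimates of Lemma~\ref{o(t)} (subscaled at $\infty$ for $\tilde f$, supercritical-near-origin for $\tilde g$) and the compactness of $\tilde f$ (Lemma~\ref{f~compact}), and then invokes an abstract critical-point theorem from \cite{Mercuri-Perera}. What you have written essentially unpacks that abstract theorem, so the main lines agree. There are, however, some differences worth flagging.

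Your coercivity argument via a Coulomb--Sobolev Gagliardo--Nirenberg interpolation (bounding $\|u\|_{L^{q_i}}^{q_i}$ by $(\|(-\Delta)^{s/2}u\|_{L^2}^2)^{\beta_i}D(u)^{\gamma_i}$ with $\beta_i+\gamma_i<1$) is a legitimate alternative to the paper's route, which gets coercivity from the statement $\tilde f(u_t)v_t=o(t^\sigma)\|v\|$ uniformly on bounded sets, combined with $\Phi(u_t)=t^\sigma I(u)-\tilde F(u_t)$; these are two phrasings of the same scaling computation. For part~(i), your direct sign argument (pick $v$ with $\Phi_\lambda(v)<0$, observe $\Phi(v_t)=t^\sigma(\Phi_\lambda(v)+o(1))<0$ for small $t$, conclude the global minimiser is nontrivial) is correct and in fact more elementary than the paper's critical-group comparison; note that it uses $\lambda_1=\inf_{u\neq 0}I(u)/J(u)$, which follows from Theorem~\ref{lambdak}\ref{lambdak(i)} and the projection $\pi$.

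For part~(ii) the first argument you offer is the weak spot. You assert a cohomological-index linking at a level $c\ge\delta>0$, but for a functional that is coercive and bounded below one cannot expect a mountain-pass-type linking to produce a critical value strictly above $\Phi(0)=0$ just by placing a low-index dilated set below a high-coindex set; the corresponding minimax value need not be positive, and a deformation argument may simply flow everything down toward the global minimum. The correct mechanism here is the one the paper uses through \cite[Theorem~2.17]{Mercuri-Perera} (and, explicitly, in the proof of Theorem~\ref{THsubscaledG<0}(ii) via \cite[Proposition~2.18]{Mercuri-Perera}): because $\lambda$ is not an eigenvalue, $\lambda_k<\lambda<\lambda_{k+1}$ for some $k\ge 2$, the scaled local linking near $0$ in dimension $k$ (which, with $\tilde G(u_t)=o(t^\sigma)$ as $t\to 0^+$, follows as in \eqref{G=o(1)} even without a sign condition on $G$) gives $C^k(\Phi,0)\neq 0$; combined with $C^l(\Phi,u_0)=\delta_{l0}\mathbb Z_2$ at the global minimiser, coercivity, and $(\mathrm{PS})$, the three-critical-points alternative yields a further nontrivial critical point $u_1$ with either $\Phi(u_1)<0$ and $C^{k-1}(\Phi,u_1)\neq 0$, or $\Phi(u_1)>0$ and $C^{k+1}(\Phi,u_1)\neq 0$, hence $u_1\neq u_0$. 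Your ``Morse theory alternative'' is pointing at this, but it asserts $C^q(\Phi,0)=0$ for $q\le 1$, which does not follow from the local linking and is not needed; the argument only requires $C^k(\Phi,0)\neq 0$ for some $k\ge 2$. So: part~(i) is fine (and a cleaner route than the paper's), part~(ii) should discard the positive-level linking claim and be run through the critical-group/Morse alternative, made precise via the local linking near the origin and the three-critical-points result.
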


Together with the proof of this theorem (see Section~\ref{sec subcritical}), we will provide an example of a nonlinearity $f$ satisfying all the above conditions. 

The next result is about an ``asymptotically scaled problem''.
\begin{theorem}\label{th subcri AS}   
Assume that \eqref{f=B+g} and \eqref{ggrowth2}-\eqref{ggrowth2c} hold and that $\lambda$ is not an eigenvalue of
problem \eqref{NE}. Then, \eqref{P} has a solution.
\end{theorem}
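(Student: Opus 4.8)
The plan is to write the functional in the form $\Phi=\Psi_\lambda-\psi$, where
\[
\Psi_\lambda(u)=\frac12\int_{\R^N}|(-\Delta)^{s/2}u|^{2}\,dx+\frac{C_\alpha}{4}\iint_{\R^N\times\R^N}\frac{u^{2}(x)u^{2}(y)}{|x-y|^{N-\alpha}}\,dxdy-\frac{\lambda}{2^*_{s,\alpha}}\int_{\R^N}|u|^{2^*_{s,\alpha}}\,dx
\]
is the action functional of the scaled problem \eqref{NE} and $\psi(u)=\int_{\R^N}G(|x|,u)\,dx$. By the scaling properties recalled in Section~\ref{section SOp} one has $\Psi_\lambda(u_t)=t^{\beta}\Psi_\lambda(u)$ with $\beta:=4s+\alpha-N>0$, so $\Psi_\lambda$ is a scaled functional whose critical points are precisely the solutions of \eqref{NE}; hence the hypothesis says that $\Psi_\lambda$ has no nontrivial critical point. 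Since $\alpha>1$ and $4s+\alpha\neq N$, the embedding $E\hookrightarrow L^{2^*_{s,\alpha}}(\R^N)$ is compact, as is $E\hookrightarrow L^{p}(\R^N)$ for every $p\in(p_{\rm rad},2^*_s)$; consequently $u\mapsto|u|^{2^*_{s,\alpha}-2}u$, $u\mapsto(I_\alpha*u^{2})u$ and, by \eqref{ggrowth2}--\eqref{ggrowth2c}, $u\mapsto g(|x|,u)$ are all compact from $E$ into $E^{*}$. Combined with the $(S_+)$ property of $u\mapsto(-\Delta)^{s}u$ on $\dot{H}^{s}(\R^N)$, the weak sequential continuity of the Riesz energy and of its gradient, and the uniform convexity of $Q^{\alpha,2}(\R^N)$, this yields the usual dichotomy: every bounded Cerami sequence of $\Phi$ has a convergent subsequence.

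The heart of the matter is then to show that Cerami sequences of $\Phi$ are bounded, and this is exactly where the assumption that $\lambda$ is not an eigenvalue of \eqref{NE} is used. Arguing by contradiction, suppose $\Phi(u_n)\to c$, $(1+\|u_n\|)\|\Phi'(u_n)\|_{E^{*}}\to0$ and $\|u_n\|\to\infty$. Since $t\mapsto\|(u_n)_t\|$ is an increasing homeomorphism of $(0,\infty)$, I would pick $\tau_n\in(0,1)$ so that $v_n:=(u_n)_{\tau_n}$ has norm $1$ and set $\sigma_n:=\tau_n^{-1}$; from $\|u_n\|\to\infty$ one gets $\sigma_n\to\infty$. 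Using the scaling identity $\Psi_\lambda'(u_n)[\phi_{\sigma_n}]=\sigma_n^{\beta}\Psi_\lambda'(v_n)[\phi]$, the bound $\|\phi_{\sigma_n}\|\le\sigma_n^{\beta/2}\|\phi\|$ for $\sigma_n\ge1$, and the Cerami condition, one finds $\sigma_n^{-\beta}\Phi'(u_n)[\phi_{\sigma_n}]\to0$ for each $\phi\in E$; moreover, writing $\|u_n\|_{L^{p}}=\sigma_n^{(2s+\alpha)/2-N/p}\|v_n\|_{L^{p}}$ and using the embeddings, the remainder $\sigma_n^{-\beta}\int_{\R^N}g(|x|,u_n)\phi_{\sigma_n}\,dx$ is bounded by a constant multiple of $\big(\sigma_n^{(2s+\alpha)(q_5-2^*_{s,\alpha})/2}+\sigma_n^{(2s+\alpha)/2-N/r-\beta}\big)\|\phi\|$, which tends to $0$ precisely because $q_5<2^*_{s,\alpha}$ and $r>p_{\rm rad}$. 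Hence $\Psi_\lambda'(v_n)[\phi]\to0$ for every fixed $\phi$, and taking $\phi=v_n$ in the analogous identity for $\langle\Phi'(u_n),u_n\rangle$ gives $\Psi_\lambda'(v_n)[v_n]\to0$. Passing to a subsequence, $v_n\rightharpoonup v$ in $E$ and $v_n\to v$ in $L^{2^*_{s,\alpha}}(\R^N)$; the compactness of the Riesz and critical terms then forces $\langle J'(v_n),v_n-v\rangle\to0$, where $J(u)=\tfrac12\|(-\Delta)^{s/2}u\|_{2}^{2}+\tfrac{C_\alpha}{4}\iint\frac{u^{2}(x)u^{2}(y)}{|x-y|^{N-\alpha}}\,dxdy$, so by the $(S_+)$ property and uniform convexity $v_n\to v$ strongly. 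Therefore $\|v\|=1$ and $\Psi_\lambda'(v)=0$, i.e.\ $v$ is a nontrivial solution of \eqref{NE} with eigenvalue $\lambda$, contradicting the hypothesis. Thus $\Phi$ satisfies the Cerami condition at every level.

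With compactness in hand, it remains to produce a minimax geometry. Since $\lambda$ is not an eigenvalue, either $\lambda<\lambda_1$ or $\lambda_k<\lambda<\lambda_{k+1}$ for some $k\ge1$. In the first case the definition of $\lambda_1$ gives $J(u)\ge\tfrac{\lambda_1}{2^*_{s,\alpha}}\int|u|^{2^*_{s,\alpha}}$, whence $\Psi_\lambda$ is coercive; the sharp Coulomb--Sobolev interpolation inequality of \cite{BE-GHI-MER-MO-SC-2018}, Young's inequality and the conditions $p_{\rm rad}<q_5<2^*_{s,\alpha}$, $p_{\rm rad}<r\le2^*_s$ show that $\psi$ is of lower order with respect to $J$, so $\Phi$ is coercive and bounded below and, by the Cerami condition, attains its infimum — a solution of \eqref{P}. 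In the second case I would invoke the scaling-based linking theorem of Section~\ref{section SOp}: using the $\mathbb{Z}_2$-cohomological-index sublevel and superlevel sets of the Rayleigh-type quotient associated with \eqref{NE} at the levels $\lambda_k$ and $\lambda_{k+1}$, one has a symmetric set of index $k$ on which $\Psi_\lambda$ is bounded above and a linking set on which $\Psi_\lambda$ is bounded below in the scaling sense; since $\psi$ is of lower order, $\Phi$ retains this linking structure, and the linking theorem together with the Cerami condition yields a critical point of $\Phi$ at the finite linking level, i.e.\ a solution of \eqref{P}.

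The main obstacle is the boundedness of Cerami sequences without an Ambrosetti--Rabinowitz condition: the Coulomb--Riesz term has homogeneity $4>2^*_{s,\alpha}$, so the classical estimate of $\Phi(u_n)-\tfrac{1}{2^*_{s,\alpha}}\langle\Phi'(u_n),u_n\rangle$ leaves a term with the wrong sign in front of the Coulomb energy. The rescaling argument above circumvents this by transferring the possible loss of compactness at infinity onto the nonlinear eigenvalue problem \eqref{NE}, where the non-eigenvalue hypothesis closes the loop; the delicate point is the bookkeeping of exponents ensuring that the rescaled contributions of $g$ and of the datum $h$ vanish, which is exactly what $q_5<2^*_{s,\alpha}$ and $r>p_{\rm rad}$ (with $p_{\rm rad}$ the lower endpoint of the compact embedding range) encode. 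A secondary technical issue is establishing the $(S_+)$-type property of $J'$ on the non-Hilbertian space $E$, combining the $(S_+)$ property of $(-\Delta)^{s}$, the weak continuity of the Riesz energy and of its gradient, and the uniform convexity of $Q^{\alpha,2}(\R^N)$.
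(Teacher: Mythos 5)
Your proposal reconstructs directly what the paper delegates to the abstract result \cite[Theorem 2.25]{Mercuri-Perera}: the paper's proof simply verifies that $\tilde g$ is a compact operator (Lemma~\ref{f~compact}) and that $\tilde g(u_t)v_t=o(t^{4s+\alpha-N})\|v\|$ as $t\to\infty$ uniformly on bounded sets (Lemma~\ref{o(t)}\eqref{o(t)(iii)}), and then cites that theorem together with the non-eigenvalue hypothesis. Your rescaling argument for the boundedness of Palais--Smale (Cerami) sequences is essentially correct: the exponent bookkeeping ($\theta q_5-4s-\alpha<0$ since $q_5<2^*_{s,\alpha}$, and $\theta-N/r-(4s+\alpha-N)<0$ since $r\le 2^*_s$) does make the contributions of $g$ and $h$ vanish in the blow-up limit, after which the $(S_+)$-type property of $\mathcal A$ (Lemma~\ref{verify H7}) plus compactness of $\mathcal B$ yield a nonzero solution of \eqref{NE} and the desired contradiction. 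This is the mechanism behind \cite[Lemma 2.26]{Mercuri-Perera}, which the paper also uses in the proofs of Theorems~\ref{subscaled-NE} and \ref{sub near 0, asympt, subcritical}. A small stylistic remark: you normalize to $\|v_n\|=1$, whereas the paper (cf.\ Lemma~\ref{PS-superscaled}) normalizes to the scaling manifold $\mathcal M=\{I(u)=1\}$; the latter is cleaner because $I$ scales exactly homogeneously under $(u,t)\mapsto u_t$, but both work since $t\mapsto\|u_t\|$ is also monotone.

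The genuine gap is the minimax geometry in the case $\lambda_k<\lambda<\lambda_{k+1}$. You invoke ``the scaling-based linking theorem of Section~\ref{section SOp},'' but Section~\ref{section SOp} contains no linking theorem; the relevant abstract statement is \cite[Theorem 2.24]{Mercuri-Perera} (used by the paper for the superscaled case, Theorem~\ref{Th-superscaled-not Eigenvalue}), or more precisely the version packaged in \cite[Theorem 2.25]{Mercuri-Perera} for the asymptotically scaled setting. Your description of the geometry --- cohomological-index linking between sub/superlevel sets of $\tilde\Psi$ with $\tilde G$ as a lower-order perturbation --- is the right intuition, but as written you neither state the linking theorem nor verify its hypotheses (index estimates on the linking sets, finiteness and positivity of the linking level under the lower-order perturbation). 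So the coercive case $\lambda<\lambda_1$ is handled correctly, but the non-coercive case rests on an unspecified citation and should be regarded as incomplete.
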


A consequence of this result is the following nonlinear Fredholm-type alternative.
\begin{corollary}\label{Cor alternative}
    Consider $\lambda\in\mathbb{R}$. Then, only one of the following cases may occur:
    \begin{enumerate}
        \item[(i)] either $\lambda$ is an eigenvalue, which means that \eqref{NE} has a nontrivial solution $u\in E,$ or
        \item[(ii)] $\lambda$ is not an eigenvalue for \eqref{NE}, hence for any $h\in L^{r'}(\mathbb{R}^N)$ with $p_{\rm rad}<r\leq 2^*_s$, a solution exists for
        \begin{equation*}
(-\Delta)^su+\left(I_\alpha*u^2\right)u= \lambda|u|^{2^*_{s,\alpha}}u+h(x), \quad\textrm{in}\quad\mathbb{R}^N.
\end{equation*}

    \end{enumerate}
\end{corollary}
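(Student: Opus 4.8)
\begin{proof-th}[Proof proposal]
The plan is to read off Corollary~\ref{Cor alternative} from Theorem~\ref{th subcri AS}. I would start by noting that, for a fixed $\lambda\in\mathbb{R}$, exactly one of the two alternatives holds \emph{by definition}: either \eqref{NE} possesses a nontrivial solution in $E$, in which case $\lambda$ is an eigenvalue and (i) occurs (and (ii) cannot, since (ii) presupposes that $\lambda$ is not an eigenvalue), or \eqref{NE} has no nontrivial solution in $E$, in which case (i) fails and one must verify the solvability claim in (ii). So the ``only one of the following cases may occur'' part is automatic, and the whole content is the implication ``$\lambda$ not an eigenvalue $\Rightarrow$ the inhomogeneous problem is solvable for every admissible $h$''.

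To prove that implication, assume $\lambda$ is not an eigenvalue of \eqref{NE} and let $h$ be as in the statement, i.e.\ $h\in L^{r'}(\mathbb{R}^N)$ with $p_{\rm rad}<r\le 2^*_s$. I would take $g(|x|,t):=h(x)$ and write the right-hand side of the target equation in the form \eqref{f=B+g}, namely $f(|x|,t)=\lambda|t|^{2^*_{s,\alpha}-2}t+g(|x|,t)$. It then only remains to check that this $g$ satisfies \eqref{ggrowth2}--\eqref{ggrowth2c}, which is immediate: choosing any $a_5>0$ and any $q_5$ in the nonempty interval $(p_{\rm rad},2^*_{s,\alpha})$ one has $|g(|x|,t)|=|h(x)|\le a_5|t|^{q_5-1}+|h(x)|$ for a.e.\ $x$ and all $t$, while the integrability and range conditions on $h$ and $r$ in \eqref{ggrowth2c} are precisely the hypotheses we are given. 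Since $\lambda$ is not an eigenvalue, Theorem~\ref{th subcri AS} then yields a solution $u\in E$ of \eqref{P} with this particular $f$, that is, of the inhomogeneous equation appearing in (ii).

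I do not expect a genuine obstacle at the level of this corollary: all the analytic substance — the growth estimates on the action functional $\Phi$, the compactness of Palais--Smale (or Cerami) sequences, which rests on $p=2^*_{s,\alpha}$ lying in a \emph{compact} embedding range for $\alpha>1$, and the variational scheme producing the critical point — is already carried out in the proof of Theorem~\ref{th subcri AS}. The single point deserving a line of verification is the one indicated above, namely that the pure forcing term $g(|x|,t)=h(x)$ is admissible in the asymptotically scaled framework; and, if one wishes to remain strictly within the radial space $E$, one simply reads $h$ as a radial function $h=h(|x|)$, consistently with the notation $F(|x|,t)$.
\end{proof-th}
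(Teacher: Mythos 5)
Your proposal is correct and matches the paper's intent: the paper gives no explicit proof, merely stating the corollary as ``a consequence'' of Theorem~\ref{th subcri AS}, and the only natural derivation is precisely yours — take $g(|x|,t)=h(x)$, observe that \eqref{ggrowth2}--\eqref{ggrowth2c} are then trivially satisfied, and apply Theorem~\ref{th subcri AS}. Your side remarks (the exponent $\lambda|u|^{2^*_{s,\alpha}}u$ in the displayed equation is evidently a typo for $\lambda|u|^{2^*_{s,\alpha}-2}u$, and $h$ should be read as radial so that $f(|x|,t)$ is well-defined on $[0,\infty)\times\mathbb{R}$) are both apt.
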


To deal with the superscaled case we consider $N/4<s<1$ and assume the existence of $c_0>0$ and $q\in (4,2^*_s)$ satisfying
\begin{equation}\label{Gsuper}
    c_0|t|^q\leq G(|x|,t)\leq \frac{1}{q}g(|x|,t)t\quad\mbox{for a.e.}\,\,x\in\mathbb{R}^N\,\,\mbox{and all}\,\,t\in\mathbb{R}.
\end{equation}

\begin{theorem}\label{Th-superscaled-not Eigenvalue}
Consider $N/4<s<1$. Assume that \eqref{f=B+g}-\eqref{ggrowthc} and \eqref{Gsuper} hold. If $\lambda\in\mathbb{R}$ is not an eigenvalue for equation \eqref{NE}, then problem \eqref{P} has a nontrivial solution whose energy level is positive.
\end{theorem}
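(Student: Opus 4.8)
The plan is to produce $u$ as a critical point of $\Phi$ at a positive minimax level, obtained from a linking theorem formulated in terms of the $\mathbb{Z}_2$–cohomological index and of the scaling $(u,t)\mapsto u_t$ on $E$. Write
\[
\Phi=\Phi_\lambda^0-\mathcal G,\qquad
\Phi_\lambda^0(u)=\tfrac12\int_{\R^N}|(-\Delta)^{s/2}u|^2\,dx+\tfrac{C_\alpha}{4}\,D(u)-\frac{\lambda}{2^*_{s,\alpha}}\int_{\R^N}|u|^{2^*_{s,\alpha}}\,dx,
\]
where $D(u):=\iint_{\R^N\times\R^N}\frac{u^2(x)u^2(y)}{|x-y|^{N-\alpha}}\,dx\,dy$ and $\mathcal G(u):=\int_{\R^N}G(|x|,u)\,dx$. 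Thus $\Phi_\lambda^0$ is the action of the scaled problem \eqref{NE}, and by construction $\Phi_\lambda^0(u_t)=t^{\,4s+\alpha-N}\Phi_\lambda^0(u)$, i.e. $\Phi_\lambda^0$ is homogeneous under the scaling. By the properties of scaled operators developed in Section~\ref{section SOp}, in particular the cohomological‑index characterisation of the eigenvalues $\lambda_k$ of \eqref{NE} (Theorem~\ref{lambdak}) and the induced decomposition of $E$, the hypothesis that $\lambda$ is not an eigenvalue means there is a unique $k\ge0$ (with the convention $\lambda_0=-\infty$) such that $\lambda_k<\lambda<\lambda_{k+1}$; correspondingly $E$ splits into a ``scaled eigenspace'' part of cohomological index $k$, on which $\Phi_\lambda^0$ is bounded above by a negative multiple of $\|\cdot\|_{2^*_{s,\alpha}}^{2^*_{s,\alpha}}$ (thanks to the strict inequality $\lambda>\lambda_k$), and a complementary cone on which, along the scaling sphere, $\Phi_\lambda^0$ is bounded below by a positive multiple of the same quantity. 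When $k=0$ this is just the mountain–pass dichotomy, $0$ being a strict local minimum of $\Phi_\lambda^0$.

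First I would check that these two pieces survive the perturbation $-\mathcal G$, giving a genuine linking geometry for $\Phi$. On the positive cone, the subcritical growth \eqref{ggrowth}--\eqref{ggrowthc} (with $2^*_{s,\alpha}<q_3<q_4<2^*_s$) and the compact embeddings $E\hookrightarrow L^{q_3},L^{q_4}$ give $\mathcal G(u)=o(\|u\|_{2^*_{s,\alpha}}^{2^*_{s,\alpha}})$ as $u\to0$; combined with a Gagliardo--Nirenberg type inequality $\|u\|_{2^*_{s,\alpha}}^{2^*_{s,\alpha}}\le C\|(-\Delta)^{s/2}u\|_{L^2}^{2a}D(u)^{1-a}$ for a suitable $a\in(0,1)$ (the exponents summing to one, a reflection of the fact that $2^*_{s,\alpha}$ is precisely the exponent making all terms of \eqref{NE} scale homogeneously), obtained by testing the embedding $E\hookrightarrow L^{2^*_{s,\alpha}}$ on scaled functions $u_t$ and optimising in $t$, one concludes $\Phi\ge\rho>0$ on the scaling sphere of a suitably small radius. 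On the other side, condition \eqref{Gsuper} yields $G(|x|,t)\ge c_0|t|^{q}$ with $q\in(4,2^*_s)$; since $q>4$, along any ray $t\mapsto tw$ with $w\in C_c^\infty(\R^N)$ (using the density of $C_c^\infty(\R^N)$ in $E$) the term $-\mathcal G(tw)$ dominates the quadratic, the quartic Coulomb, and the $t^{2^*_{s,\alpha}}$ ($2^*_{s,\alpha}<4$) contributions, so $\Phi(tw)\to-\infty$; extending the scaled eigenspace part radially in this fashion produces a set on which $\sup\Phi<\rho$, which links the scaling sphere in the index sense. The resulting minimax value $c$ then satisfies $c\ge\rho>0$.

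The core analytic step is the compactness of $\Phi$ at level $c$. Given a Palais--Smale (or Cerami) sequence $(u_n)$ at level $c$, I would first show boundedness. From \eqref{Gsuper} one has $\tfrac1q g(|x|,t)t-G(|x|,t)\ge0$ and $G(|x|,t)\ge c_0|t|^{q}$, so evaluating $\Phi(u_n)-\tfrac1q\Phi'(u_n)u_n$ controls $(\tfrac12-\tfrac1q)\|(-\Delta)^{s/2}u_n\|_{L^2}^2+(\tfrac14-\tfrac1q)C_\alpha D(u_n)+c_0'\|u_n\|_q^q$ up to the term $\lambda(\tfrac1{2^*_{s,\alpha}}-\tfrac1q)\|u_n\|_{2^*_{s,\alpha}}^{2^*_{s,\alpha}}$, which for $\lambda>0$ has to be absorbed using the Gagliardo--Nirenberg inequality above, the homogeneity of \eqref{NE}, and — when $\lambda$ is large — the finer interpolation between $\|u_n\|_q$ and the Sobolev bound furnished by $\|(-\Delta)^{s/2}u_n\|_{L^2}$, exploiting $2^*_{s,\alpha}<4<q<2^*_s$. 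Once $(u_n)$ is bounded, pass to a subsequence $u_n\rightharpoonup u$ in $E$; the compactness of $E\hookrightarrow L^p$ for $p\in\{2^*_{s,\alpha},q_3,q_4,q\}$ (all interior to the embedding range, since $\alpha>1$ and $4s+\alpha\ne N$) lets one pass to the limit in $\int_{\R^N}f(|x|,u_n)v$, while convergence of the Coulomb term follows from the compactness/weak‑continuity properties of $u\mapsto (I_\alpha*u^2)u$ already established; testing $\Phi'(u_n)\to0$ against $u_n-u$ then gives $u_n\to u$ in $E$ and $\Phi'(u)=0$. Since $\Phi(u)=c>0=\Phi(0)$ we get $u\ne0$, and by the variational characterisation $u$ is a weak solution of \eqref{P} with positive energy.

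The main obstacle I anticipate is exactly the boundedness of Palais--Smale sequences when $\lambda>0$ is large: because $2^*_{s,\alpha}<4$, the Ambrosetti--Rabinowitz manipulation cannot simultaneously keep the $\|(-\Delta)^{s/2}u_n\|_{L^2}^2$– and $D(u_n)$–coefficients positive and eliminate the $\lambda$–term, so the argument must genuinely use the super‑quartic growth $q>4$ of $G$ (which is what dominates the Coulomb term — this is also why $N/4<s<1$ enters, to make $(4,2^*_s)\ne\emptyset$) together with the scaling invariance of \eqref{NE}; this is where the choice of the cohomological index in place of the Krasnoselskii genus, and the scaling‑based linking geometry, are essential. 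A secondary technicality is to verify that the scaled eigenspace decomposition of Section~\ref{section SOp} interacts correctly with the radial extension dictated by \eqref{Gsuper}, so that the linking sets carry the right index and the subcritical perturbation $g$ does not destroy the geometry near the origin.
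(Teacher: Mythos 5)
Your overall skeleton (linking at a positive level, Palais--Smale compactness) is right, and you correctly identify the problem: with $2^*_{s,\alpha}<4$ the Ambrosetti--Rabinowitz combination cannot simultaneously keep the $\dot H^s$-- and the Coulomb--coefficients positive and neutralise the $\lambda$--term. But your proposed resolution --- absorbing $\lambda\|u_n\|_{L^{2^*_{s,\alpha}}}^{2^*_{s,\alpha}}$ through a Gagliardo--Nirenberg inequality and interpolation --- is only gestured at and does not in fact close the gap; for $\lambda>0$ large no pointwise coefficient bookkeeping of that kind works, and you acknowledge as much. This is the genuine missing step. The paper's Lemma~\ref{PS-superscaled} resolves it by a different mechanism: project the (PS) sequence onto the scaling sphere $\mathcal M=\{I=1\}$ via $\tilde u_n=(u_n)_{t_n}$, $t_n=I(u_n)^{-1/(4s+\alpha-N)}$, so $\tilde u_n\in\mathcal M$ is bounded and $\tilde t_n=t_n^{-1}\to\infty$ when $\|u_n\|\to\infty$. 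The homogeneity $\Phi_\lambda(u_t)=t^{4s+\alpha-N}\Phi_\lambda(u)$ together with $G\ge c_0|t|^q$ and $\theta q-N>4s+\alpha-N$ then forces $\|\tilde u_n\|_{L^q}\to0$; interpolation against a fixed $L^p$ bound ($p\in(p_{\rm rad},2^*_{s,\alpha})$) gives $\|\tilde u_n\|_{L^{2^*_{s,\alpha}}}\to0$. Only after this vanishing does the combination $q\cdot\Phi_\lambda(\tilde u_n)-\Phi_\lambda'(\tilde u_n)\tilde u_n$ enter: the $\lambda$--term $\lambda(q/2^*_{s,\alpha}-1)\int|\tilde u_n|^{2^*_{s,\alpha}}$ is $o(1)$ automatically --- no absorption needed, irrespective of the sign or size of $\lambda$ --- and $q>4$ makes both leading coefficients positive, so $\|\tilde u_n\|\to0$, contradicting $\tilde u_n\in\mathcal M$. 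Your GN/interpolation route does not supply this a~priori smallness; the rescaling onto $\mathcal M$ does.

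There is also a structural misstatement worth flagging: your ``scaled eigenspace'' decomposition of $E$ does not exist, because $\mathcal A$ is a nonlinear operator and there are no linear eigenspaces. The substitute used by the paper is the cohomological-index estimate on the sublevel and superlevel sets $\tilde\Psi^a$, $\tilde\Psi_a$ of the restricted functional on $\mathcal M$ (Theorem~\ref{lambdak}\ref{lambdak(iii)}), which is exactly what the abstract scaled-linking result \cite[Theorem 2.27]{Mercuri-Perera} packages. The paper's actual proof of this theorem is then very short: verify that $\tilde F(u_t)/t^{4s+\alpha-N}\to\infty$ as $t\to\infty$ uniformly on compact subsets of $\mathcal M$ (immediate from $G\ge c_0|t|^q$ with $\theta q-N>4s+\alpha-N$), record that Lemma~\ref{PS-superscaled} gives (PS), and apply the abstract theorem. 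If you want a from-scratch linking argument, you must replace the eigenspace splitting by that index estimate on $\tilde\Psi$-levels and use the scaling homeomorphisms $u\mapsto u_t$ in place of radial dilations $u\mapsto tu$.
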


In the next set of results we remove the requirement that $\lambda$ is not an eigenvalue to equation \ref{NE}, by using an abstract result based on a notion of local linking, given in \cite[Theorem 2.30]{Mercuri-Perera}, and by assuming, in the subscaled case, that  $\tilde{G}$ has a negative sign.

\begin{theorem}\label{THsubscaledG<0}
Assume that $f$ satisfies \eqref{fgrowth} with $p_{\rm rad} <q_1,q_2<2^*_{s,\alpha}$, \eqref{f=B+g}-\eqref{ggrowthc} hold and $G(|x|,t)<0$ a.e. $x\in\mathbb{R}^N$ and all $t\in\mathbb{R}\backslash\{0\}$. 
\begin{enumerate}
   \item[(i)]  If $\lambda>\lambda_1$ then \eqref{P} has a nontrivial solution.
   \item[(ii)] If $\lambda>\lambda_2$ then \eqref{P} has two nontrivial solutions.
\end{enumerate}
\end{theorem}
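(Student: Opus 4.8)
The plan is to adapt the local-linking machinery of \cite[Theorem 2.30]{Mercuri-Perera} to the fractional functional $\Phi$, using the scaling structure $(u,t)\mapsto u_t = t^{(2s+\alpha)/2}u(t\cdot)$ and the eigenvalue sequence $\{\lambda_k\}$ from Theorem \ref{lambdak}. First I would record the variational setup: write $f(|x|,t)=\lambda|t|^{2^*_{s,\alpha}-2}t+g(|x|,t)$ so that $\Phi(u)=J_\lambda(u)-\int_{\mathbb{R}^N}G(|x|,u)\,dx$, where $J_\lambda(u)=\tfrac12\|(-\Delta)^{s/2}u\|_{L^2}^2+\tfrac{C_\alpha}{4}\|u\|_{Q^{\alpha,2}}^{4}-\tfrac{\lambda}{2^*_{s,\alpha}}\|u\|_{L^{2^*_{s,\alpha}}}^{2^*_{s,\alpha}}$ is the functional of the scaled problem \eqref{NE}. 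The sign condition $G(|x|,t)<0$ for $t\neq 0$ makes the perturbation term $-\int G\ge 0$, which is exactly what forces a local linking at the origin regardless of whether $\lambda$ is an eigenvalue: near $u=0$ the quadratic-type part $J_\lambda$ controls the geometry, while the lower-order term only pushes $\Phi$ up.

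Second, I would verify the compactness condition (Cerami or $(PS)$) for $\Phi$ on $E$. Since $p_{\rm rad}<q_1,q_2<2^*_{s,\alpha}$, the growth \eqref{fgrowth} lands in the compact embedding range $E\hookrightarrow L^p$, and $g$ satisfies \eqref{ggrowthc} with $2^*_{s,\alpha}<q_3<q_4<2^*_s$ (still compact away from the endpoint $2^*_s$); boundedness of a Cerami sequence follows from the subscaled/scaled balance between the $\|\cdot\|^2$ and $\|\cdot\|_Q^4$ terms against the subcritical $G$-term — here is where I would invoke the scaling-based estimates of Section \ref{section SOp} to handle the genuinely quartic Coulomb term — and then compact embeddings upgrade weak convergence to strong. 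This part is essentially the same compactness argument already needed for Theorem \ref{THsubscaled}(and indeed I would cite it), the only change being that $G\le 0$ makes the a priori bounds if anything easier.

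Third comes the linking geometry producing one and then two critical points. Using the $\mathbb{Z}_2$-cohomological index and the eigenspaces $\{\lambda_k\}$ of the scaled operator, I would show: (a) the origin is a local linking point for $\Phi$ with the splitting $E=E^-\oplus E^+$ associated to whether $\lambda$ sits above or below the relevant $\lambda_k$; (b) when $\lambda>\lambda_1$, the index of the sublevel set near $0$ differs from that at infinity, so \cite[Theorem 2.30]{Mercuri-Perera} yields a nontrivial critical point; (c) when $\lambda>\lambda_2$, the index jump has magnitude at least $2$, and the cohomological-index version of the multiplicity statement in \cite{Mercuri-Perera} gives two nontrivial critical points. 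Crucially, because the local linking at $0$ is produced by the sign of $G$ rather than by a spectral gap, we no longer need $\lambda$ to avoid the spectrum of \eqref{NE}. The main obstacle I anticipate is (b)--(c): carrying out the cohomological-index computation of critical groups at $0$ and at infinity in the presence of the non-quadratic Coulomb term $\|u\|_Q^4$, which is not homogeneous of degree $2$, so the usual eigenspace-deformation arguments must be replaced by the scaling-flow arguments developed in Section \ref{section SOp}; verifying that the scaling $(u,t)\mapsto u_t$ interacts correctly with the index and with the $G$-perturbation is the delicate point, and I would model it closely on the corresponding step in \cite{Mercuri-Perera} for $s=1$, checking that every estimate there survives the replacement of $-\Delta$ by $(-\Delta)^s$ and of the local Sobolev embeddings by the fractional Coulomb--Sobolev embeddings \eqref{eq3}.
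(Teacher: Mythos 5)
Your proposal correctly identifies the local-linking mechanism driven by the sign of $G$ (matching the paper's Lemma \ref{local-linking}(i)) and the role of the cohomological index. However, there is a substantive gap in how you propose to produce and distinguish the critical points. Your step (b)--(c), based on an ``index jump between $0$ and infinity'' and a linear splitting $E=E^-\oplus E^+$, presupposes a linking-type geometry in which $\Phi$ escapes to $-\infty$ along some subspace. In the subscaled regime the opposite holds: because $p_{\rm rad}<q_1,q_2<2^*_{s,\alpha}$, Lemma \ref{o(t)}(i) gives $\tilde f(u_t)v_t=o(t^{4s+\alpha-N})\|v\|$ as $t\to\infty$, so $\Phi$ is \emph{coercive and bounded from below} and attains a global minimum at some $u_0\in E$ (Proposition \ref{minimization}). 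This is the decisive structural fact that your proposal never records, and without it you have no candidate critical point against which to run a comparison.

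The paper's actual route is: from Lemma \ref{local-linking}(i), $\Phi$ has a scaled local linking near $0$ in dimension $k\geq 1$ when $\lambda_k<\lambda\leq\lambda_{k+1}$, so by \cite[Theorem 2.29]{Mercuri-Perera} the critical group $C^k(\Phi,0)$ is nontrivial; since a (isolated) global minimizer always has $C^l(\Phi,u_0)=\delta_{l0}\mathbb{Z}_2$, this forces $u_0\neq 0$, proving (i). For (ii), $\lambda>\lambda_2$ gives $k\geq 2$, and the Morse-theoretic three-critical-point statement \cite[Proposition 2.18]{Mercuri-Perera} produces a third critical point $u_1\neq 0$ with $C^{k\pm 1}(\Phi,u_1)\neq 0$; since $C^{l}(\Phi,u_0)=0$ for all $l\geq 1$, we get $u_1\neq u_0$. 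Beyond the missing minimization, note also that the decomposition $E=E^-\oplus E^+$ you invoke does not exist: the eigenvalue problem \eqref{NE} is nonlinear in $u$, there are no invariant subspaces, and the correct objects are the index sublevels $\tilde\Psi^{\lambda_k}$ and $\tilde\Psi_{\lambda_{k+1}}$ on the scaled manifold $\mathcal M$, which is precisely the content of Lemma \ref{local-linking}.
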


For subcritical problems in the superscaled case where $\lambda\geq\lambda_1$ can be an eigenvalue of equation \eqref{NE}, we assume that an additional assumption is satisfied, namely that
$G$ is continuously differentiable on $[0,\infty)\times\mathbb{R}$ and 
\begin{equation}\label{Gtauu}
  \frac{\partial G}{\partial \tau}(\tau,t)\leq0,\quad \left|\tau \frac{\partial G}{\partial \tau}(\tau,t)\right|\leq C(|t|^{\tilde{q}_3}+|t|^{\tilde{q}_4}),\quad\forall (\tau,t)\in [0,\infty)\times\mathbb{R},
\end{equation}
for some $\tilde{q}_3,\tilde{q}_4\in(p_{\rm rad}, 2^*_s]$ and $C>0$.
\begin{theorem}\label{Th-superscaled-general}
Consider $N/4<s<1$ and assume that \eqref{f=B+g}-\eqref{ggrowthc} and \eqref{Gsuper} hold. If either of the following cases occurs
\begin{enumerate} 
\item[(i)] $\lambda<\lambda_1,$

\item[(ii)] $\lambda\geq\lambda_1$ and \eqref{Gtauu} holds,
\end{enumerate}
then, \eqref{P} has a nontrivial solution.
\end{theorem}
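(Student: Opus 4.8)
The plan is to obtain the nontrivial solution as a mountain-pass critical point of $\Phi$, verifying the geometry and the Palais--Smale (or Cerami) condition separately in the two cases. First I would record the basic energy estimates coming from \eqref{Gsuper}: since $G(|x|,t)\ge c_0|t|^q$ with $q>4$, the functional $\Phi$ is \emph{not} bounded below, and along any fixed $0\ne u\in E$ one has $\Phi(tu)\to-\infty$ as $t\to+\infty$ (the $(-\Delta)^{s/2}$ term scales like $t^2$, the Coulomb--Riesz term like $t^4$, the $\lambda|u|^{2^*_{s,\alpha}}$ term like $t^{2^*_{s,\alpha}}$ with $2^*_{s,\alpha}<2^*_s$, while $N/4<s<1$ forces $4<2^*_s$, so the $-\int G$ term, of order $t^q$ with $q>4$ and $q\le 2^*_s$, dominates). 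This gives the ``far'' point of the mountain pass after checking it is path-connected to $0$ through a large sphere. The key step is then the local geometry near the origin, and this is exactly where cases (i) and (ii) diverge.

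In case (i), $\lambda<\lambda_1$. Here I would use the variational characterization of $\lambda_1$ from Theorem \ref{lambdak} (via the $\mathbb{Z}_2$-cohomological index and the scaling structure of Section \ref{section SOp}) to get a coercivity-type lower bound: for $u$ in a small ball, the quadratic-plus-quartic part $\frac12\|(-\Delta)^{s/2}u\|_2^2+\frac{C_\alpha}{4}\|u\|_{Q^{\alpha,2}}^4-\frac{\lambda}{2^*_{s,\alpha}}\int|u|^{2^*_{s,\alpha}}$ is bounded below by a positive multiple of (a power of) $\|u\|$ on small spheres, because $\lambda<\lambda_1$ makes the scaled eigenvalue quotient stay above $\lambda$; the remaining term $-\int G$ is controlled by \eqref{ggrowth}--\eqref{ggrowthc} with all exponents $q_3,q_4>2^*_{s,\alpha}$, hence is higher order near $0$. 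This yields $\Phi(u)\ge\rho>0$ on a small sphere $\|u\|=r$, completing the mountain-pass geometry, and since $0$ is a strict local minimum the min-max level is positive, so the critical point found is nontrivial.

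In case (ii), $\lambda\ge\lambda_1$ may be an eigenvalue, so the origin need not be a local minimum and I would instead invoke the abstract local-linking theorem \cite[Theorem 2.30]{Mercuri-Perera}, as the paragraph preceding the statement announces. The role of the extra hypothesis \eqref{Gtauu} is to handle the radial-variable dependence: differentiating in $\tau=|x|$ and using $\partial G/\partial\tau\le0$ together with the growth bound on $\tau\,\partial G/\partial\tau$, one rewrites $\int_{\mathbb{R}^N} G(|x|,u)\,dx$ (after the change of variables $x\mapsto x/t$ inside a scaling $u\mapsto u_t$) so that its behavior under the scaling is sign-definite and controllable; this is what makes the scaled sublevel/superlevel decomposition needed by the local-linking framework valid, furnishing a pair of cones on which $\Phi$ has the requisite sign near $0$. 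The index count of $\lambda_1$ (namely that the relevant $\mathbb{Z}_2$-index jumps) supplies the dimension/cohomological bookkeeping the abstract theorem requires.

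The final ingredient in both cases is compactness. Here I would verify the Cerami condition: along a Cerami sequence $(u_n)$, combine $\Phi(u_n)$ bounded and $(1+\|u_n\|)\|\Phi'(u_n)\|\to0$ with the Ambrosetti--Rabinowitz-type inequality $G(|x|,t)\le\frac1q\,g(|x|,t)t$ from \eqref{Gsuper} and the condition $q>4$ to get a uniform bound on $\|(-\Delta)^{s/2}u_n\|_2^2$ and on $\|u_n\|_{Q^{\alpha,2}}^4$, hence $(u_n)$ is bounded in $E$; then pass to a subsequence using the compact embedding $E\hookrightarrow L^p$ for $p=2^*_{s,\alpha}$ (valid since $\alpha>1$ and $4s+\alpha\ne N$) and for the subcritical exponents $q_3,q_4$, together with weak lower semicontinuity and the Brezis--Lieb-type splitting for the Riesz term, to extract a strongly convergent subsequence and a critical point. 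I expect the \textbf{main obstacle} to be case (ii): making the local-linking geometry of \cite[Theorem 2.30]{Mercuri-Perera} fit our functional requires precisely the delicate interplay, encoded in \eqref{Gtauu}, between the radial scaling $u\mapsto u_t$, the $\mathbb{Z}_2$-cohomological index characterization of $\lambda_1$, and the monotonicity of $G$ in $|x|$ — verifying that the abstract hypotheses hold (in particular that the "scaled" sublevel sets have the right index) is the technically heaviest point, whereas the compactness and the case-(i) geometry are comparatively routine given the embedding and scaling results already established.
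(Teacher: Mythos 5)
Your case (i) argument matches the paper's in spirit: the paper expands $\Phi(u_t)=t^{4s+\alpha-N}\bigl(1-\lambda/\tilde\Psi(u)+o(1)\bigr)$ as $t\to0^+$ uniformly on $\mathcal M$, uses $\tilde\Psi\ge\lambda_1>\lambda$ from Theorem~\ref{lambdak}\ref{lambdak(i)} to get positivity near the origin, uses \eqref{F~infty} to send the energy to $-\infty$ along $t\to\infty$, and applies the classical mountain pass theorem with compactness supplied by Lemma~\ref{PS-superscaled}. (The paper phrases the local geometry on the scaled ``spheres'' $\mathcal M_\rho$ rather than metric balls $\|u\|=r$, which is the natural object here, but this is cosmetic.) Your compactness discussion via Cerami sequences is a detour — the paper's Lemma~\ref{PS-superscaled} already shows the full $(\mathrm{PS})$ condition, and the boundedness argument runs through the scaling projection onto $\mathcal M$ rather than a direct Ambrosetti–Rabinowitz estimate on $\|u_n\|$ — but the end result is the same.

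In case (ii) there is a genuine gap. You correctly identify the local-linking framework and that Lemma~\ref{local-linking}(ii) supplies the local linking (this actually needs only $G\ge0$, which is automatic from \eqref{Gsuper}). But you misplace the role of \eqref{Gtauu}: it is \emph{not} used to establish the local-linking decomposition near the origin. The abstract result the paper invokes (Corollary~2.31 of Mercuri–Perera, following from Theorem~2.30) requires, in addition to local linking, that some sublevel set $\Phi^a$, $a<0$, be \emph{contractible}. This is the technically heavy step, and it is exactly where \eqref{Gtauu} enters: one computes, using $\partial G/\partial\tau\le 0$, the growth bound on $\tau\,\partial G/\partial\tau$, and the AR inequality $qG\le g\cdot t$, that the scalar map $\varphi_u(t):=\Phi(u_t)$ satisfies $\varphi_u'(t)\le\frac{4s+\alpha-N}{t}\varphi_u(t)$. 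Hence once $\varphi_u$ becomes negative it stays strictly decreasing, so for each $u\in\mathcal M$ there is a unique crossing time $\psi(u)$ with $\varphi_u(\psi(u))=a$, and $\Phi^a=\{u_t:u\in\mathcal M,\ t\ge\psi(u)\}$; the implicit function theorem makes $\psi$ a $C^1$ map and one builds an explicit deformation retraction of $E\setminus\{0\}$ onto $\Phi^a$, proving contractibility. Without this step the local linking alone does not yield a nontrivial critical point, so your proposal as written does not close the argument in case (ii).
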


%%%%%%%%%%%%%%%%%%%%%%%%%%%%%%%%%%%%%%%%%%%%%%%%%%%%%%%

The following results, proved in Section~\ref{sec critical problems}, concern critical nonlinearities.
We assume that $N/4<s<1$ and   $f:\mathbb{R}\to\mathbb{R}$ is a critical growth nonlinearity of the form
\begin{equation}\label{fgrowthcritical2}
  f(t) = \lambda |t|^{2^*_{s,\alpha}-2}t+ \mu|t|^{q_6-2}t+|t|^{2^*_s-2}t,\quad\mbox{for all} \,\, t\in\mathbb{R},
\end{equation}
for $\lambda\in\mathbb{R}$, $\mu\geq0$ and $ q_6\in (2^*_{s,\alpha},2^*_{s})$.

\vspace{2mm}

We stress that the results below extend those in \cite[Section 1.3.2]{Mercuri-Perera}, not only to the fractional setting, but also to more general nonlinearities $f$, which are not restricted to be only scaled or a superscaled perturbations  of a critical nonlinearity. We also refer the reader to \cite{Feng-Su-2024} for a related result, where  existence of a solution is provided in the case $0 < \alpha < 1$ and $\lambda > 0$ sufficiently small.

\begin{theorem}\label{th cri-super-mult}
Let $N/4 < s < 1$ and let $f$ satisfy \eqref{fgrowthcritical2}.  
Suppose the nonlinear eigenvalues satisfy
\(\lambda_k = \lambda_{k+1} = \dots = \lambda_{k+m-1} < \lambda_{k+m}
\quad\text{for some } k,m \ge 1.\)
Then, for every $\mu \ge 0$, there is a number $\delta_k > 0$ such that
problem~\eqref{P} possesses at least $m$ distinct pairs of nontrivial
solutions with positive energy whenever
\(\lambda \in \bigl(\lambda_k - \delta_k,\; \lambda_k\bigr).\)
\end{theorem}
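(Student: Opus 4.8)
\textbf{Proof proposal for Theorem \ref{th cri-super-mult}.}

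The plan is to combine a linking-type critical point theorem based on the $\mathbb{Z}_2$-cohomological index with a careful analysis of the Palais–Smale sequences that exploits the scaling structure of \eqref{NE}. Writing $f(t)=\lambda|t|^{2^*_{s,\alpha}-2}t+g(t)$ with $g(t)=\mu|t|^{q_6-2}t+|t|^{2^*_s-2}t$, one sees that $g$ satisfies \eqref{ggrowth}--\eqref{ggrowthc} with the critical exponent $2^*_s$ replacing $q_4$, and also the superscaled-type condition \eqref{Gsuper} with $q=q_6$ if $q_6>4$ (more care is needed if $q_6\le 4$, where one uses $2^*_s>4$ coming from $s>N/4$; in any case $G(t)\ge c_0|t|^q$ for some $q\in(4,2^*_s)$ since the $|u|^{2^*_s}$ term dominates). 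The functional $\Phi$ thus has, near the origin on the subspace associated with eigenvalues $\le\lambda_k$, a geometry governed by the fact that $\lambda<\lambda_k$: the quadratic-in-scaling part $\tfrac12\|(-\Delta)^{s/2}u\|_2^2+\tfrac{C_\alpha}{4}\|u\|_{Q^{\alpha,2}}^4$ minus $\tfrac{\lambda}{2^*_{s,\alpha}}\|u\|_{2^*_{s,\alpha}}^{2^*_{s,\alpha}}$ is positive on a small sphere of the $(k+m-1)$-index sublevel set and negative on the complementary cone, because $\lambda<\lambda_k=\dots=\lambda_{k+m-1}$.

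The key steps, in order, would be: (1) Set up the multiplicity framework of \cite[Theorem 2.30 / Section 1.3.2]{Mercuri-Perera} — an index-based version of the linking theorem producing $m$ distinct pairs of critical points at positive levels $c_k\le\dots\le c_{k+m-1}$, using the cohomological index sets $\{\Phi\le 0\}$ and a scaled sphere, whose indices differ by exactly $m$ thanks to the eigenvalue multiplicity cluster $\lambda_k=\dots=\lambda_{k+m-1}<\lambda_{k+m}$ (here Theorem \ref{lambdak} and the scaling properties from Section \ref{section SOp} give the needed index computations). (2) Establish a uniform bound: for $\lambda$ close enough to $\lambda_k$ from below, the min-max levels $c_j$, $k\le j\le k+m-1$, stay below the threshold $c^*$ for compactness — this is where $\delta_k$ is chosen, by continuity of the min-max values in $\lambda$ and the fact that at $\lambda=\lambda_k$ one can still estimate the levels strictly below $c^*$ using the Aubin–Talenti–type extremals for $2^*_s$ and the $|u|^{2^*_s}$-term's help in pushing energy down (the classical Brezis–Nirenberg trick, adapted with the Riesz term and the $\mu|u|^{q_6}$ lower-order term which only helps). (3) Prove a local Palais–Smale condition below $c^*$: bounded PS sequences follow from \eqref{Gsuper} via the Ambrosetti–Rabinowitz argument (needing $2^*_s>4$, i.e. $s>N/4$, to handle the quartic Riesz term), and then compactness is recovered by the concentration-compactness analysis in $E$, ruling out loss of mass into the $2^*_s$-critical profile precisely when the energy is below $c^*$; the compact embedding $E\hookrightarrow L^{2^*_{s,\alpha}}$ for $\alpha>1$ takes care of the scaled term. (4) Conclude that each $c_j$ is a critical value, that the corresponding critical points are nontrivial (positive energy) and come in $m$ distinct pairs by the standard index argument (distinct values, or if some coincide, the index of the critical set is large enough to force infinitely many — hence at least $m$ pairs).

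The main obstacle I expect is step (2)–(3): getting the sharp energy estimate that places all $m$ min-max levels strictly below the first noncompactness threshold $c^*$ associated with the critical Sobolev exponent $2^*_s$, \emph{uniformly} over a whole cluster of dimensions and while $\lambda$ ranges in $(\lambda_k-\delta_k,\lambda_k)$. One must simultaneously control: the interaction between the fractional Aubin–Talenti bubbles and the finite-dimensional eigenspace of the scaled problem; the contribution of the nonlocal Riesz term $\iint |u(x)|^2|u(y)|^2/|x-y|^{N-\alpha}$ under the bubble rescaling (which scales differently from both Sobolev and Coulomb norms); and the sign/size of the $\mu|u|^{q_6}$ term. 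The delicate point is that, unlike the pure Brezis–Nirenberg setting, here the "lower order" linking geometry itself is scaling-critical for $2^*_{s,\alpha}$, so the estimate must be carried out in the $E$-norm with both pieces active; I would handle this by first doing the estimate at $\mu=0$ using the extremal functions for the embedding and the explicit scaling $u_t=t^{(2s+\alpha)/2}u(t\cdot)$, then absorbing the $\mu$-term as a nonnegative perturbation, and finally invoking continuity in $\lambda$ to open the interval $(\lambda_k-\delta_k,\lambda_k)$.
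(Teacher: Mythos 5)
Your overall framework is right: pseudo-index linking via Proposition~\ref{Theorem 2.33 MP}, a local Palais--Smale condition below $c^{*}=\tfrac{s}{N}\mathbb{S}^{N/2s}$, and the index computation $i(A_0)=k+m-1$, $i(\mathcal{M}\setminus B_0)\leq k-1$ coming from the cluster $\lambda_k=\dots=\lambda_{k+m-1}<\lambda_{k+m}$ and Theorem~\ref{lambdak}\ref{lambdak(iii)}. Step~(3) as you describe it is also close to what the paper does in Lemma~\ref{PSlocal}, though the paper additionally exploits the Pohozaev identity (Lemma~\ref{pohozaev}) when estimating the energy of the weak limit in the dichotomy argument, rather than pure concentration--compactness.

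However, there is a genuine misconception in step~(2), which you yourself flag as the main obstacle. You plan to place the min-max levels below $c^{*}$ via a Brezis--Nirenberg argument with Aubin--Talenti bubbles, estimating the interaction between the bubbles, the Riesz term, and the finite-dimensional eigen-sets, and then invoking continuity in $\lambda$ from $\lambda=\lambda_k$. This would be hard to carry out precisely because the linking sets here are cohomological-index sets on the scaled manifold $\mathcal{M}$, not linear subspaces, and it is not what the paper does. The paper's mechanism is elementary and avoids bubbles entirely. Fix $\varepsilon\in(0,\lambda_{k+m}-\lambda_{k+m-1})$ and take $A_0\subset\mathcal{M}\setminus\tilde\Psi_{\lambda_k+\varepsilon}$ compact with $i(A_0)=k+m-1$. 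For $u\in A_0$, $\tilde\Psi(u)<\lambda_k+\varepsilon<\lambda_{k+m}$, so $J(u)>1/\lambda_{k+m}$, and by interpolating between $L^r$ ($r<2^*_{s,\alpha}$) and $L^{2^*_s}$ one gets a uniform lower bound $\tfrac{1}{2^*_s}\int|u|^{2^*_s}\,dx\geq c_3>0$ independent of $\varepsilon$. Then from \eqref{Phi cri in M}, dropping the $\mu$-term (which only helps),
\[
\Phi(u_t)\leq t^{4s+\alpha-N}\Bigl(1-\tfrac{\lambda}{\lambda_k+\varepsilon}\Bigr)-c_3\,t^{\theta 2^*_s-N},
\]
and a one-variable maximization over $t\geq 0$ gives
\[
\sup_{u\in A_0,\,0\leq t\leq R}\Phi(u_t)
\leq
\frac{s}{N}\,\frac{2^{N/2s}}{(c_3 2^*_s)^{N/2s-1}}
\Bigl(1-\tfrac{\lambda}{\lambda_k+\varepsilon}\Bigr)^{N/2s}.
\]
The right-hand side tends to zero as $\lambda\to\lambda_k^{-}$ (choosing $\varepsilon$ small depending on $\lambda$), which is precisely how $\delta_k$ is produced: one solves $\bigl(1-\lambda/(\lambda_k+\varepsilon)\bigr)<\tfrac{1}{2}\mathbb{S}(c_32^*_s)^{2/2^*_s}$ and sets $\delta_k=\tfrac{\lambda_k}{2}\mathbb{S}(c_32^*_s)^{2/2^*_s}$. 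There is no Aubin--Talenti extremal, no concentration estimate of bubble energies against the index sets, and no continuity-from-$\lambda=\lambda_k$ argument (indeed at $\lambda=\lambda_k$ the linking condition $\inf_B\Phi>0$ would already fail, since one only has $\tilde\Psi(u)\geq\lambda_k=\lambda$ on $B_0$). Replacing your proposed bubble argument with this direct scaling estimate is essential to making the proof go through.
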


As an immediate consequence of this theorem we get the following 
\begin{corollary}
Suppose $N/4 < s < 1$ and that $f$ is of the form \eqref{fgrowthcritical2}.
If $\lambda_k<\lambda_{k+1}$ for some $k\geq1$, then, for each $\mu\geq0$, there is $\delta_k>0$ such that \eqref{P} has a pair of nontrivial solutions at a positive energy level for all $\lambda\in(\lambda_k-\delta_k,\lambda_k)$.
\end{corollary}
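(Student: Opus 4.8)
The plan is to derive this corollary directly from Theorem~\ref{th cri-super-mult} by specialising to the case $m=1$. Suppose $\lambda_k < \lambda_{k+1}$ for some $k \ge 1$. Since the nonlinear eigenvalues form a nondecreasing sequence $\lambda_k \nearrow \infty$ (as guaranteed by Theorem~\ref{lambdak}), the strict inequality $\lambda_k < \lambda_{k+1}$ can be rewritten in the chain form required by Theorem~\ref{th cri-super-mult}: taking $m = 1$, the hypothesis ``$\lambda_k = \lambda_{k+1} = \dots = \lambda_{k+m-1} < \lambda_{k+m}$'' collapses to the single statement $\lambda_k < \lambda_{k+1}$, which holds by assumption. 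Here one should be slightly careful that $k$ is indeed a valid index for this collapsed chain, but since $k \ge 1$ and $\lambda_{k+1}$ is well-defined, there is nothing to check.

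With these hypotheses verified, Theorem~\ref{th cri-super-mult} applies: for every $\mu \ge 0$ there is a number $\delta_k > 0$ such that problem~\eqref{P} possesses at least $m = 1$ distinct pairs of nontrivial solutions with positive energy whenever $\lambda \in (\lambda_k - \delta_k, \lambda_k)$. Since ``at least one pair'' is exactly ``a pair'', this is precisely the conclusion of the corollary, and the proof is complete. The only genuine content here is recognising that the corollary is the $m=1$ instance of the theorem; there is no independent obstacle.

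I would phrase the written proof in one or two sentences. For completeness one might also remark that the assumption $f$ is of the form \eqref{fgrowthcritical2} and the restriction $N/4 < s < 1$ are carried over verbatim into the hypotheses of Theorem~\ref{th cri-super-mult}, so no extra work is needed to match them. If the authors wish to be slightly more self-contained, they could note that a single pair $\{u, -u\}$ of solutions is produced because the functional $\Phi$ is even when $f$ has the odd form \eqref{fgrowthcritical2}, which is implicitly what makes ``pairs'' the natural counting unit, but this is already subsumed in the statement of Theorem~\ref{th cri-super-mult}.

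\begin{proof-th}[Proof of the Corollary]
This is the special case $m=1$ of Theorem~\ref{th cri-super-mult}. Indeed, assume $\lambda_k < \lambda_{k+1}$ for some $k \ge 1$. Since the sequence $\{\lambda_j\}$ is nondecreasing and diverges to $+\infty$ by Theorem~\ref{lambdak}, the condition $\lambda_k = \lambda_{k+1} = \dots = \lambda_{k+m-1} < \lambda_{k+m}$ of Theorem~\ref{th cri-super-mult} is satisfied with $m=1$, as it then reduces exactly to $\lambda_k < \lambda_{k+1}$. Applying Theorem~\ref{th cri-super-mult} with this $k$ and $m=1$, we obtain, for each $\mu \ge 0$, a number $\delta_k > 0$ such that \eqref{P} has at least $m = 1$ distinct pair of nontrivial solutions at a positive energy level for every $\lambda \in (\lambda_k - \delta_k, \lambda_k)$. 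This is precisely the assertion.
\end{proof-th}
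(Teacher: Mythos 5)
Your proof is correct and matches the paper's intended argument: the paper explicitly labels this corollary "an immediate consequence" of Theorem~\ref{th cri-super-mult}, and specialising to $m=1$, exactly as you do, is the one-line observation behind it. No gaps.
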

In Theorem~\ref{th cri-super-mult}, the assumption on \(\lambda\) being close to \(\lambda_k\) can be dropped, provided \(\mu\) is sufficiently large.  
In fact, we have the following
\begin{theorem}\label{th cri l<l1}
Let $N/4 < s < 1$ and suppose $f$ satisfies \eqref{fgrowthcritical2} with  $\lambda\in\mathbb{R}$.  
For every integer $m \ge 1$ there exists a number $\mu_m>0$ such that, for all $\mu>\mu_m$,  
problem~\eqref{P} admits at least $m$ distinct pairs of nontrivial solutions with positive energy.  
In particular, the number of these solutions tends to infinity as $\mu\to\infty$.
\end{theorem}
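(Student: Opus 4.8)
The plan is to run a minimax scheme based on the $\mathbb{Z}_2$-cohomological index, exploiting the fact that when $\mu$ is large the subcritical perturbation $\mu|t|^{q_6-2}t$ dominates the energy budget on finite-dimensional symmetric sets, so that a multidimensional linking geometry is available for {\it any} fixed $\lambda$. Concretely, write the energy as
\[
\Phi_\mu(u)=\frac12\|(-\Delta)^{s/2}u\|_{2}^2+\frac{C_\alpha}{4}\,\|u\|_{Q^{\alpha,2}}^{4}-\frac{\lambda}{2^*_{s,\alpha}}\|u\|_{2^*_{s,\alpha}}^{2^*_{s,\alpha}}-\frac{\mu}{q_6}\|u\|_{q_6}^{q_6}-\frac{1}{2^*_s}\|u\|_{2^*_s}^{2^*_s}.
\]
First I would fix $m\ge1$ and choose an $m$-dimensional symmetric subspace $W_m\subset E\cap C_c^\infty(\mathbb{R}^N)$ (using the density result announced in the introduction), with $\mathrm{index}(\partial(W_m\cap B))\ge m$. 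On the sphere $\{u\in W_m:\|u\|=\rho_m\}$, after using the scaling $u\mapsto u_t$ to locate the relevant scales, the term $-\frac{\mu}{q_6}\|u\|_{q_6}^{q_6}$ (with $q_6$ in the compactness range and $q_6>2^*_{s,\alpha}>2$, and also, by the standing hypothesis $N/4<s<1$, one has $2^*_s>4$) can be made to drive $\Phi_\mu$ uniformly negative on a large sphere of $W_m$ once $\mu>\mu_m$, while on a small sphere $S_{r}=\{\|u\|=r\}$ in the full space $E$ one still has $\Phi_\mu\ge \beta_m>0$ because near the origin all the power terms of exponent $>2$ are negligible against the quadratic Gagliardo seminorm — here it is important that $q_6,q_1,q_2$ all sit strictly above $2$ so that the quadratic part wins near $0$. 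This produces the classical linking pair $(S_r,\,\partial Q)$ with $Q$ a ball in $W_m$ (or a cone over it), and the cohomological-index intersection property of \cite[Theorem 2.30]{Mercuri-Perera}-type results gives $m$ critical values $0<\beta_m\le c_1\le\cdots\le c_m$.

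Next I would establish a Palais--Smale (or Cerri) condition below the relevant threshold. Boundedness of PS sequences follows from the Ambrosetti--Rabinowitz-type inequality \eqref{Gsuper} applied to $g$ (this is where $q>4$, hence $4s>N$, is used to control the quartic Coulomb term), together with the fact that the critical exponent term $|u|^{2^*_s-2}u$ satisfies the usual AR inequality. For compactness of bounded PS sequences at level $c$ one splits $\Phi_\mu$ into the compact part (the terms with exponents $2^*_{s,\alpha}$ and $q_6$, which embed compactly into $E_{rad}$ by the stated embedding theorem) and the non-compact Sobolev-critical part; a concentration-compactness / Brezis--Nirenberg-style argument shows PS fails only at levels of the form $c+\frac{s}{N}S^{N/2s}j$ for integer $j\ge1$, where $S$ is the fractional Sobolev constant. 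The key quantitative point is that the minimax values $c_i$ are bounded above {\it uniformly in $\mu$} by a constant depending only on $W_m$ (because enlarging $\mu$ only makes the linking sets' sup smaller), whereas one should rather argue the opposite direction: I would instead choose $W_m$ and $\rho_m$ first, note that on the linking set the sup of $\Phi_\mu$ is bounded above by a quantity that {\it decreases} in $\mu$, and then pick $\mu_m$ so large that this sup lies strictly below $\beta_m+\frac{s}{N}S^{N/2s}$, i.e. below the first PS-breaking threshold above $\beta_m$. Then standard deformation arguments with the index yield $m$ distinct critical values in $(0,\frac{s}{N}S^{N/2s})$, and by the usual index-counting (if some $c_i$ coincide the corresponding critical set has positive index, hence is infinite) we get at least $m$ distinct pairs $\pm u_i$ of nontrivial solutions, all at positive energy.

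The main obstacle I anticipate is the compactness analysis at the Sobolev-critical level in this nonlocal, non-Hilbertian space $E$: one must rule out energy leaking both into the fractional-Sobolev-critical bubbles and, potentially, into the Coulomb term at infinity. The density of $C_c^\infty$ in $E$ (announced as a new tool) should let us use cutoff/truncation arguments and a fractional Brezis--Lieb lemma for all the relevant functionals simultaneously; pinning down the exact threshold $\frac{s}{N}S^{N/2s}$ and verifying that the linking level can be pushed below it by enlarging $\mu$ — uniformly over the fixed $\lambda$, including $\lambda$ at or above eigenvalues of \eqref{NE} — is the delicate estimate. A secondary technical point is that, unlike in Theorem~\ref{th cri-super-mult}, here $\lambda$ is arbitrary, so the quadratic-at-the-scaled-exponent term $-\frac{\lambda}{2^*_{s,\alpha}}\|u\|_{2^*_{s,\alpha}}^{2^*_{s,\alpha}}$ need not be a small perturbation of the linking geometry; this is precisely compensated by taking $\mu$ large, and making that trade-off quantitative (choosing $\mu_m$ after $W_m$, $\rho_m$ and $\beta_m$ are fixed) is the crux of the argument.
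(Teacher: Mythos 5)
Your route diverges from the paper's and, more importantly, contains a gap at the heart of the argument: the positive lower bound near the origin. You propose to achieve $\Phi_\mu\ge\beta_m>0$ on a small sphere $S_r=\{u\in E:\|u\|=r\}$ by arguing that the negative power terms are negligible because their exponents exceed $2$. In the Coulomb--Sobolev space $E$ this heuristic breaks down for the scaled term: using the inequality $\int|u|^p\,dx\lesssim I(u)^{(p\theta-N)/(4s+\alpha-N)}$ (equation~\eqref{Lp by I(u)}), the $\lambda$-term with $p=2^*_{s,\alpha}$ has exponent exactly $1$ in $I(u)$, i.e.\ it is of the \emph{same order} as the positive part of $\Phi_\mu$ in the regime where the Gagliardo seminorm dominates the norm. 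For $\lambda$ large (which the theorem must allow, since $\lambda\in\mathbb{R}$ is arbitrary) the small-sphere infimum can therefore be nonpositive, and enlarging $\mu$ makes it only worse, not better, since the $\mu$-term is also subtracted. Your proposed fix --- choosing $\mu_m$ large after fixing $W_m,\rho_m,\beta_m$ --- controls the minimax \emph{upper} bound $\sup_X\Phi_\mu<c^*$, but does nothing to restore positivity of $\beta_m$.

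The paper handles this by abandoning the classical small-sphere/linear-subspace geometry entirely. It works on the scaled manifold $\mathcal{M}=\{I(u)=1\}$ and takes $B=\{u_\rho:u\in B_0\}$ with $B_0=\tilde{\Psi}_{\lambda_j}$, where $j$ is chosen with $\lambda<\lambda_j$; on $B_0$ one has $1-\lambda/\tilde{\Psi}(u)\ge 1-\lambda/\lambda_j>0$, and since the $\mu$- and critical terms carry the strictly larger scaling exponents $\theta q_6-N$ and $\theta 2^*_s-N>4s+\alpha-N$, positivity follows for small $\rho$. The loss of index entailed by restricting to $B_0$ (since $i(\mathcal{M}\setminus B_0)=j-1$) is exactly compensated by choosing $A_0\subset\mathcal{M}\setminus\tilde{\Psi}_a$ with $i(A_0)=\tilde m+l-1\ge m+j-1$, and then Proposition~\ref{Theorem 2.33 MP} (not a Theorem-2.30-type local linking result, which is for a different regime) delivers $m$ distinct pairs. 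Finally, the boundedness of PS sequences you attribute to the Ambrosetti--Rabinowitz condition~\eqref{Gsuper} cannot be invoked here --- \eqref{Gsuper} is not a hypothesis of this theorem; the paper instead proves boundedness in Lemma~\ref{PSlocal} by projecting the sequence onto $\mathcal{M}$ via the scaling and deriving a contradiction from a combination of $\Phi_\lambda(\tilde u_n)$ and $\Phi'_\lambda(\tilde u_n)\tilde u_n$, crucially using $2^*_s>4$ (i.e.\ $s>N/4$) to control the quartic Coulomb term.
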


%%%%%%%%%%%%%%%%%%%%%%%%%%%%%%%%%%%%%%%%%%%%%%%%%%%%%%%%%%%%%%%%%%%

Section~\ref{section sub near 0} is devoted to the proof of the results we now state on nonlinearities which are subscaled near the origin. Here we write $f:[0,\infty)\times\mathbb{R}\to\mathbb{R}$  as
\begin{equation}\label{f sub near 0}
  f(|x|,t) = |t|^{\beta-2}t+ g(|x|,t),
\end{equation}
where $  p_{\rm rad}<\beta< 2^*_{s,\alpha}$ and $g:[0,\infty)\times\mathbb{R}\to\mathbb{R}$ is a Caratheodory function such that
\begin{equation}\label{g sub near 0}
  |g(|x|,t)| \leq a_7|t|^{q_7-1}+ a_8|t|^{q_8-1}, \quad\mbox{for a.e.}\,\,x\in\mathbb{R}^N ,\,\, \forall t\in\mathbb{R},
\end{equation}
with 
\begin{equation}\label{q7 q8 sub near 0}
        \beta<q_7,q_8 \leq 2^*_{s}.
    \end{equation}
We also assume that $g$ is odd in $t$ and its primitive $G$ vanishing at zero satisfies
\begin{equation}\label{G sub near 0}
  G(|x|,t) \leq \tilde{C}|t|^{\tilde{\beta}}\quad\mbox{for a.e.}\,\,x\in\mathbb{R}^N \,\, \forall t\in\mathbb{R},
\end{equation}
for some  $  p_{\rm rad}<\tilde\beta< 2^*_{s,\alpha}$ and a positive constant $\tilde C$. 
Assuming that the associated functional $\Phi$ satisfies the (PS) condition, we can prove the following 
\begin{theorem}\label{th sub near 0}
 Assume that \eqref{f sub near 0}-\eqref{G sub near 0} hold, with $p_{\rm rad}<\beta,\tilde{\beta}< 2^*_{s,\alpha}$, and that $\Phi$ satisfies the $(PS)_c$ condition for all $c<0$. Then, there exists a sequence of solutions $(u_k)$ for \eqref{P} satisfying $\Phi(u_k)\nearrow0$.
\end{theorem}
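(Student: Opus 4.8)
The plan is to apply a symmetric minimax theorem for even $C^1$ functionals that are bounded from below -- a Clark--Kajikiya type result -- formulated, in keeping with the rest of the paper, through the $\mathbb{Z}_2$--cohomological index. Write
\[
\Phi(u)=\Phi_0(u)-\frac1\beta\int_{\mathbb{R}^N}|u|^\beta\,dx-\int_{\mathbb{R}^N}G(|x|,u)\,dx,\qquad
\Phi_0(u):=\frac12\|(-\Delta)^{\frac s2}u\|_{L^2}^2+\frac{C_\alpha}{4}\iint_{\mathbb{R}^N\times\mathbb{R}^N}\frac{u^2(x)u^2(y)}{|x-y|^{N-\alpha}}\,dx\,dy.
\]
By \eqref{f sub near 0} and the oddness of $g(|x|,\cdot)$ the nonlinearity $f(|x|,\cdot)$ is odd, hence $F(|x|,\cdot)$ is even, $\Phi$ is even and $\Phi(0)=0$; the growth conditions \eqref{g sub near 0}--\eqref{q7 q8 sub near 0} make $\Phi$ of class $C^1$. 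First I would check that \emph{$\Phi$ is bounded from below on $E$}: from \eqref{G sub near 0} one has $-\int_{\mathbb{R}^N}G(|x|,u)\,dx\ge-\tilde C\|u\|_{L^{\tilde\beta}}^{\tilde\beta}$, and since $\beta,\tilde\beta\in(p_{\rm rad},2^*_{s,\alpha})$ lie in the compact embedding range, the Gagliardo--Nirenberg inequalities underlying $E\hookrightarrow L^\beta,L^{\tilde\beta}$ (equivalently, the homogeneity of the scaled operator of Section~\ref{section SOp}) yield $\|u\|_{L^p}^p\le\varepsilon\,\Phi_0(u)+C_\varepsilon$ for $p\in\{\beta,\tilde\beta\}$ -- the point being that $p<2^*_{s,\alpha}$ makes these terms of strictly lower order than the ``scaled'' quantity $\Phi_0$, which is homogeneous of positive degree $4s+\alpha-N$ under dilations. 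Absorbing, $\Phi(u)\ge\tfrac12\Phi_0(u)-C\ge-C$.

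The heart of the argument is the minimax geometry produced by the \emph{scaling} $u\mapsto u_t:=t^{(2s+\alpha)/2}u(t\cdot)$, under which (cf. Section~\ref{section SOp}) $\Phi_0(u_t)=t^{4s+\alpha-N}\Phi_0(u)$ and $\|u_t\|_{L^p}^p=t^{(2s+\alpha)p/2-N}\|u\|_{L^p}^p$. Fix $k\ge1$ and a $k$--dimensional subspace $V_k\subset E$ spanned by radial functions in $C_c^\infty(\mathbb{R}^N)$; on the unit sphere $S_k=\{v\in V_k:\|v\|=1\}$ all Lebesgue norms are comparable, so $m_\beta:=\min_{v\in S_k}\|v\|_{L^\beta}^\beta>0$ while $\Phi_0$, $\|\cdot\|_{L^{q_7}}^{q_7}$ and $\|\cdot\|_{L^{q_8}}^{q_8}$ are bounded on $S_k$. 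For $v\in S_k$ and $t>0$,
\[
\Phi(v_t)=t^{4s+\alpha-N}\Phi_0(v)-\frac1\beta\,t^{(2s+\alpha)\beta/2-N}\|v\|_{L^\beta}^\beta-\int_{\mathbb{R}^N}G(|x|,v_t)\,dx,
\]
and, since $|G(|x|,\tau)|\le\tfrac{a_7}{q_7}|\tau|^{q_7}+\tfrac{a_8}{q_8}|\tau|^{q_8}$, the last integral is $O\!\left(t^{(2s+\alpha)q_7/2-N}+t^{(2s+\alpha)q_8/2-N}\right)$ uniformly on $S_k$. Because $\beta<2^*_{s,\alpha}$ gives $(2s+\alpha)\beta/2-N<4s+\alpha-N$ and $q_7,q_8>\beta$ gives $(2s+\alpha)q_i/2-N>(2s+\alpha)\beta/2-N$, the term $-\tfrac1\beta t^{(2s+\alpha)\beta/2-N}\|v\|_{L^\beta}^\beta$ dominates as $t\to0^+$; hence there is $t_k>0$ with $\sup_{v\in S_k}\Phi(v_{t_k})<0$. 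The map $v\mapsto v_{t_k}$ is \emph{linear}, injective and odd, so $A_k:=\{v_{t_k}:v\in S_k\}$ is a compact symmetric subset of $E\setminus\{0\}$ that is equivariantly homeomorphic to $S^{k-1}$; therefore its $\mathbb{Z}_2$--cohomological index (and genus) equals $k$, and $\sup_{A_k}\Phi<0$.

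To conclude, put $c_k:=\inf_{A\in\mathcal{A}_k}\sup_{u\in A}\Phi(u)$, where $\mathcal{A}_k$ is the family of compact symmetric subsets $A\subset E\setminus\{0\}$ with $\operatorname{index}(A)\ge k$. Since $E$ is infinite--dimensional, $\mathcal{A}_k\neq\emptyset$ for every $k$; by the preceding construction $-\infty<c_1\le c_2\le\cdots$ and $c_k\le\sup_{A_k}\Phi<0$, and since $\Phi$ satisfies $(PS)_c$ for every $c<0$ -- precisely the range of the levels $c_k$ -- the standard equivariant deformation and intersection argument shows that each $c_k$ is a critical value, that $c_k\nearrow0$, and that whenever $c_k=c_{k+1}=\dots=c_{k+p}$ with $p\ge1$ the critical set at that level has index $\ge p+1$ and is therefore infinite. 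In all cases $\Phi$ has infinitely many pairwise distinct critical points -- weak solutions of \eqref{P} -- whose energies form a sequence of negative values increasing to $0$; relabelling yields the asserted $(u_k)$ with $\Phi(u_k)\nearrow0$.

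The step I expect to be the main obstacle is the uniform negativity of $\Phi$ on $A_k$: one must control, simultaneously and uniformly over the finite--dimensional sphere $S_k$, the competition between the scaled part $\Phi_0(u_t)\sim t^{4s+\alpha-N}$, the leading power $\|u_t\|_{L^\beta}^\beta\sim t^{(2s+\alpha)\beta/2-N}$ (with strictly smaller exponent, by $\beta<2^*_{s,\alpha}$), and the $G$--remainder (with strictly larger exponents, by $q_7,q_8>\beta$). This, together with the boundedness from below, is exactly where the subscaled--near--the--origin assumptions \eqref{q7 q8 sub near 0}--\eqref{G sub near 0} enter; given the prescribed $(PS)_c$ for $c<0$, the abstract minimax part is then routine.
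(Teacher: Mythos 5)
Your proof is correct but follows a genuinely different route from the paper's. To produce symmetric sets of index at least $k$ on which $\Phi$ is negative, the paper takes $M_t=\{u_t:u\in\tilde\Psi^{\lambda_k}\}$, a scaled image of the sublevel set $\tilde\Psi^{\lambda_k}\subset\mathcal M$ of the eigenvalue functional from Section~\ref{subsection SEP}; its index $i(M_t)=i(\tilde\Psi^{\lambda_k})=k+m-1\ge k$ comes directly from Theorem~\ref{lambdak}~\ref{lambdak(iii)}, and the uniform negativity of $\Phi$ on $M_t$ for small $t$ uses the lower bound of Lemma~\ref{Lemma K estimate} on $\int|u|^\beta$ over $\tilde\Psi^{\lambda_k}$. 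You instead take $A_k=\{v_{t_k}:v\in S_k\}$, the scaled unit sphere of a $k$-dimensional subspace $V_k\subset C_c^\infty(\mathbb R^N)_{\rm rad}$, getting the index count from equivariant homeomorphism to $S^{k-1}$ and the negativity from equivalence of norms on $V_k$; this is the classical Clark/Kajikiya construction and is perfectly valid, since the scaling $v\mapsto v_t$ at fixed $t$ is a linear odd homeomorphism. You also establish boundedness from below by absorbing $\|u\|_{L^p}^p$ into $\Phi_0(u)=I(u)$ via the sub-additive bound $\int|u|^p\le c\,I(u)^{(\theta p-N)/(4s+\alpha-N)}$ and Young's inequality (this is exactly the paper's Lemma-\ref{Lp by I(u)}-type estimate, which you should cite explicitly), whereas the paper instead dominates $\Phi$ from below by the auxiliary functional $\tilde\Phi$ and invokes the abstract coercivity Lemma~2.15 of \cite{Mercuri-Perera}. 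The trade-off: the paper's construction stays inside the scaled-operator machinery and in particular does not require density of smooth functions or finite-dimensional subspaces, while your construction is more elementary and self-contained but replaces the intrinsic index information of $\tilde\Psi^{\lambda_k}$ by the cruder statement $i(A_k)=k$; both are sufficient because Proposition~\ref{critical point result} (and its compact/genus analogue you implicitly invoke) only needs $-\infty<c_k<0$.
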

As a consequence, we obtain two corollaries on problems which in fact turn out to satisfy the (PS) condition.
\begin{corollary}\label{cor1 sub near 0}
    For any $p_{\rm rad}<\beta< 2^*_{s,\alpha}$ the equation
   \begin{equation*}
(-\Delta)^su+\left(I_\alpha*u^2\right)u=  |u|^{\beta-2}u \quad\textrm{in}\quad\mathbb{R}^N
\end{equation*}
    has infinitely many solutions at negative levels.
\end{corollary}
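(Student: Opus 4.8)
The plan is to derive Corollary~\ref{cor1 sub near 0} from Theorem~\ref{th sub near 0} by verifying the hypotheses for the pure-power nonlinearity $f(|x|,t)=|t|^{\beta-2}t$. In the notation of \eqref{f sub near 0} we take $g\equiv 0$, so that conditions \eqref{g sub near 0}, \eqref{q7 q8 sub near 0} and \eqref{G sub near 0} are satisfied trivially (with $G\equiv 0$, any $\tilde\beta\in(p_{\rm rad},2^*_{s,\alpha})$, and $g$ obviously odd). The restriction $p_{\rm rad}<\beta<2^*_{s,\alpha}$ is exactly the one imposed in the statement, and since $\beta$ lies strictly inside this interval, by the embedding results recalled after \eqref{eq4} we have a \emph{compact} embedding $E\hookrightarrow L^\beta(\mathbb{R}^N)$. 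Thus the only real work is to check that the action functional
\[
\Phi(u)=\frac12\int_{\mathbb{R}^N}|(-\Delta)^{s/2}u|^2\,dx+\frac{C_\alpha}{4}\iint_{\mathbb{R}^N\times\mathbb{R}^N}\frac{u^2(x)u^2(y)}{|x-y|^{N-\alpha}}\,dx\,dy-\frac{1}{\beta}\int_{\mathbb{R}^N}|u|^\beta\,dx
\]
satisfies the $(PS)_c$ condition for every $c<0$ (in fact one may as well prove it for every $c\in\mathbb{R}$, which is cleaner); once this is established, Theorem~\ref{th sub near 0} immediately produces a sequence $(u_k)$ of solutions with $\Phi(u_k)\nearrow 0$, and since $\Phi(u_k)<0$ each $u_k$ is nontrivial, giving infinitely many solutions at negative levels.

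To prove the $(PS)$ condition, let $(u_n)\subset E$ be a sequence with $\Phi(u_n)\to c$ and $\Phi'(u_n)\to 0$ in $E^*$. The first step is boundedness: combining $\Phi(u_n)-\tfrac14\Phi'(u_n)u_n$ one obtains
\[
\Phi(u_n)-\tfrac14\Phi'(u_n)u_n=\tfrac14\|(-\Delta)^{s/2}u_n\|_{L^2}^2+\Bigl(\tfrac14-\tfrac1\beta\Bigr)\int_{\mathbb{R}^N}|u_n|^\beta\,dx,
\]
and since $\beta<2^*_{s,\alpha}<2^*_s$ and in particular $\beta$ can be smaller or larger than $4$, one controls $\int|u_n|^\beta$ either directly (when $\beta\le 4$, the coefficient $\tfrac14-\tfrac1\beta\le 0$ forces a slightly different split, e.g. $\Phi(u_n)-\tfrac1\beta\Phi'(u_n)u_n$, which isolates the Gagliardo seminorm and the Coulomb term with positive coefficients $\tfrac12-\tfrac1\beta$ and $\tfrac14-\tfrac1\beta$ — since $\beta>p_{\rm rad}>2$ we have $\tfrac12-\tfrac1\beta>0$, and $\tfrac14-\tfrac1\beta$ may be negative only if $\beta<4$, in which case one uses interpolation of $\|u_n\|_{Q^{\alpha,2}}$ against $\|u_n\|_\beta$ and $\|(-\Delta)^{s/2}u_n\|_{L^2}$ to absorb it). In all cases one concludes that $\|u_n\|$ is bounded. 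By reflexivity of $E$ and the compact embedding $E\hookrightarrow L^\beta$, pass to a subsequence with $u_n\rightharpoonup u$ in $E$ and $u_n\to u$ in $L^\beta$.

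The final step is to upgrade weak to strong convergence. Testing $\Phi'(u_n)-\Phi'(u)$ against $u_n-u$ gives
\[
\langle\Phi'(u_n)-\Phi'(u),u_n-u\rangle=\|(-\Delta)^{s/2}(u_n-u)\|_{L^2}^2+C_\alpha\,\mathcal{B}(u_n,u)-\int_{\mathbb{R}^N}\bigl(|u_n|^{\beta-2}u_n-|u|^{\beta-2}u\bigr)(u_n-u)\,dx,
\]
where $\mathcal{B}(u_n,u)$ denotes the difference of the two Coulomb quartic-form derivatives. The left-hand side tends to $0$ because $\Phi'(u_n)\to 0$, $\Phi'(u)$ is fixed, and $u_n-u$ is bounded; the nonlinear integral tends to $0$ by strong $L^\beta$ convergence and the standard elementary inequality for the map $t\mapsto|t|^{\beta-2}t$. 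The main obstacle is handling the Coulomb term $\mathcal{B}(u_n,u)$: one must show its ``cross'' contributions vanish and that what remains dominates $\|u_n-u\|_{Q^{\alpha,2}}^4$ (or a comparable power) from below. This is done by expanding the quartic form, using the boundedness of $(u_n)$ in $Q^{\alpha,2}$ together with the Hardy–Littlewood–Sobolev inequality and the compact embedding into the relevant $L^p$ spaces to show all mixed terms converge, and invoking a convexity/monotonicity inequality for the quartic Coulomb form — analogous to the one used in the $s=1$ case in \cite{Mercuri-Perera} — to deduce $\|(-\Delta)^{s/2}(u_n-u)\|_{L^2}\to 0$ and $\|u_n-u\|_{Q^{\alpha,2}}\to 0$, i.e. $u_n\to u$ in $E$. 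Once $(PS)_c$ holds for all $c<0$, Theorem~\ref{th sub near 0} applies and yields the conclusion; the nontriviality at negative energy is automatic since $\Phi(0)=0$.
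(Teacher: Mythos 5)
Your overall plan---invoking Theorem~\ref{th sub near 0} with $g\equiv 0$ and reducing everything to a verification of the $(PS)_c$ condition---matches the paper, and your final compactness step (weak convergence, compact embedding $E\hookrightarrow L^\beta(\mathbb{R}^N)$, a Brezis--Lieb type inequality for the quartic Coulomb form) is essentially what the paper's Lemma~\ref{verify H7} packages. The genuine gap is in your boundedness argument. You run an Ambrosetti--Rabinowitz split $\Phi(u_n)-\tfrac14\Phi'(u_n)u_n$ (or $-\tfrac1\beta\Phi'(u_n)u_n$), which, as you note yourself, only closes cleanly when $\beta\geq 4$. But in this paper one always has
\[
2^*_{s,\alpha}=\frac{2(4s+\alpha)}{2s+\alpha}=4-\frac{2\alpha}{2s+\alpha}<4,
\]
so $\beta<2^*_{s,\alpha}<4$ in \emph{every} admissible case: the ``easy case'' $\beta>4$ is vacuous. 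In the remaining (i.e.\ only) case the coefficient in front of either the $L^\beta$ term or the Coulomb quartic form is negative, and the fallback you propose---``interpolation of $\|u_n\|_{Q^{\alpha,2}}$ against $\|u_n\|_{L^\beta}$ and $\|(-\Delta)^{s/2}u_n\|_{L^2}$''---is not a valid inequality. Without a priori boundedness there is no way to dominate the quartic Coulomb term by lower-degree quantities so as to absorb that negatively-weighted term, and the combination of the two Palais--Smale relations leaves an underdetermined system in the three quantities $\|(-\Delta)^{s/2}u_n\|_{L^2}^2$, $\iint u_n^2u_n^2/|x-y|^{N-\alpha}$ and $\int|u_n|^\beta$.

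The paper avoids this entirely by using \emph{coercivity}, which was already established in the proof of Theorem~\ref{th sub near 0}: since $p_{\rm rad}<\beta<2^*_{s,\alpha}$, Lemma~\ref{o(t)}(i) gives $\tilde h(u_t)v_t=o(t^{4s+\alpha-N})\|v\|$ as $t\to\infty$, uniformly on bounded sets, and then \cite[Lemma 2.15]{Mercuri-Perera} yields $\Phi(u)\to\infty$ as $\|u\|\to\infty$ (equivalently, $\int|u|^\beta\leq c[I(u)]^{(\theta\beta-N)/(4s+\alpha-N)}=o(I(u))$ by the scaling estimate, so the quadratic-plus-quartic part of $\Phi$ dominates). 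Coercivity gives boundedness of every $(PS)$ sequence with no superlinearity assumption, for the full subscaled range. This is exactly what the paper's short proof of the corollary cites: coercivity (hence boundedness of $(PS)$ sequences) together with subcritical growth (hence compactness of $\tilde f$ and applicability of Lemma~\ref{verify H7}) give $(PS)_c$ for all $c\in\mathbb{R}$, and Theorem~\ref{th sub near 0} then applies.
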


\begin{corollary}\label{cor2 sub near 0}
  The equation
   \begin{equation*}
(-\Delta)^su+\left(I_\alpha*u^2\right)u=  |u|^{\beta-2}u+\lambda |u|^{2^*_{s,\alpha}-2}u-|u|^{q-2}u \quad\textrm{in}\quad\mathbb{R}^N
\end{equation*}
    has infinitely many solutions at negative levels for any $\lambda\in\mathbb{R}$ whenever $ p_{\rm rad}<\beta<2^*_{s,\alpha}$ and $q$ is such that
   $$
       \beta<q \leq 2^*_{s} \quad\mbox{for}\,\,\lambda\leq0\quad\mbox{or}\quad
        2^*_{s,\alpha}<q \leq 2^*_{s}\quad\mbox{for}\,\,\lambda>0.
   $$
\end{corollary}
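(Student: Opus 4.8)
\noindent The plan is to derive Corollary~\ref{cor2 sub near 0} from Theorem~\ref{th sub near 0}. Write the right-hand side as $f(|x|,t)=|t|^{\beta-2}t+g(|x|,t)$ with
\[
g(|x|,t)=\lambda|t|^{2^*_{s,\alpha}-2}t-|t|^{q-2}t,
\]
which is odd in $t$, so that \eqref{f sub near 0} holds with $p_{\rm rad}<\beta<2^*_{s,\alpha}$. Condition \eqref{g sub near 0} is immediate with $q_7=2^*_{s,\alpha}$, $q_8=q$ (and $a_7=|\lambda|$, $a_8=1$): one has $\beta<2^*_{s,\alpha}$ always, while $\beta<q$ follows from $\beta<q\le 2^*_s$ when $\lambda\le0$ and from $\beta<2^*_{s,\alpha}<q$ when $\lambda>0$. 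For the primitive $G(|x|,t)=\tfrac{\lambda}{2^*_{s,\alpha}}|t|^{2^*_{s,\alpha}}-\tfrac1q|t|^q$ I would verify \eqref{G sub near 0} with $\tilde\beta:=\beta$: if $\lambda\le0$ then $G\le0$; if $\lambda>0$, split at $|t|=1$, using $|t|^{2^*_{s,\alpha}}\le|t|^{\tilde\beta}$ for $|t|\le1$ (since $\tilde\beta<2^*_{s,\alpha}$) and the boundedness from above on $[1,\infty)$ of $r\mapsto\tfrac{\lambda}{2^*_{s,\alpha}}r^{2^*_{s,\alpha}}-\tfrac1q r^{q}$, which holds precisely because $q>2^*_{s,\alpha}$. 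Thus the two admissible ranges for $q$ are exactly what make all of these checks go through, and $\Phi$ is even because $f$ is odd.

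\medskip

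It then remains to verify that the functional
\[
\Phi(u)=\tfrac12\|(-\Delta)^{s/2}u\|_{L^2}^2+\tfrac{C_\alpha}{4}\|u\|_{Q^{\alpha,2}}^4-\tfrac1\beta\|u\|_{L^\beta}^\beta-\tfrac{\lambda}{2^*_{s,\alpha}}\|u\|_{L^{2^*_{s,\alpha}}}^{2^*_{s,\alpha}}+\tfrac1q\|u\|_{L^q}^q
\]
satisfies the $(PS)_c$ condition for all $c<0$; in fact I would prove it satisfies the full $(PS)$ condition. For \emph{boundedness} of a Palais--Smale sequence $(u_n)$, the key point is that the only \emph{focusing} term of subscaled growth is $\tfrac1\beta\|u\|_{L^\beta}^\beta$ with $\beta<2^*_{s,\alpha}$, and the scaling-subcritical interpolation inequality underlying the compact embedding $E\hookrightarrow L^\beta$ gives $\|u\|_{L^\beta}^\beta\le\varepsilon\bigl(\|(-\Delta)^{s/2}u\|_{L^2}^2+\|u\|_{Q^{\alpha,2}}^4\bigr)+C_\varepsilon$ for every $\varepsilon>0$. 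When $\lambda\le0$ the remaining two nonlinear terms of $\Phi$ are nonnegative, so after this absorption $\Phi$ is coercive and $(PS)$ sequences are automatically bounded. When $\lambda>0$ the term $-\tfrac{\lambda}{2^*_{s,\alpha}}\|u\|_{L^{2^*_{s,\alpha}}}^{2^*_{s,\alpha}}$ is focusing, but since $q>2^*_{s,\alpha}$ it is controlled by the superscaled defocusing term $\tfrac1q\|u\|_{L^q}^q$ together with the leading terms: interpolating $L^{2^*_{s,\alpha}}$ between some $L^{p_1}$ with $p_1\in(p_{\rm rad},2^*_{s,\alpha})$ and $L^{q}$, applying again the scaling-subcritical inequality for $\|u\|_{L^{p_1}}$ and Young's inequality, and combining with $\Phi(u_n)-\tfrac1\mu\Phi'(u_n)u_n=O(1)+o(\|u_n\|)$ for $\mu>\max\{4,q\}$, one obtains boundedness of $(u_n)$; this is the step that follows the scheme of \cite{Mercuri-Perera}.

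\medskip

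For \emph{compactness}, pass to a subsequence $u_n\rightharpoonup u$ in $E$. Since $\alpha>1$, the exponents $\beta$ and $2^*_{s,\alpha}$ (and $q$ when $q<2^*_s$) lie in the compact range $(p_{\rm rad},2^*_s)$, so $u_n\to u$ in $L^\beta$, in $L^{2^*_{s,\alpha}}$, and (when $q<2^*_s$) in $L^q$; combined with the density of $C_c^\infty(\mathbb{R}^N)$ in $E$ and the continuity properties of the Coulomb--Riesz term, this yields $\Phi'(u)=0$. Testing $\Phi'(u_n)$ with $u_n-u$, using a Brezis--Lieb-type splitting for the cubic Coulomb term (valid because $u_n\to u$ in $L^{2^*_{s,\alpha}}$ and a.e.) and, when $q=2^*_s$, the classical Brezis--Lieb lemma for $\|u_n\|_{L^{2^*_s}}^{2^*_s}$, one arrives at
\[
\Phi'(u_n)(u_n-u)=\|(-\Delta)^{s/2}(u_n-u)\|_{L^2}^2+C_\alpha\|u_n-u\|_{Q^{\alpha,2}}^4+\|u_n-u\|_{L^{2^*_s}}^{2^*_s}+o(1),
\]
where the last summand is present only if $q=2^*_s$. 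Since the left-hand side tends to $0$ and each summand on the right is nonnegative --- here the \emph{defocusing} sign of $-|u|^{2^*_s-2}u$ is essential --- we get $\|(-\Delta)^{s/2}(u_n-u)\|_{L^2}\to0$ and $\|u_n-u\|_{Q^{\alpha,2}}\to0$, hence $u_n\to u$ in $E$. This establishes $(PS)$. Applying Theorem~\ref{th sub near 0} then produces a sequence of solutions $(u_k)$ of the equation with $\Phi(u_k)\nearrow0$; since $\Phi(0)=0$, each $u_k$ is nontrivial, and as the $\Phi(u_k)$ are distinct negative numbers these are infinitely many solutions at negative energy levels, for every admissible $\lambda$ and $q$.

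\medskip

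\noindent The hard part is the boundedness of Palais--Smale sequences when $\lambda>0$: one must track the interpolation and Young exponents carefully to ensure that the focusing term $\|u\|_{L^{2^*_{s,\alpha}}}^{2^*_{s,\alpha}}$ is genuinely absorbed by the leading terms and by the superscaled term $\tfrac1q\|u\|_{L^q}^q$, and this is exactly where the strict chain $p_{\rm rad}<\beta<2^*_{s,\alpha}<q$ is used --- together with the fact that the scaling-subcritical interpolation inequalities for $L^\beta$ and $L^{p_1}$ allow absorption with an arbitrarily small constant. The case $\lambda\le0$, by contrast, is straightforward because $\Phi$ is then coercive, and in both cases the compactness step reduces to the standard Brezis--Lieb argument, now for the Coulomb energy and, in the endpoint case $q=2^*_s$, for the critical Sobolev term with its favourable sign.
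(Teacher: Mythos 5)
Your proof is correct and follows the same skeleton as the paper's: verify that the decomposition $f=|t|^{\beta-2}t+g$ with $g(t)=\lambda|t|^{2^*_{s,\alpha}-2}t-|t|^{q-2}t$ falls under the hypotheses of Theorem~\ref{th sub near 0}, establish the Palais--Smale condition (with a Brezis--Lieb argument for the endpoint $q=2^*_s$), and invoke that theorem. Your verification of \eqref{g sub near 0}--\eqref{G sub near 0}, in particular the pointwise bound $G(t)\le\tilde C|t|^{\tilde\beta}$ obtained by splitting at $|t|=1$ and using $q>2^*_{s,\alpha}$ when $\lambda>0$, is exactly what the paper does.

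Where you diverge is in the boundedness of $(\mathrm{PS})$ sequences. Having verified \eqref{G sub near 0}, the paper simply observes that the coercivity argument already carried out in the proof of Theorem~\ref{th sub near 0} applies verbatim: $G\le\tilde C|t|^{\tilde\beta}$ gives $\Phi\ge\tilde\Phi$ for a subscaled functional $\tilde\Phi$, and $\tilde\Phi$ is coercive because all local terms are subscaled; boundedness of $(\mathrm{PS})$ sequences is then automatic. You instead re-derive boundedness from scratch, splitting on the sign of $\lambda$ and, for $\lambda>0$, combining an Ambrosetti--Rabinowitz-type mixture $\Phi(u_n)-\tfrac1\mu\Phi'(u_n)u_n$ with interpolation of $L^{2^*_{s,\alpha}}$ between $L^{p_1}$ and $L^q$ and Young's inequality. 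Your argument is correct --- a computation shows that after Young the residual term is $C\|u\|_{L^{p_1}}^{p_1}$ with $p_1<2^*_{s,\alpha}$, hence genuinely subscaled and absorbable --- but it is substantially heavier than needed, and it obscures the fact that \eqref{G sub near 0} makes $\Phi$ coercive uniformly in the sign of $\lambda$. In fact your own pointwise estimate on $G$ already yields $\tilde G(u)\le\tilde C\|u\|_{L^{\tilde\beta}}^{\tilde\beta}\le\varepsilon I(u)+C_\varepsilon$, which gives coercivity in one line with no $\lambda$-dependent case split and no $(\mathrm{PS})$-sequence-specific cancellations. So your route and the paper's reach the same place; the paper's is the more economical one, and the mechanism underlying it is precisely the hypothesis \eqref{G sub near 0} you had already checked.
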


When $\lambda\in\mathbb{R}$ is not an eigenvalue of \eqref{NE}  we can drop condition \eqref{G sub near 0} and prove a similar result even in some instances where the functional $\Phi$ is unbounded from below, as in the following
\begin{theorem}\label{subscaled-NE}
   If $\lambda\in\mathbb{R}$ is not an eigenvalue to equation \eqref{NE} and $p_{\rm rad}<\beta< 2^*_{s,\alpha}$ then equation
   \begin{equation*}
(-\Delta)^su+\left(I_\alpha*u^2\right)u=  |u|^{\beta-2}u+ \lambda|u|^{2^*_{s,\alpha}-2}u  \quad\textrm{in}\quad\mathbb{R}^N
\end{equation*}
has a sequence of solutions $(u_k)$ such that $\Phi(u_k)\nearrow0$.
\end{theorem}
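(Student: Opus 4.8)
The plan is to recover Theorem~\ref{subscaled-NE} from the variational scheme of Theorem~\ref{th sub near 0}, supplying its two structural assumptions differently: the Palais--Smale condition will come from the hypothesis $\lambda\notin\{\lambda_k\}$, and the one--sided bound \eqref{G sub near 0} --- whose only role in Theorem~\ref{th sub near 0} is to force the energy to be bounded from below --- will be bypassed by a truncation when it fails. Write $f(|x|,t)=|t|^{\beta-2}t+g(|x|,t)$ with $g(|x|,t)=\lambda|t|^{2^*_{s,\alpha}-2}t$; then $g$ is odd in $t$ and satisfies \eqref{g sub near 0}--\eqref{q7 q8 sub near 0} with $q_7=q_8=2^*_{s,\alpha}$ (recall $p_{\rm rad}<\beta<2^*_{s,\alpha}<2^*_{s}$, the last inequality being strict because $4s+\alpha\neq N$), while $G(|x|,t)=\frac{\lambda}{2^*_{s,\alpha}}|t|^{2^*_{s,\alpha}}$ fulfils \eqref{G sub near 0} when $\lambda\le0$ and violates it only when $\lambda>0$, in which case the functional $\Phi$ attached to this $f$ may fail to be bounded from below (the scaling $u\mapsto u_t$ then sends $\Phi(u_t)\to-\infty$ along suitable curves $t\mapsto u_t$). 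I would seek the $u_k$ as critical points of $\Phi$.

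The first and principal step is compactness. Since $\beta\in(p_{\rm rad},2^*_{s,\alpha})$, the equation in the statement is precisely problem~\eqref{P} for an asymptotically scaled nonlinearity of the form \eqref{f=B+g}, \eqref{ggrowth2}--\eqref{ggrowth2c} (with $q_5=\beta$ and $h\equiv0$), and $\lambda$ is not an eigenvalue of \eqref{NE}; I would then invoke the compactness analysis of Section~\ref{sec subcritical} for this class of nonlinearities --- the same one that underlies Theorem~\ref{th subcri AS} and Corollary~\ref{Cor alternative} --- to conclude that $\Phi$ satisfies $(PS)_c$ for every $c\in\R$. The underlying dichotomy is the familiar one: an unbounded Palais--Smale sequence, renormalised by means of the scaling $(u,t)\mapsto u_t$ of Section~\ref{section SOp}, would converge --- using the compactness of $E\hookrightarrow L^{2^*_{s,\alpha}}$, available because $\alpha>1$ --- to a nontrivial solution of \eqref{NE} at the eigenvalue $\lambda$, the subscaled term $|u|^{\beta-2}u$ disappearing in the limit because its scaling exponent $\frac{2s+\alpha}{2}\beta-N$ lies strictly below the critical one $4s+\alpha-N$; this contradicts the hypothesis on $\lambda$. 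Bounded Palais--Smale sequences converge strongly thanks to the compact embeddings, the Hilbertian nature of the $\dot H^s$ part and the uniform convexity of the Coulomb factor $Q^{\alpha,2}$.

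Next I would construct, for each $k\ge1$, a symmetric compact set of $\mathbb Z_2$--cohomological index at least $k$, lying in an arbitrarily small ball and on which $\Phi$ is strictly negative. Fix a $k$--dimensional subspace $W_k\subset C^\infty_c(\R^N)$ of radial functions and put $S_k=\{w\in W_k:\|w\|=1\}$, so $i(S_k)=k$. For $w\in W_k$ and $t>0$ the scaling relations give $\Phi(w_t)=t^{4s+\alpha-N}a(w)-\frac{1}{\beta}\,t^{\frac{2s+\alpha}{2}\beta-N}\|w\|_{L^\beta}^\beta$, where $a(w)$ gathers the kinetic, Coulomb and $\lambda|u|^{2^*_{s,\alpha}}$ contributions. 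Since $\beta<2^*_{s,\alpha}$ one has $\frac{2s+\alpha}{2}\beta-N<4s+\alpha-N$, and on $S_k$ the quantity $\|w\|_{L^\beta}^\beta$ is bounded away from $0$ and $\infty$; hence there are $t_k>0$ small and $\varepsilon_k>0$ with $\Phi(w_{t_k})\le-\varepsilon_k$ and $\|w_{t_k}\|$ below any prescribed $\rho>0$, uniformly for $w\in S_k$. As $w\mapsto w_{t_k}$ is an odd homeomorphism, $A_k:=\{w_{t_k}:w\in S_k\}$ is a symmetric compact subset of $\{\Phi\le-\varepsilon_k\}\cap\overline{B_\rho}$ with $i(A_k)=k$. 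With these sets and the $(PS)$ condition in hand I would conclude as in Theorem~\ref{th sub near 0}. When $\lambda\le0$, $\Phi$ is coercive and bounded below, and the symmetric critical point theorem employed there (the index version of the Clark--Kajikiya theorem) yields critical points $u_k\neq0$ with $\Phi(u_k)<0$ and $\Phi(u_k)\nearrow0$. When $\lambda>0$ I would first replace $\Phi$ by the truncated functional $\widetilde\Phi$ obtained by multiplying the nonlinear part $\frac{1}{\beta}\int|u|^\beta+\frac{\lambda}{2^*_{s,\alpha}}\int|u|^{2^*_{s,\alpha}}$ by a smooth even cutoff $\psi(\|u\|)$ that equals $1$ on $[0,\rho]$ and $0$ on $[2\rho,\infty)$: this $\widetilde\Phi$ is $C^1$ and even, coincides with $\Phi$ on $\overline{B_\rho}$, is coercive and bounded below, has no critical point with $\|u\|>2\rho$ (there $\widetilde\Phi'(u)=0$ pairs with $u$ to force $\|(-\Delta)^{s/2}u\|_{L^2}^2+C_\alpha\iint u^2(x)u^2(y)|x-y|^{-(N-\alpha)}=0$), and --- by the same convexity/compactness argument together with its coercivity --- satisfies $(PS)_c$ for all $c\in\R$; applying the symmetric theorem to $\widetilde\Phi$ with $A_k\subset\overline{B_\rho}$ produces critical points $u_k\neq0$ with $\widetilde\Phi(u_k)\nearrow0^-$, which for $k$ large lie in $B_\rho$ --- so that $\widetilde\Phi'(u_k)=\Phi'(u_k)=0$ --- and therefore solve \eqref{P} with $\Phi(u_k)=\widetilde\Phi(u_k)\nearrow0$.

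The hard part is the compactness step: promoting ``$\lambda\notin\{\lambda_k\}$'' to the Palais--Smale condition at every level via the renormalisation argument, which is exactly where the structural restrictions $1<\alpha<N$ and $4s+\alpha>N$ are genuinely used (the latter giving $2^*_{s,\alpha}<2^*_{s}$, hence the compactness of $E\hookrightarrow L^{2^*_{s,\alpha}}$). The remaining ingredients --- the scaling construction of the index--$k$ sets clustering at the origin, the truncation that removes the possible unboundedness of $\Phi$ from below (together with the verification, using that $\widetilde\Phi$ has no critical points outside $\overline{B_{2\rho}}$ and the $(PS)$ property of $\widetilde\Phi$, that its low--energy critical points are genuine solutions of \eqref{P}), and the final extraction of a sequence of solutions with energies increasing to $0$ --- are routine once that compactness is available, as in Theorem~\ref{th sub near 0}.
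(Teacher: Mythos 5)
Your compactness step and the construction of index-$k$ sets clustering at the origin on which $\Phi<0$ are both sound and in the same spirit as the paper (the paper obtains $(PS)$ from $\lambda\notin\sigma(\mathcal A,\mathcal B)$ by citing an abstract lemma, and uses $M_t=\{u_t:u\in\tilde\Psi^{\lambda_k}\}$ instead of spheres in finite-dimensional subspaces; both constructions give $i\geq k$). The gap is in the $\lambda>0$ branch, specifically the unsupported sentence that the critical points $u_k$ of the truncated functional $\widetilde\Phi$ ``for $k$ large lie in $B_\rho$''. Nothing in your argument forces this: $\widetilde\Phi(u_k)\nearrow 0^-$ does not imply $u_k\to 0$, since $\widetilde\Phi$ may have nontrivial critical points at level $0$ in the transition annulus $\rho\le\|u\|\le 2\rho$, and the $(PS)$ sequence $(u_k)$ could converge to one of these rather than to $0$. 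Moreover, with the cutoff applied to the full block $\tfrac1\beta\int|u|^\beta+\tfrac{\lambda}{2^*_{s,\alpha}}\int|u|^{2^*_{s,\alpha}}$ you cannot guarantee $\widetilde\Phi>0$ on the annulus, because the subscaled term $|u|^\beta$ produces negativity of $\widetilde\Phi$ near $0$ along the whole scaling curve while the term $\lambda\int|u|^{2^*_{s,\alpha}}$ grows \emph{linearly} in $I(u)$ (the exponent in \eqref{Lp by I(u)} is exactly $1$ at $p=2^*_{s,\alpha}$), so when $\lambda$ is large the negative region is not confined to a small ball. This is exactly why the analogous truncation in Theorem~\ref{th sub near 0 critical} is done on $I(u)$, not on $\|u\|$, and exploits that the competing critical exponent $2^*_s$ enters with power $>1$ in $I(u)$ and a small coefficient $\mu$ --- a structure that is absent here.

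The paper avoids truncation altogether by bounding the minimax levels $c_k$ from below directly through a linking argument with the scaling cone $Y=\{u_t:u\in\tilde\Psi_{\lambda_{k_0}},\,t\ge 0\}$, where $\lambda<\lambda_{k_0}$: any $M\in\mathcal F_k$ with $k\ge k_0$ must meet $Y$, since otherwise the odd continuous projection $\pi|_M$ would map $M$ into $\mathcal M\setminus\tilde\Psi_{\lambda_{k_0}}$, whose index is $k_0-1<k$, contradicting $i(M)\ge k$; and on $Y$ one has $\tilde\Psi(u)\ge\lambda_{k_0}>\lambda$, so
\[
\Phi(u_t)=t^{4s+\alpha-N}\Bigl(1-\frac{\lambda}{\tilde\Psi(u)}\Bigr)-\frac{t^{\theta\beta-N}}{\beta}\int_{\mathbb{R}^N}|u|^\beta\,dx
\]
is bounded below uniformly in $t\ge 0$ and $u\in\tilde\Psi_{\lambda_{k_0}}$ because the positive $t^{4s+\alpha-N}$ term has the dominant exponent. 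Hence $c_k\ge\inf_Y\Phi>-\infty$ and Proposition~\ref{critical point result} applies to the \emph{untruncated} $\Phi$. This lower bound is precisely what replaces condition \eqref{G sub near 0} of Theorem~\ref{th sub near 0}, and it is the genuinely new idea; your $\lambda\le 0$ case is correct (there $\Phi$ is coercive and Theorem~\ref{th sub near 0} applies directly), but for $\lambda>\lambda_1$ the ball-truncation does not supply the missing lower bound, and you would need to reproduce the cone-intersection argument.
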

In a critical regime, we have the following multiplicity result, obtained by a suitable truncation of the action functional
\begin{theorem}\label{th sub near 0 critical}
Consider $N/4<s<1$ and $\beta\in (p_{\rm rad}, 2^*_{s,\alpha})$. Then, there exists $\mu^*>0$ such that
   \begin{equation*}
(-\Delta)^su+\left(I_\alpha*u^2\right)u=  \mu|u|^{\beta-2}u + |u|^{2^*_{s}-2}u  \quad\textrm{in}\quad\mathbb{R}^N
\end{equation*}
has a sequence of solutions at negative levels, for all $\mu\in(0,\mu^*)$.
\end{theorem}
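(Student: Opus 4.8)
The plan is to reduce the problem to the abstract multiplicity scheme underlying Theorem~\ref{th sub near 0}, by performing a truncation that removes the obstruction created by the critical term $|u|^{2^*_s-2}u$ at large scales while leaving the functional unchanged near the origin. Write the associated functional
$$
\Phi_\mu(u)=\frac12\int_{\mathbb{R}^N}|(-\Delta)^{s/2}u|^2dx+\frac{C_\alpha}{4}\iint_{\mathbb{R}^N\times\mathbb{R}^N}\frac{u^2(x)u^2(y)}{|x-y|^{N-\alpha}}dxdy-\frac{\mu}{\beta}\int_{\mathbb{R}^N}|u|^\beta dx-\frac{1}{2^*_s}\int_{\mathbb{R}^N}|u|^{2^*_s}dx,
$$
which is $C^1$ and even on $E$ with $\Phi_\mu(0)=0$. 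Fix a smooth nonincreasing cutoff $\chi\colon[0,\infty)\to[0,1]$ with $\chi\equiv1$ on $[0,R_0]$, $\chi\equiv0$ on $[2R_0,\infty)$, and set
$$
\widetilde\Phi_\mu(u)=\frac12\int_{\mathbb{R}^N}|(-\Delta)^{s/2}u|^2dx+\frac{C_\alpha}{4}\iint_{\mathbb{R}^N\times\mathbb{R}^N}\frac{u^2(x)u^2(y)}{|x-y|^{N-\alpha}}dxdy-\chi(\|u\|)\frac{\mu}{\beta}\int_{\mathbb{R}^N}|u|^\beta dx-\chi(\|u\|)\frac{1}{2^*_s}\int_{\mathbb{R}^N}|u|^{2^*_s}dx.
$$
Then $\widetilde\Phi_\mu$ is $C^1$, even, coincides with $\Phi_\mu$ on $\{\|u\|\le R_0\}$, and for $\|u\|\ge2R_0$ reduces to the kinetic plus Coulomb terms, which dominate $c_0(\|u\|^2-1)$; hence $\widetilde\Phi_\mu$ is coercive and bounded below.

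Next I would fix the two truncation parameters. Using $2^*_s>4$ (which holds precisely because $s>N/4$) and the embeddings $E\hookrightarrow L^\beta,L^{2^*_s}$, one checks that on the transition shell $\{R_0<\|u\|<2R_0\}$ the kinetic and Coulomb terms dominate a fixed multiple of $R_0^4$, while the subtracted terms are bounded by $\mu C\|u\|^\beta+C\|u\|^{2^*_s}$; since $\beta<4<2^*_s$ (recall $\beta<2^*_{s,\alpha}<4$), choosing $R_0$ small makes $C\|u\|^{2^*_s}$ negligible, and then there is $\mu^\ast=\mu^\ast(R_0)>0$ so that for $\mu\in(0,\mu^\ast)$ the $\beta$-term is absorbed as well, giving $\widetilde\Phi_\mu\ge0$ on $\{\|u\|\ge R_0\}$. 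Shrinking $R_0$ further so that $4R_0^2<S^{N/(2s)}$, where $S$ is the best constant in $\|u\|_{2^*_s}^2\le S^{-1}\|(-\Delta)^{s/2}u\|_2^2$, I would then prove that $\widetilde\Phi_\mu$ satisfies $(PS)_c$ for every $c<0$: coercivity together with $\widetilde\Phi_\mu\ge0$ off $\{\|u\|\le R_0\}$ confines such a Palais--Smale sequence (eventually) to $\{\|u\|\le R_0\}$, where $\widetilde\Phi_\mu=\Phi_\mu$; the compact embeddings $E\hookrightarrow L^\beta$ and $E\hookrightarrow L^{2^*_{s,\alpha}}$ (the latter compact since $4s+\alpha>N$ forces $2^*_{s,\alpha}<2^*_s$) pass the subscaled and the Coulomb terms to the limit; a fractional Brezis--Lieb splitting of the critical term produces a possible bubble carrying $\dot H^s$-energy at least $S^{N/(2s)}$, incompatible with $\|u_n-u\|^2\le4R_0^2<S^{N/(2s)}$; hence $u_n\to u$ strongly in $\dot H^s$, and matching norms via $\widetilde\Phi_\mu'(u_n)u_n\to0$ together with the uniform convexity of $E$ yields $u_n\to u$ in $E$.

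With these preparations the variational argument runs as in the proof of Theorem~\ref{th sub near 0}. Exploiting the scaling $u\mapsto u_t=t^{(2s+\alpha)/2}u(t\cdot)$, for fixed $u\ne0$ the four terms of $\Phi_\mu(u_t)$ carry the powers $t^{4s+\alpha-N}$ (kinetic and Coulomb parts), $t^{\beta(2s+\alpha)/2-N}$, and $t^{N(4s+\alpha-N)/(N-2s)}$, and the exponent of the subscaled term is strictly the smallest because $\beta<2^*_{s,\alpha}$; hence $\Phi_\mu(u_t)<0$ for all small $t>0$ while $\|u_t\|\to0$. Using the density of compactly supported smooth functions in $E$, for each $k$ I would take a $k$-dimensional subspace $V_k\subset E$ and push a small sphere of $V_k$ via the scaling into $\{\|u\|<R_0\}$, obtaining a symmetric compact set $A_k$ with $\mathbb{Z}_2$-cohomological index at least $k$ on which $\widetilde\Phi_\mu=\Phi_\mu<0$; then $c_k:=\inf_{\mathrm{ind}(A)\ge k}\sup_A\widetilde\Phi_\mu$ satisfies $-\infty<c_k\le\sup_{A_k}\widetilde\Phi_\mu<0$ and $c_k\nearrow0$, and by $(PS)_{c_k}$ each $c_k$ is a critical value of $\widetilde\Phi_\mu$. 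Since $\widetilde\Phi_\mu\ge0$ on $\{\|u\|>R_0\}$, any critical point at a negative level lies in $\{\|u\|\le R_0\}$, where $\widetilde\Phi_\mu=\Phi_\mu$; hence it is a genuine critical point of $\Phi_\mu$, i.e., a solution of the equation, and we obtain infinitely many such solutions with $\Phi_\mu(u_k)\nearrow0^-$.

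The hard part will be the $(PS)_c$ analysis for $c<0$: making the elimination of bubbles rigorous requires the fractional concentration--compactness / Brezis--Lieb machinery for $(-\Delta)^s$ and a careful verification that the weak limit solves the equation, and one must also check that the Coulomb term does not leak mass under weak convergence --- here the radial framework and the compact embedding $E\hookrightarrow L^{2^*_{s,\alpha}}$ are decisive. Choosing $R_0$ and $\mu^\ast$ simultaneously so that the transition shell sits inside the region $\widetilde\Phi_\mu\ge0$ and below the Sobolev threshold is bookkeeping rather than a genuine obstacle, but it is precisely what forces $\mu$ to be small.
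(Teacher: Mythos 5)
Your approach is correct in broad strategy — truncation plus a Clark/Krasnoselskii-type minimax at negative levels via Proposition~\ref{critical point result} — and leads to the result, but it differs from the paper's implementation in three substantive ways which are worth contrasting.

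First, the truncation. The paper truncates the \emph{whole functional} by a cutoff depending on the scaled potential $I(u)$, namely $\tilde\Phi_\mu=\xi_\mu(I(u))\,\Phi_\mu(u)$, after proving the key pointwise bound $\Phi_\mu(u)\ge\gamma_\mu(I(u))$ with $\gamma_\mu(t)=t-\tilde c_1\mu t^{a}-\tilde c_2 t^{b}$, $a\in(0,1)<b$. The choice of $I(u)$ (rather than $\|u\|$) is deliberate: $I$ is polynomial, trivially $C^1$, and scales cleanly, so the shell $\{R_1(\mu)\le I(u)\le R_2(\mu)\}$ is immediately controlled by $\gamma_\mu$. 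You instead truncate only the \emph{nonlinear terms} by a cutoff $\chi(\|u\|)$. This also works, but it forces you to deal with the worst-case comparison between $\|u\|$ and $I(u)$ on the transition shell, where $I(u)$ can be as small as order $\|u\|^4$ (when the Coulomb quartic part dominates the $\dot H^s$ part in the norm). Your claim that the kinetic+Coulomb terms dominate a multiple of $R_0^4$ is correct for $R_0<1$, but it is exactly this factor $R_0^4$ against $\mu R_0^\beta$ and $R_0^{2^*_s}$ that forces $2^*_s>4$ (so $s>N/4$) and $\mu\lesssim R_0^{4-\beta}$; you should also record that $\|\cdot\|$ is $C^1$ on $E\setminus\{0\}$ (since $Q^{1/2}$ is smooth where $Q>0$) so that $\chi(\|u\|)$ is $C^1$, although the paper sidesteps this by using $I$.

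Second, the $(\mathrm{PS})_c$ analysis. Here your route is genuinely simpler. The paper repeats the energy bookkeeping of Lemma~\ref{PSlocal} (the $a,\tilde a$-weighted combination of Pohozaev and Nehari identities) to derive $c\ge \frac{s}{N}\mathbb S^{N/(2s)}-\mu\tilde d\int|u|^\beta$ in absence of convergence, and then exploits the uniform $L^\beta$-bound coming from the truncation to contradict $c<0$ for small $\mu$. You instead use the confinement $\|u_n\|\le R_0$ (forced by negative level) to bound the bubble energy directly: from the Brezis--Lieb splitting $a_n:=\|(-\Delta)^{s/2}(u_n-u)\|^2$ either tends to zero or satisfies $\liminf a_n\ge\mathbb S^{N/(2s)}$, and the latter is excluded outright if $4R_0^2<\mathbb S^{N/(2s)}$. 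This eliminates the Pohozaev step and the $\mu$-dependence in the compactness argument (the $\mu$-smallness enters only through the sign of $\widetilde\Phi_\mu$ on the shell), which is cleaner. One detail to spell out: after $a_n\to0$, the convergence of the Coulomb quartic part of $\|u_n-u\|$ follows from the same Brezis--Lieb inequality you used, and you do not strictly need a separate uniform-convexity argument.

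Third, the index sets. The paper constructs $M_t=\{u_t:u\in\tilde\Psi^{\lambda_k}\}$ and invokes Theorem~\ref{lambdak}\ref{lambdak(iii)} together with Lemma~\ref{Lemma K estimate} to get both $i\ge k$ and a uniform positive lower bound on $\int|u|^\beta$. You use scaled spheres of $k$-dimensional subspaces. This is fine: the sphere has index $k$, the scaling is a (linear in $u$) odd homeomorphism so it preserves the index, and compactness of the sphere gives $\inf\|u\|_\beta>0$. Functionally the two devices serve the same purpose; the paper's choice is just consistent with its scaled eigenvalue framework.

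In summary: your proposal is correct and establishes the statement, taking a noticeably more elementary route for the $(\mathrm{PS})_c$ threshold. The main items to tighten before it is a complete proof are (i) the $C^1$-regularity of the cutoff $\chi(\|u\|)$ on $E$, (ii) the explicit worst-case bound $I(u)\ge cR_0^4$ on the shell $\{R_0\le\|u\|\le2R_0\}$, and (iii) the weak-limit solves $\Phi_\mu'(u)=0$ step, which relies on the compactness of the Coulomb term under weak convergence (as in Lemma~\ref{verify H7}) and not merely on the compact $L^p$-embeddings.
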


Finally, we deal with nonlinearities involving a subscaled term at the origin with a negative sign, and in a context which we may regard as asymptotically scaled at infinity.
\begin{theorem}\label{sub near 0, asympt, subcritical}
Assume that  $ p_{\rm rad}<\beta <2^*_{s,\alpha}$ and $\lambda>\lambda_k$ is not an eigenvalue of \eqref{NE}. Then the equation
   \begin{equation*}
(-\Delta)^su+\left(I_\alpha*u^2\right)u=   \lambda|u|^{2^*_{s,\alpha}-2}u -|u|^{\beta-2}u  \quad\textrm{in}\quad\mathbb{R}^N
\end{equation*}
has $k$ pairs of nontrivial solutions at positive energy levels.
\end{theorem}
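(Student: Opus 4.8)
The plan is to realize the $k$ pairs of solutions as critical points of the action functional
\[
\Phi(u)=\frac12\|(-\Delta)^{s/2}u\|_{L^2}^2+\frac{C_\alpha}{4}\iint\frac{u^2(x)u^2(y)}{|x-y|^{N-\alpha}}\,dx\,dy-\frac{\lambda}{2^*_{s,\alpha}}\int|u|^{2^*_{s,\alpha}}\,dx+\frac1\beta\int|u|^{\beta}\,dx,
\]
which here corresponds to the nonlinearity $f(|x|,t)=\lambda|t|^{2^*_{s,\alpha}-2}t-|t|^{\beta-2}t$, i.e.\ the splitting \eqref{f=B+g} with $g(|x|,t)=-|t|^{\beta-2}t$ and $G(|x|,t)=-\tfrac1\beta|t|^\beta\le 0$. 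Since $\beta<2^*_{s,\alpha}$ lies in a compact embedding range (as $p_{\rm rad}<\beta<2^*_{s,\alpha}$), the functional $\Phi$ is $C^1$, even, and the lower-order term is a compact perturbation; the reason we ask $\lambda>\lambda_k$ and $\lambda$ not an eigenvalue of \eqref{NE} is to control the quadratic-free ``scaled'' part $\Phi_\infty(u)=\tfrac12\|(-\Delta)^{s/2}u\|^2+\tfrac{C_\alpha}{4}\|u\|_{Q}^4-\tfrac{\lambda}{2^*_{s,\alpha}}\|u\|_{2^*_{s,\alpha}}^{2^*_{s,\alpha}}$ via the scaling $u\mapsto u_t=t^{(2s+\alpha)/2}u(t\cdot)$ and the eigenvalue sequence $\lambda_k\nearrow\infty$ from Theorem~\ref{lambdak}.

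First I would establish the compactness: I claim $\Phi$ satisfies $(PS)_c$ for every $c\in\mathbb{R}$ (or at least in the relevant range) when $\lambda$ is not an eigenvalue of \eqref{NE}. Given a $(PS)_c$ sequence $(u_n)$, boundedness follows because $G\le 0$ makes the term $\frac1\beta\int|u_n|^\beta$ helpful: from $\Phi(u_n)-\tfrac1{2^*_{s,\alpha}}\Phi'(u_n)u_n$ one extracts control of $\|(-\Delta)^{s/2}u_n\|^2$ and $\|u_n\|_Q^4$ (the exponents $2$ and $4$ both exceed $2^*_{s,\alpha}\cdot\tfrac12$... more precisely $2^*_{s,\alpha}<4$ is what makes the Coulomb term dominate), and the $\int|u_n|^\beta$ term only adds a coercive contribution. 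Then, after passing to a weak limit $u$ in $E$ and using the compact embedding $E\hookrightarrow L^\beta$, the only obstruction to strong convergence is a possible failure in the scaled part; this is exactly ruled out by ``$\lambda$ not an eigenvalue of \eqref{NE}'', by the argument already used (and available to me) in Theorems~\ref{th subcri AS} and \ref{Th-superscaled-not Eigenvalue} — a nonzero weak limit of a rescaled sequence would solve \eqref{NE}. This mirrors the $s=1$ argument of Mercuri–Perera.

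Next comes the linking geometry producing $k$ pairs. Using the $\mathbb{Z}_2$-cohomological index $i$ of Fadell–Rabinowitz and the symmetric, scaling-compatible sublevel/superlevel sets associated with $\Phi_\infty$, one builds (as in Theorem~\ref{lambdak} and the surrounding Section~\ref{section SOp} machinery) a symmetric subset $A_k\subset E$ on which $\Phi$ is bounded above by a negative-or-small constant and with cohomological index $\ge k$, together with a linking set $B$ (a sphere-like set away from the origin, or a sublevel set of $\Phi_\infty$) on which $\Phi$ is bounded below, with $i(E\setminus B)\le $ the complementary index, whenever $\lambda>\lambda_k$. The inequality $\lambda>\lambda_k$ enters precisely here: it forces the ``$k$-dimensional'' piece of the scaled spectrum to sit below the nonlinearity, so the functional goes appropriately negative on an index-$\ge k$ set, while $\beta<2^*_{s,\alpha}$ (subscaled at zero with the favorable sign $-|t|^{\beta-2}t$) guarantees that near the origin the picture is controlled and the energy levels produced are \emph{positive} (the $-\tfrac\lambda{2^*_{s,\alpha}}|t|^{2^*_{s,\alpha}}$ term dominates the helpful $+\tfrac1\beta|t|^\beta$ at the relevant scale, pushing the min-max values above $0$). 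Then the standard cohomological-index min-max theorem (the one underlying Theorem~\ref{lambdak}, e.g.\ the version in \cite{Mercuri-Perera} using the index) yields $k$ critical values
\[
0<c_1\le c_2\le\cdots\le c_k<\infty,
\]
and, since $\Phi$ is even and satisfies $(PS)$, each distinct value gives a pair of critical points while a repeated value gives infinitely many; in all cases we get at least $k$ pairs of nontrivial solutions at positive energy.

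The main obstacle I expect is twofold. The more delicate half is the $(PS)$ verification: one must show that the rescaling trick that detects eigenvalues of \eqref{NE} interacts correctly with the subscaled-at-zero term $-\tfrac1\beta|u|^\beta$ — in particular that a nonvanishing rescaled weak limit cannot be killed by this lower-order term and genuinely solves \eqref{NE}, contradicting ``$\lambda$ not an eigenvalue.'' The other delicate point is showing the min-max levels are strictly positive: this requires a careful scaling estimate comparing $\tfrac1\beta\|u\|_\beta^\beta$ against $\tfrac\lambda{2^*_{s,\alpha}}\|u\|_{2^*_{s,\alpha}}^{2^*_{s,\alpha}}$ along the scaling curves $t\mapsto u_t$, using $\beta<2^*_{s,\alpha}$ to see that on the linking set the negative $2^*_{s,\alpha}$-term wins, so $\inf_B\Phi>0$. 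Both steps are adaptations of arguments already deployed earlier in the paper (Theorems~\ref{th subcri AS}, \ref{Th-superscaled-not Eigenvalue}, \ref{lambdak}), so the work is in checking that the signs and exponents line up in this particular combination; I would organize the proof as: (1) $(PS)_c$ for all $c$; (2) construction of the index-$\ge k$ set and the linking set with $\inf_B\Phi>0$; (3) apply the cohomological-index min-max theorem and conclude via $\mathbb{Z}_2$-symmetry.
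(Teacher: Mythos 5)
Your overall strategy matches the paper's (scaling-based linking with cohomological index, (PS) via $\lambda\notin\sigma(\mathcal A,\mathcal B)$, apply a symmetric min--max theorem --- the paper invokes Corollary~\ref{Corollary 2.34 MP} with $c^*=\infty$). However, two of your key intermediate steps are wrong in detail, and the proof as sketched would not go through.

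First, the boundedness of (PS) sequences. You propose to extract a bound from $\Phi(u_n)-\tfrac1{2^*_{s,\alpha}}\Phi'(u_n)u_n$, claiming $2^*_{s,\alpha}<4$ is ``what makes the Coulomb term dominate.'' This is the opposite of what happens: since $2^*_{s,\alpha}=\tfrac{2(4s+\alpha)}{2s+\alpha}<4$ always holds (it reduces to $\alpha>0$), the coefficient of the Coulomb term in $\Phi-\tfrac1{2^*_{s,\alpha}}\Phi'$ is $\tfrac14-\tfrac1{2^*_{s,\alpha}}<0$, so that term works \emph{against} boundedness. No Ambrosetti--Rabinowitz-type quotient yields a bound here; in fact $\Phi$ is unbounded below when $\lambda>\lambda_1$. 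The paper instead gets (PS) solely from the scaling normalization: if $\|u_n\|\to\infty$, the rescaled sequence $\tilde u_n=\pi(u_n)\in\mathcal M$ would produce, by Lemma~\ref{o(t)}\eqref{o(t)(iii)} (the $\beta$-term is subscaled, so its contribution is $o(t^{4s+\alpha-N})$ as $t\to\infty$), a critical point of $\tilde\Psi$ at level $\lambda$, contradicting $\lambda\notin\sigma(\mathcal A,\mathcal B)$. This is precisely \cite[Lemma~2.26]{Mercuri-Perera}, and there is no shortcut through the sign of $G$.

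Second, the positivity of the min--max levels. You write that ``the $-\tfrac\lambda{2^*_{s,\alpha}}|t|^{2^*_{s,\alpha}}$ term dominates the helpful $+\tfrac1\beta|t|^\beta$ at the relevant scale, pushing the min-max values above $0$.'' That is backwards: if the negative $2^*_{s,\alpha}$-term dominated, $\Phi$ would be \emph{negative} on the linking sphere. Writing, for $u\in\mathcal M$,
\begin{equation*}
\Phi(u_t)=t^{4s+\alpha-N}\Bigl(1-\frac{\lambda}{\tilde\Psi(u)}\Bigr)+\frac{t^{\theta\beta-N}}{\beta}\int_{\mathbb R^N}|u|^{\beta}\,dx,
\end{equation*}
one has $0<\theta\beta-N<4s+\alpha-N$ because $\beta<2^*_{s,\alpha}$, so as $t\to0^+$ it is the \emph{positive} $\beta$-term that dominates; together with Lemma~\ref{Lemma K estimate} (a uniform lower bound on $\int|u|^\beta$ over $\tilde\Psi^{\lambda_{k+m}}$), this yields $\inf_{u\in\mathcal M}\Phi(u_\rho)>0$ for $\rho$ small. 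The inequality $\lambda>\lambda_k$ is then used on the other side: taking a compact symmetric $A_0\subset\mathcal M\setminus\tilde\Psi_\lambda$ with $i(A_0)=k+m-1\ge k$ (where $\lambda_{k+m-1}<\lambda<\lambda_{k+m}$), one has $\tilde\Psi\le\tilde c<\lambda$ on $A_0$, so $\Phi(u_R)\le R^{4s+\alpha-N}(1-\lambda/\tilde c)+c'R^{\theta\beta-N}\le 0$ for $R$ large, and $\sup_X\Phi<\infty$. Corollary~\ref{Corollary 2.34 MP} then gives the $k$ pairs. So the overall scaffolding is right, but both the source of (PS) boundedness and the sign analysis behind $\inf_B\Phi>0$ must be corrected.
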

In the critical case, we have the following
\begin{theorem}\label{sub near 0, asympt, critical}
Consider $N/4<s<1$, $\lambda>\lambda_k$ and $\beta\in (p_{\rm rad}, 2^*_{s,\alpha})$. Then, there exists $\mu^*>0$ such that
   \begin{equation*}
(-\Delta)^su+\left(I_\alpha*u^2\right)u=  \lambda|u|^{2^*_{s,\alpha}-2}u-\mu|u|^{\beta-2}u + |u|^{2^*_{s}-2}u  \quad\textrm{in}\quad\mathbb{R}^N
\end{equation*}
has $k$ pairs of nontrivial solutions at positive energy levels, for all $\mu\in(0,\mu^*)$.
\end{theorem}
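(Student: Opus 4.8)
\medskip
\noindent\emph{Plan of proof.} The plan is to reduce the statement to the $\mathbb{Z}_2$-cohomological index (pseudo-index) minimax machinery already used in the proof of Theorem~\ref{sub near 0, asympt, subcritical}, combined with a compactness analysis that accounts for the critical Sobolev power. Write $f(t)=\lambda|t|^{2^*_{s,\alpha}-2}t+g_\mu(t)$ with $g_\mu(t)=-\mu|t|^{\beta-2}t+|t|^{2^*_s-2}t$; since $g_\mu$ is odd, the energy
\[
\Phi_\mu(u)=\tfrac12\|(-\Delta)^{s/2}u\|_2^2+\tfrac{C_\alpha}{4}\mathcal D(u)-\tfrac{\lambda}{2^*_{s,\alpha}}\|u\|_{2^*_{s,\alpha}}^{2^*_{s,\alpha}}+\tfrac{\mu}{\beta}\|u\|_{\beta}^{\beta}-\tfrac{1}{2^*_s}\|u\|_{2^*_s}^{2^*_s},
\]
where $\mathcal D(u)=\iint_{\mathbb R^N\times\mathbb R^N}u^2(x)u^2(y)|x-y|^{\alpha-N}\,dx\,dy$, is even and of class $C^1$ on $E$. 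Note that $2<\beta<2^*_{s,\alpha}<4<2^*_s$, the inequality $4<2^*_s$ being exactly the hypothesis $s>N/4$; this ordering is what makes an Ambrosetti--Rabinowitz-type manipulation available and keeps the subscaled and Coulomb contributions of lower order than the critical one. I will check the two hypotheses of the abstract theorem: a scaling-based linking geometry with symmetric sets of cohomological index $\ge k$, and the Palais--Smale condition below the first non-compactness level $c^\ast:=\tfrac sN S^{N/(2s)}$, where $S$ is the best constant in $\|(-\Delta)^{s/2}u\|_2^2\ge S\|u\|_{2^*_s}^2$.

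For the geometry I would use the scaled-operator framework of Section~\ref{section SOp} and Theorem~\ref{lambdak}. Because $\lambda>\lambda_k$, there is a symmetric set $A_k\subset E$ of cohomological index at least $k$ on which the purely scaled part $\Psi_0(u):=\tfrac12\|(-\Delta)^{s/2}u\|_2^2+\tfrac{C_\alpha}{4}\mathcal D(u)-\tfrac{\lambda}{2^*_{s,\alpha}}\|u\|_{2^*_{s,\alpha}}^{2^*_{s,\alpha}}$ is nonpositive. Since $\Psi_0$ is positively homogeneous of degree $4s+\alpha-N>0$ under the scaling $u\mapsto u_t=t^{(2s+\alpha)/2}u(t\,\cdot\,)$ of Definition~\ref{def scaling}, while $\tfrac\mu\beta\|u\|_\beta^\beta$ and $-\tfrac1{2^*_s}\|u\|_{2^*_s}^{2^*_s}$ scale with strictly smaller exponents, $\Phi_\mu(u_t)\to-\infty$ as $t\to\infty$ along rays through $A_k$; this provides the ``large'' symmetric set of the linking, on which the supremum of $\Phi_\mu$ is finite once the scaling parameter is truncated. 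Dually, exactly as for Theorem~\ref{sub near 0, asympt, subcritical}, one constructs a symmetric set $B$ cohomologically linking $A_k$ on which $\Phi_\mu\ge\delta>0$, using that at the relevant scale the quadratic term controls the (higher-order) scaled and critical terms while $\tfrac\mu\beta\|u\|_\beta^\beta\ge0$ only helps. The pseudo-index theorem then yields $k$ candidate critical values $0<\delta\le c_1^{(\mu)}\le\cdots\le c_k^{(\mu)}$.

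Next, the compactness step. Let $(u_n)$ be a $(PS)_c$ sequence for $\Phi_\mu$. Combining $\Phi_\mu(u_n)$ with a suitable multiple of $\Phi_\mu'(u_n)u_n$ and using the one-sided inequality $\tfrac1{2^*_s}g_\mu(t)t\ge G_\mu(t)$ (valid because $\beta<2^*_s$, $G_\mu$ being the primitive of $g_\mu$), one gets coercive control of $\|(-\Delta)^{s/2}u_n\|_2^2$, $\mathcal D(u_n)$ and $\|u_n\|_{2^*_s}^{2^*_s}$, the ``wrong-sign'' term $\lambda\|u_n\|_{2^*_{s,\alpha}}^{2^*_{s,\alpha}}$ being reabsorbed through the Gagliardo--Nirenberg--Sobolev interpolation inequalities for $E$ from \cite{BE-GHI-MER-MO-SC-2018} together with Young's inequality; this is where $s>N/4$ and $4s+\alpha>N$ are essential in keeping the exponents subcritical. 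Once $(u_n)$ is bounded, $u_n\rightharpoonup u$ weakly in $E$, $u$ solves \eqref{P}, and since $p=2^*_{s,\alpha}$ lies in the compact embedding range for $E$, the Coulomb and $2^*_{s,\alpha}$-terms pass to the limit; a concentration--compactness/Brezis--Lieb splitting on the critical term then shows that if $c<c^\ast$ no Sobolev mass is lost, hence $u_n\to u$ strongly. (Alternatively, one first truncates the critical nonlinearity so that the modified functional satisfies $(PS)$ on all of $E$, and checks a posteriori that the relevant minimax values lie in the region where the two functionals coincide.)

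It remains to place the levels below $c^\ast$. At $\mu=0$ the values $c_1^{(0)}\le\cdots\le c_k^{(0)}$ are strictly below $c^\ast$: inserting a fractional Aubin--Talenti instanton $U_\varepsilon$, suitably rescaled and localized within $A_k$, into $\Phi_0$ and exploiting $\lambda>\lambda_k$ together with $2^*_{s,\alpha}<4<2^*_s$ to make the scaled and Coulomb contributions negligible, gives a Brezis--Nirenberg-type strict inequality $\sup_{A_k^{(\varepsilon)}}\Phi_0<c^\ast$. Since $\mu\mapsto c_j^{(\mu)}$ is nondecreasing (the added term $\tfrac\mu\beta\|u\|_\beta^\beta\ge0$ only raises $\Phi_\mu$) and continuous, there is $\mu^\ast>0$ with $c_k^{(\mu)}<c^\ast$ for all $\mu\in(0,\mu^\ast)$. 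Together with the $(PS)_c$ condition on $(0,c^\ast)$, the pseudo-index theorem gives that $c_1^{(\mu)}\le\cdots\le c_k^{(\mu)}$ are critical values of the even functional $\Phi_\mu$ lying in $(0,c^\ast)$, hence \eqref{P} has $k$ pairs of nontrivial solutions at positive energy. I expect the main obstacle to be precisely the compactness analysis: controlling the interaction between the nonlocal Coulomb term and the critical Sobolev term, both for the boundedness of $(PS)_c$ sequences and for the strict level estimate $c_k^{(\mu)}<c^\ast$ in the presence of the three competing powers $\beta$, $2^*_{s,\alpha}$ and $2^*_s$.
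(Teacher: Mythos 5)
Your global strategy coincides with the paper's: reduce to the scaled pseudo-index machinery (Corollary~\ref{Corollary 2.34 MP}), verify the $(PS)_c$ condition below $c^*=\frac{s}{N}\mathbb{S}^{N/2s}$, pick a compact symmetric $A_0\subset\mathcal{M}\setminus\tilde\Psi_{\tilde\lambda}$ of index $\geq k$ (using $\lambda>\lambda_k$ and Theorem~\ref{lambdak}\ref{lambdak(iii)}), and place the minimax levels below $c^*$ by letting $\mu$ be small. However, two of your intermediate steps have genuine gaps that the paper's argument is designed precisely to avoid.

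\emph{Boundedness of $(PS)_c$ sequences.} You propose to combine $\Phi_\mu(u_n)$ with $\frac{1}{2^*_s}\Phi'_\mu(u_n)u_n$ (Ambrosetti--Rabinowitz) and absorb the resulting wrong-sign term $-\lambda\bigl(\tfrac{1}{2^*_{s,\alpha}}-\tfrac{1}{2^*_s}\bigr)\|u_n\|_{2^*_{s,\alpha}}^{2^*_{s,\alpha}}$ via Gagliardo--Nirenberg interpolation and Young. This absorption does not go through for $\lambda>\lambda_k$ arbitrary. The sharp interpolation inequality \eqref{Lp by I(u)} for $p=2^*_{s,\alpha}$ reads $\|u\|_{2^*_{s,\alpha}}^{2^*_{s,\alpha}}\leq c\,I(u)$, with exponent exactly $1$: there is no small parameter, and $\lambda c$ can exceed the coefficients $\tfrac12-\tfrac{1}{2^*_s}$ and $C_\alpha(\tfrac14-\tfrac{1}{2^*_s})$ on the coercive terms. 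Interpolating instead between $\beta$ and $2^*_s$ yields, by the H\"older/Young identity $1=\frac{(1-\theta)2^*_{s,\alpha}}{\beta}+\frac{\theta 2^*_{s,\alpha}}{2^*_s}$, a bound $\|u\|_{2^*_{s,\alpha}}^{2^*_{s,\alpha}}\leq\epsilon_1\|u\|_\beta^\beta+\epsilon_2\|u\|_{2^*_s}^{2^*_s}$ with \emph{fixed} constants $\epsilon_1,\epsilon_2$, which again cannot be beaten by the coefficients $\mu(\tfrac1\beta-\tfrac1{2^*_s})$ (small, since $\mu$ is small) and $0$ (the $2^*_s$-term cancels in the AR combination with parameter $2^*_s$). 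The paper's proof is essential here: it assumes $\|u_n\|\to\infty$, projects $\tilde u_n=(u_n)_{t_{u_n}}\in\mathcal{M}$, and uses that in the rescaled picture the $2^*_s$-term dominates and forces $\|\tilde u_n\|_{L^{2^*_s}}\to0$, whence by interpolation with the bounded $L^{p}$-norm on $\mathcal{M}$ ($p<2^*_{s,\alpha}$) also $\|\tilde u_n\|_{L^{2^*_{s,\alpha}}}\to0$; the AR trick is then applied on $\mathcal{M}$ (where the $\lambda$-term simply vanishes in the limit) and gives $\tilde u_n\to 0$, contradicting $\tilde u_n\in\mathcal{M}$.

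\emph{Level estimate and geometry.} You propose to show $c_j^{(0)}<c^*$ by inserting fractional Aubin--Talenti instantons and then invoke monotonicity/continuity in $\mu$. This is both unnecessary and problematic. It is unnecessary because, with the scaling structure, for $u\in A_0$ (so $\tilde\Psi(u)<\tilde\lambda<\lambda$), one has $\Phi_\mu(u_t)\leq c'\mu\,t^{\theta\beta-N}-c_3\,t^{\theta 2^*_s-N}$ directly, and $\max_{t\geq0}(c'\mu t^a-c_3 t^b)\to 0$ as $\mu\to0^+$ for $0<a<b$, giving $\sup_X\Phi_\mu<c^*$ for $\mu$ small with no concentrating-sequence construction at all. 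It is problematic because the required geometry $\inf_{\mathcal{M}_\rho}\Phi_\mu>0$ \emph{fails} at $\mu=0$: for $u\in\mathcal{M}$ with $\tilde\Psi(u)<\lambda$ (which exist since $\lambda>\lambda_1$), the leading term $t^{4s+\alpha-N}(1-\lambda/\tilde\Psi(u))$ is negative, and at $\mu=0$ there is no lower-order positive term to dominate as $t\to0^+$; the positivity on $\tilde\Psi^{\lambda'}\cap\mathcal{M}_\rho$ relies entirely on $\frac{\mu}{\beta}\|u\|_\beta^\beta$ having the smallest scaling exponent $\theta\beta-N$. So the $+\mu\|u\|_\beta^\beta$ term does not merely ``only help'': it is indispensable for the linking, and a comparison with the $\mu=0$ functional (whose minimax levels are not even nonnegative) cannot be the basis of the estimate.
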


 To conclude this introduction we finally highlight some auxiliary results which are collected in the appendix to ease the reading of the paper, and which we believe are of independent interest in the study of Coulomb-Sobolev spaces and their applications to PDEs. Most of these results require $s\in (0,1),$ a restriction which we hope to relax in future.

\begin{itemize}
  \item \textbf{Appendix~A} contains a proof of the density of smooth functions in the fractional Coulomb–Sobolev space $\mathcal{E}^{s,\alpha,q}(\mathbb{R}^{N})$.  
  \item \textbf{Appendix~B} deals with regularity of weak solutions found in the main theorems.  Beside other potential applications, with these we are able to prove a Pohozaev-type identity.
  \item \textbf{Appendix~C} provides the aforementioned Pohozaev-type identity associated with our fractional Schr\"odinger–Poisson–Slater equation.  Results of similar flavor and for functions in $L^{2}$ are available (see e.g.  \cite[Proposition 2.9]{Teng-2019} and \cite[Lemma 2.3]{Hu-Li-Zhao-2021}). However, our results here extend these to a more general setting.
\end{itemize}

%%%%%%%%%%%%%%%%%%%%%%%%%%%%%%%%%%%%%%%%%%%%%%%%%%%%%%%%%%%%%%%%%%%%%%%%%%%%%%%%%%%%%%

\section{Scaled operators on fractional Coulomb-Sobolev spaces}\label{section SOp}

We recall the definition of a scaling on a Banach space, as given in \cite[Definition 1.1]{Mercuri-Perera}.

\begin{definition}\label{def scaling}
    Let $W$ be a reflexive Banach space. A scaling on $W$ is a continuous mapping
$$W\times [0,\infty) \to W,\quad (u,t) \mapsto u_t$$ satisfying
\begin{enumerate}[label={($H_\arabic*$)},  % what appears in the margin
                  ref=$H_\arabic*$]               % what \ref returns
\item\label{H1} $(u_{t_1})_{t_2}=u_{t_1t_2}$ for all $u\in W$ and $t_1,t_2\geq0$,
\item\label{H2} $(\tau u)_{t}=\tau u_{t}$ for all $u\in W$, $\tau\in\mathbb{R}$ and $t\geq0$,
\item\label{H3} $u_{0}=0$ and $u_{1}=u$ for all $u\in W$, 
    \item\label{H4} $u_{t}$ is bounded on bounded sets in $W\times [0,\infty)$, 
  \item\label{H5} $\exists \sigma >0$ such that $\|u_t\|=O(t^\sigma)$ as $t\to\infty$,  uniformly in $u$ on bounded sets.
\end{enumerate}
\end{definition}
Hereafter we set
\begin{equation}\label{def theta}
\theta=\frac{(2s+\alpha)}{2}.
\end{equation}
\begin{lemma}
    Let $0<s<1$. The map $E\times [0,\infty)\to E$ given by $(u,t)\mapsto u_t:=t^\theta u(t\cdot)$
is a scaling on $E$.
\end{lemma}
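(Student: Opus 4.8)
The plan is to verify each of the five axioms \ref{H1}--\ref{H5} for the candidate map $(u,t)\mapsto u_t := t^\theta u(t\cdot)$ with $\theta = (2s+\alpha)/2$, together with the continuity of the map $E\times[0,\infty)\to E$. The algebraic axioms \ref{H1}, \ref{H2} and \ref{H3} are immediate from the definition: for \ref{H1}, $(u_{t_1})_{t_2}(x) = t_2^\theta (u_{t_1})(t_2 x) = t_2^\theta t_1^\theta u(t_1 t_2 x) = u_{t_1 t_2}(x)$; for \ref{H2}, $(\tau u)_t = t^\theta (\tau u)(t\cdot) = \tau\, t^\theta u(t\cdot) = \tau u_t$; for \ref{H3}, $u_0 = 0$ (since $t^\theta\to 0$ as $t\to 0^+$, and by convention the value at $t=0$) and $u_1 = u$. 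The substantive point is that $\theta$ is chosen precisely so that the two pieces of the norm scale in a controlled, homogeneous way.

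The key computation is to track how $\|u_t\|$ depends on $t$. First I would compute the Gagliardo/fractional-Laplacian term: using $\widehat{u(t\cdot)}(\xi) = t^{-N}\hat u(\xi/t)$ and the change of variables $\eta = \xi/t$, one gets
\begin{equation*}
\|(-\Delta)^{s/2} u_t\|_{L^2}^2 = t^{2\theta}\int_{\mathbb{R}^N}|\xi|^{2s}\,|t^{-N}\hat u(\xi/t)|^2\,d\xi = t^{2\theta - 2N + 2s + N}\int_{\mathbb{R}^N}|\eta|^{2s}|\hat u(\eta)|^2\,d\eta = t^{2\theta + 2s - N}\|(-\Delta)^{s/2}u\|_{L^2}^2.
\end{equation*}
Next the Coulomb term: with $x = y'/t$, $y = z'/t$,
\begin{equation*}
\iint \frac{u_t^2(x)u_t^2(y)}{|x-y|^{N-\alpha}}\,dx\,dy = t^{4\theta}\iint \frac{u^2(tx)u^2(ty)}{|x-y|^{N-\alpha}}\,dx\,dy = t^{4\theta - 2N + (N-\alpha)}\iint \frac{u^2(y')u^2(z')}{|y'-z'|^{N-\alpha}}\,dy'\,dz' = t^{4\theta - N - \alpha}[\cdots].
\end{equation*}
With $\theta = (2s+\alpha)/2$ one has $2\theta + 2s - N = 2s + \alpha + 2s - N = 4s + \alpha - N$, and $4\theta - N - \alpha = 2(2s+\alpha) - N - \alpha = 4s + \alpha - N$; the two exponents agree, so writing $\gamma := 4s + \alpha - N$ we obtain $\|u_t\|^2 = t^{\gamma}\|u\|^2$, i.e. $\|u_t\| = t^{\gamma/2}\|u\|$ exactly. (One should note that under the standing assumption $4s+\alpha>N$ we have $\gamma>0$, but the identity holds regardless.) From this clean scaling identity, \ref{H4} and \ref{H5} are immediate: on a bounded set $\|u\|\le M$ and $t$ in a bounded interval $[0,R]$, $\|u_t\| = t^{\gamma/2}\|u\| \le R^{\gamma/2}M$, giving \ref{H4}; and $\|u_t\| = O(t^{\gamma/2})$ uniformly for $\|u\|\le M$ as $t\to\infty$, so \ref{H5} holds with $\sigma = \gamma/2 = (4s+\alpha-N)/2 > 0$.

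The remaining obligation, and the one I expect to be the only genuinely delicate step, is the joint continuity of $(u,t)\mapsto u_t$ from $E\times[0,\infty)$ into $E$. By the homogeneity identity plus \ref{H2}, it suffices to handle continuity in each variable and combine: fixing $t$, the map $u\mapsto u_t$ is linear with $\|u_t\| = t^{\gamma/2}\|u\|$, hence bounded, hence continuous; the subtlety is continuity in $t$ for fixed $u$, and especially at $t=0$. For continuity at $t_0>0$, I would first establish it on the dense subspace $C_c^\infty(\mathbb{R}^N)\cap E$ (using Appendix~A, the density of smooth compactly supported functions in the fractional Coulomb--Sobolev space), where $t\mapsto u(t\cdot)$ is continuous into $E$ by dominated convergence in both the $L^2$ fractional norm (on the Fourier side) and the Coulomb integral, and then pass to general $u\in E$ via the uniform bound $\|u_t - v_t\| = t^{\gamma/2}\|u-v\|$ and a standard $3\varepsilon$ argument over a compact $t$-interval. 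Continuity at $t=0$ follows from $\|u_t\| = t^{\gamma/2}\|u\|\to 0$. Finally, joint continuity at $(u_0,t_0)$ is obtained by writing $\|u_t - (u_0)_{t_0}\| \le \|u_t - (u_0)_t\| + \|(u_0)_t - (u_0)_{t_0}\| = t^{\gamma/2}\|u-u_0\| + \|(u_0)_t - (u_0)_{t_0}\|$ and letting $(u,t)\to(u_0,t_0)$. This completes the verification that the map is a scaling on $E$ in the sense of Definition~\ref{def scaling}.
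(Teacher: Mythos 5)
Your computations of how each ingredient scales are correct: both $\|(-\Delta)^{s/2}u_t\|_{L^2}^2$ and the Coulomb double integral $\iint |u_t(x)|^2|u_t(y)|^2|x-y|^{-(N-\alpha)}\,dx\,dy$ scale by the same factor $t^{4s+\alpha-N}$. However, the claimed identity $\|u_t\|^2 = t^{4s+\alpha-N}\|u\|^2$ is \emph{false}, because the $E$-norm for $q=2$ is
\[
\|u\|^2 = \|(-\Delta)^{s/2}u\|_{L^2}^2 + \left(\iint_{\mathbb{R}^N\times\mathbb{R}^N}\frac{|u(x)|^2|u(y)|^2}{|x-y|^{N-\alpha}}\,dx\,dy\right)^{1/2},
\]
so the Coulomb term enters with an outer exponent $1/q=1/2$. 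Consequently the first summand of $\|u_t\|^2$ scales as $t^{4s+\alpha-N}$, but the second scales as $t^{(4s+\alpha-N)/2}$; the norm is \emph{not} homogeneous under this scaling, and your identity $\|u_t\| = t^{\gamma/2}\|u\|$ and its consequence $\|u_t - v_t\| = t^{\gamma/2}\|u-v\|$ are both incorrect. What actually holds (and what the paper records) is the inequality
\[
\|u_t\| \leq \max\bigl\{t^{(4s+\alpha-N)/2},\,t^{(4s+\alpha-N)/4}\bigr\}\,\|u\|,\qquad \forall\,(u,t)\in E\times[0,\infty).
\]

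This error is not fatal to the proof: the displayed inequality is exactly what is needed to verify \ref{H4} and \ref{H5} (the paper takes $\sigma = 4s+\alpha-N$, consistent with the scaled-operator relations for $\mathcal{A}$ and $\mathcal{B}$; your $\sigma=(4s+\alpha-N)/2$ would also satisfy the literal statement of \ref{H5}, but it is derived from the false identity). Likewise, your continuity scheme -- prove it on $C_c^\infty$ by dominated convergence, then extend by density plus a $3\varepsilon$ argument on compact $t$-intervals -- is the same route the paper takes (citing the density result and the corresponding lemma in Mercuri--Perera), and it survives once you replace the false equality by the uniform max-inequality above. So the architecture of the argument is right, but you must not assert that the $E$-norm scales homogeneously; only a two-sided power bound is available, with different exponents for the two pieces of the norm.
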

 \begin{proof} From the definition of $u_t$ it is clear that \eqref{H1}-\eqref{H3} hold. We also observe that
$$
\|(-\Delta)^\frac{s}{2}u_t\|_{L^2}^2=\int_{\mathbb{R}^N}|\xi|^{2s}|\widehat{u_t}|^2d\xi=t^{4s+\alpha-N}\int_{\mathbb{R}^N}|\xi|^{2s}|\widehat{u}|^2d\xi
$$
and
$$
\iint_{\mathbb{R}^N\times\mathbb{R}^N}\frac{u_t^2(x)u_t^2(y)}{|x-y|^{N-\alpha}}dxdy=t^{4s+\alpha-N}\iint_{\mathbb{R}^N\times\mathbb{R}^N}\frac{u^2(x)u^2(y)}{|x-y|^{N-\alpha}}dxdy
$$
for all $u\in E$ and $t\geq0$. Hence
$$
\|u_t\|\leq\max\{t^{(4s+\alpha-N)/2},t^{(4s+\alpha-N)/4}\}\|u\|,\quad\forall (u,t)\in E\times [0,\infty),
$$
which implies \eqref{H4} and \eqref{H5} are satisfied with $\sigma=4s+\alpha-N$. Continuity follows then as in \cite[Lemma 3.1]{Mercuri-Perera} as a result of the density of $C^\infty_c(\mathbb{R}^N)$ in $E$ \textcolor{red} (see Proposition \ref{densityE} in the Appendix). 
\end{proof}

Recall that $\tilde{h}\in C(E,E^*)$ is a potential operator if there is a functional $\tilde{H}\in C^1(E,\mathbb{R})$, called a potential for $\tilde{h}$, such that $\tilde{H}'=\tilde{h}$. We can assume that $\tilde{H}(0)=0$, by replacing $\tilde{H}$ with $\tilde{H}-\tilde{H}(0)$ if necessary.
An odd potential operator $\tilde{h}\in C(E,E^*)$ is called a scaled operator if it maps bounded sets into bounded sets and satisfies
\begin{equation}\label{scaledoperator}
    \tilde{h}(u_t)v_t = t^\sigma \tilde{h}(u)v\quad \forall u,v\in E, \,t\geq0.
\end{equation}
As shown in \cite[Proposition 2.2]{Mercuri-Perera}, if $\tilde{h}$ is a potential operator, then its potential $\tilde{H}$ with $\tilde{H}(0)=0$ is such that
\begin{equation}\label{potential integral}
\tilde{H}(u)=\int_0^1\tilde{h}(\tau u)ud\tau,\quad\forall u\in E.
\end{equation}
In particular, $\tilde{H}$ is even if $\tilde{h}$ is odd and maps bounded sets to bounded sets whenever $\tilde{h}$ does. Moreover, if $\tilde{h}$ is a scaled operator satisfying \eqref{scaledoperator}, then
$$ 
\tilde{H}(u_t) = t^\sigma \tilde{H}(u)\quad\forall u\in E,\,\,t\geq0.
$$

With these preliminaries in place, let us now consider the odd potential operators $\mathcal{A}, \mathcal{B}:E\to E^*$, defined as $\mathcal{A}(u):=(-\Delta)^su+\left(I_\alpha*u^2\right)u$, that means,
\begin{equation}
    \mathcal{A}(u)v= \int_{\mathbb{R}^N}(-\Delta)^\frac{s}{2}u(-\Delta)^\frac{s}{2}vdx+C_\alpha\iint_{\mathbb{R}^N\times\mathbb{R}^N}\frac{u^2(x)u(y)v(y)}{|x-y|^{N-\alpha}}dxdy
\end{equation}
and  $\mathcal{B}(u):=|u|^{2^*_{s,\alpha}-2}u$, where
\begin{equation}
    \mathcal{B}(u)v= \int_{\mathbb{R}^N}|u|^{2^*_{s,\alpha}-2}uvdx.
\end{equation}
We note that the potential operators $\mathcal{A}$ and $\mathcal{B}$ satisfy \eqref{scaledoperator} with the same $\sigma=4s+\alpha-N$ such that \eqref{H5} holds, namely
$$
\mathcal{A}(u_t)v_t=t^{4s+\alpha-N}\mathcal{A}(u)v\quad{\rm and}\quad \mathcal{B}(u_t)v_t=t^{4s+\alpha-N}\mathcal{B}(u)v,
$$
for all $u,v\in E$ and $t\geq 0$. Moreover, the embedding $E\hookrightarrow L^{2^*_{s,\alpha}}(\mathbb{R}^N)$ ensures that $\mathcal{B}$ is bounded on bounded sets, and this clearly holds for $\mathcal{A}$. It follows then that $\mathcal{A}$ and $\mathcal{B}$ are scaled operators. \\ Let us now define the potentials $I,J:E\to\mathbb{R}$ for $\mathcal{A}$ and $\mathcal{B}$, respectively, as  
\begin{equation*}
I(u)=\frac{1}{2}\|(-\Delta)^\frac{s}{2}u\|_{L^2}^2+\frac{C_\alpha}{4}\iint_{\mathbb{R}^N\times\mathbb{R}^N}\frac{u^2(x)u^2(y)}{|x-y|^{N-\alpha}}dxdy,\quad J(u)=\frac{1}{2^*_{s,\alpha}}\int_{\mathbb{R}^N}|u|^{2^*_{s,\alpha}}dx.
\end{equation*}

Observe that, if $u\in E$ is a solution for $\mathcal{A}(u)=\lambda \mathcal{B}(u)$, i.e. a solution to \eqref{NE},
then $u_t$ is also a solution, for all $t\geq0$.  

\vspace{0.5cm}

We may check now that the following properties are enjoyed by $\mathcal A,\mathcal B, I$ and $J$.
\begin{enumerate}[label={($H_{\arabic*}$)},  % what appears in the margin
                  ref=$H_{\arabic*}$,start=6]               % what \ref returns
    \item\label{H6} $\mathcal{A}(u)u>0$ for all $u\in E\backslash\{0\}.$
    \item\label{H7}  If $u_n\rightharpoonup u$ in $E$ and $\mathcal{A}(u_n)(u_n-u)\to0$ then $(u_n)$ has a subsequence that converges strongly to u.
    \item\label{H8} $\mathcal{B}(u)u>0$ for all $u\in E\backslash\{0\}.$
    \item\label{H9} If $u_n\rightharpoonup u$ in $E$ then $\mathcal{B}(u_n)\to \mathcal{B}(u)$ in $E^*$.
    \item\label{H10} $I$ is coercive, i.e., $I(u)\to\infty$ as $\|u\|\to\infty.$ 
    \item\label{H11} For each $u\in E\backslash\{0\}$ there is an unique $t>0$ such that $I(tu) = 1.$
    \item\label{H12} Every solution $u$ of equation \eqref{NE} satisfies $I(u)=\lambda J(u).$
\end{enumerate}
Note that \eqref{H7} is satisfied by Lemma \ref{verify H7} and  \eqref{H12} by Lemma \ref{verify H12} below. \eqref{H6}, \eqref{H8} and \eqref{H10} are obvious. As far as \eqref{H9}, note that since $\alpha\in(1,N)$ and $4s+\alpha>N$, we have $p_{\rm rad}<2^*_{s,\alpha}<2^*_s$ and so, by compactness of the embedding $E\hookrightarrow L^{2^*_{s,\alpha}}(\mathbb{R}^N)$ we have $\mathcal{B}(u_n ) \to \mathcal{B}(u)$ in $E^*$ if $u_n \rightharpoonup u$ in $E$. Finally, we may check that \eqref{H11} is also satisfied, as
$$
I(tu)=\frac{t^2}{2}\|(-\Delta)^\frac{s}{2}u\|_{L^2}^2+\frac{C_\alpha t^4}{4}\iint_{\mathbb{R}^N\times \mathbb{R}^N}\frac{u^2(x)u^2(y)}{|x-y|^{N-\alpha}}dxdy,
$$
is strictly increasing in $t\geq0$, for any fixed $u\in E\backslash\{0\}.$

\begin{lemma}\label{verify H7}% uses that $\dot{H}^s(\mathbb{R}^N)$ is Hilbert.
    Assume that $u_n\rightharpoonup u$ in $E$ and $\mathcal{A}(u_n)(u_n-u)\to0$. Then $(u_n)$ has a convergent subsequence.
\end{lemma}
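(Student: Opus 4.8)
The plan is to extract a strongly convergent subsequence by reducing the problem to the vanishing of a single nonlocal ``cross term'' and then killing that term with the compact embeddings of $E$ and, in an essential way, the hypothesis $4s+\alpha>N$.

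Since $u_n\rightharpoonup u$ in $E$, the sequence $(u_n)$ is bounded; by the compactness of the embeddings $E\hookrightarrow L^p(\mathbb{R}^N)$ for $p\in(p_{\rm rad},2^*_{s})$ (see \eqref{eq3}) I pass to a subsequence with $u_n\to u$ in every such $L^p$ and a.e., and I set $w_n:=u_n-u\rightharpoonup0$ and $D(v):=\iint_{\mathbb{R}^N\times\mathbb{R}^N}\frac{v^2(x)v^2(y)}{|x-y|^{N-\alpha}}\,dx\,dy$. As $\mathcal{A}(u)\in E^*$ is fixed we have $\mathcal{A}(u)(u_n-u)\to0$, so the hypothesis yields $[\mathcal{A}(u_n)-\mathcal{A}(u)](u_n-u)\to0$. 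The quadratic $L^2$-part of this difference equals $\|(-\Delta)^\frac{s}{2}w_n\|_{L^2}^2$, and substituting $u_n^2=u^2+2uw_n+w_n^2$ in the Coulomb part gives
\[
[\mathcal{A}(u_n)-\mathcal{A}(u)](u_n-u)=\|(-\Delta)^\frac{s}{2}w_n\|_{L^2}^2+C_\alpha D(w_n)+R_n,
\]
where $R_n$ is a fixed linear combination of $\iint\frac{u^2(x)w_n^2(y)}{|x-y|^{N-\alpha}}$ and of integrals $\iint\frac{(uw_n)(x)\,\zeta(y)}{|x-y|^{N-\alpha}}$ with $\zeta\in\{w_n^2,uw_n\}$. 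Using Cauchy--Schwarz for the positive-definite Riesz kernel, the pointwise inequality $2|(uw_n)(x)(uw_n)(y)|\le u^2(x)w_n^2(y)+u^2(y)w_n^2(x)$, and the boundedness of $D(w_n)$, each summand of $R_n$ is dominated by a constant times $\big(\iint\frac{u^2(x)w_n^2(y)}{|x-y|^{N-\alpha}}\big)^{\kappa}$ with $\kappa\in\{\tfrac12,1\}$. Hence everything reduces to
\begin{equation*}
\iint_{\mathbb{R}^N\times\mathbb{R}^N}\frac{u^2(x)\,w_n^2(y)}{|x-y|^{N-\alpha}}\,dx\,dy\longrightarrow0.\tag{$\star$}
\end{equation*}

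To prove $(\star)$ — the technical heart — I split $u^2=\phi+\rho$ with $\phi:=u^2\chi_{\{|u|\le M\}\cap B_R}$ bounded and compactly supported and $\rho:=u^2-\phi\ge0$. Since $u\in Q^{\alpha,2}(\mathbb{R}^N)$, dominated convergence makes $\iint\frac{\rho(x)\rho(y)}{|x-y|^{N-\alpha}}$ arbitrarily small for $M,R$ large, and Cauchy--Schwarz then bounds the $\rho$-contribution to $(\star)$ by $\big(\iint\frac{\rho(x)\rho(y)}{|x-y|^{N-\alpha}}\big)^{1/2}D(w_n)^{1/2}$, uniformly in $n$. For the $\phi$-contribution, equal to $C_\alpha^{-1}\int_{\mathbb{R}^N}(I_\alpha*\phi)\,w_n^2\,dx$, I use that a bounded compactly supported $\phi$ forces $I_\alpha*\phi$ to be bounded and to decay like $|y|^{-(N-\alpha)}$, so $I_\alpha*\phi\in L^q(\mathbb{R}^N)$ for every $q\in(N/(N-\alpha),\infty]$, whereas $w_n^2\to0$ in $L^m(\mathbb{R}^N)$ for every $m\in(p_{\rm rad}/2,\,2^*_{s}/2)$. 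These ranges are compatible exactly because of our standing assumption: a direct computation gives $\frac{p_{\rm rad}}{p_{\rm rad}-2}=\frac{2(N-1)}{N-\alpha}+\frac{1}{2s}$, whence $\big(N/(N-\alpha),\infty\big)\cap\big(N/(2s),\,p_{\rm rad}/(p_{\rm rad}-2)\big)\ne\emptyset$ if and only if $4s+\alpha>N$; picking $q$ in this intersection, Hölder's inequality gives $\int(I_\alpha*\phi)w_n^2\to0$. Letting first $n\to\infty$ and then $M,R\to\infty$ yields $(\star)$.

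Finally, $(\star)$ forces $R_n\to0$, so $\|(-\Delta)^\frac{s}{2}w_n\|_{L^2}^2+C_\alpha D(w_n)\to0$; both summands being nonnegative, each tends to $0$, and therefore $\|u_n-u\|^2=\|(-\Delta)^\frac{s}{2}w_n\|_{L^2}^2+D(w_n)^{1/2}\to0$, i.e. the subsequence converges strongly in $E$. I expect the main obstacle to be precisely the exponent bookkeeping in $(\star)$: one has to exhibit a single Lebesgue space containing $I_\alpha*\phi$ whose conjugate exponent sits in the compact-embedding range of $E$, and this is the one place where $4s+\alpha>N$ is genuinely needed.
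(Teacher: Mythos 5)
Your proof is correct, and it takes a genuinely different route from the paper's. The paper's argument is short and cites two external tools: the weak sequential continuity of the four-linear form $T(u,v,w,z)=C_\alpha\iint\frac{u(x)v(x)w(y)z(y)}{|x-y|^{N-\alpha}}\,dx\,dy$ in its first three slots (from Ianni--Ruiz, Lemma 2.3), which converts the Coulomb part of $\mathcal{A}(u_n)(u_n-u)$ into $D(u_n)-D(u)+o_n(1)$, and then the nonlocal Brezis--Lieb lemma (Mercuri--Moroz--Van Schaftingen, Proposition 4.1), which gives the one-sided estimate $D(u_n)-D(u)\ge D(u_n-u)+o_n(1)$; combining these with the $\dot H^s$ part yields $\mathcal{A}(u_n)(u_n-u)\ge \|(-\Delta)^{s/2}(u_n-u)\|_{L^2}^2+C_\alpha D(u_n-u)+o_n(1)$, forcing both nonnegative terms to vanish. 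You instead subtract $\mathcal{A}(u)(u_n-u)\to 0$, expand the quartic nonlocal term algebraically, reduce \emph{all} remainder terms (via Cauchy--Schwarz for the positive-definite Riesz form and a pointwise AM--GM) to the single cross term $(\star)=\iint\frac{u^2(x)w_n^2(y)}{|x-y|^{N-\alpha}}\,dx\,dy$, and then kill $(\star)$ by a truncation-plus-H\"older argument with a careful exponent count. This is more self-contained (no appeal to the nonlocal Brezis--Lieb inequality or the $T$-continuity lemma), gives an asymptotic \emph{equality} $\mathcal{A}(u_n)(u_n-u)=\|(-\Delta)^{s/2}w_n\|_{L^2}^2+C_\alpha D(w_n)+o_n(1)$ rather than merely an inequality, and usefully isolates where the hypothesis $4s+\alpha>N$ is genuinely needed — namely, so that the conjugate exponent of a Lebesgue space containing $I_\alpha*\phi$ falls inside $(p_{\rm rad}/2,\,2^*_s/2)$, the range where $w_n^2\to 0$ strongly. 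The trade-off is that the paper's route is shorter in context, since the nonlocal Brezis--Lieb lemma is a ready-made tool reused several times later (e.g. in Lemma~\ref{PSlocal}), whereas your $(\star)$ argument must be carried out from scratch.
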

\begin{proof}
   Arguing as in \cite[Lemma 3.2]{Mercuri-Perera}, note that

   $$
  \mathcal{A}(u_n)(u_n-u)= \int_{\mathbb{R}^N}(-\Delta)^\frac{s}{2}u_n(-\Delta)^\frac{s}{2}(u_n-u)dx +C_\alpha\iint_{\mathbb{R}^N\times\mathbb{R}^N}\frac{u_n^2(x)u_n(y)(u_n-u)(y)}{|x-y|^{N-\alpha}}dxdy.
   $$
   Since $u_n\rightharpoonup u$ in $E$, we also have $(-\Delta)^\frac{s}{2}u_n\rightharpoonup (-\Delta)^\frac{s}{2}u$ weakly in $L^2(\mathbb{R}^N)$. This yields
   $$
   \int_{\mathbb{R}^N}(-\Delta)^\frac{s}{2}u(-\Delta)^\frac{s}{2}u_ndx \to  \int_{\mathbb{R}^N}|(-\Delta)^\frac{s}{2}u|^2dx,\quad\mbox{as}\,\,n\to\infty,
   $$
    and hence
    \begin{equation*}\label{Hs control}
 \int_{\mathbb{R}^N}(-\Delta)^\frac{s}{2}u_n(-\Delta)^\frac{s}{2}(u_n-u)dx=  \|(-\Delta)^\frac{s}{2}u_n\|_{L^2}^2-\|(-\Delta)^\frac{s}{2}u\|_{L^2}^2+o_n(1)= \|(-\Delta)^\frac{s}{2}(u_n-u)\|_{L^2}^2+o_n(1).
   \end{equation*}
 
Setting
$$
T(u,v,w,z) = C_\alpha\iint_{\mathbb{R}^N\times\mathbb{R}^N}\frac{u(x)v(x)w(y)z(y)}{|x-y|^{N-\alpha}}dxdy,
   $$
as in \cite[Lemma 2.3]{Ianni-Ruiz-2012} (see also \cite[Lemma 2.4]{Hu-Li-Zhao-2021}) one sees that $T(u_n,v_n,w_n,z) \to T(u,v,w,z)$ whenever $u_n\rightharpoonup u$, $v_n\rightharpoonup v$ and $w_n\rightharpoonup w$ in $E$ and $z\in E$ is fixed. Hence,
$$
\iint_{\mathbb{R}^N\times\mathbb{R}^N}\frac{u_n^2(x)u_n(y)(u_n-u)(y)}{|x-y|^{N-\alpha}}dxdy=
\iint_{\mathbb{R}^N\times\mathbb{R}^N}\frac{u_n^2(x)u_n^2(y)}{|x-y|^{N-\alpha}}dxdy
-\iint_{\mathbb{R}^N\times\mathbb{R}^N}\frac{u^2(x)u^2(y)}{|x-y|^{N-\alpha}}dxdy+o_n(1).
$$
Passing if necessary to a subsequence, since $u_n(x)\to u(x)$ almost everywhere, by the nonlocal Brezis-Lieb lemma (see \cite[Proposition 4.1]{Mercuri-Moroz-VS-2016}) we obtain
$$
\iint_{\mathbb{R}^N\times\mathbb{R}^N}\frac{u_n^2(x)u_n^2(y)}{|x-y|^{N-\alpha}}dxdy
-\iint_{\mathbb{R}^N\times\mathbb{R}^N}\frac{u^2(x)u^2(y)}{|x-y|^{N-\alpha}}dxdy
\geq \iint_{\mathbb{R}^N\times\mathbb{R}^N}\frac{(u_n-u)^2(x)(u_n-u)^2(y)}{|x-y|^{N-\alpha}}dxdy+o_n(1).
$$
Thus,
$$
\mathcal{A}(u_n)(u_n-u)\geq \|(-\Delta)^\frac{s}{2}(u_n-u)\|_{L^2}^2+ C_\alpha\iint_{\mathbb{R}^N\times\mathbb{R}^N}\frac{(u_n-u)^2(x)(u_n-u)^2(y)}{|x-y|^{N-\alpha}}dxdy+o_n(1),
$$
which is enough to conclude.
\end{proof}

The following lemma is based on a Pohozaev-type identity which is stated and proved in Lemma \ref{pohozaev} in the Appendix.
\begin{lemma}\label{verify H12}
Suppose that $\mathcal{A}(u)=\lambda \mathcal{B}(u)$ for some $u\in E\backslash\{0\}$ and $\lambda>0$. Then, $I(u)=\lambda J(u),$ namely \eqref{H12} is satisfied.
\end{lemma}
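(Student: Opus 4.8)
The plan is to combine two scalar identities satisfied by any nontrivial solution $u$ of $\mathcal{A}(u)=\lambda\mathcal{B}(u)$: the \emph{Nehari identity}, obtained by testing the equation with $u$ itself, and the \emph{Pohozaev identity} of Lemma~\ref{pohozaev} in the Appendix. First I would test the weak formulation with $v=u$, which gives
\[
\|(-\Delta)^{s/2}u\|_{L^2}^2 + C_\alpha\iint_{\mathbb{R}^N\times\mathbb{R}^N}\frac{u^2(x)u^2(y)}{|x-y|^{N-\alpha}}\,dx\,dy = \lambda\int_{\mathbb{R}^N}|u|^{2^*_{s,\alpha}}\,dx,
\]
that is, $2\,(\text{kinetic part of }I(u)) + 4\cdot\frac{1}{4}C_\alpha(\text{Coulomb part}) = \lambda\,2^*_{s,\alpha}J(u)$; more precisely, writing $A:=\|(-\Delta)^{s/2}u\|_{L^2}^2$, $B:=C_\alpha\iint u^2(x)u^2(y)|x-y|^{-(N-\alpha)}$ and $D:=\int|u|^{2^*_{s,\alpha}}$, the Nehari identity reads $A+B=\lambda D$.

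Next I would invoke the Pohozaev identity from Lemma~\ref{pohozaev}, which for the equation $(-\Delta)^s u+(I_\alpha*u^2)u=\lambda|u|^{2^*_{s,\alpha}-2}u$ should take the form
\[
\frac{N-2s}{2}A + \frac{N+\alpha}{4}B = \frac{\lambda N}{2^*_{s,\alpha}}D,
\]
where the coefficient $(N-2s)/2$ is the standard fractional Pohozaev weight for $(-\Delta)^s$, the weight $(N+\alpha)/4$ comes from the scaling of the Coulomb term, and $N/2^*_{s,\alpha}$ from the power nonlinearity (these coefficients are exactly what make the identity consistent with the $u\mapsto u_t$ scaling, since $\sigma=4s+\alpha-N$). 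Now I have two linear equations in the three unknowns $A,B,D$ together with the relation $A+B=\lambda D$; the goal $I(u)=\lambda J(u)$ is precisely $\tfrac12 A+\tfrac14 B=\tfrac{\lambda}{2^*_{s,\alpha}}D$. A short linear-algebra computation—eliminating $D$ via $\lambda D=A+B$ and checking that the Pohozaev relation then forces the desired proportionality—closes the argument; one verifies that the system is consistent exactly because $2^*_{s,\alpha}=2(4s+\alpha)/(2s+\alpha)$, so the two independent combinations of $(A,B)$ coming from Nehari and Pohozaev span the same line as $(\tfrac12,\tfrac14)$ relative to $D$. Equivalently and more conceptually, since $t\mapsto \Phi_\lambda(u_t)$ (with $\Phi_\lambda:=I-\lambda J$) is $C^1$ and $\Phi_\lambda(u_t)=t^\sigma(I(u)-\lambda J(u))$ by the scaling homogeneity, while $u$ being a critical point gives $\frac{d}{dt}\big|_{t=1}\Phi_\lambda(u_t)=\Phi_\lambda'(u)\,\frac{d}{dt}\big|_{t=1}u_t=0$; differentiating $t^\sigma(I(u)-\lambda J(u))$ at $t=1$ gives $\sigma(I(u)-\lambda J(u))=0$, and since $\sigma=4s+\alpha-N\neq 0$ under our standing assumption $4s+\alpha>N$, we conclude $I(u)=\lambda J(u)$.

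The main obstacle is the rigorous justification that the Pohozaev identity of Lemma~\ref{pohozaev} actually applies to the solution $u$ at hand: this requires the regularity of weak solutions established in Appendix~B (to give meaning to the boundary/commutator terms in the fractional Pohozaev computation and to ensure the Coulomb potential $I_\alpha*u^2$ is regular enough), which is exactly why the condition $4s+\alpha>N$ is imposed. The ``conceptual'' scaling argument in the previous paragraph is cleaner but it still implicitly needs $t\mapsto\Phi_\lambda(u_t)$ to be differentiable at $t=1$ with derivative $\Phi_\lambda'(u)[\partial_t u_t|_{t=1}]$, i.e. that $\partial_t u_t|_{t=1}=\theta u + x\cdot\nabla u$ lies in $E$ and pairs correctly against $\Phi_\lambda'(u)$—which again is a regularity statement equivalent in content to the Pohozaev identity. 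So in practice I would present the proof as: quote Lemma~\ref{pohozaev}, write down the Nehari and Pohozaev identities, and finish with the elementary elimination showing $\tfrac12 A+\tfrac14 B=\tfrac{\lambda}{2^*_{s,\alpha}}D$.
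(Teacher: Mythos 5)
Your proposal is correct and follows essentially the same route as the paper: test the weak form with $u$ to get the Nehari identity $A+B=\lambda D$, invoke the Pohozaev identity from Lemma~\ref{pohozaev} to get $\tfrac{N-2s}{2}A+\tfrac{N+\alpha}{4}B=\tfrac{\lambda N}{2^*_{s,\alpha}}D$, and eliminate to obtain $\tfrac12 A+\tfrac14 B=\tfrac{\lambda}{2^*_{s,\alpha}}D$ (the paper's explicit combination is $\tfrac{2s+\alpha}{2(4s+\alpha-N)}\cdot\text{Nehari}-\tfrac{1}{4s+\alpha-N}\cdot\text{Pohozaev}$). Your supplementary observation that this is morally $\sigma(I(u)-\lambda J(u))=\tfrac{d}{dt}\big|_{t=1}\Phi_\lambda(u_t)=0$ with $\sigma=4s+\alpha-N\neq0$ is the right heuristic, and you correctly flag that making it rigorous requires precisely the regularity encoded in the Pohozaev lemma.
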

\begin{proof}
Since $u\in E\backslash\{0\}$ is a solution for \eqref{NE}, it holds that $\mathcal{A}(u)u=\lambda \mathcal{B}(u)u,$ and so by \eqref{H6} and \eqref{H8} the condition $\lambda>0$ is necessary.
By Lemma \ref{pohozaev} it holds that 
\begin{equation}\label{Poh}
\frac{N-2s}{2}\|(-\Delta)^{\frac{s}{2}}u\|_2^2+\frac{C_\alpha(N+\alpha)}{4}\iint_{\mathbb{R}^N\times\mathbb{R}^N}\frac{u^2(x)u^2(y)}{|x-y|^{N-\alpha}}dxdy
=
\frac{\lambda N}{2^*_{s,\alpha}}\int_{\mathbb{R}^N} |u|^{2^*_{s,\alpha}}dx.
\end{equation}
 Moreover, using $u$ as a test function we have
\begin{equation}\label{NES}
\|(-\Delta)^\frac{s}{2}u\|_2^2+{C_\alpha}\iint_{\mathbb{R}^N\times\mathbb{R}^N}\frac{u^2(x)u^2(y)}{|x-y|^{N-\alpha}}dxdy= \lambda\int_{\mathbb{R}^N}|u|^{2^*_{s,\alpha}}dx.
\end{equation}
 Multiplying equation \eqref{Poh} by $1/(4s+\alpha-N)$, equation \eqref{NES} by $(2s+\alpha)/[2(4s+\alpha-N)]$, and subtracting, we obtain
\begin{equation*}\label{I=J}
\frac{1}{2}\|(-\Delta)^{\frac{s}{2}}u\|_2^2+\frac{C_\alpha}{4}\iint_{\mathbb{R}^N\times\mathbb{R}^N}\frac{u^2(x)u^2(y)}{|x-y|^{N-\alpha}}dxdy
=
\frac{\lambda }{2^*_{s,\alpha}}\int_{\mathbb{R}^N} |u|^{2^*_{s,\alpha}}dx.
\end{equation*}
which yields the conclusion.
\end{proof}

Now, let us to consider the potential operators $\tilde{f},\tilde{g}:E\to E^*$ associated with the local nonlinearities $f$ and $g$, defined as
$$
\tilde{f}(u)v=\int_{\mathbb{R}^N}f(|x|,u)vdx\quad\mbox{and}\quad\tilde{g}(u)v=\int_{\mathbb{R}^N}g(|x|,u)vdx,
$$
and whose potentials $\tilde{F},\tilde{G}:E\to \mathbb{R}$ such that $\tilde{F}(0)=0$ and $\tilde{G}(0)=0$ are
$$
\tilde{F}(u)=\int_{\mathbb{R}^N}F(|x|,u)dx\quad\mbox{and}\quad\tilde{G}(u)=\int_{\mathbb{R}^N}G(|x|,u)dx.
$$

\begin{lemma}\label{o(t)}The following asymptotics hold. \\
\begin{enumerate}[label={(${\roman*}$)},  % what appears in the margin
                  ref=${\roman*}$]               % what \ref returns
\item\label{o(t)(i)} \textit{($f$ subscaled)}. Assume that $f$ satisfies \eqref{fgrowth} with 
    \begin{equation}\label{fsubscaled}
        p_{\rm rad} <q_1<q_2<2^*_{s,\alpha}\quad\mbox{and}\quad p_{\rm rad}<r\leq 2^*_{s}
    \end{equation}
    Then, $\tilde f(u_t)v_t=o(t^{4s+\alpha-N})\|v\|$ as $t\to\infty$, uniformly in $u$ on bounded sets for all $v\in E$.
\item\label{o(t)(ii)}  Suppose that $f$ satisfies \eqref{f=B+g}. If $g$ satisfies \eqref{ggrowth}-\eqref{ggrowthc}, then 
       $$
       \tilde{g}(u_t)v_t=o(t^{4s+\alpha-N})\|v\| \quad\mbox{as}\quad t\to0,
       $$
       uniformly in $u$ on bounded sets, and for all $v\in E$.
 \item\label{o(t)(iii)} \textit{($f$ asymptotically scaled)}. Suppose that $f$ satisfies \eqref{f=B+g}. If $g$ satisfies \eqref{ggrowth2}-\eqref{ggrowth2c}, then 
 $$
 \tilde{g}(u_t)v_t=o(t^{4s+\alpha-N})\|v\|\quad\mbox{as}\quad t\to\infty,
 $$
 uniformly in $u$ on bounded sets, and for all $v\in E$.
\end{enumerate}
\end{lemma}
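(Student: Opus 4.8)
\medskip
\noindent\textbf{Proof plan.} All three items reduce to a single rescaling identity, which I would establish first. Fix $u,v\in E$ and $t>0$; writing $w_t(x)=t^\theta w(tx)$ and changing variables $y=tx$ in the duality pairing, for any Carath\'eodory function $\psi$ one gets
\[
\int_{\mathbb{R}^N}\psi(|x|,u_t(x))\,v_t(x)\,dx=t^{\theta-N}\int_{\mathbb{R}^N}\psi\!\left(\tfrac{|y|}{t},\,t^\theta u(y)\right)v(y)\,dy,
\]
which I would apply with $\psi=f$ for \eqref{o(t)(i)} and with $\psi=g$ for \eqref{o(t)(ii)}--\eqref{o(t)(iii)}, and then insert the corresponding growth condition (\eqref{fgrowth}, \eqref{ggrowth}, or \eqref{ggrowth2}).

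Next I would estimate term by term, separating the pure-power contributions from the inhomogeneous one. A power term $a_j|t^\theta u(y)|^{q_j-1}=a_j\,t^{\theta(q_j-1)}|u(y)|^{q_j-1}$, after the prefactor $t^{\theta-N}$, contributes --- by H\"older and the continuous embedding $E\hookrightarrow L^{q_j}(\mathbb{R}^N)$, valid since $p_{\rm rad}<q_j<2^*_s$ (cf.\ \eqref{eq3}) --- a quantity bounded by $C\,a_j\,t^{\theta q_j-N}\,\|u\|^{q_j-1}\,\|v\|$. The term $a(y/t)$ (resp.\ $h(y/t)$) is treated with H\"older in the exponents $r,r'$, together with the scaling identity $\|a(\cdot/t)\|_{L^{r'}}=t^{N/r'}\|a\|_{L^{r'}}$ and the embedding $E\hookrightarrow L^r$, producing a bound $C\,t^{\theta-N/r}\,\|a\|_{L^{r'}}\,\|v\|$ (using $\theta-N+N/r'=\theta-N/r$). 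Since $u$ enters only through $\|u\|^{q_j-1}$, each bound is uniform for $u$ in bounded subsets of $E$, as required.

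It then remains to compare the exponents of $t$ with $\sigma=4s+\alpha-N$. A direct computation gives $\theta q_j-N<\sigma\iff q_j<2^*_{s,\alpha}$ and $\theta q_j-N>\sigma\iff q_j>2^*_{s,\alpha}$, while $\theta-N/r<\sigma$ is equivalent to $1/r>(2N-6s-\alpha)/(2N)$; the latter holds for every $r\le 2^*_s$ \emph{precisely} because $4s+\alpha>N$ (indeed $2N-6s-\alpha<N-2s$, hence $(2N-6s-\alpha)/(2N)<1/2^*_s\le 1/r$). Thus in \eqref{o(t)(i)} all exponents $\theta q_1-N$, $\theta q_2-N$, $\theta-N/r$ are strictly below $\sigma$, so the full bound is $o(t^\sigma)\|v\|$ as $t\to\infty$; in \eqref{o(t)(iii)} the same conclusion holds with the single power $q_5<2^*_{s,\alpha}$ together with $\theta-N/r$, again as $t\to\infty$; and in \eqref{o(t)(ii)} both $\theta q_3-N$ and $\theta q_4-N$ exceed $\sigma$, so the bound is $o(t^\sigma)\|v\|$ as $t\to 0$. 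The step I expect to require the most care is the inhomogeneous term: one must rescale its $L^{r'}$-norm correctly and check the inequality $\theta-N/r<\sigma$, which is exactly the place where the standing hypothesis $4s+\alpha>N$ enters the argument.
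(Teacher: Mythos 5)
Your proposal is correct and follows essentially the same route as the paper: change of variables in the pairing, H\"older plus the embeddings $E\hookrightarrow L^{q_j}$ and $E\hookrightarrow L^r$, and then a comparison of the exponents $\theta q_j-N$ and $\theta-N/r$ against $\sigma=4s+\alpha-N$. The only cosmetic difference is that you solve the inequality $\theta-N/r<\sigma$ directly, whereas the paper simply evaluates at the endpoint $r=2^*_s$ and notes $\theta-N/2^*_s=(4s+\alpha-N)/2<4s+\alpha-N$; both rely on the standing hypothesis $4s+\alpha>N$ in the same way.
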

\begin{proof}
We break the proof analysing each case separately. \\
\eqref{o(t)(i)} We have $u_t(x)=t^\theta u(tx)$, where $\theta=(2s+\alpha)/2$, as in \eqref{def theta}. 
Since $f$ satisfies \eqref{fgrowth}, for any $t>0$ we have
%$$\tilde{f}(u_t)v_t=t^{\theta-N}\int_{\mathbb{R}^N}f(|x/t|,t^\theta u(x))v(x)dx$$
\begin{eqnarray*}
 |\tilde f(u_t)v_t|&\leq& 
 a_1\int_{\mathbb{R}^N}|u_t|^{q_1-1}|v_t|dx+a_2\int_{\mathbb{R}^N}|u_t|^{q_2-1}|v_t|dx
+\int_{\mathbb{R}^N}|a(x)||v_t|dx\\
&\leq& a_1t^{\theta q_1-N}\int_{\mathbb{R}^N}|u|^{q_1-1}|v|dx+a_2t^{\theta q_2-N}\int_{\mathbb{R}^N}|u|^{q_2-1}|v|dx
+t^{\theta-N}\int_{\mathbb{R}^N}|a(x/t)||v|dx\\
&\leq& a_1t^{\theta q_1-N}\|u\|_{L^{q_1}}^{q_1-1}\|v\|_{L^{q_1}}+a_2t^{\theta q_2-N}\|u\|_{L^{q_2}}^{q_2-1}\|v\|_{L^{q_2}}
+t^{\theta-N/r}\|a\|_{L^{r'}}\|v\|_{L^{r}}.
\end{eqnarray*}
Recalling the embedding $E\hookrightarrow L^{p}(\mathbb{R}^N)$ for $p=q_1,q_2$ and $r$, we obtain
$$
 |\tilde f(u_t)v_t|\leq
 c_1 t^{\theta q_1-N}\|u\|^{q_1-1}\|v\|+c_2 t^{\theta q_2-N}\|u\|^{q_2-1}\|v\|
+c_3t^{\theta-N/r}\|a\|_{L^{r'}}\|v\|.
$$
Since $q_1<q_2<2^*_{s,\alpha}$ and $\theta 2^*_{s,\alpha}=4s+\alpha$ we have $\theta q_1-N<\theta q_2-N<(4s+\alpha-N)$. Also,  $r\leq 2^*_s$ gives us $\theta-N/r\leq(4s+\alpha-N)/2<(4s+\alpha-N)$. Thus, \eqref{o(t)(i)} holds.\\

\noindent \eqref{o(t)(ii)}  By \eqref{ggrowth}-\eqref{ggrowthc}, for any $t>0$ we have
\begin{eqnarray*}
 |\tilde g(u_t)v_t|&\leq& 
 a_3\int_{\mathbb{R}^N}|u_t|^{q_3-1}|v_t|dx+a_4\int_{\mathbb{R}^N}|u_t|^{q_4-1}|v_t|dx
\\
&\leq& a_3t^{\theta q_3-N}\int_{\mathbb{R}^N}|u|^{q_3-1}|v|dx+a_4t^{\theta q_4-N}\int_{\mathbb{R}^N}|u|^{q_4-1}|v|dx
\\
&\leq& a_3t^{\theta q_3-N}\|u\|_{L^{q_3}}^{q_3-1}\|v\|_{L^{q_3}}+a_4t^{\theta q_4-N}\|u\|_{L^{q_4}}^{q_4-1}\|v\|_{L^{q_4}}
\\
&\leq& \tilde c_3t^{\theta q_3-N}\|u\|^{q_3-1}\|v\|+\tilde c_4t^{\theta q_4-N}\|u\|^{q_4-1}\|v\|.
\end{eqnarray*}
Since $q_4>q_3>2^*_{s,\alpha}$, we have $\theta q_3-N>4s+\alpha-N$ and so \eqref{o(t)(ii)}  holds. \\

\noindent \eqref{o(t)(iii)} Since the argument in this case is similar to \eqref{o(t)(i)}, we leave out the details; and this concludes the proof.
\end{proof}

The following lemma is an immediate consequence of the compactness of the embedding $E\hookrightarrow L^q(\mathbb{R}^N)$, for $p_{\rm rad} <q<2^*_{s}$. 

\begin{lemma}\label{f~compact}
    Suppose that $f$ satisfies \eqref{fgrowth} with $p_{\rm rad}<q_1,q_2< 2^*_s$. Then $\tilde{f}$ is a compact potential operator.
\end{lemma}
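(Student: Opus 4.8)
The plan is to prove that $\tilde{f}$ is a compact potential operator by first establishing that it is a potential operator (which is immediate from its definition, with potential $\tilde{F}(u) = \int_{\mathbb{R}^N} F(|x|,u)\,dx$), and then showing compactness, i.e. that $\tilde{f}$ maps weakly convergent sequences to strongly convergent sequences in $E^*$. So suppose $u_n \rightharpoonup u$ in $E$; I must show $\tilde{f}(u_n) \to \tilde{f}(u)$ in $E^*$, namely
\[
\sup_{\|v\| \le 1} \left| \int_{\mathbb{R}^N} \bigl( f(|x|,u_n) - f(|x|,u) \bigr) v \, dx \right| \to 0.
\]

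The key steps, in order: First, invoke the compactness of the embedding $E \hookrightarrow L^p(\mathbb{R}^N)$ for every $p$ in the open interval $(p_{\rm rad}, 2^*_s)$ — this is the embedding result recalled after \eqref{eq3}–\eqref{eq4} (compactness away from the endpoints), and it is exactly here that the hypothesis $p_{\rm rad} < q_1, q_2 < 2^*_s$ is used so that $q_1$ and $q_2$ lie strictly inside the compactness range. Hence, along a subsequence, $u_n \to u$ strongly in $L^{q_1}(\mathbb{R}^N)$ and in $L^{q_2}(\mathbb{R}^N)$, and (passing to a further subsequence) $u_n \to u$ a.e. in $\mathbb{R}^N$ with a dominating function in each of these $L^{q_i}$ spaces. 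Second, using the growth condition \eqref{fgrowth}, I apply Hölder's inequality with exponents $q_i$ and $q_i' = q_i/(q_i-1)$ to estimate, for $\|v\| \le 1$,
\[
\left| \int_{\mathbb{R}^N} \bigl( f(|x|,u_n) - f(|x|,u) \bigr) v \, dx \right| \le \bigl\| f(|x|,u_n) - f(|x|,u) \bigr\|_{(L^{q_1})' + (L^{q_2})' + L^{r'}} \, \|v\|_{\cap L^{q_i} \cap L^r},
\]
more precisely splitting $f(|x|,u_n)-f(|x|,u)$ according to the three terms in \eqref{fgrowth} and bounding each piece by the corresponding dual norm times the embedding constant. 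Third, I show each of these dual norms tends to zero: the term $a(x)$ cancels (it does not depend on $u$), while $\bigl\| \,|u_n|^{q_1-1} - |u|^{q_1-1} \bigr\|_{L^{q_1/(q_1-1)}} \to 0$ and similarly for $q_2$, by the standard continuity of Nemytskii operators — combining a.e. convergence, the $L^{q_i}$ domination, and the generalized dominated convergence theorem (Vitali). Finally, since the limit $\tilde{f}(u)$ is independent of the subsequence, a standard subsequence-of-subsequence argument upgrades the convergence to the full sequence, proving compactness.

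The main obstacle — though a mild one — is making the three-term splitting of the growth bound \eqref{fgrowth} rigorous when passing to dual norms: one must be careful that $v$ lies simultaneously in $L^{q_1}, L^{q_2}$ and $L^{r}$ with norms controlled by $\|v\|$ (which follows from the three separate continuous embeddings), and that the Nemytskii-type convergence $\bigl\| |u_n|^{q_i-1}-|u|^{q_i-1} - (\text{something with } f) \bigr\| \to 0$ is really applied to $f(|x|,\cdot)$ itself, not merely to the majorant. The clean way is to observe directly that $w \mapsto f(|x|,w)$ maps $L^{q_1} \cap L^{q_2} \cap L^r \to (L^{q_1})' + (L^{q_2})' + L^{r'}$ continuously (a consequence of \eqref{fgrowth} and a product-space version of the Krasnoselskii–Nemytskii theorem), and that strong convergence $u_n \to u$ in each $L^{q_i}$ (plus the fixed $a \in L^{r'}$) feeds into this continuity. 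This is entirely routine given the compact embeddings, which is why the lemma is stated as "an immediate consequence"; the write-up should simply record the Hölder estimate and cite the Nemytskii continuity.
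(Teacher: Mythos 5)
Your proof is correct and follows exactly the standard route the paper alludes to but does not write out: the paper states the lemma with no proof, only the remark that it is ``an immediate consequence of the compactness of the embedding $E\hookrightarrow L^q(\mathbb{R}^N)$ for $p_{\rm rad}<q<2^*_s$.'' Your combination of that compact embedding (and continuous embedding into $L^r$ to handle the fixed $a\in L^{r'}$ term), H\"older's inequality, and Krasnoselskii--Nemytskii continuity is precisely the argument being invoked, and you correctly flag and resolve the one point of care, namely that the growth bound \eqref{fgrowth} is a majorant for $f$ rather than a decomposition of $f$, so the Nemytskii continuity theorem must be applied to $f$ itself rather than piecewise to the majorant.
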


In the spirit of \cite[Theorem 2.16]{Mercuri-Perera} we have the following existence result by minimisation. 
\begin{proposition}\label{minimization}% Theorem 2.16 in Mercuri-Perera
Suppose that $f$ satisfies \eqref{fgrowth} with exponents satisfying \eqref{fsubscaled}. Then,\eqref{P} has a solution.
\end{proposition}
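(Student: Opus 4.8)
The plan is to show that the functional $\Phi$ is coercive and weakly lower semicontinuous on $E$ under the subscaled growth conditions \eqref{fsubscaled}, and then invoke the direct method. First I would establish coercivity. Writing $\Phi(u) = I(u) - \tilde{F}(u)$, where $I$ is coercive by \eqref{H10}, it suffices to control $\tilde{F}$ from above by a term that is dominated by $I$ at infinity. Using the growth bound \eqref{fgrowth} on $f$, integrating in $t$, and then applying the continuous embeddings $E \hookrightarrow L^{q_1}, L^{q_2}, L^r$, one gets
\[
\tilde{F}(u) \leq \frac{a_1}{q_1}\|u\|_{L^{q_1}}^{q_1} + \frac{a_2}{q_2}\|u\|_{L^{q_2}}^{q_2} + \|a\|_{L^{r'}}\|u\|_{L^r} \leq C\left(\|u\|^{q_1} + \|u\|^{q_2} + \|u\|\right).
\]
Since $I(u) \geq c\min\{\|u\|^2, \|u\|^4\}$ for large $\|u\|$ (from the two homogeneous pieces of $I$), and since $q_1, q_2 < 2^*_{s,\alpha} < 2^*_s$, the key point is to check that the exponents $q_1, q_2$ are beaten by the quadratic/quartic growth of $I$. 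The subtlety is that $I$ behaves like $\|u\|^2$ when the Gagliardo seminorm dominates and like $\|u\|^4$ when the Coulomb term dominates; in either regime one should verify via the interpolation/embedding structure of $E$ that $q_1, q_2 < 4$ is not needed — rather, one splits $u = u^{(1)} + u^{(2)}$ heuristically or, more cleanly, uses that on $E$ the norm controls both $\|(-\Delta)^{s/2}u\|_{L^2}^2$ and $\|u\|_{Q^{\alpha,2}}^4$, and that $L^{q_i}$ norms are controlled by a product of powers of these two quantities with exponents summing appropriately, so that Young's inequality absorbs $\tilde F(u)$ into $I(u)$ for $\|u\|$ large. This absorption argument is where I expect the main technical care to be needed, and it is essentially the content of \cite[Theorem 2.16]{Mercuri-Perera} transported to the fractional setting.

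Next I would establish weak lower semicontinuity of $\Phi$. The quadratic term $\frac12\|(-\Delta)^{s/2}u\|_{L^2}^2$ is convex and continuous, hence weakly lower semicontinuous; the Coulomb term $\frac{C_\alpha}{4}\iint u^2(x)u^2(y)/|x-y|^{N-\alpha}\,dx\,dy$ is weakly lower semicontinuous on $E$ (it is the fourth power of the uniformly convex norm $\|\cdot\|_{Q^{\alpha,2}}$, or alternatively one uses Fatou together with a.e.\ convergence along a subsequence after passing to the Coulomb-space weak limit). For the nonlocal perturbation $\tilde{F}$, I would use Lemma \ref{f~compact}: since $p_{\rm rad} < q_1, q_2 < 2^*_s$, the operator $\tilde{f}$ is a compact potential operator, and consequently $\tilde{F}$ is weakly continuous on $E$ (weak convergence $u_n \rightharpoonup u$ implies, along a subsequence, strong convergence in $L^{q_1} \cap L^{q_2}$ by compactness of the embedding for non-endpoint exponents, and the term $\int a(x)u_n\,dx \to \int a(x)u\,dx$ follows since $L^r$ convergence is also compact). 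Hence $\Phi = I - \tilde{F}$ is weakly lower semicontinuous as a sum of a weakly l.s.c.\ part and a weakly continuous part.

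With coercivity and weak lower semicontinuity in hand, and since $E$ is reflexive (being uniformly convex, as noted in the introduction), a minimizing sequence for $\inf_E \Phi$ is bounded, has a weakly convergent subsequence $u_n \rightharpoonup u$, and $\Phi(u) \leq \liminf \Phi(u_n) = \inf_E \Phi$, so $u$ is a global minimizer. Since $\Phi \in C^1(E,\mathbb{R})$, $u$ is a critical point, hence a weak solution of \eqref{P}. The main obstacle, as indicated, is the coercivity estimate: one must carefully exploit the two-scale structure of the norm on $E$ (the $L^2$ Gagliardo part scaling quadratically versus the Coulomb part scaling quartically) to absorb all three subscaled terms $\|u\|^{q_1}$, $\|u\|^{q_2}$, $\|u\|$, using that $q_1, q_2 < 2^*_{s,\alpha}$ and $r \leq 2^*_s$ together with the precise embedding constants; everything else is routine given the tools already assembled in Section \ref{section SOp}.
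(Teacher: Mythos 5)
Your route via coercivity plus the direct method is a valid alternative to the paper's, which instead proves that $\Phi$ satisfies the (PS) condition; the weak lower semicontinuity argument you give is fine. The gap is in the coercivity step. The inequality $\tilde F(u)\le C(\|u\|^{q_1}+\|u\|^{q_2}+\|u\|)$ is correct but insufficient on its own: since $q_1,q_2>p_{\rm rad}>2$, these powers outgrow the quadratic part of $I$, and $I$ grows only like $\|u\|^2$ along sequences in which the Gagliardo seminorm blows up while the Coulomb norm stays bounded, so $\|u\|^{q_i}$ cannot be absorbed there. You correctly suspect that the remedy is a two-norm interpolation of the form $\|u\|_{L^{q_i}}^{q_i}\lesssim\|(-\Delta)^{s/2}u\|_{L^2}^{2a_i}\|u\|_{Q^{\alpha,2}}^{4b_i}$ with $a_i,b_i\ge 0$ and $a_i+b_i<1$, after which Young's inequality absorbs $\tilde F$ into $I$; but you leave this inequality unstated and unverified, and checking $a_i+b_i<1$ is, after a scaling computation, precisely equivalent to $q_i<2^*_{s,\alpha}$, while $a_i\ge 0$ needs $q_i>p_{\rm rad}$. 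That is the mathematical content left open.

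The paper's proof avoids this explicit interpolation by using the scaling machinery already assembled. Lemma~\ref{o(t)}\eqref{o(t)(i)} gives $\tilde f(u_t)v_t=o(t^{4s+\alpha-N})\|v\|$ as $t\to\infty$ uniformly in $u$ on bounded sets, hence by \eqref{potential integral} $\tilde F(u_t)=o(t^{4s+\alpha-N})$. For $\|u\|$ large, write $u=(\tilde u)_{\tilde t}$ with $\tilde u=\pi(u)\in\mathcal M$ bounded and $\tilde t=I(u)^{1/(4s+\alpha-N)}\to\infty$ by \eqref{H10}; then $\Phi(u)=\tilde t^{4s+\alpha-N}-\tilde F\bigl((\tilde u)_{\tilde t}\bigr)=\tilde t^{4s+\alpha-N}\bigl(1-o(1)\bigr)\to\infty$. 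This is exactly what the cited \cite[Theorem 2.16]{Mercuri-Perera} formalizes, and it is what the tools of Section~\ref{section SOp} are designed to deliver. Once coercivity is obtained this way, your direct method is as good as the paper's (PS)-based conclusion, so the rest of the proposal stands.
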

\begin{proof}
    By the preceding lemma, the assumption on $f$ implies that $\tilde{f}$ is a compact operator, and by Lemma \ref{o(t)}\eqref{o(t)(i)} that
    $$
    \tilde f(u_t)v_t=o(t^{4s+\alpha-N})\|v\| \quad\mbox{as}\quad t\to\infty,
    $$ 
    uniformly in $u$ on bounded sets, for all $v\in E$. This implies that the functional $\Phi$ is coercive and, since it is bounded on bounded sets, we conclude that $\Phi$ is bounded from below. Moreover, $\Phi$ satisfies the (PS) condition in $E$ (see also \cite[Proposition 2.3]{Mercuri-Perera}): 
    in fact, since any (PS) sequence $(u_n)$ must be bounded in $E$ and $E$ is a reflexive space, it follows that $u_n\rightharpoonup u$ in $E$, up to a subsequence, and then $\tilde{f}(u_n)\to \tilde{f}(u)$ in $E^*$. So 
    $$
    \mathcal{A}(u_n)(u_n-u)=(\Phi'(u_n)(u_n-u)+\tilde{f}(u_n)(u_n-u))\to0,
    $$
    and by Lemma \ref{verify H7} we conclude that $u_n\to u$ in $E$, up to a subsequence. Therefore, $\Phi$ has a minimizer in $E$, hence a solution to \eqref{P}.
\end{proof}
%It is worth emphasizing that, at this stage, we do not yet have the necessary tools or assumptions to ensure that the solution obtained is nontrivial. In particular, the proposition above is proved under hypotheses that allow the nonlinearity \( f \) to be nonhomogeneous, which, if indeed the case, could imply that the solution differs from the zero function.

%%%%%%%%%%%%%%%%%%%%%%%%%%%%%%%%%%%%%%%%%%%%%%%%%%%%%%%%%%%%%%%%%%%%%%%%%%%%%%%%

\subsection{Scaled eigenvalue problem}\label{subsection SEP}

Let us concentrate on the scaled eigenvalue problem $\mathcal{A}=\lambda \mathcal{B}$, i.e. problem \eqref{NE}. 
The associated functional for this equation is $\Phi_\lambda: E\to\mathbb{R}$, given by
\begin{equation*}\label{Philambda}
\Phi_\lambda(u)=\frac{1}{2}\int_{\mathbb{R}^N}|(-\Delta)^\frac{s}{2}u|^2dx+\frac{C_\alpha}{4}\int_{\mathbb{R}^N}\int_{\mathbb{R}^N}\frac{u^2(x)u^2(y)}{|x-y|^{N-\alpha}}dxdy - \frac{\lambda}{2^*_{s,\alpha}}\int_{\mathbb{R}^N}|u|^{2^*_{s,\alpha}}dx.
\end{equation*}
When \eqref{NE} has a nontrivial solution $u\in E$ we say that $\lambda$ is an eigenvalue and $u$ is an eigenfuction associated with $\lambda$.
We have seen that conditions \eqref{H6}-\eqref{H12} are satisfied by  the scaled operators $\mathcal{A}$ and $\mathcal{B}$ and their potentials $I$ and $J$, respectively.  We will call the set $\sigma(\mathcal{A},\mathcal{B})$ of all eigenvalues of \eqref{NE} the spectrum of the pair of
scaled operators $(\mathcal{A},\mathcal{B})$. By \eqref{H6} and \eqref{H8}we know that $\sigma(\mathcal{A},\mathcal{B})\subset (0,\infty)$.

We set
$$
\mathcal{M}=\{u\in E: I(u)=1\},
$$
and define the projection
$
\pi:E\backslash\{0\}\to\mathcal{M}
$
as
\begin{equation}\label{def pi}
    \pi(u)=u_{t_u}
\end{equation}
where $t_u=(I(u))^{-1/(4s+\alpha-N)}$. 
We also define
$$
\Psi(u)=\frac{1}{J(u)},\quad u\in E\backslash\{0\},\quad\mbox{and}\quad \Tilde{\Psi}=\Psi_{|_\mathcal{M}}.
$$
We note that $\mathcal{M}$ is a complete, symmetric, and bounded $C^1$-Finsler manifold,
and eigenvalues of problem \eqref{NE} coincide with critical values of $\Tilde{\Psi}$, as proved in \cite[Proposition 2.5]{Mercuri-Perera}. 
Let $\mathcal{F}$ denote the class of symmetric subsets of $\mathcal{M}$, and let $i(Y)$ be the cohomological index of $Y \in\mathcal F$, of Fadell and Rabinowitz (see \cite{Fadell-Rabinowitz-1978}). For $k\geq1$, let
$$
\mathcal{F}_k = \{Y \in \mathcal{F}: i(Y) \geq k\}
$$
and set
$$
\lambda_k := \inf_{Y\in\mathcal{F}_k}\sup_{u\in Y}\Psi(u).
$$
The next theorem is an immediate consequence of an abstract result proved in \cite{Mercuri-Perera}. It ensures that $(\lambda_k)$ is indeed a sequence of \emph{nonlinear eigenvalues} for \eqref{NE} and have some additional information, which are stated below.
\begin{theorem}[\cite{Mercuri-Perera}, Theorem 1.3]\label{lambdak}
Assume $s\in(0,1)$, $\alpha\in(1,N)$ and $4s+\alpha> N$. Then $\lambda_k\nearrow \infty$ is a sequence of eigenvalues of \eqref{NE}. Moreover,
\begin{enumerate}[label={(\roman*)},  % what appears in the margin
                  ref=(\roman*)]               % what \ref returns
\item\label{lambdak(i)} The first eigenvalue is given by
$$
\lambda_1=\min_{u\in\mathcal{M}}\tilde{\Psi}(u)>0.
$$
\item\label{lambdak(ii)}  If $\lambda_k = \cdots = \lambda_{k+m-1} = \lambda$, then $i(E_\lambda)\geq m$, where $E_\lambda$ is the set of eigenfunctions associated with $\lambda$ that lie on $\mathcal{M}$.
\item\label{lambdak(iii)} If $\lambda_k<\lambda<\lambda_{k+1}$, then
$$i(\tilde{\Psi}^{\lambda_k} ) = i(\mathcal{M}\backslash\tilde{\Psi}_{\lambda}) = i(\tilde{\Psi}^{\lambda} ) 
= i(\mathcal{M}\backslash \tilde{\Psi}_{\lambda_{k+1}}) = k,$$
where 
$\tilde{\Psi}^a=\{ u\in \mathcal{M} : \tilde{\Psi}(u) \leq a\}$ and 
$\tilde{\Psi}_a = \{ u\in \mathcal{M} : \tilde{\Psi}(u) \geq a\}$ for $a\in\mathbb{R}$.
\end{enumerate}
\end{theorem}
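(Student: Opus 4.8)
The plan is to read the present concrete situation as an instance of the abstract variational framework of \cite{Mercuri-Perera} and then apply \cite[Theorem 1.3]{Mercuri-Perera} directly; the proof is therefore the verification that every hypothesis of that framework is met here, together with a short discussion of the two points — lower boundedness of the first eigenvalue and the divergence $\lambda_k\to\infty$ — in which compactness is used. All the structural ingredients have in fact already been assembled: by the Lemma at the start of this section the map $(u,t)\mapsto u_t=t^\theta u(t\cdot)$ is a scaling on the reflexive, uniformly convex space $E$, so that \eqref{H1}--\eqref{H5} hold with exponent $\sigma=4s+\alpha-N$, which is positive precisely because $4s+\alpha>N$; the operators $\mathcal A,\mathcal B$ are odd potential operators, with potentials $I,J$, that are scaled with this same $\sigma$; and \eqref{H6}--\eqref{H12} have been checked above — \eqref{H6}, \eqref{H8} and \eqref{H10} directly, \eqref{H9} from the compact embedding $E\hookrightarrow L^{2^*_{s,\alpha}}(\mathbb R^N)$ (available because $\alpha>1$ and $4s+\alpha\ne N$), \eqref{H11} from the strict monotonicity of $t\mapsto I(tu)$, and the two delicate ones, \eqref{H7} and \eqref{H12}, in Lemmas \ref{verify H7} and \ref{verify H12}, the latter resting on the Pohozaev identity of Lemma \ref{pohozaev}.

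Granting these, the abstract theorem yields all the assertions at once. By \eqref{H10}--\eqref{H11} and reflexivity, $\mathcal M=\{u\in E:I(u)=1\}$ is a complete, symmetric, bounded $C^1$-Finsler manifold, the projection \eqref{def pi} is well defined, and $\tilde\Psi=\Psi|_{\mathcal M}$ is even; since $J$ is bounded on $\mathcal M$ by the embedding $E\hookrightarrow L^{2^*_{s,\alpha}}(\mathbb R^N)$, $\tilde\Psi=1/J$ is bounded below by a positive constant, and \eqref{H7} together with the compactness in \eqref{H9} gives that $\tilde\Psi$ satisfies the Palais--Smale condition on $\mathcal M$. The Fadell--Rabinowitz index being continuous, monotone and subadditive \cite{Fadell-Rabinowitz-1978}, the usual min--max argument on $C^1$-Finsler manifolds then shows each $\lambda_k$ is a critical value of $\tilde\Psi$, hence an eigenvalue of \eqref{NE}; \ref{lambdak(i)} is the case $k=1$, where $\mathcal F_1$ contains every nonempty symmetric set, in particular pairs of antipodal points, so the infimum is attained; \ref{lambdak(ii)} is the standard lower bound $i(E_\lambda)\ge m$ at an eigenvalue of multiplicity $m$, obtained from a deformation that would otherwise push $\lambda_{k+m-1}$ below $\lambda$; and \ref{lambdak(iii)} follows from the index computations for the sub- and superlevel sets $\tilde\Psi^a,\tilde\Psi_a$ when $a$ is a regular value between two consecutive eigenvalues.

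It remains to explain $\lambda_k\nearrow\infty$, and this is the step where compactness is indispensable. Arguing by contradiction, suppose $\lambda_k\le c$ for all $k$. Then for each $k$ there is a symmetric $Y_k\subset\mathcal M$ with $i(Y_k)\ge k$ and $\sup_{Y_k}\Psi<c+1$, i.e.\ $Y_k\subset\tilde\Psi^{\,c+1}$, so monotonicity of the index gives $i(\tilde\Psi^{\,c+1})=\infty$. On the other hand, by the Palais--Smale condition the set of eigenfunctions on $\mathcal M$ with eigenvalue $\le c+1$ is compact and hence of finite index, and the negative pseudo-gradient flow of $\tilde\Psi$ deformation-retracts $\tilde\Psi^{\,c+1}$, within $\mathcal M$, onto an arbitrarily small symmetric neighborhood of that compact set; continuity and monotonicity of the index then force $i(\tilde\Psi^{\,c+1})<\infty$, a contradiction. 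The compactness used here is exactly that of $E\hookrightarrow L^{2^*_{s,\alpha}}(\mathbb R^N)$.

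At this stage I do not expect any step to be genuinely hard: the only delicate inputs — the Pohozaev identity behind \eqref{H12} and the global compactness behind \eqref{H7} and \eqref{H9} — are already in place, so what is left is matching our objects to the hypotheses of \cite[Theorem 1.3]{Mercuri-Perera}. The main obstacle, were one to demand a self-contained treatment, would be re-deriving the deformation lemma and the index estimates on the Finsler manifold $\mathcal M$; but since this is precisely the content of the cited abstract theorem, invoking it is the appropriate route here.
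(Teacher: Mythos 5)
Your proposal is correct and matches the paper's approach exactly: the paper itself simply verifies hypotheses \eqref{H1}--\eqref{H12} in the lemmas preceding the statement and then invokes the abstract result \cite[Theorem 1.3]{Mercuri-Perera}, which is precisely what you do. The extra paragraphs you supply on the internal mechanics of that abstract theorem (Palais--Smale on $\mathcal M$, the min--max construction, the deformation-and-index contradiction giving $\lambda_k\to\infty$) are a correct sketch but are not needed once the cited theorem is invoked.
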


%%%%%%%%%%%%%%%%%%%%%%%%%%%%%%%%%%%%%%%%%%%%%%%%%%%%%%%%%%%%%%%%%%%%%%%%%%%%%%%%%%%%%%%%%

\section{Solutions to subcritical problems}\label{sec subcritical}

Recall that if $u_0$ is an isolated critical point for $\Phi$, with $\Phi(u_0)=c$, the critical groups are
defined by
$$
C^l(\Phi,u_0) = H^l(\Phi^c\cap U,\Phi^c\cap (U\backslash\{u_0\})),\quad l\geq0,
$$
where  $\Phi^c = \{u\in E : \Phi(u) \leq c\}$, $U\subset E$ is an open set such that $u_0$ is the unique critical point of $\Phi$ in  $U$ and $H^*$ denotes cohomology with $\mathbb{Z}_2$ coeﬃcients. 

As it is customary, in order to find multiple solutions to our PDE we use critical groups to distinguish them, and in particular from the trivial one. We focus first on the proofs in the case of subcritical nonlinearities $f$.

\begin{proof-th} \textbf{\textit{of Theorem \ref{THsubscaled}.}} 
\phantomsection \addcontentsline{toc}{subsection}{Proof of Theorem~\ref{THsubscaled}}
    Assumptions \eqref{f=B+g}-\eqref{ggrowthc} allows us to use Lemma \ref{f~compact}, by which $\tilde{f}$ and $\tilde{g}$ are compact.  By \eqref{fsubscaled} and Lemma \ref{o(t)}\eqref{o(t)(i)}-\eqref{o(t)(ii)}, it holds that
    $$
    \tilde{f}(u_t)v_t=o(t^{4s+\alpha-N})\|v\| \quad\mbox{as}\quad t\to\infty\quad\mbox{and}\quad \tilde{g}(u_t)v_t=o(t^{4s+\alpha-N})\|v\| \quad\mbox{as}\quad t\to0,
    $$
    uniformly in $u$ on bounded sets for all $v\in E$. The conclusion follows then by \cite[Theorem 2.17]{Mercuri-Perera}.
\end{proof-th}

To see an application of this result, let   $\lambda\in \mathbb{R}\backslash\sigma(\mathcal{A},\mathcal{B}),$  $0<\gamma<\min\{2^*_{s}-2^*_{s,\alpha},2^*_{s,\alpha}(2^*_{s,\alpha}-p_{\rm rad})\}$ and $f:\mathbb{R}\to\mathbb{R}$ defined as
$$
f(t)=\lambda \frac{|t|^{2^*_{s,\alpha}-2}t}{1+|t|^\gamma}.
$$
Since $|t|^\frac{\gamma}{2^*_{s,\alpha}}\leq\frac{1}{(2^*_{s,\alpha})'}+\frac{1}{2^*_{s,\alpha}}|t|^{\gamma}\leq 1+|t|^{\gamma}$  we see that
$
|f(t)|\leq |\lambda||t|^{q_1-1},
$
where $q_1=2^*_{s,\alpha}-\gamma/2^*_{s,\alpha}\in(p_{\rm{rad}},2^*_{s,\alpha})$. Also, 
$$
|g(t)|=|f(t)-\lambda |t|^{2^*_{s,\alpha}-2}t|=|\lambda|.|t|^{2^*_{s,\alpha}-1}\frac{|t|^\gamma}{1+|t|^\gamma}\leq |\lambda|.|t|^{q_3-1},\,\,\forall t\in\mathbb{R},
$$
where $q_3=2^*_{s,\alpha}+\gamma\in(2^*_{s,\alpha},2^*_{s})$. Thus, if $\lambda$ is not an eigenvalue of  \eqref{NE}, Theorem \ref{THsubscaled} provides the existence of at least one or, respectively, two solutions to \eqref{P}, when $\lambda>\lambda_1$ or $\lambda>\lambda_2.$\\

The next proof deals with the asymptotically scaled case.

\begin{proof-th} \textbf{\textit{of Theorem \ref{th subcri AS}.}}\addcontentsline{toc}{subsection}{Proof of Theorem~\ref{th subcri AS}}
Note that by Lemma \ref{f~compact}, $\tilde{g}$ is a compact operator, and Lemma \ref{o(t)}\eqref{o(t)(iii)} yields
    $$
    \tilde{g}(u_t)v_t=o(t^{4s+\alpha-N})\|v\| \quad\mbox{as}\quad t\to\infty,
    $$
    uniformly in $u$ on bounded sets for all $v\in E$. The conclusion then follows by  \cite[Theorem 2.25]{Mercuri-Perera}. 
\end{proof-th}
%As a consequence of this result, we have the following alternative.

%\begin{corollary}\label{Cor alternative}
%    Consider $\lambda\in\mathbb{R}$. Then, one of the following options occurs:
%    \begin{enumerate}
 %       \item[(i)] $\lambda\in\sigma(\mathcal{A},\mathcal{B})$, which means that \eqref{NE} has a nontrivial solution $u\in E$.
  %      \item[(ii)] $\lambda\notin\sigma(\mathcal{A},\mathcal{B})$, and then
  %      \begin{equation*}
%(-\Delta)^su+\left(I_\alpha*u^2\right)u= \lambda|u|^{2^*_{s,\alpha}}u+h(x), \quad\textrm{in}\quad\mathbb{R}^N
%\end{equation*}
%has a solution for any $h\in L^{r'}(\mathbb{R}^N)$, with $p_{\rm rad}<r\leq 2^*_s$.
%does h needs to be radial?
 %   \end{enumerate}
%\end{corollary}

As for the superscaled case, we recall that our approach requires that $g$ satisfies in addition the Ambrosetti-Rabinowitz condition \eqref{Gsuper}.
We need the following compactness result, which says that the functional $\Phi$ satisfies the Palais-Smale condition.

\begin{lemma}\label{PS-superscaled}
 Consider $N/4<s<1$ and assume that \eqref{f=B+g}-\eqref{ggrowthc} and \eqref{Gsuper} hold. Then $\Phi$ satisfies the $(PS)$ condition. 
\end{lemma}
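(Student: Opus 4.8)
The plan is to show that any Palais--Smale sequence $(u_n)$ for $\Phi$ is bounded in $E$, and then to upgrade weak convergence to strong convergence using the abstract machinery already available, in particular Lemma~\ref{verify H7}. To obtain boundedness, I would combine the Ambrosetti--Rabinowitz condition \eqref{Gsuper} with the scaling structure of the remaining terms. Writing $f(|x|,t)=\lambda|t|^{2^*_{s,\alpha}-2}t+g(|x|,t)$ as in \eqref{f=B+g}, one computes, for a (PS)$_c$ sequence,
\[
\Phi(u_n)-\frac{1}{q}\,\Phi'(u_n)u_n
=\left(\frac12-\frac1q\right)\|(-\Delta)^{\frac s2}u_n\|_{L^2}^2
+C_\alpha\left(\frac14-\frac1q\right)\iint_{\mathbb{R}^N\times\mathbb{R}^N}\frac{u_n^2(x)u_n^2(y)}{|x-y|^{N-\alpha}}\,dxdy
+\left(\frac{\lambda}{q}-\frac{\lambda}{2^*_{s,\alpha}}\right)\int_{\mathbb{R}^N}|u_n|^{2^*_{s,\alpha}}dx
+\int_{\mathbb{R}^N}\left(\frac1q g(|x|,u_n)u_n-G(|x|,u_n)\right)dx .
\]
Here the hypothesis $N/4<s<1$ forces $2^*_s>4$, hence $q\in(4,2^*_s)$ makes \emph{both} coefficients $1/2-1/q$ and $1/4-1/q$ strictly positive; the last integral is nonnegative by \eqref{Gsuper}; and the $|u_n|^{2^*_{s,\alpha}}$ term, although possibly of unfavourable sign, is controlled by the compactness of the embedding $E\hookrightarrow L^{2^*_{s,\alpha}}(\mathbb{R}^N)$ once we know $(u_n)$ is bounded. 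The main obstacle is precisely to close this circular-looking argument: the $\lambda$-term is \emph{not} sign-definite in general, so I cannot simply discard it. The standard device is to argue by contradiction---assume $\rho_n:=\|u_n\|\to\infty$, set $v_n=u_n/\rho_n$, and note that since $\|u\|$ controls the two homogeneous pieces of $I$ in a two-sided way (degree $2$ and degree $4$), the rescaled sequence $v_n$ has a nonzero weak limit only if the quartic Coulomb part does not dominate. One then uses $E\hookrightarrow L^{2^*_{s,\alpha}}(\mathbb{R}^N)$ compactly to pass to the limit in the rescaled identity and reach a contradiction with $\lambda\notin\sigma(\mathcal A,\mathcal B)$ being irrelevant here---rather, with \eqref{Gsuper} which, via $c_0|t|^q\le G$, forces $\int|u_n|^q$ to grow and overwhelms the subscaled $\lambda|u_n|^{2^*_{s,\alpha}}$ contribution because $q>4>2^*_{s,\alpha}\cdot(\text{scaling exponent balance})$; concretely $\int|u_n|^q\lesssim \Phi(u_n)-\frac1q\Phi'(u_n)u_n = O(1+\rho_n)$ while the remaining positive definite quadratic-plus-quartic part is bounded, and this pins down $\rho_n$.

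Once boundedness is established, the conclusion is routine and mirrors the proof of Proposition~\ref{minimization}. Up to a subsequence $u_n\rightharpoonup u$ in $E$; by Lemma~\ref{f~compact} (applicable since the exponents in \eqref{fgrowth}, as forced by \eqref{f=B+g}--\eqref{ggrowthc}, lie in $(p_{\rm rad},2^*_s)$ after absorbing the critical scaled term into the compact range $p=2^*_{s,\alpha}$) we have $\tilde f(u_n)\to\tilde f(u)$ in $E^*$. Hence
\[
\mathcal A(u_n)(u_n-u)=\Phi'(u_n)(u_n-u)+\tilde f(u_n)(u_n-u)\to 0,
\]
and Lemma~\ref{verify H7} yields $u_n\to u$ strongly in $E$ along a subsequence, which is exactly the $(PS)$ condition. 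A small technical point is to verify that the piece $\lambda|u|^{2^*_{s,\alpha}-2}u$ of $f$, together with $g$, indeed defines a \emph{compact} operator: this follows from the compactness of $E\hookrightarrow L^{2^*_{s,\alpha}}(\mathbb{R}^N)$ (here $\alpha>1$ is used) and $E\hookrightarrow L^{q_3}(\mathbb{R}^N)$, $E\hookrightarrow L^{q_4}(\mathbb{R}^N)$ with $q_3,q_4\in(2^*_{s,\alpha},2^*_s)$ from \eqref{ggrowthc}, all of which are in the compactness range by the embedding results quoted after \eqref{eq4}.

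I expect the boundedness step to be where essentially all the work lies, and the reason the restriction $N/4<s<1$ is imposed: it is exactly the inequality $2^*_s>4$ that makes the Coulomb quartic term cooperate with the Ambrosetti--Rabinowitz exponent $q<2^*_s$ in the estimate above. If $2^*_s\le 4$ the sign of $1/4-1/q$ could fail and the argument would collapse, which is consistent with the authors' remark that removing this condition is an open issue. I would therefore present the estimate for $\Phi(u_n)-\frac1q\Phi'(u_n)u_n$ first, extract from it simultaneously the bound on $\|(-\Delta)^{s/2}u_n\|_{L^2}$, on the Coulomb energy, and on $\int|u_n|^q$, then handle the indefinite $\lambda$-term by interpolating $L^{2^*_{s,\alpha}}$ between $L^2$-type control and the $L^q$ bound (noting $2^*_{s,\alpha}<q$ and $2^*_{s,\alpha}<2^*_s$), and finally invoke Lemma~\ref{verify H7} as above.
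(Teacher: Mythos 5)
Your second step (once $(u_n)$ is bounded, conclude strong convergence via compactness of $\tilde f$ and Lemma~\ref{verify H7}) is correct and matches the paper. The gap is in the boundedness step, which is where, as you yourself observe, all the work lies.

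Your central device is the naive normalization $v_n=u_n/\rho_n$ together with an $L^{2^*_{s,\alpha}}$-interpolation, but this does not close. Two concrete problems. First, the sign-indefinite term $\lambda\bigl(\tfrac1q-\tfrac1{2^*_{s,\alpha}}\bigr)\int|u_n|^{2^*_{s,\alpha}}dx$ in the Ambrosetti--Rabinowitz quantity $\Phi(u_n)-\tfrac1q\Phi'(u_n)u_n$ cannot, in general, be absorbed into the quadratic and Coulomb pieces: the best generic control is $\int|u_n|^{2^*_{s,\alpha}}\le \tfrac{2^*_{s,\alpha}}{\lambda_1}\,I(u_n)$, and for $|\lambda|$ large the resulting coefficient exceeds $\min\{\tfrac12-\tfrac1q,\tfrac14-\tfrac1q\}$, so the cancellation you rely on simply fails. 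Second, the inequality you assert, $\int|u_n|^q\lesssim \Phi(u_n)-\tfrac1q\Phi'(u_n)u_n$, is not a consequence of \eqref{Gsuper}: that condition gives $\tfrac1q g(|x|,u_n)u_n-G(|x|,u_n)\ge 0$ and $c_0|u_n|^q\le G(|x|,u_n)$, but neither yields a bound of $\int|u_n|^q$ by the AR quantity without already having absorbed the $\lambda$-term. The argument is therefore circular exactly where you flagged it might be, and the interpolation remark does not resolve the circularity.

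The paper's proof avoids this entirely by \emph{not} normalizing by the norm. Instead it projects $u_n$ onto the scaling manifold $\mathcal M$ via $\tilde u_n=\pi(u_n)=(u_n)_{t_n}$ with $t_n=I(u_n)^{-1/(4s+\alpha-N)}$, and sets $\tilde t_n=t_n^{-1}=I(u_n)^{1/(4s+\alpha-N)}\to\infty$. The crucial point is that under the scaling $u_t=t^\theta u(t\cdot)$ every term of $\Phi_\lambda$ is homogeneous of degree $4s+\alpha-N$ in $t$, while $\int|u_t|^q\,dx=t^{\theta q-N}\int|u|^q\,dx$ carries the \emph{strictly larger} exponent $\theta q-N>4s+\alpha-N$ (because $q>2^*_{s,\alpha}$). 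Rewriting $\Phi(u_n)=c+o(1)$ and $\Phi'(u_n)u_n=o(\|u_n\|)$ in terms of $\tilde u_n$ and $\tilde t_n$, this exponent gap forces $\|\tilde u_n\|_{L^q}\to0$, then $\|\tilde u_n\|_{L^{2^*_{s,\alpha}}}\to0$ by interpolation, and combining the two identities (multiply the first by $q$, subtract, and use $qG-gu\le0$) gives $\|\tilde u_n\|\to0$, contradicting $\tilde u_n\in\mathcal M$. The role of $N/4<s<1$ is the one you identified ($2^*_s>4$, so a legitimate $q\in(4,2^*_s)$ exists and the coefficients $\tfrac q2-1$, $\tfrac q4-1$ in the subtracted identity are positive), but the mechanism that defeats the indefinite $\lambda$-term is the scaling exponent gap, not any direct absorption. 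If you want to repair your write-up, replace the $v_n=u_n/\rho_n$ rescaling with the $\pi$-projection and the $t$-scaling, and carry out the two-identity bookkeeping explicitly.
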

\begin{proof}
Let $c\in\mathbb{R}$ and $(u_n)$ in $E$ a $(PS)_c$ sequence for $\Phi,$ namely 
$$
\Phi(u_n)=c+o(1)\quad\mbox{and}\quad \|\Phi'(u_n)\|_{E^*}=o(1),\quad\mbox{as}\,\,n\to\infty.
$$
To show that it contains a strongly convergent subsequence, we first show that it is bounded in $E$. In fact, suppose, arguing by contradiction, that $\|u_n\|\to\infty$ for a subsequence. We use the notation \eqref{def pi} and set
$$
t_n=t_{u_n}=\frac{1}{I(u_n)^{1/(4s+\alpha-N)}},\quad\tilde{u}_n=\pi(u_n)=(u_n)_{t_n}\quad\mbox{and}\quad \tilde{t}_n=\frac{1}{t_n}=I(u_n)^{1/(4s+\alpha-N)}.
$$
Since  $I$ is coercive by \eqref{H10}, we have $\tilde{t}_n\to\infty$.
Since $ \mathcal{M}$ is bounded,  $(\tilde{u}_n)$ is, too. Since $(\tilde{u}_n)_{\tilde{t}_n}=u_n$ by \eqref{H1}, we may estimate
\begin{equation}\label{unOt}
    \|u_n\|=\|(\tilde{u}_n)_{\tilde{t}_n}\|\leq\max\{(\tilde{t}_n)^{(4s+\alpha-N)/2},(\tilde{t}_n)^{(4s+\alpha-N)/4}\}I(\tilde{u}_n)=O((\tilde{t}_n)^{(4s+\alpha-N)/2})
\end{equation}
as $n\to\infty$. Observe that, since $\Phi(u_n)=c+o(1)$, we may write $\Phi_\lambda(u_n)=\tilde{G}(u_n)+c+o(1)$. Therefore, recalling that
\begin{eqnarray*}
 \Phi_\lambda(u_t)&=&\frac{1}{2}\int_{\mathbb{R}^N}|(-\Delta)^\frac{s}{2}(u_t)|^2dx+\frac{C_\alpha}{4}\iint_{\mathbb{R}^N\times\mathbb{R}^N}\frac{(u_t)^2(x)(u_t)^2(y)}{|x-y|^{N-\alpha}}dxdy -\frac{\lambda}{2^*_{s,\alpha}} \int_{\mathbb{R}^N}|u_t|^{2^*_{s,\alpha}}dx\\
 &=&I(u_t)-\lambda J(u_t)= t^{4s+\alpha-N}\Phi_\lambda(u)
\end{eqnarray*}
for all $u\in E$ and $t\geq0$, we also have, using \eqref{Gsuper}, that
\begin{eqnarray}
(\tilde{t}_n)^{4s+\alpha-N}\Phi_\lambda(\tilde u_n)&=&\int_{\mathbb{R}^N}G(|x|,u_n)dx+c+o(1)\label{eqPhiLn}\\
&\geq &c_0\int_{\mathbb{R}^N}|u_n
|^qdx+c+o(1)= c_0\tilde{t}_n^{\theta q-N}\int_{\mathbb{R}^N}|\tilde{u}_n|^qdx+c+o(1).\nonumber
\end{eqnarray}
As $q>2^*_{s,\alpha}$, we note that $\theta q-N>4s+\alpha-N>0$ and hence, by the boundedness of $(\Phi_\lambda(\tilde{u}_n))$ we get $\|\tilde{u}_n\|_{L^q}\to0$. Picking some $p\in (p_{\rm rad},2^*_{s,\alpha})$, by the embedding $E\hookrightarrow L^p(\mathbb{R}^N)$,  $\|(\tilde{u}_n)\|_{L^p}$ is bounded in $n$ and therefore, by interpolation, $\|\tilde{u}_n\|_{L^{2^*_{s,\alpha}}}\to0$.
On the other hand, by
$$
\Phi_\lambda'(u_t)u_t=\mathcal{A}(u_t)u_t-\lambda \mathcal{B}(u_t)u_t=t^{4s+\alpha-N}\Phi_\lambda'(u)u,\quad \forall u\in E, \forall t\geq0,
$$ 
$\Phi'(u_n)u_n=o(\|u_n\|)$ and \eqref{unOt}, we get
\begin{eqnarray}\label{eqPhi'Ln}
(\tilde{t}_n)^{4s+\alpha-N}\Phi'_\lambda(\tilde u_n)\tilde{u}_n=\int_{\mathbb{R}^N}g(|x|,u_n)u_ndx+o(\|u_n\|)
=\int_{\mathbb{R}^N}g(|x|,u_n)u_ndx+o((\tilde{t}_n)^{(4s+\alpha-N)/2}).
\end{eqnarray}
Finally, multiplying \eqref{eqPhiLn} by $q$ and subtracting \eqref{eqPhi'Ln} we conclude that
\begin{eqnarray*}
  & &\left(\frac{q}{2}-1\right)\int_{\mathbb{R}^N}|(-\Delta)^\frac{s}{2}\tilde u_n|^2dx+C_\alpha\left(\frac{q}{4}-1\right) \iint_{\mathbb{R}^N\times\mathbb{R}^N}\frac{\tilde u_n^2(x)\tilde u_n^2(y)}{|x-y|^{N-\alpha}}dxdy \\
  &&\qquad\qquad=\lambda\left(\frac{q}{2^*_{s,\alpha}} -1\right)\int_{\mathbb{R}^N}|\tilde u_n|^{2^*_{s,\alpha}}dx 
  + (\tilde{t}_n)^{-(4s+\alpha-N)}\int_{\mathbb{R}^N}\left[qG(|x|,u_n)-g(|x|,u_n)u_n\right]dx+o(1)\\
  && \qquad\qquad\leq\,\, o(1),
\end{eqnarray*}
which yields $\|\tilde u_n\|=o(1)$, as $q>4>2$. Since $\tilde u_n\in\mathcal{M},$ this is a contradiction, and therefore the (PS) sequence $(u_n)$ is bounded in $E$. Since by Lemma \ref{f~compact} $\tilde{f}$ is a compact operator, we may use \cite[Proposition 2.3]{Mercuri-Perera} by which $(u_n)$ has a convergent subsequence, and this concludes the proof.
\end{proof}

We now deal with the proof in the superscaled case, by applying \cite[Theorem 2.27]{Mercuri-Perera}, which follows from a new linking theorem; see \cite[Theorem 2.24]{Mercuri-Perera}.

\begin{proof-th} \textbf{\textit{of Theorem \ref{Th-superscaled-not Eigenvalue}.}}\addcontentsline{toc}{subsection}{Proof of Theorem~\ref{Th-superscaled-not Eigenvalue}}
We claim that 
\begin{equation}\label{F~infty}
    \lim_{t\to\infty}\dfrac{\tilde{F}(u_t)}{t^{4s+\alpha-N}}=\infty,
\end{equation}
uniformly in $u$ on compact subsets of $\mathcal{M},$
 where $\tilde{F}:E\to\mathbb{R}$ is the potential of $\tilde{f},$ defined as
$$
\tilde{F}(u)=\int_{\mathbb{R}^N}F(|x|,u)dx=\frac{\lambda}{2^*_{s,\alpha}}\int_{\mathbb{R}^N}|u|^{2^*_{s,\alpha}}dx+\int_{\mathbb{R}^N}G(|x|,u)dx.
$$
In fact, \eqref{Gsuper} yields
$$
\tilde{F}(u_t)\geq\frac{\lambda}{2^*_{s,\alpha}}\int_{\mathbb{R}^N}|u_t|^{2^*_{s,\alpha}}dx+c_0\int_{\mathbb{R}^N}|u_t|^qdx
=\frac{\lambda t^{4s+\alpha-N}}{2^*_{s,\alpha}}\int_{\mathbb{R}^N}|u|^{2^*_{s,\alpha}}dx+c_0t^{q\theta-N}\int_{\mathbb{R}^N}|u|^qdx,
$$
and the claim now follows as $q>2^*_{s,\alpha},$ namely $q\theta-N>4s+\alpha-N>0$. Since by Lemma \ref{PS-superscaled}, $\Phi$ satisfies the (PS) condition, we are now in the position to apply \cite[Theorem 2.27]{Mercuri-Perera}, and this concludes the proof.
\end{proof-th}

In the next proofs, the additional challenge consists in removing the requirement that $\lambda\notin \sigma(\mathcal{A},\mathcal{B}).$ We deal with this case, assuming  $\tilde{G}$ to be negative in the subscaled case, as this allows us to apply a recent abstract result based on a notion of local linking; see \cite[Theorem 2.30]{Mercuri-Perera}.

We say that $\Phi$ has a scaled local linking near the origin in dimension $k\geq1$ if
there exist two nonempty, closed, symmetric and disjoint subsets $A_0,B_0\subset\mathcal{M}$ such that
$$
i(A_0)=i(\mathcal{M}\backslash B_0)=k
$$
and $\rho>0$ satisfying
$$
\begin{cases}
    \Phi(u_t)\leq0,&\forall u\in A_0\,\,\mbox{and}\,\, 0\leq t\leq\rho,\\
      \Phi(u_t)>0,&\forall u\in B_0\,\,\mbox{and}\,\, 0< t\leq\rho.
\end{cases}
$$
\begin{lemma}\label{local-linking}
  Assume \eqref{f=B+g}-\eqref{ggrowthc}. Then $\Phi$ has a scaled local linking near the origin in dimension
$k$ in either of the following cases:
\begin{enumerate}
   \item[(i)]  $\lambda_k<\lambda\leq\lambda_{k+1}$ and $G(|x|,t)<0$ a.e. $x\in\mathbb{R}^N$ and all $t\in\mathbb{R}\backslash\{0\}$. 
   \item[(ii)] $\lambda_k\leq\lambda <\lambda_{k+1}$ and $G(|x|,t)\geq0$ a.e. $x\in\mathbb{R}^N$ and all $t\in\mathbb{R}$.
\end{enumerate}
\end{lemma}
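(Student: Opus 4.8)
The plan is to start from the decomposition $\Phi=\Phi_\lambda-\tilde G$, which follows from \eqref{f=B+g} since then $F(|x|,t)=\tfrac{\lambda}{2^*_{s,\alpha}}|t|^{2^*_{s,\alpha}}+G(|x|,t)$, and to combine it with the scaling identity $\Phi_\lambda(u_t)=t^{4s+\alpha-N}\Phi_\lambda(u)$ together with the fact that $\Phi_\lambda(u)=I(u)-\lambda J(u)=1-\lambda/\tilde\Psi(u)$ for $u\in\mathcal M$. This gives, for all $u\in\mathcal M$ and $t\ge0$,
\[
\Phi(u_t)=t^{4s+\alpha-N}\Bigl(1-\frac{\lambda}{\tilde\Psi(u)}\Bigr)-\tilde G(u_t),
\]
so the sign of $\Phi(u_t)$ for small $t$ is dictated by the position of $\tilde\Psi(u)$ relative to $\lambda$, up to the error term $\tilde G(u_t)$. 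The first step is therefore to show that this error is negligible as $t\to0$: writing $\tilde G(u_t)=\int_0^1\tilde g\bigl((\tau u)_t\bigr)(u)_t\,d\tau$ by \eqref{potential integral} and \eqref{H2}, and applying Lemma~\ref{o(t)}\eqref{o(t)(ii)} with the bounded set $\{\tau u:\tau\in[0,1],\,u\in\mathcal M\}$ (recall $\mathcal M$ is bounded), the integrand is $o(t^{4s+\alpha-N})$ uniformly in $\tau$ and in $u\in\mathcal M$, and integrating in $\tau$ yields $\tilde G(u_t)=o(t^{4s+\alpha-N})$ as $t\to0$, uniformly for $u\in\mathcal M$.

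For case (i), where $\lambda_k<\lambda\le\lambda_{k+1}$ and $G<0$, I would take $A_0=\tilde\Psi^{\lambda_k}$ and $B_0=\tilde\Psi_\lambda$. These are closed (continuity of $\tilde\Psi$), symmetric (as $\mathcal M$ is symmetric and $\tilde\Psi$ even), nonempty and disjoint (since $\lambda_k<\lambda$), and, using $\lambda_k<\lambda_{k+1}$ and Theorem~\ref{lambdak}\ref{lambdak(iii)}, $i(A_0)=i(\tilde\Psi^{\lambda_k})=k$ and $i(\mathcal M\setminus B_0)=i(\mathcal M\setminus\tilde\Psi_\lambda)=k$ (when $\lambda=\lambda_{k+1}$, replace the last identity by $i(\mathcal M\setminus\tilde\Psi_{\lambda_{k+1}})=k$, also furnished by that theorem). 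On $A_0$, $\tilde\Psi(u)\le\lambda_k<\lambda$ gives $1-\lambda/\tilde\Psi(u)\le1-\lambda/\lambda_k<0$, so $\Phi_\lambda(u_t)\le-c\,t^{4s+\alpha-N}$ with $c=\lambda/\lambda_k-1>0$; with the uniform remainder estimate this gives $\Phi(u_t)\le0$ for $0\le t\le\rho$ once $\rho>0$ is small enough (at $t=0$, $u_0=0$ and $\Phi(0)=0$). On $B_0$, $\tilde\Psi(u)\ge\lambda$ gives $\Phi_\lambda(u_t)\ge0$, while $G<0$ and $u_t\not\equiv0$ for $t>0$ force $\tilde G(u_t)=\int_{\mathbb R^N}G(|x|,u_t)\,dx<0$; hence $\Phi(u_t)>0$ for $0<t\le\rho$.

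For case (ii), where $\lambda_k\le\lambda<\lambda_{k+1}$ and $G\ge0$, I would instead take $A_0=\tilde\Psi^\lambda$ and $B_0=\tilde\Psi_{\lambda_{k+1}}$, again closed, symmetric, nonempty and disjoint (since $\lambda<\lambda_{k+1}$), with $i(A_0)=i(\tilde\Psi^\lambda)=k$ and $i(\mathcal M\setminus B_0)=i(\mathcal M\setminus\tilde\Psi_{\lambda_{k+1}})=k$ by Theorem~\ref{lambdak}\ref{lambdak(iii)} (applied at an interior point of $(\lambda_k,\lambda_{k+1})$ when $\lambda=\lambda_k$). On $A_0$, $\tilde\Psi(u)\le\lambda$ gives $\Phi_\lambda(u_t)\le0$ and $G\ge0$ gives $\tilde G(u_t)\ge0$, so $\Phi(u_t)\le\Phi_\lambda(u_t)\le0$ for all $t\ge0$. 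On $B_0$, $\tilde\Psi(u)\ge\lambda_{k+1}>\lambda$ gives $1-\lambda/\tilde\Psi(u)\ge1-\lambda/\lambda_{k+1}=:c>0$, so $\Phi_\lambda(u_t)\ge c\,t^{4s+\alpha-N}$, which dominates $\tilde G(u_t)=o(t^{4s+\alpha-N})$; hence $\Phi(u_t)>0$ for $0<t\le\rho$ with $\rho$ small.

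I expect the main obstacle to be the uniformity of the error estimate $\tilde G(u_t)=o(t^{4s+\alpha-N})$ over the entire manifold $\mathcal M$ --- which is precisely where the boundedness of $\mathcal M$ and the ``uniform on bounded sets'' formulation of Lemma~\ref{o(t)}\eqref{o(t)(ii)} are needed --- together with the careful bookkeeping of the cohomological index identities at the endpoints $\lambda=\lambda_k$ and $\lambda=\lambda_{k+1}$, which forces one to invoke Theorem~\ref{lambdak}\ref{lambdak(iii)} at an interior value of the spectral gap $(\lambda_k,\lambda_{k+1})$; once these are in place, choosing $\rho$ so that the leading $t^{4s+\alpha-N}$ term prevails is routine.
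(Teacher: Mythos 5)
Your proof is correct and follows essentially the same route as the paper: expand $\Phi(u_t)=t^{4s+\alpha-N}\bigl(1-\lambda/\tilde\Psi(u)\bigr)-\tilde G(u_t)$, establish $\tilde G(u_t)=o(t^{4s+\alpha-N})$ uniformly on $\mathcal M$ via Lemma~\ref{o(t)}(ii) and \eqref{potential integral}, use the sign of $G$ to control $\tilde G(u_t)$ at the borderline ($\tilde\Psi(u)=\lambda$), and invoke Theorem~\ref{lambdak}\ref{lambdak(iii)} for the index identities. The one cosmetic difference is that the paper fixes $A_0=\tilde\Psi^{\lambda_k}$ and $B_0=\tilde\Psi_{\lambda_{k+1}}$ once and for all (both indices equal $k$ by applying part (iii) at an interior point of the gap), whereas you slide one of the linking sets onto the $\lambda$-level in each case; both choices are legitimate and give identical estimates.
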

\begin{proof}
Assumptions \eqref{f=B+g}-\eqref{ggrowthc} allows us to use Lemma \ref{o(t)}\eqref{o(t)(ii)}, which states that $\tilde{g}(u_t)v_t=o(t^{4s+\alpha-N})\|v\|$ as $t\to0^+$ uniformly in $u$ on bounded sets, for all $v\in E$. Hence, by \eqref{potential integral}, we obtain
$$
\tilde{G}(u_t)=o(t^{4s+\alpha-N}),\quad\mbox{as}\,\,t\to0^+,
$$
uniformly in $u$ on bounded sets. Since $\tilde{\Psi}(u)=1/J(u)$ for $u\in\mathcal{M}$,  we may write 
\begin{eqnarray}
    \Phi(u_t)&=&I(u_t)-\lambda J(u_t)-\tilde{G}(u_t) = t^{4s+\alpha-N}\left(I(u)-\lambda J(u)\right)-\tilde{G}(u_t)\nonumber\\
    &=&t^{4s+\alpha-N}\left(1-\frac{\lambda}{\tilde{\Psi}(u)}\right)-\tilde{G}(u_t) \label{Gsign} \\
    &=&t^{4s+\alpha-N}\left(1-\frac{\lambda}{\tilde{\Psi}(u)}+o(1)\right) \label{G=o(1)}
\end{eqnarray}
as $t\to0^+$, for all $u\in\mathcal{M}$.  We observe now that if $\lambda_k<\lambda_{k+1}$, by Theorem \ref{lambdak}\ref{lambdak(iii)} we have
  $$
i(\tilde{\Psi}^{\lambda_k})=i(\mathcal{M}\backslash\tilde{\Psi}_{\lambda_{k+1}})=k.
  $$
Consider $A_0=\tilde{\Psi}^{\lambda_k}$ and $B_0=\tilde{\Psi}_{\lambda_{k+1}}$, and note that these subsets of $\mathcal{M}$ are nonempty and disjoint, closed as $\tilde{\Psi}$ is continuous. Moreover, these are symmetric as $\tilde{\Psi}$ is even.

\noindent (i) If $\lambda_k<\lambda\leq\lambda_{k+1}$, by \eqref{G=o(1)} we see that there exists some $\rho>0$ such that
$$
 \Phi(u_t)\leq t^{4s+\alpha-N}\left(1-\frac{\lambda}{\lambda_k}+o(1)\right) \leq0,\quad \forall (u,t)\in A_0\times[0,\rho].
$$
 On the other hand, since $\tilde{G}(u_t)<0$ for all $t>0$, by \eqref{Gsign} we get
$$
\Phi(u_t)\geq t^{4s+\alpha-N}\left(1-\frac{\lambda}{\lambda_{k+1}}\right) -\tilde{G}(u_t)> 0, \quad \forall (u,t)\in B_0\times(0,\rho].
$$
Thus, we may conclude that $\Phi$ has a scaled local linking near the origin in dimension $k.$ 

\noindent (ii) This case is similar to (i), we therefore leave out the details. We conclude the proof simply observing that by \eqref{Gsign} and the fact of $\tilde{G}\geq0,$ we get  $\Phi(u_t)\leq 0$ for all $u\in A_0$ and $t\geq0$. Whereas, by \eqref{G=o(1)} we get $\rho>0$ such that $\Phi(u_t)>0$ for all $u\in B_0$ and $t\in(0,\rho]$.
\end{proof}

We are now ready to prove our existence result in the subscaled case and allowing $\lambda$ to be an eigenvalue,  under the assumption that $G<0.$

\begin{proof-th} \textbf{\textit{of Theorem \ref{THsubscaledG<0}.}}\addcontentsline{toc}{subsection}{Proof of Theorem~\ref{THsubscaledG<0}}
 Since $f$ satisfies \eqref{fgrowth} and \eqref{fsubscaled}, by Proposition \ref{minimization} we know that $\Phi$ has a minimizer $u_0$ in $E$. If this critical point is not isolated, we would have infinite solutions for \eqref{P}. So, we may assume that $u_0$ is an isolated critical point for $\Phi$, and so the critical groups are given by
 $$
 C^l(\Phi,u_0)=\delta_{l0}\mathbb{Z}_2,\,\, \forall l\geq0.
 $$
By \eqref{f=B+g}-\eqref{ggrowthc}, we have $\tilde{g}(0)=0$ and so $u\equiv0$ is a critical point for $\Phi$, which we may assume is also isolated.  We distinguish two cases. 
 
\noindent (i) In the case $\lambda>\lambda_1$, let us suppose that $\lambda_k<\lambda\leq \lambda_{k+1}$ for some $k\geq1$. Then, Lemma \ref{local-linking}(i) ensures that $\Phi$ has a local linking near origin in dimension $k$. Based on this, by \cite[Theorem 2.29]{Mercuri-Perera} we know that the critical group $C^k(\Phi,0)$ is nontrivial. Since for the minimizer $u_0$ it holds that $C^k(\Phi,u_0)=0$, we may conclude that $u_0\neq0$.

\noindent (ii) If $\lambda>\lambda_2$, we see that $C^k(\Phi,0)\neq 0$ for some $k\geq2$. Then, \cite[Proposition 2.18]{Mercuri-Perera} ensures the existence of a critical point $u_1\neq0$ for $\Phi$ such that either
$\Phi(u_1)<0$ and $C^{k-1}(\Phi,u_1)\neq 0,$ or $\Phi(u_1)>0$ and $C^{k+1}(\Phi,u_1)\neq 0$. We may therefore conclude that, for some $l_0\geq1$,
$$
C^{l_0}(\Phi,u_1)\neq 0\quad\mbox{and}\quad C^{l_0}(\Phi,u_0)= 0.
$$
Thus, $u_0\neq u_1$ are nontrivial solutions for \eqref{P}.
\end{proof-th}

We now deal with subcritical problems in a superscaled regime, allowing again $\lambda$ to be an eigenvalue to \eqref{NE}.

\begin{proof-th} \textbf{\textit{of Theorem \ref{Th-superscaled-general}.}}\addcontentsline{toc}{subsection}{Proof of Theorem~\ref{Th-superscaled-general}}
Since \eqref{Gsuper} holds, by Lemma \ref{PS-superscaled}  $\Phi$ satisfies the (PS) condition. We distinguish two cases.

\noindent (i)  $\lambda<\lambda_1.$ 
As in Lemma \ref{local-linking} we see that \eqref{G=o(1)} holds, namely
\begin{eqnarray*}
    \Phi(u_t)&=&t^{4s+\alpha-N}\left(1-\frac{\lambda}{\tilde{\Psi}(u)}+o(1)\right) \quad\mbox{as}\,\,t\to0^+,
\end{eqnarray*}
uniformly in $u\in\mathcal{M}$. Setting $\lambda^+=\max\{\lambda,0\}$, by Theorem \ref{lambdak}\ref{lambdak(i)} we obtain
\begin{eqnarray*}
    \Phi(u_t)&\geq&t^{4s+\alpha-N}\left(1-\frac{\lambda^+}{\lambda_1}+o(1)\right) \quad\mbox{as}\,\,t\to0^+,
\end{eqnarray*}
uniformly in $u\in\mathcal{M}$. On the other hand, by \eqref{F~infty} (proof of Theorem \ref{Th-superscaled-not Eigenvalue}) we have
$$
\lim_{t\to\infty}\frac{\tilde F(u_t)}{t^{4s+\alpha-N}}=\infty,
$$
for any $u\in\mathcal{M},$ hence
$$
\Phi(u_t)=t^{4s+\alpha-N}\left(1-\frac{\tilde F(u_t)}{t^{4s+\alpha-N}}\right)\to-\infty\quad\mbox{as}\,\,t\to\infty.
$$

Thus, $\Phi$ has the mountain pass geometry. Since it also satisfies the (PS) condition, by the classical mountain-pass theorem $\Phi$ has a nontrivial critical point corresponding to a minimax energy level $c>0$.\\

\noindent (ii) $\lambda\geq\lambda_1.$
We may assume $\lambda_k\leq\lambda<\lambda_{k+1}$ for some $k\geq1$ and $G(|x|,t)>0$ almost everywhere in $\mathbb{R}^N$, for all $t\neq0$. Note that Lemma \ref{local-linking} ensures that $\Phi$ has a local linking near the origin in dimension $k$. In order to apply \cite[Corollary 2.31]{Mercuri-Perera} and find a nontrivial critical point for $\Phi,$ we ovbserve that $\Phi^a$ is contractible for some $a<0$. In fact, for any
$$
a<\inf_{u\in\mathcal{M}, 0\leq t\leq1}\Phi(u_t),
$$
we have
$$
\Phi^a=\{u\in E:\Phi(u)\leq a\}=\{u_t:u\in\mathcal{M},\,t>1\,\,\mbox{and}\,\,\varphi_u(t)\leq a\}
$$
where $\varphi_u(t)=\Phi(u_t)$. By \eqref{Gsuper}, for any $u\in E$ and $t>0$ we get
$$
\tilde{G}(u_t)=t^{-N}\int_{\mathbb{R}^N}G(|x|/t,t^\theta u(x))dx\geq c_0 t^{\theta q-N}\int_{\mathbb{R}^N}|u(x)|^qdx.
$$
Note that by $q>2^*_{s,\alpha}$ and $4s+\alpha>N$, we have $\theta q-N>4s+\alpha-N>0$.
Hence, since by \eqref{Gsign} we can estimate
\begin{eqnarray*}
\varphi_u(t)&=&t^{4s+\alpha-N}\left(1-\frac{\lambda}{\tilde{\Psi}(u)}\right)-t^{-N}\int_{\mathbb{R}^N}G(|x|/t,t^\theta u(x))dx\\
&\leq& t^{4s+\alpha-N}\left(1-\frac{\lambda}{\tilde{\Psi}(u)}\right)-c_0 t^{\theta q-N}\int_{\mathbb{R}^N}|u(x)|^qdx,
\end{eqnarray*}
it follows that $\varphi_u(t)\to-\infty$ as $t\to\infty$. We now observe that by \eqref{Gtauu},  $\varphi_u$ is a $C^1$ function and 
\begin{eqnarray*}
\varphi'_u(t)&=&(4s+\alpha-N)t^{4s+\alpha-N-1}\left(1-\frac{\lambda}{\tilde{\Psi}(u)}\right)+Nt^{-N-1}\int_{\mathbb{R}^N}G(|x|/t,t^\theta u(x))dx\\
&&+t^{-N-2}\int_{\mathbb{R}^N}\frac{\partial G}{\partial \tau}(|x|/t,t^\theta u(x))|x|dx-t^{-N}\int_{\mathbb{R}^N}\theta  g(|x|/t,t^\theta u(x))t^{\theta-1}u(x)dx\\
&\leq& \frac{(4s+\alpha-N)}{t}\varphi_u(t)+(4s+\alpha)t^{-N-1}\int_{\mathbb{R}^N}G(|x|/t,t^\theta u(x))dx\\
&&-t^{-N-1}\int_{\mathbb{R}^N}\theta  g(|x|/t,t^\theta u(x))t^{\theta}u(x)dx.
\end{eqnarray*}
Recalling that $4s+\alpha=\theta 2^*_{s,\alpha}<\theta q$, by \eqref{Gsuper} we get
$$
\varphi'_u(t)\leq \frac{(4s+\alpha-N)}{t}\varphi_u(t), \quad\mbox{for all}\,\,t>0.
$$
As a consequence, $\varphi_u(t)<0$ yields $\varphi'_u(t)<0$. The implicit function theorem now provides us with a $C^1-$map $\psi:\mathcal{M}\to(1,\infty)$ such that
$$
\varphi_u(t)>a\,\,\mbox{for}\,\, 0\leq t<\psi(u),\quad\varphi_u(\psi(u))=a\quad\mbox{and}\,\,\varphi_u(t)<a\,\,\forall t>\psi(u),
$$
hence
$$
\Phi^a=\{u_t:u\in\mathcal{M},\,t\geq \psi(u)\}.
$$
Finally, since $T:(E\backslash\{0\})\times [0,1] \to E\backslash\{0\}$ defined as
$$
T(u,\sigma)=
\begin{cases}
\tilde{u}_{\sigma\psi(\tilde{u})+(1-\sigma)t^{-1}_u}& \mbox{if}\,\,u\notin\Phi^a\\
u& \mbox{if}\,\,u\in\Phi^a,
\end{cases}
$$
is a deformation retraction onto $\Phi^a$ and $E\backslash\{0\}$ is a contractible set, we conclude that $\Phi^a$ is also contractible, and this concludes the proof.
\end{proof-th}

%%%%%%%%%%%%%%%%%%%%%%%%%%%%%%%%%%%%%%%%%%%%%%%%%%%%%%%%%%%%%%%%%%%%%%%%%%%%%%%%%%%%%%%%%%%%%%%%%%%%%%%

\section{Solutions to critical problems}\label{sec critical problems}

In this section we study \eqref{P}, focusing on the case $N/4<s<1$ and with a local nonlinearity $f:\mathbb{R}\to\mathbb{R}$ of critical growth, defined as
\begin{equation*}
  f(t) = \lambda |t|^{2^*_{s,\alpha}-2}t+ \mu|t|^{q_6-2}t+|t|^{2^*_s-2}t,\quad\mbox{for all} \,\, t\in\mathbb{R},
\end{equation*}
for $\lambda\in\mathbb{R}$, $\mu\geq0$
%, with $|\lambda|+\mu>0$.
and $ q_6\in (2^*_{s,\alpha},2^*_{s})$. Namely, our PDE takes the form
\begin{equation}\label{Pcri}
(-\Delta)^su+\left(I_\alpha*u^2\right)u= \lambda |u|^{2^*_{s,\alpha}-2}u+ \mu|u|^{q_6-2}u+|u|^{2^*_s-2}u, \quad\textrm{in}\quad\mathbb{R}^N.
\end{equation}
Note that the associated action functional $\Phi$ is even, and reads as
\begin{eqnarray}
   \Phi(u)&=&\frac{1}{2}\int_{\mathbb{R}^N}|(-\Delta)^\frac{s}{2}u|^2dx+\frac{C_\alpha}{4}\iint_{\mathbb{R}^N\times \mathbb{R}^N}\frac{u^2(x)u^2(y)}{|x-y|^{N-\alpha}}dxdy - \frac{\lambda}{2^*_{s,\alpha}}\int_{\mathbb{R}^N}|u|^{2^*_{s,\alpha}}dx\nonumber\\
&&- \frac{\mu}{q_6}\int_{\mathbb{R}^N}|u|^{q_6}dx- \frac{1}{2^*_{s}}\int_{\mathbb{R}^N}|u|^{2^*_{s}}dx\nonumber\\
&=&\Phi_\lambda(u)- \frac{\mu}{q_6}\int_{\mathbb{R}^N}|u|^{q_6}dx- \frac{1}{2^*_{s}}\int_{\mathbb{R}^N}|u|^{2^*_{s}}dx.
\label{Phi-critical}
\end{eqnarray}

In this section, our existence and multiplicity results to~\eqref{Pcri} will be derived by critical point theorems from \cite{Mercuri-Perera} which require only a local Palais–Smale condition, as it is natural when dealing with equations involving the critical Sobolev exponent. For reader's convenience we recall the definition of pseudo-index.

For any $\rho > 0$, we define
\begin{equation}\label{Mrho}
\mathcal{M}_\rho = \{ u \in E : I(u) = \rho^{4s+\alpha-N} \} = \{ u_\rho : u \in \mathcal{M} \}.
\end{equation}
Let $\Gamma$ be the group of odd homeomorphisms of $E$ that act as the identity outside the set $\Phi^{-1}(0, c^*)$, for some $c^*>0$ such that $\Phi$ satisfies the $(\mathrm{PS})_c$ condition for all $c \in (0, c^*)$. Let $\mathcal{A}^*$ be the family of symmetric subsets of $E$.
For any $M \in \mathcal{A}^*$, the pseudo-index $i^*(M)$ with respect to $i$, $\mathcal{M}_\rho$, and $\Gamma$ (see Benci \cite{V. Benci}) is defined 
\[
i^*(M) = \min_{\gamma \in \Gamma} i(\gamma(M) \cap \mathcal{M}_\rho).
\]

\begin{proposition}[\cite{Mercuri-Perera}, Theorem 2.33]\label{Theorem 2.33 MP}
Let $A_0$ and $B_0$ be symmetric subsets of $\mathcal{M}$ such that $A_0$ is compact, $B_0$ is closed, and
\begin{equation*}
i(A_0) \geq k + m - 1, \qquad i(\mathcal{M} \setminus B_0) \leq k - 1
\end{equation*}
for some $k, m \geq 1$. Let $R > \rho > 0$ and let
\[
X = \{u_t : u \in A_0,\, 0 \leq t \leq R \}, \quad
A = \{u_R : u \in A_0 \}, \quad
B = \{u_\rho : u \in B_0 \}.
\]
Assume that
\begin{equation*}
\sup_{u \in A} \Phi(u) \leq 0 < \inf_{u \in B} \Phi(u), \qquad
\sup_{u \in X} \Phi(u) < c^*.
\end{equation*}
For $j = k, \ldots, k + m - 1$, let
\[
\mathcal{A}^*_j = \{M \in \mathcal{A}^* : M \text{ is compact and } i^*(M) \geq j \}
\]
and set
\[
c^*_j := \inf_{M \in \mathcal{A}^*_j} \max_{u \in M} \Phi(u).
\]
Then $0 < c^*_k \leq \cdots \leq c^*_{k+m-1} < c^*$, each $c^*_j$ is a critical value of $\Phi$, and $\Phi$ has $m$ distinct pairs of associated critical points.
\end{proposition}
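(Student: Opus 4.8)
The plan is to run Benci's pseudo-index minimax scheme, using the Fadell--Rabinowitz cohomological index $i$ in place of the genus and letting the scaling of $E$ play the role of a linear decomposition. Throughout write $n:=4s+\alpha-N>0$, so that $I(u_t)=t^{n}I(u)$; by \eqref{H1}--\eqref{H3}, for every $\rho>0$ the maps $v\mapsto v_\rho$ and $v\mapsto v_{1/\rho}$ are mutually inverse odd homeomorphisms between $\mathcal{M}$ and $\mathcal{M}_\rho$, hence preserve $i$. The chain $c^*_k\le\cdots\le c^*_{k+m-1}$ is automatic once these numbers are finite, since $\mathcal{A}^*_{j+1}\subseteq\mathcal{A}^*_j$.

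The first step is to prove $X\in\mathcal{A}^*_{k+m-1}$. Compactness and symmetry of $X$ are clear, as $X$ is the image of the compact set $A_0\times[0,R]$ under the continuous map $(u,t)\mapsto u_t$ and $(-u)_t=-u_t$ by \eqref{H2}. The substantive point is $i^*(X)\ge k+m-1$, i.e.\ $i(\gamma(X)\cap\mathcal{M}_\rho)\ge k+m-1$ for every $\gamma\in\Gamma$. I would argue as follows: since $\Phi\le0$ on $A=\{u_R:u\in A_0\}$ and $\Phi(0)=0$, neither $A$ nor $0$ meets $\Phi^{-1}(0,c^*)$, so $\gamma$ is the identity on $A\cup\{0\}$; putting $h(u,t)=\gamma(u_t)$, the set $T=\{(u,t)\in A_0\times[0,R]:I(h(u,t))\le\rho^{n}\}$ is closed, symmetric in $u$, contains $A_0\times\{0\}$ and misses $A_0\times\{R\}$. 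The piercing (intersection) property of the index---established exactly as in the genus case from the continuity, monotonicity and dimension axioms of $i$ together with the strict monotonicity of $t\mapsto I(u_t)$---then yields $i(\partial T)\ge i(A_0)$ for the boundary $\partial T$ of $T$ in $A_0\times[0,R]$. Since $I\circ h\equiv\rho^{n}$ on $\partial T$, the odd continuous map $h$ sends $\partial T$ into $\gamma(X)\cap\mathcal{M}_\rho$, and monotonicity of $i$ gives $i(\gamma(X)\cap\mathcal{M}_\rho)\ge i(\partial T)\ge i(A_0)\ge k+m-1$. Hence each $\mathcal{A}^*_j$ is nonempty and, testing with $M=X$, $c^*_{k+m-1}\le\sup_X\Phi<c^*$. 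For the lower bound I would take any $M\in\mathcal{A}^*_k$, use $\gamma=\mathrm{id}\in\Gamma$ to get $i(M\cap\mathcal{M}_\rho)\ge k$, carry this onto $\mathcal{M}$ via $v\mapsto v_{1/\rho}$, and note that a symmetric subset of $\mathcal{M}$ of index $\ge k$ cannot lie in $\mathcal{M}\setminus B_0$ (whose index is $\le k-1$); thus some $w\in B_0$ has $w_\rho\in M$, so $M\cap B\ne\emptyset$ and $\max_M\Phi\ge\inf_B\Phi>0$. Taking the infimum over $M$ gives $c^*_k\ge\inf_B\Phi>0$, and altogether $0<c^*_k\le\cdots\le c^*_{k+m-1}<c^*$.

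The second step is to show that each $c^*_j$ is a critical value, with the right multiplicity along runs. Fix a run $c:=c^*_j=\cdots=c^*_{j+r-1}$, $r\ge1$, with $k\le j$ and $j+r-1\le k+m-1$, and suppose for contradiction that $i(K_c)\le r-1$, where $K_c=\{u:\Phi(u)=c,\ \Phi'(u)=0\}$; this set is symmetric since $\Phi$ is even and compact since $c\in(0,c^*)$ and $\Phi$ satisfies $(\mathrm{PS})_c$, and $0\notin K_c$ because $\Phi(0)=0<c$. By continuity of $i$, choose a symmetric open $U\supseteq K_c$ with $i(\overline U)\le r-1$; the equivariant deformation lemma (valid as $\Phi$ is even, $C^1$ and $(\mathrm{PS})_c$ holds) gives $\varepsilon>0$ with $(c-\varepsilon,c+\varepsilon)\subset(0,c^*)$ and an odd homeomorphism $\eta$ of $E$, equal to the identity off $\Phi^{-1}(c-\varepsilon,c+\varepsilon)\subseteq\Phi^{-1}(0,c^*)$---so $\eta\in\Gamma$---with $\eta(\Phi^{c+\varepsilon}\setminus U)\subseteq\Phi^{c-\varepsilon}$. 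Pick $M\in\mathcal{A}^*_{j+r-1}$ with $\max_M\Phi<c+\varepsilon$ and set $M'=\overline{M\setminus U}$; the standard subadditivity estimate $i^*(M)\le i^*(M')+i(\overline U)$ yields $i^*(M')\ge j$, i.e.\ $M'\in\mathcal{A}^*_j$. Since $\Gamma$ is a group, $i^*(\eta(M'))=i^*(M')\ge j$ and $\eta(M')$ is compact symmetric with $\eta(M')\subseteq\Phi^{c-\varepsilon}$, forcing $c^*_j\le c-\varepsilon<c$, a contradiction. Therefore $i(K_c)\ge r$: for $r=1$ this says $c^*_j$ is a critical value, and for $r\ge2$ it forces $K_c$ to be infinite (a nonempty finite symmetric set has index $1$). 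Adding up over the distinct values among $c^*_k,\dots,c^*_{k+m-1}$, one obtains at least $m$ distinct pairs $\pm u$ of critical points, all at levels $\ge c^*_k>0=\Phi(0)$, hence nontrivial and of positive energy.

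I expect the piercing estimate $i(\gamma(X)\cap\mathcal{M}_\rho)\ge i(A_0)$ of the first step to be the main obstacle: it is where the scaling structure of $E$ (strict monotonicity of $t\mapsto I(u_t)$, coercivity of $I$ from \eqref{H10}, and the homeomorphisms $\mathcal{M}\cong\mathcal{M}_\rho$) must be reconciled with the axioms of the cohomological index, and one must be careful that the version of the intersection lemma used is exactly the one available for $i$ rather than for the genus. Everything else---the ordering and the two finiteness bounds, the localized equivariant deformation arguments staying inside $\Gamma$, the pseudo-index subadditivity bookkeeping, and the final count---is routine once the localized deformation lemmas and the fact that finite symmetric sets have index one are in hand.
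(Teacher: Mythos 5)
Your proposal is correct and is essentially the Benci pseudo-index argument with the Fadell--Rabinowitz cohomological index that \cite{Mercuri-Perera} uses, so the approach matches the cited source. The two building blocks you identify are exactly the ones needed: (a) the piercing estimate $i(\partial T)\ge i(A_0)$ to get $X\in\mathcal{A}^*_{k+m-1}$, and (b) the intersection argument $M\cap B\neq\emptyset$ via $i(\mathcal{M}\setminus B_0)\le k-1$ for the lower bound; then the localized equivariant deformation, pseudo-index subadditivity, and the ``finite symmetric set has index $1$'' fact give the multiplicity count along runs.

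One small inaccuracy worth flagging: you describe the piercing step as ``established exactly as in the genus case from the continuity, monotonicity and dimension axioms of $i$ together with the strict monotonicity of $t\mapsto I(u_t)$.'' The piercing property is not a consequence of the continuity/monotonicity/dimension axioms and does not require monotonicity of $t\mapsto I(u_t)$; it is a separate, standard property of the $\mathbb{Z}_2$-cohomological index (it is one of the listed properties in Perera--Agarwal--O'Regan \cite{Perera-book}): if $T_0,T_1$ are closed subsets of $A_0\times[0,R]$, invariant under $(u,t)\mapsto(-u,t)$, with $T_0\cup T_1=A_0\times[0,R]$, $A_0\times\{0\}\subset T_0$ and $A_0\times\{R\}\subset T_1$, then $i(T_0\cap T_1)\ge i(A_0)$. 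Taking $T_0=T$ and $T_1=\overline{(A_0\times[0,R])\setminus T}$ gives $T_0\cap T_1=\partial T$, which is exactly what you use. The strict monotonicity of $t\mapsto I(u_t)$ is needed elsewhere (to make $v\mapsto v_{1/\rho}$ a homeomorphism $\mathcal{M}_\rho\to\mathcal{M}$ and $\mathcal{M}$ a manifold), but not here. With that attribution corrected, the proof is sound; in particular your verification that $\gamma\in\Gamma$ fixes $A\cup\{0\}$ (because $\sup_A\Phi\le 0$ and $\Phi(0)=0$, so neither meets $\Phi^{-1}(0,c^*)$), your use of $i(\gamma\bar U\cap\mathcal{M}_\rho)\le i(\bar U)$ in the pseudo-index subadditivity, and your handling of the runs of equal levels via $i(K_c)\ge r$ are all as in the standard scheme.
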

The proof of Theorem \ref{sub near 0, asympt, subcritical} and Theorem \ref{sub near 0, asympt, critical} will be based on the following
\begin{corollary}[\cite{Mercuri-Perera}, Corollary 2.34]\label{Corollary 2.34 MP}
Let $A_0$ be a compact symmetric subset of $\mathcal{M}$ with $i(A_0) = m \geq 1$, let $R > \rho > 0$, and let
\[
A = \{u_R : u \in A_0\}, \qquad X = \{u_t : u \in A_0,\, 0 \leq t \leq R\}.
\]
Assume that
\[
\sup_{u \in A} \Phi(u) \leq 0 < \inf_{u \in \mathcal{M}_\rho} \Phi(u), \qquad \sup_{u \in X} \Phi(u) < c^*.
\]
Then $\Phi$ has $m$ distinct pairs of critical points at levels in $(0, c^*)$.
\end{corollary}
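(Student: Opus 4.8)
The plan is to obtain this corollary as the special case $k=1$, $B_0=\mathcal{M}$ of Proposition~\ref{Theorem 2.33 MP}. With these choices the two index hypotheses of that proposition read $i(A_0)\ge k+m-1=m$, which holds by assumption (with equality), and $i(\mathcal{M}\setminus B_0)=i(\emptyset)\le k-1=0$, which holds because the Fadell--Rabinowitz cohomological index of the empty set is zero. Moreover, taking $B_0=\mathcal{M}$ gives $B=\{u_\rho:u\in B_0\}=\{u_\rho:u\in\mathcal{M}\}=\mathcal{M}_\rho$ by \eqref{Mrho}, so the linking inequalities $\sup_{u\in A}\Phi(u)\le 0<\inf_{u\in B}\Phi(u)$ and $\sup_{u\in X}\Phi(u)<c^*$ demanded by Proposition~\ref{Theorem 2.33 MP} coincide exactly with the hypotheses $\sup_{u\in A}\Phi(u)\le 0<\inf_{u\in\mathcal{M}_\rho}\Phi(u)$ and $\sup_{u\in X}\Phi(u)<c^*$ assumed here; the sets $X$ and $A$ built from $A_0$ are the same in both statements.

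I would then simply invoke Proposition~\ref{Theorem 2.33 MP}: for $j=1,\dots,m$ the minimax values $c^*_j=\inf_{M\in\mathcal{A}^*_j}\max_{u\in M}\Phi(u)$ satisfy $0<c^*_1\le\cdots\le c^*_m<c^*$, each $c^*_j$ is a critical value of $\Phi$, and $\Phi$ possesses $m$ distinct pairs of associated critical points. Since every $c^*_j$ lies in $(0,c^*)$, this is precisely the assertion that $\Phi$ has $m$ distinct pairs of critical points at levels in $(0,c^*)$.

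There is essentially no analytic obstacle here, since the work is carried by Proposition~\ref{Theorem 2.33 MP}; the proof is a matter of checking that the parameter specialization is admissible. The only points deserving an explicit word are the normalization $i(\emptyset)=0$ for the cohomological index, and the observation that the group $\Gamma$ of odd homeomorphisms and the threshold $c^*$ (tied to the local $(\mathrm{PS})_c$ condition for $c\in(0,c^*)$) are the same objects in both statements, so that the pseudo-index $i^*$ and the classes $\mathcal{A}^*_j$ used to define $c^*_j$ are unchanged. If a self-contained treatment were preferred, one could instead rerun the pseudo-index deformation argument underlying Proposition~\ref{Theorem 2.33 MP} directly in this case, using that $i^*(M)\ge j$ forces $\gamma(M)\cap\mathcal{M}_\rho\neq\emptyset$ for every admissible $\gamma$ and hence $\max_M\Phi\ge\inf_{\mathcal{M}_\rho}\Phi>0$, together with the standard multiplicity argument when consecutive $c^*_j$ coincide; but routing through the cited proposition is cleaner.
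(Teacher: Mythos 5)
Your proof is correct. The paper itself does not supply a proof of this corollary---it is imported verbatim from \cite{Mercuri-Perera} immediately after Proposition~\ref{Theorem 2.33 MP}---but the specialization you describe (take $k=1$, $B_0=\mathcal{M}$, so that $i(\mathcal{M}\setminus B_0)=i(\emptyset)=0\le k-1$ and $B=\{u_\rho:u\in\mathcal{M}\}=\mathcal{M}_\rho$ by \eqref{Mrho}) is the natural and almost certainly intended derivation, and every hypothesis of the proposition is checked correctly, including that $\mathcal{M}$ is closed and symmetric and that $i(A_0)=m=k+m-1$. The conclusion $0<c^*_1\le\cdots\le c^*_m<c^*$ with $m$ distinct pairs of critical points is exactly the assertion of the corollary.
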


Before dealing with the main proofs of this section, we establish a compactness result for (PS) sequences within a certain energy range.  We denote by $\mathbb{S}$ the classical best Sobolev constant, such that
\begin{equation}\label{best S}
\|u\|_{L^{2^*_s}}^2\leq\frac{1}{\mathbb S}\|(-\Delta)^\frac{s}{2}u\|_{L^2}^2,\quad\forall u\in E.
\end{equation}
In particular, we show that the functional $\Phi$ defined in \eqref{Phi-critical} satisfies the Palais-Smale condition for $c\in(0,c^*)$ where $c^*=s\mathbb{S}^\frac{N}{2s}/N.$

\begin{lemma}\label{PSlocal}
Consider $N/4<s<1$. If $\lambda\in\mathbb{R}$, $\mu\geq0$ and $q_6\in(2^*_{s,\alpha},2^*_{s})$, then the functional $\Phi$, associated to problem \eqref{Pcri}, satisfies the $(PS)_c$ condition for $0<c<s\mathbb{S}^\frac{N}{2s}/N$.
\end{lemma}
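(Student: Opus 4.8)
The plan is to follow the now-standard concentration-compactness scheme adapted to the fractional Coulomb--Sobolev setting. Let $(u_n)\subset E$ be a $(PS)_c$ sequence with $0<c<s\mathbb{S}^{N/2s}/N$, that is $\Phi(u_n)\to c$ and $\Phi'(u_n)\to 0$ in $E^*$. First I would establish boundedness of $(u_n)$ in $E$: combining $\Phi(u_n)-\tfrac{1}{2^*_s}\Phi'(u_n)u_n$ one gets, after elementary manipulation using $2<4<2^*_s$ and $2^*_{s,\alpha},q_6<2^*_s$, a bound of the form $c+o(1)+o(\|u_n\|)\geq \eta\,\|(-\Delta)^{s/2}u_n\|_{L^2}^2 + \eta\,\|u_n\|^4_{Q^{\alpha,2}}$ for some $\eta>0$ (here the requirement $2^*_s>4$, i.e. $N/4<s$, is exactly what makes the Coulomb term have a favorable sign after subtracting the $\tfrac{1}{2^*_s}$-multiple of the derivative). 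Since the norm on $E$ controls both pieces, $(u_n)$ is bounded. Passing to a subsequence, $u_n\rightharpoonup u$ in $E$, $u_n\to u$ in $L^p_{\rm loc}$ for $p<2^*_s$, and $u_n\to u$ a.e.; moreover by the compactness of $E\hookrightarrow L^{2^*_{s,\alpha}}(\mathbb{R}^N)$ and $E\hookrightarrow L^{q_6}(\mathbb{R}^N)$ (both exponents lie in the compact range since $p_{\rm rad}<2^*_{s,\alpha}<q_6<2^*_s$), the lower-order nonlocal-free terms converge strongly, and by property \eqref{H9} the Coulomb term behaves well; a standard argument then shows $u$ is a weak solution of \eqref{Pcri}, i.e. $\Phi'(u)=0$.

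Next I would pass to the remainder $v_n:=u_n-u$ and apply the Brezis--Lieb lemma together with its nonlocal counterpart \cite[Proposition 4.1]{Mercuri-Moroz-VS-2016}: one obtains $\|(-\Delta)^{s/2}u_n\|_{L^2}^2=\|(-\Delta)^{s/2}v_n\|_{L^2}^2+\|(-\Delta)^{s/2}u\|_{L^2}^2+o(1)$, $\|u_n\|_{L^{2^*_s}}^{2^*_s}=\|v_n\|_{L^{2^*_s}}^{2^*_s}+\|u\|_{L^{2^*_s}}^{2^*_s}+o(1)$, and the Coulomb quartic form of $u_n$ splits as that of $v_n$ plus that of $u$ plus $o(1)$ (using $v_n\to 0$ in $L^{2^*_{s,\alpha}}$ together with $T$-continuity as in Lemma~\ref{verify H7}); the subcritical terms $\int|u_n|^{2^*_{s,\alpha}}$ and $\int|u_n|^{q_6}$ converge to the corresponding integrals of $u$ and contribute nothing to the remainder. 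Feeding these into $\Phi(u_n)\to c$ and into $\Phi'(u_n)u_n\to 0$, and using $\Phi(u)\geq 0$ for any critical point $u$ (this itself needs a short argument: from $\Phi'(u)u=0$ one has $\Phi(u)=\Phi(u)-\tfrac{1}{2^*_s}\Phi'(u)u\geq 0$ again by $2^*_s>4$), I would arrive at
\[
\tfrac12\|(-\Delta)^{s/2}v_n\|_{L^2}^2+\tfrac{C_\alpha}{4}\!\iint\!\tfrac{v_n^2(x)v_n^2(y)}{|x-y|^{N-\alpha}}-\tfrac{1}{2^*_s}\|v_n\|_{L^{2^*_s}}^{2^*_s}\to c-\Phi(u)\le c,
\]
\[
\|(-\Delta)^{s/2}v_n\|_{L^2}^2+C_\alpha\!\iint\!\tfrac{v_n^2(x)v_n^2(y)}{|x-y|^{N-\alpha}}-\|v_n\|_{L^{2^*_s}}^{2^*_s}\to 0.
\]

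Now set $b:=\lim \|(-\Delta)^{s/2}v_n\|_{L^2}^2$ (along a further subsequence). The second relation gives $\|v_n\|_{L^{2^*_s}}^{2^*_s}\to b+d$ where $d:=\lim C_\alpha\iint v_n^2 v_n^2/|x-y|^{N-\alpha}\geq 0$. The Sobolev inequality \eqref{best S} yields $\|v_n\|_{L^{2^*_s}}^2\leq \mathbb{S}^{-1}\|(-\Delta)^{s/2}v_n\|_{L^2}^2$, hence in the limit $(b+d)^{2/2^*_s}\leq \mathbb{S}^{-1}b\leq \mathbb{S}^{-1}(b+d)$, so either $b+d=0$ or $b+d\geq \mathbb{S}^{N/2s}$. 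Combining the two displayed limits, $c-\Phi(u)=(\tfrac12-\tfrac{1}{2^*_s})b+(\tfrac14-\tfrac{1}{2^*_s})d \ge \tfrac{s}{N}(b+d)$ whenever $2^*_s\ge 4$, so if $b+d>0$ then $c\ge \Phi(u)+\tfrac{s}{N}\mathbb{S}^{N/2s}\ge \tfrac{s}{N}\mathbb{S}^{N/2s}=c^*$, contradicting $c<c^*$. Therefore $b+d=0$, which forces $\|(-\Delta)^{s/2}v_n\|_{L^2}\to 0$ and $\|v_n\|_{Q^{\alpha,2}}\to 0$, i.e. $v_n\to 0$ in $E$, so $u_n\to u$ strongly and $(PS)_c$ holds. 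The main obstacle I anticipate is the bookkeeping of the Coulomb quartic term under the Brezis--Lieb splitting — one must justify that the cross terms vanish, which is where the strong $L^{2^*_{s,\alpha}}$-convergence of $v_n$ (from compactness of the embedding, valid precisely because $\alpha>1$ and $4s+\alpha\ne N$) and the continuity properties of the quadrilinear form $T$ from Lemma~\ref{verify H7} are essential; handling the sign of the Coulomb contribution requires the restriction $N/4<s<1$ so that $2^*_s>4$, exactly as flagged in the introduction.
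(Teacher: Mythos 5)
Your proposal has two genuine gaps, both occurring at points where the paper's proof uses a significantly different mechanism.

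\textbf{Boundedness step.} You propose to bound $(u_n)$ directly from $\Phi(u_n)-\frac{1}{2^*_s}\Phi'(u_n)u_n$, claiming this yields $c+o(1)+o(\|u_n\|)\geq \eta\|(-\Delta)^{s/2}u_n\|^2_{L^2}+\eta\|u_n\|^4_{Q^{\alpha,2}}$. Writing out this quantity gives
\[
\Bigl(\tfrac12-\tfrac{1}{2^*_s}\Bigr)\!\int|(-\Delta)^{s/2}u_n|^2
+C_\alpha\Bigl(\tfrac14-\tfrac{1}{2^*_s}\Bigr)\!\iint\tfrac{u_n^2u_n^2}{|x-y|^{N-\alpha}}
-\lambda\Bigl(\tfrac{1}{2^*_{s,\alpha}}-\tfrac{1}{2^*_s}\Bigr)\!\int|u_n|^{2^*_{s,\alpha}}
-\mu\Bigl(\tfrac{1}{q_6}-\tfrac{1}{2^*_s}\Bigr)\!\int|u_n|^{q_6}.
\]
For $\lambda>0$ and/or $\mu>0$ the last two coefficients are strictly positive, so these terms are \emph{negative}, and they are not dominated by the first two. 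In fact, using the scaling $u_t=t^\theta u(t\cdot)$ with $\theta=(2s+\alpha)/2$, the first three pieces all scale like $t^{4s+\alpha-N}$ whereas the $q_6$-term scales like $t^{\theta q_6-N}$ with $\theta q_6-N>4s+\alpha-N$; hence for $\mu>0$ this expression tends to $-\infty$ along suitable sequences $\|u_n\|\to\infty$. Your claimed inequality is simply false for $\mu>0$. The lemma is stated for all $\lambda\in\mathbb{R}$ and $\mu\geq0$, which is why the paper proves boundedness by a completely different route: rescaling the putative unbounded sequence onto the manifold $\mathcal{M}$, showing via interpolation that the rescaled sequence vanishes in $L^{2^*_s}$, $L^{2^*_{s,\alpha}}$ and $L^{q_6}$, and then reaching the contradiction $\tilde u_n\to 0$ in $E$.

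\textbf{Energy threshold step.} You need $\Phi(u)\geq 0$ for the weak limit $u$, and justify it by $\Phi(u)=\Phi(u)-\frac{1}{2^*_s}\Phi'(u)u\geq 0$ ``by $2^*_s>4$''. This fails for the same reason: the $\lambda$- and $\mu$-contributions to $\Phi(u)-\frac{1}{2^*_s}\Phi'(u)u$ can be negative. In fact, combining the Nehari identity with the Pohozaev identity \eqref{pohozaev-critical} yields
\[
\Phi(u)=\frac{s}{N}\|(-\Delta)^{s/2}u\|^2_{L^2}-\frac{\alpha C_\alpha}{4N}\iint\frac{u^2(x)u^2(y)}{|x-y|^{N-\alpha}}\,dx\,dy,
\]
which has no definite sign: if the Coulomb energy of $u$ dominates, $\Phi(u)<0$. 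The paper does not assert $\Phi(u)\geq 0$. Instead, after arriving at the bound
$c\geq \tfrac{s}{N}\mathbb{S}^{N/2s}+\tfrac{s}{N}A+C_\alpha(\tfrac14-\tfrac{1}{2^*_s})B'-\lambda(\cdots)P-\mu(\cdots)Q$
(with $A,B',P,Q$ the obvious integrals of $u$), it adds the specific linear combination $a\times(\text{Pohozaev})+\tilde a\times(\text{Nehari})=0$ with $a=(4s+\alpha-N)^{-1}$ and $\tilde a=-s/N-aN/2^*_s$, chosen exactly so that the $A$-, $B'$- and $P$-coefficients vanish; what remains is $c\geq\tfrac{s}{N}\mathbb{S}^{N/2s}+\mu d\!\int|u|^{q_6}+\tfrac{s}{N}\!\int|u|^{2^*_s}$ with $d>0$ (because $q_6>2^*_{s,\alpha}$), hence $c\geq c^*$. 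This use of the Pohozaev identity for $u$ (not merely the Nehari identity) is essential and cannot be replaced by the shortcut you propose.

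A smaller point: the nonlocal Brezis--Lieb lemma gives only the \emph{inequality} $\iint_{u_n}\geq\iint_u+\iint_{u_n-u}+o(1)$, so the two displayed relations in your proposal should be one-sided; this does not by itself break the argument but needs to be handled with care.
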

\begin{proof}
Let $(u_n)$ be a $(PS)_c$ sequence for $\Phi$ in $E,$ namely such that
$$
\Phi(u_n)=c+o(1)\quad\mbox{and}\quad \Phi'(u_n)=o(1)\quad\mbox{as}\,\,n\to\infty.
$$
We claim that $(u_n)$ is bounded in $E$. In fact, arguing by contradiction, suppose that $\|u_n\|\to\infty$, for a subsequence. As in Lemma \ref{PS-superscaled}, we set
$$
t_n=\frac{1}{I(u_n)^{1/(4s+\alpha-N)}},\quad\tilde{u}_n=(u_n)_{t_n}\quad\mbox{and}\quad \tilde{t}_n=\frac{1}{t_n}=I(u_n)^{1/(4s+\alpha-N)}.
$$
As $I$ is coercive, it holds that $\tilde{t}_n\to\infty$.
Since by scaling we have $I(u_t)=t^{4s+\alpha-N}I(u)$ for all $u\in E$ and $t\geq 0$, it holds that $I(\tilde{u}_n)=1$, namely $\tilde{u}_n\in \mathcal{M}$, and therefore $(\tilde{u}_n)$ is bounded. Moreover, note that $(\tilde{u}_n)_{\tilde{t}_n}=u_n$ and hence
\begin{equation}\label{unOtcritical}
    \|u_n\|=\|(\tilde{u}_n)_{\tilde{t}_n}\|\leq\max\{(\tilde{t}_n)^{(4s+\alpha-N)/2},(\tilde{t}_n)^{(4s+\alpha-N)/4}\}I(\tilde{u}_n)=O((\tilde{t}_n)^{(4s+\alpha-N)/2})
\end{equation}
as $n\to\infty$. As
\begin{eqnarray*}
 \Phi_\lambda(u_t) &=&I(u_t)-\lambda J(u_t)= t^{4s+\alpha-N}\Phi_\lambda(u)\quad\mbox{for all}\,\, u\in E\,\,\mbox{and}\,\, t\geq0, 
\end{eqnarray*}
and $\Phi(u_n)=c+o(1)$, namely
\begin{eqnarray}
\frac{1}{2}\int_{\mathbb{R}^N}|(-\Delta)^\frac{s}{2}u_n|^2dx+\frac{C_\alpha}{4}\iint_{\mathbb{R}^N\times \mathbb{R}^N}\frac{u_n^2(x)u_n^2(y)}{|x-y|^{N-\alpha}}dxdy - \frac{\lambda}{2^*_{s,\alpha}}\int_{\mathbb{R}^N}|u_n|^{2^*_{s,\alpha}}dx\nonumber\\
- \frac{\mu}{q_6}\int_{\mathbb{R}^N}|u_n|^{q_6}dx- \frac{1}{2^*_{s}}\int_{\mathbb{R}^N}|u_n|^{2^*_{s}}dx=c+o(1),
\label{Phi(un)c}
\end{eqnarray}
  we may conclude that
\begin{eqnarray}
(\tilde{t}_n)^{4s+\alpha-N}\Phi_\lambda(\tilde u_n)&
=&\frac{\mu}{q_6}\int_{\mathbb{R}^N}|u_n|^{q_6}dx+ \frac{1}{2^*_{s}}\int_{\mathbb{R}^N}|u_n|^{2^*_{s}}dx+c+o(1)\nonumber\\
&=&\frac{\mu(\tilde{t}_n)^{\theta q_6-N}}{q_6}\int_{\mathbb{R}^N}|\tilde u_n|^{q_6}dx
+ \frac{(\tilde{t}_n)^{\theta 2^*_{s}-N}}{2^*_{s}}\int_{\mathbb{R}^N}|\tilde u_n|^{2^*_{s}}dx+c+o(1). \label{eqPhiLn3}
\end{eqnarray}
We stress that $q_6\in(2^*_{s,\alpha},2^*_{s})$ yields
$$
2^*_{s}\theta -N>q_6\theta -N>2^*_{s,\alpha}\theta -N= 4s+\alpha-N>0. %>\alpha>1
$$
As a consequence, since $\mu\geq0$, the boundedness of $(\Phi_\lambda(\tilde{u}_n))$ gives us 
\begin{equation}\label{un2*b}
\int_{\mathbb{R}^N}|\tilde u_n|^{2^*_{s}}dx=O((\tilde{t}_n)^{-b}),\quad\mbox{as}\,\,n\to\infty.
\end{equation}
with $b=\theta 2^*_{s}-4s-\alpha>0,$ hence $\tilde{u}_n\to0$ in $L^{2^*_{s}}(\mathbb{R}^N)$.
 Picking $r\in (p_{\rm rad},2^*_{s,\alpha})$, the embedding $E\hookrightarrow L^r(\mathbb{R}^N)$ forces $(\tilde{u}_n)$ to be bounded in $L^r(\mathbb{R}^N)$. Pick now $\sigma\in(0,1)$ such that
 $
 \frac{1}{2^*_{s,\alpha}}= \frac{\sigma}{2^*_{s}} +\frac{1-\sigma}{r}.
 $
 By interpolation, we obtain
 $$
 \|\tilde{u}_n\|_{L^{2^*_{s,\alpha}}}\leq \|\tilde{u}_n\|_{L^{2^*_{s}}}^{\sigma}\|\tilde{u}_n\|_{L^r}^{1-\sigma}\leq C\|\tilde{u}_n\|_{L^{2^*_{s}}}^{\sigma}.
 $$
Hence $\tilde{u}_n\to0$ in $L^{2^*_{s,\alpha}}(\mathbb{R}^N)$. In the same fashion, for some $\tilde\sigma\in(0,1)$ such that
 $
 \frac{1}{q_6}= \frac{\tilde\sigma}{2^*_{s}} +\frac{1-\tilde\sigma}{2^*_{s,\alpha}},
 $
 we also have
 $$
 \|\tilde{u}_n\|_{L^{q_6}}\leq \|\tilde{u}_n\|_{L^{2^*_{s}}}^{\tilde\sigma}\|\tilde{u}_n\|_{L^{2^*_{s,\alpha}}}^{1-\tilde\sigma}.
 $$
Since $(\theta q_6-4s-\alpha)=b(\tilde\sigma q_6/2^*_{s})$, combining the two inequalities above with \eqref{un2*b}, we get
$$
(\tilde{t}_n)^{\theta q_6-4s-\alpha}\int_{\mathbb{R}^N}|\tilde{u}_n|^{q_6}dx
\leq C(\tilde{t}_n)^{\theta q_6-4s-\alpha}(\tilde{t}_n)^{-b\tilde\sigma q_6/2^*_{s}}\|\tilde{u}_n\|_{L^{2^*_{s,\alpha}}}^{1-\tilde\sigma}=C\|\tilde{u}_n\|_{L^{2^*_{s,\alpha}}}^{1-\tilde\sigma},
 $$
which yields
\begin{equation*}\label{un2*q6}
    \int_{\mathbb{R}^N}|\tilde{u}_n|^{q_6}dx=o(\tilde{t}_n^{-(\theta q_6-4s-\alpha)})
\quad\mbox{as}\,\,n \to\infty.
\end{equation*}
On the other hand, since $\Phi'(u_n)u_n=o(\|u_n\|),$ we have
\begin{eqnarray}
   \int_{\mathbb{R}^N}|(-\Delta)^\frac{s}{2}u_n|^2dx+{C_\alpha}\iint_{\mathbb{R}^N\times \mathbb{R}^N}\frac{u_n^2(x)u_n^2(y)}{|x-y|^{N-\alpha}}dxdy - {\lambda}\int_{\mathbb{R}^N}|u_n|^{2^*_{s,\alpha}}dx \nonumber\\
   =\mu\int_{\mathbb{R}^N}|u_n|^{q_6}dx+\int_{\mathbb{R}^N}|u_n|^{2^*_s}dx+o(\|u_n\|), \label{Phi'(un)o(1)}
\end{eqnarray}
that is
$$
\Phi'_\lambda(u_n){u}_n=\mu\int_{\mathbb{R}^N}|u_n|^{q_6}dx+\int_{\mathbb{R}^N}|u_n|^{2^*_s}dx+o(\|u_n\|)
$$
as $n\to\infty$. It is worth recalling now that
$$
\Phi'_\lambda(u_t)u_t=\mathcal{A}(u_t)u_t-\lambda \mathcal{B}(u_t)u_t=t^{(4s+\alpha-N)}\Phi_\lambda'(u)u,\quad \forall u\in E, \forall t\geq0,
$$ 
and $u_n=(\tilde{u}_n)_{\tilde t_n}.$ By \eqref{unOtcritical}, it then follows that 
\begin{eqnarray}\label{eqPhi'Lncri}
(\tilde{t}_n)^{(4s+\alpha-N)}\Phi'_\lambda(\tilde u_n)\tilde{u}_n=
\mu(\tilde{t}_n)^{\theta q_6-N}\int_{\mathbb{R}^N}|\tilde u_n|^{q_6}dx+(\tilde{t}_n)^{\theta 2^*_s-N}\int_{\mathbb{R}^N}|\tilde u_n|^{2^*_s}dx+o((\tilde{t}_n)^{(4s+\alpha-N)/2}).
\end{eqnarray}
Multiplying \eqref{eqPhiLn3} by $2^*_s$ and subtracting \eqref{eqPhi'Lncri} we finally obtain
\begin{eqnarray*}
  & &\left(\frac{2^*_s}{2}-1\right)\int_{\mathbb{R}^N}|(-\Delta)^\frac{s}{2}\tilde u_n|^2dx+C_\alpha\left(\frac{2^*_s}{4}-1\right) \iint_{\mathbb{R}^N\times\mathbb{R}^N}\frac{\tilde u_n^2(x)\tilde u_n^2(y)}{|x-y|^{N-\alpha}}dxdy \\
  &&\qquad\qquad=\lambda\left(\frac{2^*_s}{2^*_{s,\alpha}} -1\right)\int_{\mathbb{R}^N}|\tilde u_n|^{2^*_{s,\alpha}}dx
  +\mu\left(\frac{2^*_s}{q_6} -1\right)(\tilde{t}_n)^{\theta q_6-4s-\alpha}\int_{\mathbb{R}^N}|\tilde u_n|^{q_6}dx+o(1)\\
  && \qquad\qquad=\,\, o(1),\quad\mbox{as}\,\,n\to\infty.
\end{eqnarray*}
As $2^*_s>4$ for $s>N/4$, we get $\tilde u_n\to0$, which contradicts the fact that $\tilde u_n\in\mathcal{M},$
and this is enough to conclude that $(u_n)$ is bounded in $E$. \\
Since $E$ is reflexive Banach space, up to a subsequence, $u_n\rightharpoonup u$ for some $u\in E$. By the compactness of the embedding $E\hookrightarrow L^r(\mathbb{R}^N)$ for $r\in (p_{\rm rad},2^*_s)$, we can assume that $u_n\rightarrow u$ in $L^r(\mathbb{R}^N)$ and $u_n(x)\rightarrow u(x)$ almost everywhere. In particular, $u_n\rightarrow u$  in $L^{2^*_{s,\alpha}}(\mathbb{R}^N)$ and in $L^{q_6}(\mathbb{R}^N)$, which implies that
$$
\int_{\mathbb{R}^N}| u_n|^{2^*_{s,\alpha}-2}u_nvdx\to \int_{\mathbb{R}^N}| u|^{2^*_{s,\alpha}-2}uvdx\quad\mbox{and}
\quad\int_{\mathbb{R}^N}| u_n|^{q_6-2}u_nvdx\to \int_{\mathbb{R}^N}| u|^{q_6-2}uvdx,
$$
for any $v\in E$. As in the proof of Lemma \ref{verify H7}, by the weak convergence  $u_n\rightharpoonup u$ in $E$ we see that
\begin{equation}\label{unuHs}
   \int_{\mathbb{R}^N}(-\Delta)^\frac{s}{2}u_n(-\Delta)^\frac{s}{2}vdx \to  \int_{\mathbb{R}^N}(-\Delta)^\frac{s}{2}u(-\Delta)^\frac{s}{2}vdx,
   \end{equation}
and
\begin{equation*}\label{unuQaq}
\iint_{\mathbb{R}^N\times\mathbb{R}^N}\frac{u^2_n(x)u_n(y)v(y)}{|x-y|^{N-\alpha}}dxdy\to\iint_{\mathbb{R}^N\times\mathbb{R}^N}\frac{u^2(x)u(y)v(y)}{|x-y|^{N-\alpha}}dxdy.
\end{equation*}
Moreover, by a standard weak convergence argument we also have
$$
\int_{\mathbb{R}^N}| u_n|^{2^*_{s}-2}u_nvdx\to \int_{\mathbb{R}^N}| u|^{2^*_{s}-2}uvdx.
$$
Thus, for each $v\in E$, it holds that
$$
0=\lim_{n\to\infty}\Phi'(u_n)v=\Phi'(u)v
$$
and therefore $u$ is a critical point for $\Phi$. Testing with $v=u$, the Nehari identity $\Phi'(u)u=0$ yields
\begin{equation}\label{phi'(u)u}
  \int_{\mathbb{R}^N}|(-\Delta)^\frac{s}{2}u|^2dx+C_\alpha\iint_{\mathbb{R}^N\times \mathbb{R}^N}\frac{u^2(x)u^2(y)}{|x-y|^{N-\alpha}}dxdy - {\lambda}\int_{\mathbb{R}^N}|u|^{2^*_{s,\alpha}}dx
- {\mu}\int_{\mathbb{R}^N}|u|^{q_6}dx- \int_{\mathbb{R}^N}|u|^{2^*_{s}}dx=0.
\end{equation}
Moreover, by the Pohozaev identity provided in Lemma \ref{pohozaev}, we also get
\begin{eqnarray}\label{pohozaev-critical}
  \frac{N-2s}{2}\int_{\mathbb{R}^N}|(-\Delta)^\frac{s}{2}u|^2dx+\frac{C_\alpha(N+\alpha)}{4}\iint_{\mathbb{R}^N\times \mathbb{R}^N}\frac{u^2(x)u^2(y)}{|x-y|^{N-\alpha}}dxdy\nonumber\\
 \quad - \frac{\lambda N}{2^*_{s,\alpha}}\int_{\mathbb{R}^N}|u|^{2^*_{s,\alpha}}dx
- \frac{\mu N}{q_6}\int_{\mathbb{R}^N}|u|^{q_6}dx- \frac{N}{2^*_{s}}\int_{\mathbb{R}^N}|u|^{2^*_{s}}dx=0.
\end{eqnarray}
With these preliminaries in place we can now show strong convergence by a Brezis-Lieb type argument adapted to the present fractional and nonlocal context. To this aim, observe now that by \eqref{unuHs} it is standard to see that
\begin{equation}\label{unuHso(1)}
   \int_{\mathbb{R}^N}|(-\Delta)^\frac{s}{2}(u_n-u)|^2dx =  \int_{\mathbb{R}^N}|(-\Delta)^\frac{s}{2}u_n|^2dx-\int_{\mathbb{R}^N}|(-\Delta)^\frac{s}{2}u|^2dx+o(1).
   \end{equation}
On the other hand, since by boundedness and the fact that $u_n(x)\to u(x)$ almost everywhere, the classical Brezis-Lieb lemma yields
\begin{equation}\label{BLLemma}
    \int_{\mathbb{R}^N}|u_n-u|^{2^*_{s}}dx=\int_{\mathbb{R}^N}|u_n|^{2^*_{s}}dx-\int_{\mathbb{R}^N}|u|^{2^*_{s}}dx+o(1).
\end{equation}
On the other hand, the nonlocal Brezis-Lieb lemma (see \cite[Proposition 4.1]{Mercuri-Moroz-VS-2016}) also yields
\begin{eqnarray}
\iint_{\mathbb{R}^N\times\mathbb{R}^N}\frac{u_n^2(x)u_n^2(y)}{|x-y|^{N-\alpha}}dxdy
\!\!\!&-&\!\!\!\iint_{\mathbb{R}^N\times\mathbb{R}^N}\frac{u^2(x)u^2(y)}{|x-y|^{N-\alpha}}dxdy\nonumber\\
&\geq&\iint_{\mathbb{R}^N\times\mathbb{R}^N}\frac{(u_n-u)^2(x)(u_n-u)^2(y)}{|x-y|^{N-\alpha}}dxdy+o(1).\label{nonlocalBLL}
\end{eqnarray}
Hence, subtracting \eqref{phi'(u)u} by \eqref{Phi'(un)o(1)}, putting together \eqref{unuHso(1)}-\eqref{nonlocalBLL} and the fact that $u_n\rightarrow u$  in $L^{2^*_{s,\alpha}}(\mathbb{R}^N)$ and in $L^{q_6}(\mathbb{R}^N)$, we obtain
\begin{eqnarray*}
\int_{\mathbb{R}^N}|(-\Delta)^\frac{s}{2}(u_n-u)|^2dx+C_\alpha\iint_{\mathbb{R}^N\times\mathbb{R}^N}\frac{(u_n-u)^2(x)(u_n-u)^2(y)}{|x-y|^{N-\alpha}}dxdy\leq \int_{\mathbb{R}^N}|u_n-u|^{2^*_{s}}dx+o(1).
\end{eqnarray*}
Observe now that the classical Sobolev inequality \eqref{best S} gives us
\begin{eqnarray}\label{convS}
\int_{\mathbb{R}^N}|(-\Delta)^\frac{s}{2}(u_n-u)|^2dx
\leq \mathbb{S}^{-\frac{2^*_s}{2}}\left(\int_{\mathbb{R}^N}|(-\Delta)^\frac{s}{2}(u_n-u)|^2dx\right)^\frac{2^*_s}{2}+o(1).
\end{eqnarray}
Let us assume now that there is no subsequence of $(u_n)$ that converges to $u$ and prove that $c\geq \frac{s}{N}\mathbb{S}^\frac{N}{2s},$ this will be enough to complete the proof. By \eqref{convS} and \eqref{unuHso(1)} we  have
\begin{equation}\label{no-convergence}
 \mathbb{S}^\frac{2^*_s}{2^*_s-2}\leq\int_{\mathbb{R}^N}|(-\Delta)^\frac{s}{2}(u_n-u)|^2dx+ o_n(1)=\int_{\mathbb{R}^N}|(-\Delta)^\frac{s}{2}u_n|^2dx-\int_{\mathbb{R}^N}|(-\Delta)^\frac{s}{2}u|^2dx+ o(1).
\end{equation}
Hence, multiplying \eqref{Phi'(un)o(1)} by $1/2^*_s$ and subtracting by \eqref{Phi(un)c}, we obtain
\begin{eqnarray*}
 c & =&\left(\frac{1}{2}-\frac{1}{2^*_s}\right)\int_{\mathbb{R}^N}|(-\Delta)^\frac{s}{2}u_n|^2dx+C_\alpha\left(\frac{1}{4}-\frac{1}{2^*_s}\right) \iint_{\mathbb{R}^N\times\mathbb{R}^N}\frac{ u_n^2(x) u_n^2(y)}{|x-y|^{N-\alpha}}dxdy \\
  &&-\lambda\left(\frac{1}{2^*_{s,\alpha}} -\frac{1}{2^*_s}\right)\int_{\mathbb{R}^N}|u_n|^{2^*_{s,\alpha}}dx
  -\mu\left(\frac{1}{q_6} -\frac{1}{2^*_s}\right)\int_{\mathbb{R}^N}|u_n|^{q_6}dx+o(1).
\end{eqnarray*}
On the other hand, by \eqref{no-convergence} and \eqref{nonlocalBLL} as $n\to\infty$ we get
\begin{eqnarray*}
 c & \geq&\frac{s}{N}\mathbb{S}^\frac{N}{2s}+\frac{s}{N}\int_{\mathbb{R}^N}|(-\Delta)^\frac{s}{2}u|^2dx+C_\alpha\left(\frac{1}{4}-\frac{1}{2^*_s}\right) \iint_{\mathbb{R}^N\times\mathbb{R}^N}\frac{ u^2(x) u^2(y)}{|x-y|^{N-\alpha}}dxdy \\
  &&-\lambda\left(\frac{1}{2^*_{s,\alpha}} -\frac{1}{2^*_s}\right)\int_{\mathbb{R}^N}|u|^{2^*_{s,\alpha}}dx
  -\mu\left(\frac{1}{q_6} -\frac{1}{2^*_s}\right)\int_{\mathbb{R}^N}|u|^{q_6}dx.
\end{eqnarray*}
Now, multiplying \eqref{pohozaev-critical} by any $a$ and \eqref{phi'(u)u} by $\tilde{a}$, both to be defined later, the inequality above yields
\begin{eqnarray*}
 c & \geq&\frac{s}{N}\mathbb{S}^\frac{N}{2s}+ \left(\frac{aN}{2^*_s}+\tilde{a}+\frac{s}{N}\right)\int_{\mathbb{R}^N}|(-\Delta)^\frac{s}{2}u|^2dx\\
 &&+ C_\alpha\left(\frac{a(N+\alpha)}{4}+\tilde{a}+\frac{1}{4}-\frac{1}{2^*_s}\right) \iint_{\mathbb{R}^N\times\mathbb{R}^N}\frac{ u^2(x) u^2(y)}{|x-y|^{N-\alpha}}dxdy \\
  &&- \lambda\left(\frac{aN}{2^*_{s,\alpha}}+\tilde{a}+\frac{1}{2^*_{s,\alpha}} -\frac{1}{2^*_s}\right)\int_{\mathbb{R}^N}|u|^{2^*_{s,\alpha}}dx
  -\mu\left(\frac{aN}{q_6}+\tilde{a}+\frac{1}{q_6} -\frac{1}{2^*_s}\right)\int_{\mathbb{R}^N}|u|^{q_6}dx\\
  &&-\left(\frac{aN}{2^*_{s}}+\tilde{a}\right)\int_{\mathbb{R}^N}|u|^{2^*_{s}}dx.
\end{eqnarray*}
Setting in particular $a=(4s+\alpha-N)^{-1}>0$ and $\tilde{a}=\frac{-s}{N}-\frac{aN}{2^*_{s}}$ we have that the first three terms in the inequality above cancel out. % for any $\lambda\in\mathbb{R}$. 
As $q_6>2^*_{s,\alpha},$ we get
$$
-d:=\left(\frac{aN}{q_6}+\tilde{a}+\frac{1}{q_6} -\frac{1}{2^*_s}\right)<\left(\frac{aN}{2^*_{s,\alpha}}+\tilde{a}+\frac{1}{2^*_{s,\alpha}} -\frac{1}{2^*_s}\right)=0.
$$
Hence, as $\mu\geq0$, we finally get
\begin{eqnarray}\label{PS level estimate}
 c & \geq&\frac{s}{N}\mathbb{S}^\frac{N}{2s}
   +\mu d\int_{\mathbb{R}^N}|u|^{q_6}dx
  +\frac{s}{N}\int_{\mathbb{R}^N}|u|^{2^*_{s}}dx\geq\frac{s}{N}\mathbb{S}^\frac{N}{2s},
\end{eqnarray}
and this completes the proof.
\end{proof} 

We are now ready to deal with the main proofs of this section, on the critical equation \eqref{Pcri}.

\begin{proof-th} \textbf{\textit{of Theorem \ref{th cri-super-mult}.}}\addcontentsline{toc}{subsection}{Proof of Theorem~\ref{th cri-super-mult}}
 Our aim is to apply Proposition \ref{Theorem 2.33 MP}. Let us then consider $\lambda\in(0,\lambda_k)$. By Lemma \ref{PSlocal} we know that $\Phi$ satisfies the $(PS)_c$ for all $0<c<c^*:=(s/N)\mathbb{S}^\frac{N}{2s}$. Pick $\varepsilon\in(0,\lambda_{k+m}-\lambda_{k+m-1})$. Then, by Theorem \ref{lambdak}\ref{lambdak(iii)} we have
$$
i(\mathcal{M}\backslash\tilde{\Psi}_{\lambda_{k+m-1}+\varepsilon})=k+m-1.
$$
Since $\mathcal{M}\backslash\tilde{\Psi}_{\lambda_{k+m-1}+\varepsilon}$ is an open symmetric subset of $\mathcal{M}$,  there is a compact symmetric $C\subset\mathcal{M}\backslash\tilde{\Psi}_{\lambda_{k+m-1}+\varepsilon}$ with $i(C)=k+m-1$  (see the proof of Proposition 3.1 in Degiovanni and Lancelotti \cite{De-Lan-2007}).
We set $A_0=C$ and $B_0=\tilde{\Psi}_{\lambda_k}$. The properties of $C$ yield
$$
i(A_0)=k+m-1.
$$
As far as $B_0,$ we distinguish the cases $\lambda_k=\lambda_1$ and $\lambda_k>\lambda_1$. If $\lambda_1=\cdots=\lambda_k$, we have
$$
B_0=\{u\in\mathcal{M}:\tilde{\Psi}(u)\geq\lambda_1\}=\mathcal{M},
$$
by Theorem \ref{lambdak}\ref{lambdak(i)}. Then, $i(\mathcal{M}\backslash B_0)=i(\emptyset)=0\leq k-1$. In the other case, it holds $\lambda_{l-1}<\lambda_l=\cdots=\lambda_k$ for some $2\leq l\leq k$. Then, Theorem \ref{lambdak}\ref{lambdak(iii)} implies that
$$
i(\mathcal{M}\backslash B_0)=
i(\mathcal{M}\backslash\tilde{\Psi}_{\lambda_{l}})=l-1\leq k-1.
$$
Now, for some $R>\rho>0$, we set
\begin{eqnarray*}
    X=\{u_t:u\in A_0,\,0\leq t\leq R\},\quad A=\{u_R:u\in A_0\}\quad\mbox{and}\quad B=\{u_{\rho}:u\in B_0\},
\end{eqnarray*}
and show that it is possible to choose suitable $R>\rho>0$ and $\delta_k>0$ in such a way that, for all $\lambda\in(\lambda_k-\delta_k,\lambda_k)$, it holds that
$$
\sup_{v\in A}\Phi(v)\leq 0<\inf_{v\in B}\Phi(v)\quad\mbox{and}
\quad\sup_{v\in X}\Phi(v)<\frac{s}{N}\mathbb{S}^\frac{N}{2s}.
$$
In fact, for all $\lambda\in\mathbb{R}$, $u\in\mathcal{M}$ and $t\geq0$ we have
\begin{equation}\label{Phi cri in M}
    \Phi(u_t)=t^{4s+\alpha-N}\left(1-\frac{\lambda}{\tilde{\Psi}(u)}\right)-\frac{\mu t^{\theta q_6-N}}{q_6}\int_{\mathbb{R}^N}|u|^{q_6}dx-\frac{t^{\theta 2^*_s-N}}{2^*_s}\int_{\mathbb{R}^N}|u|^{2^*_s}dx.
\end{equation}
We recall that $\theta=(2s+\alpha)/2$ and $\theta 2^*_s>\theta q_6>\theta 2^*_{s,\alpha}=4s+\alpha$.
Since $\mathcal{M}$ is bounded, there exist constants $c_1,c_2>0$ such that
$$
\Phi(u_t)\geq 
%t^{4s+\alpha-N}\left(1-\frac{\lambda}{\tilde{\Psi}(u)}\right)-c_1 t^{\theta q_6-N}-c_2t^{\theta 2^*_s-N}=
t^{4s+\alpha-N}\left(1-\frac{\lambda}{\tilde{\Psi}(u)}-c_1\mu t^{\theta q_6-(4s+\alpha)}-c_2t^{\theta 2^*_s-(4s+\alpha)}\right).
$$
 Hence, since $\tilde{\Psi}(u)\geq\lambda_k>\lambda$ for all $u\in B_0$, we get 
$$
\Phi(u_t)\geq 
t^{4s+\alpha-N}\left(1-\frac{\lambda}{\lambda_k}-c_1\mu t^{\theta q_6-(4s+\alpha)}-c_2t^{\theta 2^*_s-(4s+\alpha)}\right)>0,
$$
provided $t=\rho$ is small enough. %rho depends on \lambda\in(0,\lambda_k).
This gives us that
\begin{equation}\label{inf in B}
    \inf_{v\in B}\Phi(v)>0.
\end{equation}
Now, for $u\in A_0\subset\mathcal{M}\backslash\tilde{\Psi}_{\lambda_{k}+\varepsilon}$ we have
$
\tilde{\Psi}(u)<\lambda_{k}+\varepsilon<\lambda_{k+m},
$
which implies
$$
\frac{1}{2^*_{s,\alpha}}\int_{\mathbb{R}^N}|u|^{2^*_{s,\alpha}}dx=J(u)>\frac{1}{\lambda_{k+m}}.
$$
Picking $r\in(p_{\rm rad},2^*_{s,\alpha})$ and $\sigma\in(0,1)$ such that $\frac{1}{2^*_{s,\alpha}}=\frac{1-\sigma}{r}+\frac{\sigma}{2^*_{s}}$,  the interpolation inequality and the boundedness of $\mathcal{M}$ yields
$$
\|u\|_{L^{2^*_{s,\alpha}}}\leq \|u\|_{L^{r}}^{1-\sigma}\|u\|_{L^{2^*_{s}}}^\sigma\leq  \|u\|_{L^{2^*_{s}}}^\sigma.
$$
Hence, there exists $c_3>0$, independent on $\varepsilon$, such that
$$
\frac{1}{2^*_{s}}\int_{\mathbb{R}^N}|u|^{2^*_{s}}dx\geq c_3,\quad\forall u\in A_0.
$$
Since $\lambda,\mu\geq0$, by \eqref{Phi cri in M} we get
 $$
\Phi(u_t)\leq t^{4s+\alpha-N}-\frac{t^{\theta 2^*_s-N}}{2^*_s}\int_{\mathbb{R}^N}|u|^{2^*_s}dx\leq 
t^{4s+\alpha-N}\left(1-c_3t^{\theta 2^*_s-(4s+\alpha)}\right)\leq0, \quad\forall u\in A_0,
$$
for $t=R>\rho$, large enough. Hence,
\begin{equation}\label{sup in A 2} 
    \sup_{v\in A}\Phi(v)\leq0.
\end{equation}
We now estimate the supremum of $\Phi$ in $X$. Again by  \eqref{Phi cri in M}, since $\lambda,\mu\geq0$, we obtain
\begin{equation*}
    \Phi(u_t)\leq t^{4s+\alpha-N}\left(1-\frac{\lambda}{\lambda_{k}+\varepsilon}\right)-c_3t^{\theta 2^*_s-N}, \quad\forall u\in A_0, \,t\geq0.
\end{equation*}
Thus, 
\begin{eqnarray*}
   \sup_{u\in A_0, 0\leq t\leq R}\Phi(u_t)&\leq &\max_{t\geq0}\left[t^{4s+\alpha-N}\left(1-\frac{\lambda}{\lambda_{k}+\varepsilon}\right)-c_3t^{\theta 2^*_s-N}\right]\\
    &=&\frac{s}{N}\frac{2^\frac{N}{2s}}{(c_32^*_s)^{\frac{N}{2s}-1}}\left(1-\frac{\lambda}{\lambda_{k}+\varepsilon}\right)^\frac{N}{2s}.
\end{eqnarray*}
We set
$$
\delta_k=\frac{\lambda_k}{2}\mathbb{S}\left(c_32^*_s\right)^\frac{2}{2^*_s}.
$$
For each $\lambda_k-\delta_k<\lambda<\lambda_k$, we can see that for $\varepsilon>0$ small enough it holds that
\begin{eqnarray*}
   \sup_{u\in A_0, 0\leq t\leq R}\Phi(u_t)<\frac{s}{N}\mathbb{S}^\frac{N}{2s}.
\end{eqnarray*}
Finally, combining this inequality, \eqref{inf in B} and \eqref{sup in A 2},
the conclusion now follows by Proposition \ref{Theorem 2.33 MP}, and this concludes the proof.
\end{proof-th}

We conclude this section with the proof of Theorem~\ref{th cri l<l1}, which does not require $\lambda$ to lie near an eigenvalue, provided $\mu$ is sufficiently large. In fact, $\lambda$ may be any real number.

\begin{proof-th} \textbf{\textit{of Theorem \ref{th cri l<l1}.}}\addcontentsline{toc}{subsection}{Proof of Theorem~\ref{th cri l<l1}}
 Although the argument is similar to the above proof, for sake of clarity we prefer not to leave it out. We shall conclude again by Proposition \ref{Theorem 2.33 MP}. To this aim, note that by Lemma \ref{PSlocal} $\Phi$ satisfies the $(PS)_c$ for all $0<c<c^*:=(s/N)\mathbb{S}^\frac{N}{2s}$. Pick $m\in\mathbb{N}$ and set 
 $$
 j=\min\{k\in\mathbb{N}: \lambda<\lambda_k\}\quad\mbox{and}\quad \tilde{m}=j+m-1.
 $$
Let  $l\in\mathbb{N}$ and $a\in\mathbb{R}$ such that $\lambda_{\tilde m}=\cdots=\lambda_{\tilde m+l-1}<a<\lambda_{\tilde m+l}$.   Then, by Theorem \ref{lambdak}\ref{lambdak(iii)} we have
$$
i(\mathcal{M}\backslash\tilde{\Psi}_{a})=\tilde m+l-1.
$$
Since $\mathcal{M}\backslash\tilde{\Psi}_{a}$ is an open symmetric subset of $\mathcal{M}$,  there exists a compact symmetric set $C\subset\mathcal{M}\backslash\tilde{\Psi}_{a}$ with $i(C)=\tilde m+l-1$  (see the proof of Proposition 3.1 in Degiovanni and Lancelotti \cite{De-Lan-2007}).
Set
$$
A_0=C\quad \mbox{and}\quad B_0=\tilde{\Psi}_{\lambda_j}.
$$ 
Hence
$$
i(A_0)=\tilde m+l-1=(m+l-1)+j-1.
$$
Dealing with $B_0$ we analyse the cases $j=1$ and $j>1$ separately. If $j=1$, we have
$$
B_0=\{u\in\mathcal{M}:\tilde{\Psi}(u)\geq\lambda_1\}=\mathcal{M},
$$
by Theorem \ref{lambdak}\ref{lambdak(i)}. Hence $i(\mathcal{M}\backslash B_0)=i(\emptyset)=0\leq j-1$. In the other case, it holds that $\lambda_{j-1}\leq\lambda<\lambda_j$, hence Theorem \ref{lambdak}\ref{lambdak(iii)} yields 
$$
i(\mathcal{M}\backslash B_0)=
i(\mathcal{M}\backslash\tilde{\Psi}_{\lambda_{j}})=j-1.
$$
Now, for some $R>\rho>0$ to be chosen later, we set
\begin{eqnarray*}
    X=\{u_t:u\in A_0,\,0\leq t\leq R\},\quad A=\{u_R:u\in A_0\}\quad\mbox{and}\quad B=\{u_{\rho}:u\in B_0\}.
\end{eqnarray*}
We claim that we may choose $R,\,\mu_m>0$ such that, for each $\mu\geq \mu_m$ there is $\rho\in(0,R)$ such that
$$
\sup_{v\in A}\Phi(v)\leq 0<\inf_{v\in B}\Phi(v)\quad\mbox{and}
\quad\sup_{v\in X}\Phi(v)<\frac{s}{N}\mathbb{S}^\frac{N}{2s}.
$$
Recall that 
\begin{equation}\label{Phi cri MPT}
    \Phi(u_t)=t^{4s+\alpha-N}\left(1-\frac{\lambda}{\tilde{\Psi}(u)}\right)-\frac{\mu t^{\theta q_6-N}}{q_6}\int_{\mathbb{R}^N}|u|^{q_6}dx-\frac{t^{\theta 2^*_s-N}}{2^*_s}\int_{\mathbb{R}^N}|u|^{2^*_s}dx,
\end{equation}
for all $u\in\mathcal{M}$ and $t\geq0$.
Since $A_0\subset\mathcal{M}$ is compact, there exists $c_0>0$ such that 
$$
\frac{1}{2^*_{s,\alpha}}\int_{\mathbb{R}^N}|u|^{2^*_{s,\alpha}}dx,\quad \frac{1}{2^*_{s}}\int_{\mathbb{R}^N}|u|^{2^*_{s}}dx, \quad \frac{1}{q_6}\int_{\mathbb{R}^N}|u|^{q_6}dx \geq c_0,\quad\forall u\in A_0.
$$
Setting $\lambda^-=-\min\{\lambda,0\}$, by the boundedness of $\|u\|_{L^{2^*_{s,\alpha}}}$ in $\mathcal{M}$ we get
$$
1-\frac{\lambda}{\tilde{\Psi}(u)}\leq 1+\frac{\lambda^-}{2^*_{s,\alpha}}\int_{\mathbb{R}^N}|u|^{2^*_{s,\alpha}}dx\leq \tilde{b},\quad\forall u\in A_0,
$$
for some $\tilde{b}>0$.  Hence, as $\mu>0$, by \eqref{Phi cri MPT} we obtain
 $$
\Phi(u_t)\leq \tilde{b}t^{4s+\alpha-N}-\frac{t^{\theta 2^*_s-N}}{2^*_s}\int_{\mathbb{R}^N}|u|^{2^*_s}dx\leq 
t^{4s+\alpha-N}\left(\tilde{b}-c_0t^{\theta 2^*_s-(4s+\alpha)}\right)\leq0, \quad\forall u\in A_0,
$$
for $t=R$ large enough, independent on $\mu$. Hence
\begin{equation}\label{sup in A}
    \sup_{v\in A}\Phi(v)\leq0.
\end{equation}
Let us now estimate the supremum of $\Phi$ in $X$. Using again  \eqref{Phi cri MPT} we obtain
\begin{equation*}
    \Phi(u_t)\leq \tilde{b} t^{4s+\alpha-N}-\mu c_0t^{\theta q_6-N}, \quad\forall u\in A_0, \,t\geq0.
\end{equation*}
Thus
\begin{eqnarray*}
   \sup_{u\in A_0, 0\leq t\leq R}\Phi(u_t)&\leq &\max_{t\geq0}\left[\tilde{b}t^{4s+\alpha-N}-\mu c_0t^{\theta q_6-N}\right]\\
    &=&\left[\theta q_6-(4s+\alpha)\right]\left[\frac{\tilde{b}}{\theta q_6-N}\right]^\frac{\theta q_6-N}{\theta q_6-(4s+\alpha)}\left[\frac{4s+\alpha-N}{c_0\mu}\right]^\frac{4s+\alpha-N}{\theta q_6-(4s+\alpha)}.
\end{eqnarray*}
Picking $\mu_m>0$ sufficiently large, for $\mu\geq\mu_m$ it holds that
\begin{eqnarray}\label{sup in X lambda}
   \sup_{v\in X}\Phi(v)= \sup_{u\in A_0, 0\leq t\leq R}\Phi(u_t)<\frac{s}{N}\mathbb{S}^\frac{N}{2s}.
\end{eqnarray}
To finalise the proof, let us now estimate $\Phi$ on $B$. By \eqref{Phi cri MPT}, since $\mathcal{M}$ is bounded, there exist $c_1,c_2>0$ such that
$$
\Phi(u_t)\geq 
t^{4s+\alpha-N}\left(1-\frac{\lambda^+}{\tilde{\Psi}(u)}-c_1 \mu t^{\theta q_6-(4s+\alpha)}-c_2t^{\theta 2^*_s-(4s+\alpha)}\right),
$$
where $\lambda^+=\max\{\lambda,0\}$. Hence, as $\tilde{\Psi}(u)\geq\lambda_j>\lambda^+$ for all $u\in B_0$ and $\theta 2^*_s>\theta q_6>\theta 2^*_{s,\alpha}=4s+\alpha$,   we get 
$$
\Phi(u_t)\geq 
t^{4s+\alpha-N}\left(1-\frac{\lambda_+}{\lambda_j}-c_1\mu t^{\theta q_6-(4s+\alpha)}-c_2t^{\theta 2^*_s-(4s+\alpha)}\right)>0,
$$
provided $t=\rho\in(0,R)$ is small enough. 
This gives us that
\begin{equation*}
  \inf_{v\in B}\Phi(v)>0.
\end{equation*}
Putting together the last inequality with \eqref{sup in A} and \eqref{sup in X lambda}, the conclusion now follows by Proposition \ref{Theorem 2.33 MP}.
\end{proof-th}

%%%%%%%%%%%%%%%%%%%%%%%%%%%%%%%%%%%%%%%%%%%%%%%%%%%

\section{Solutions to problems subscaled near the origin}\label{section sub near 0}

In this section we deal with local nonlinearities $f:[0,\infty)\times\mathbb{R}\to\mathbb{R}$  of the form
$$
  f(|x|,t) = |t|^{\beta-2}t+ g(|x|,t),
$$
where $ p_{\rm rad}<\beta< 2^*_{s,\alpha}$ and $g:[0,\infty)\times\mathbb{R}\to\mathbb{R}$ is an odd Caratheodory function satisfying \eqref{g sub near 0}-\eqref{G sub near 0}. Our PDE then reads as
\begin{equation}\label{P sub near 0}
(-\Delta)^su+\left(I_\alpha*u^2\right)u=  |u|^{\beta-2}u+ g(|x|,u) \quad\textrm{in}\quad\mathbb{R}^N,
\end{equation}
whose solutions are critical points of the even functional
\begin{eqnarray}
   \Phi(u)&=&\frac{1}{2}\int_{\mathbb{R}^N}|(-\Delta)^\frac{s}{2}u|^2dx+\frac{C_\alpha}{4}\iint_{\mathbb{R}^N\times \mathbb{R}^N}\frac{u^2(x)u^2(y)}{|x-y|^{N-\alpha}}dxdy - \frac{1}{\beta}\int_{\mathbb{R}^N}|u|^{\beta}dx
   - \int_{\mathbb{R}^N}G(|x|,u)dx\nonumber\\
&=&I(u)-  \frac{1}{\beta}\int_{\mathbb{R}^N}|u|^{\beta}dx - \int_{\mathbb{R}^N}G(|x|,u)dx.
\label{Phi-sub near 0}
\end{eqnarray}
   
We seek solutions ot \eqref{P sub near 0} by means of an abstract result in critical point theory (see \cite[Proposition~3.36]{Perera-book}), which we recall for reader's convenience, with the following 
 
\begin{proposition}\label{critical point result}
    Let $\Phi$ be an even $C^1-$functional on a Banach space $W$ such that $\Phi(0) = 0$
and $\Phi$ satisfies the $(PS)_c$ condition for all $c<0$. Let $\mathcal{F}$ denote the class of symmetric subsets
of $W \backslash\{0\}$. For $k\geq1$, let
$$
\mathcal{F}_k = \{M \in \mathcal{F}: i(M) \geq k\}
$$
and set
\begin{equation}\label{ck sub near 0}
    c_k:=\inf_{M\in \mathcal{F}_k}\sup_{u\in M}\Phi(u).
\end{equation}
If there exists a $k_0 \geq1$ such that $-\infty<c_k <0$ for all $k \geq k_0$, then $c_{k_0}\leq c_{k_0+1}\leq\cdots  \to0$ is a sequence of critical values of $\Phi$.
\end{proposition}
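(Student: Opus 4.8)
The plan is to prove this by the classical Ljusternik--Schnirelmann minimax scheme, adapted to an index taking values in $\mathbb N\cup\{\infty\}$. The only external ingredients are: the nesting $\mathcal F_{k+1}\subseteq\mathcal F_k$; an odd (equivariant) quantitative deformation lemma valid at any level $c<0$ --- available because $\Phi$ is even, $C^1$ and satisfies $(PS)_c$ for $c<0$ --- producing an odd homeomorphism $\eta$ of $W$ that pushes $\Phi^{c+\varepsilon}$ (off a prescribed neighbourhood of the critical set, if $c$ is critical) into $\Phi^{c-\varepsilon}$; and the structural properties of the cohomological index $i$: monotonicity under inclusion, invariance under odd homeomorphisms, the neighbourhood (continuity) property, finite subadditivity $i(A\cup B)\le i(A)+i(B)$ for closed symmetric $A,B$, and finiteness of $i$ on compact symmetric subsets of $W\setminus\{0\}$. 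A standing remark used everywhere: since $\Phi(0)=0$ and all levels in play are negative, the point $0$ lies in no sublevel set $\Phi^c$ with $c<0$, which is exactly what keeps every set we construct inside $W\setminus\{0\}$.

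\emph{Monotonicity and criticality.} First, $\mathcal F_{k+1}\subseteq\mathcal F_k$ gives $c_k\le c_{k+1}$, so $c_{k_0}\le c_{k_0+1}\le\cdots$, and being nondecreasing and bounded above by $0$ the sequence converges to some $c^\ast\le 0$. Next, fix $k\ge k_0$ and suppose $c_k$ were not a critical value; since $c_k<0$, the deformation lemma at $c_k$ yields $\varepsilon>0$ (taken so small that $c_k+\varepsilon<0$) and an odd homeomorphism $\eta$ with $\eta(\Phi^{c_k+\varepsilon})\subseteq\Phi^{c_k-\varepsilon}$. Choosing $M\in\mathcal F_k$ with $\sup_M\Phi<c_k+\varepsilon$, we get $M\subseteq\Phi^{c_k+\varepsilon}$, hence $\eta(M)$ is a symmetric subset of $\Phi^{c_k-\varepsilon}\subseteq W\setminus\{0\}$ with $i(\eta(M))=i(M)\ge k$, i.e. $\eta(M)\in\mathcal F_k$, while $\sup_{\eta(M)}\Phi\le c_k-\varepsilon<c_k$ --- contradicting the definition of $c_k$. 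So each $c_k$, $k\ge k_0$, is a critical value.

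\emph{Convergence to $0$.} Suppose $c^\ast<0$ and let $K=\{u\in W:\Phi(u)=c^\ast,\ \Phi'(u)=0\}$. If $K=\emptyset$, the deformation lemma at $c^\ast$ gives $\varepsilon>0$ and an odd homeomorphism $\eta$ with $\eta(\Phi^{c^\ast+\varepsilon})\subseteq\Phi^{c^\ast-\varepsilon}$; picking $k$ so large that $c_k>c^\ast-\varepsilon$, and $M\in\mathcal F_k$ with $\sup_M\Phi<c_k+\varepsilon\le c^\ast+\varepsilon$, the push-down above yields $\eta(M)\in\mathcal F_k$ with $\sup_{\eta(M)}\Phi\le c^\ast-\varepsilon$, so $c_k\le c^\ast-\varepsilon$, a contradiction. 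If $K\ne\emptyset$, then $K$ is compact by $(PS)_{c^\ast}$, symmetric, and $0\notin K$ since $\Phi(0)=0\ne c^\ast$; hence $q:=i(K)<\infty$, and by the continuity property there is a closed symmetric neighbourhood $N$ of $K$ with $0\notin N$ and $i(N)=q$. The deformation lemma gives $\varepsilon>0$ (with $c^\ast+\varepsilon<0$) and an odd homeomorphism $\eta$ with $\eta\bigl(\Phi^{c^\ast+\varepsilon}\setminus N\bigr)\subseteq\Phi^{c^\ast-\varepsilon}$. Fix $k$ large with $c_k>c^\ast-\varepsilon$, choose $M\in\mathcal F_{k+q}$ with $\sup_M\Phi<c^\ast+\varepsilon$, and set $M'=\overline{M\setminus N}$. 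From $M\subseteq M'\cup N$ and subadditivity, $k+q\le i(M)\le i(M')+i(N)=i(M')+q$, so $i(M')\ge k$; moreover $M'$ is closed, symmetric, and contained in $\Phi^{c^\ast+\varepsilon}\subseteq W\setminus\{0\}$, so $M'\in\mathcal F_k$. Then $\eta(M')\subseteq\Phi^{c^\ast-\varepsilon}$ is also in $\mathcal F_k$, giving $c_k\le\sup_{\eta(M')}\Phi\le c^\ast-\varepsilon$, a contradiction. Hence $c^\ast=0$ and $c_{k_0}\le c_{k_0+1}\le\cdots\to 0$, each a critical value.

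\emph{Main obstacle.} The monotonicity and the "each $c_k$ is critical" steps are routine; the delicate point is the convergence argument in the case where $c^\ast$ itself is a critical value, where one must run the index-drop argument around a neighbourhood of $K$ and track carefully that all the sets produced stay symmetric, closed and disjoint from $0$ --- precisely the places where $\Phi(0)=0$, the restriction of $(PS)$ to levels $c<0$, and the subadditivity/continuity axioms of the index are indispensable (the Krasnoselskii genus would in fact also serve for this particular statement, the cohomological index being needed for the other, finer results of the paper).
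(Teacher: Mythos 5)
The paper itself offers no proof of this proposition: it is recalled verbatim from \cite[Proposition 3.36]{Perera-book}, so the only "paper proof" is that citation, and your argument is precisely the standard Ljusternik--Schnirelmann deformation/index proof that the cited source also follows; it is essentially correct. One small technical point should be repaired in the case $K\neq\emptyset$: with $N$ a closed symmetric neighbourhood of $K$ and $M'=\overline{M\setminus N}$, the set $M'$ may meet $\partial N\subseteq N$, so the inclusion $M'\subseteq\Phi^{c^\ast+\varepsilon}\setminus N$ that you implicitly use to conclude $\eta(M')\subseteq\Phi^{c^\ast-\varepsilon}$ need not hold as stated. The fix is immediate: $M'$ is disjoint from $\operatorname{int} N$ (any point of $\operatorname{int} N$ has a ball inside $N$ containing no point of $M\setminus N$), and the equivariant deformation lemma permits deforming off an arbitrary neighbourhood of $K$, so apply it with $\mathcal{O}=\operatorname{int} N$ to get $\eta\bigl(\Phi^{c^\ast+\varepsilon}\setminus\operatorname{int} N\bigr)\subseteq\Phi^{c^\ast-\varepsilon}$, which does cover $M'$. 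Everything else --- the monotonicity $c_k\le c_{k+1}$ from $\mathcal F_{k+1}\subseteq\mathcal F_k$, the criticality of each $c_k$ via an odd deformation at negative levels (where $(PS)$ is available and $\Phi(0)=0$ keeps $0$ out of every sublevel set used, so all sets remain in $W\setminus\{0\}$), the finiteness and continuity of the index on the compact critical set $K$, and the subadditivity estimate $i(M')\ge i(M)-i(N)\ge k$ --- is deployed correctly, and your closing remark that the Krasnoselskii genus would suffice for this particular statement is accurate.
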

We need this preliminary
 \begin{lemma}\label{Lemma K estimate}
Consider $ p_{\rm rad}<\beta< 2^*_{s,\alpha}$. Then, for each
$\lambda\geq\lambda_1$ there exists $c>0$ such that
$$
\frac{1}{\beta}\int_{\mathbb{R}^N}|u|^{\beta}dx\geq c,\quad\forall u\in\tilde{\Psi}^{\lambda}.
$$
 \end{lemma}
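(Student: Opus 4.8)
The plan is to combine three ingredients: the boundedness of the manifold $\mathcal{M}$ in $E$, a quantitative consequence of membership in $\tilde\Psi^\lambda$, and an interpolation inequality exploiting that $\beta$ lies strictly between $p_{\rm rad}$ and $2^*_{s,\alpha}$, with $2^*_{s,\alpha}<2^*_s$.

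First I would unwind the definition of $\tilde\Psi^\lambda$. Since $\tilde\Psi(u)=1/J(u)$ with $J(u)=\frac{1}{2^*_{s,\alpha}}\int_{\R^N}|u|^{2^*_{s,\alpha}}\,dx$, the condition $u\in\tilde\Psi^\lambda$ (that is, $\tilde\Psi(u)\le\lambda$) is equivalent to
$$
\int_{\R^N}|u|^{2^*_{s,\alpha}}\,dx\ \ge\ \frac{2^*_{s,\alpha}}{\lambda},
$$
so $\|u\|_{L^{2^*_{s,\alpha}}}\ge m_\lambda:=(2^*_{s,\alpha}/\lambda)^{1/2^*_{s,\alpha}}>0$ for all such $u$. (For $\lambda<\lambda_1$ the set $\tilde\Psi^\lambda$ is empty by Theorem~\ref{lambdak}\ref{lambdak(i)}, so the assertion is vacuous; the role of the hypothesis $\lambda\ge\lambda_1$ is precisely to ensure $\tilde\Psi^\lambda\neq\emptyset$.)

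Next I would use that, since $\alpha>1$ and $4s+\alpha>N$, one has $p_{\rm rad}<\beta<2^*_{s,\alpha}<2^*_s$, so that both continuous embeddings $E\hookrightarrow L^\beta(\R^N)$ and $E\hookrightarrow L^{2^*_s}(\R^N)$ hold. Because $\mathcal{M}$ is bounded in $E$, there is $R_1>0$ with $\|u\|_{L^{2^*_s}}\le R_1$ for every $u\in\mathcal{M}$. Choosing the unique $\sigma\in(0,1)$ with $\frac{1}{2^*_{s,\alpha}}=\frac{1-\sigma}{\beta}+\frac{\sigma}{2^*_s}$, the interpolation inequality yields
$$
\|u\|_{L^{2^*_{s,\alpha}}}\ \le\ \|u\|_{L^\beta}^{1-\sigma}\,\|u\|_{L^{2^*_s}}^{\sigma}\ \le\ R_1^{\sigma}\,\|u\|_{L^\beta}^{1-\sigma},\qquad\forall\,u\in\mathcal{M}.
$$

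Finally, combining the two displays, for every $u\in\tilde\Psi^\lambda$ one obtains $m_\lambda\le R_1^{\sigma}\|u\|_{L^\beta}^{1-\sigma}$, hence $\|u\|_{L^\beta}\ge(m_\lambda R_1^{-\sigma})^{1/(1-\sigma)}>0$, and therefore $\frac{1}{\beta}\int_{\R^N}|u|^\beta\,dx=\frac{1}{\beta}\|u\|_{L^\beta}^\beta\ge c$ with $c:=\frac{1}{\beta}(m_\lambda R_1^{-\sigma})^{\beta/(1-\sigma)}>0$, which is the claim. I do not expect a genuine obstacle here; the only point needing a moment's care is verifying that the interpolation exponent $\sigma$ indeed lies in $(0,1)$ and that the two embeddings invoked are in the admissible range, both of which follow from the standing hypotheses $\alpha>1$, $4s+\alpha>N$ and $p_{\rm rad}<\beta<2^*_{s,\alpha}$.
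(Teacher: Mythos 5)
Your proposal is correct and follows essentially the same route as the paper: unwind $\tilde\Psi(u)\le\lambda$ into the lower bound $\|u\|_{L^{2^*_{s,\alpha}}}\ge(2^*_{s,\alpha}/\lambda)^{1/2^*_{s,\alpha}}$, then interpolate $L^{2^*_{s,\alpha}}$ between $L^\beta$ and $L^{2^*_s}$ and use a uniform $L^{2^*_s}$ bound on $\mathcal{M}$. The only cosmetic difference is that the paper obtains the $L^{2^*_s}$ bound via $\|(-\Delta)^{s/2}u\|_{L^2}^2\le 2I(u)=2$ plus the Sobolev embedding of $\dot H^s$, whereas you invoke the boundedness of $\mathcal{M}$ in $E$ together with $E\hookrightarrow L^{2^*_s}$; both are valid.
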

\begin{proof}
 We fix $\lambda\geq\lambda_1$. % Remember that $\tilde{\Psi}^{\lambda}=\emptyset$ if $\lambda<\lambda_1$.  
 For any $u\in\tilde{\Psi}^{\lambda}\subset\mathcal{M}$ we know that
 \begin{equation}\label{K estimate}
\int_{\mathbb{R}^N}|u|^{2^*_{s,\alpha}}dx= \frac{2^*_{s,\alpha}}{\tilde\Psi(u)}\geq \frac{2^*_{s,\alpha}}{\lambda}\quad\mbox{and}\quad \|(-\Delta)^su\|_2^2\leq 2I(u)=2.
\end{equation}
Recalling that  $\beta<2^*_{s,\alpha}<2^*_{s}$, let $\sigma\in(0,1)$ be such that $\frac{1}{2^*_{s,\alpha}}=\frac{\sigma}{\beta}+\frac{1-\sigma}{2^*_{s}}$. So, by the interpolation inequality, we have
$$
\|u\|_{L^{2^*_{s,\alpha}}}\leq \|u\|_{L^{2^*_{s}}}^{1-\sigma}\|u\|_{L^\beta}^\sigma.
$$
Using the embedding $\dot{H}^s\hookrightarrow L^{2^*_{s}} $ together with the second inequality in \eqref{K estimate} we obtain
$$
\|u\|_{L^{2^*_{s,\alpha}}}\leq C \|(-\Delta)^su\|_{L^2}^{1-\sigma}\|u\|_{L^\beta}^\sigma
\leq C 2^{(1-\sigma)/2}\|u\|_{L^\beta}^\sigma, \quad\forall u\in\tilde{\Psi}^{\lambda},
$$
which by \eqref{K estimate} is enough to conclude the proof.
\end{proof}

As $\Phi$ satisfies the Palais-Smale condition at negative levels, we are in the position to deal with the following\\

\begin{proof-th} \textbf{\textit{of Theorem \ref{th sub near 0}.}}\addcontentsline{toc}{subsection}{Proof of Theorem~\ref{th sub near 0}}
  As anticipated, our argument is based on Proposition \ref{critical point result}. Let us define $h:\mathbb{R}\to\mathbb{R}$ as $h(t)=|t|^{\beta-2}t+\tilde{\beta}\tilde{C}|t|^{\tilde\beta-2}t$ and its associated potential $\tilde{h}:E\to E^*$ as
  $$
  \tilde{h}(u)v=\int_{\mathbb{R}^N}h(u)vdx.
  $$   
  We denote by $H$ the primitive of $h$ vanishing at zero, and set
\begin{eqnarray*}
   \tilde{\Phi}(u)&=&\frac{1}{2}\int_{\mathbb{R}^N}|(-\Delta)^\frac{s}{2}u|^2dx+\frac{C_\alpha}{4}\iint_{\mathbb{R}^N\times \mathbb{R}^N}\frac{u^2(x)u^2(y)}{|x-y|^{N-\alpha}}dxdy -\int_{\mathbb{R}^N}H(u)dx.
\end{eqnarray*}
  Since $ p_{\rm rad}<\beta,\tilde{\beta}< 2^*_{s,\alpha}$, by Lemma \ref{o(t)}\eqref{o(t)(i)} we get
  $$
  \tilde{h}(u_t)v_t=o(t^{4s+\alpha-N})\|v\|\quad\mbox{as}\,\,t\to\infty,
  $$
uniformly in $u$ on bounded sets of $E$, for all $v\in E$. Then, Lemma 2.15 in \cite{Mercuri-Perera} ensures that 
$\tilde{\Phi}$ is a coercive functional on $E$. By \eqref{G sub near 0} we have ${\Phi}(u)\geq\tilde{\Phi}(u)$ for all $u\in E$ which implies that $\Phi$ is also coercive. The boundedness of ${\Phi}$ on bounded sets now implies that ${\Phi}$ is bounded from below in $E$. Hence, the levels $c_k$ in \eqref{ck sub near 0}satisfy
$$
c_k\geq -C, \quad\mbox{for all}\,\, k\geq1,
$$
for some positive constant $C$. Let us show that $c_k<0$ for all $k\geq 1$. By the definition of $\Phi$, see \eqref{Phi-sub near 0}, we have
\begin{eqnarray*}
   \Phi(u_t)&=&I(u_t) - \frac{1}{\beta}\int_{\mathbb{R}^N}|u_t|^{\beta}dx
   - \int_{\mathbb{R}^N}G(|x|,u_t)dx.
\end{eqnarray*}
For any $u\in\mathcal{M}$ we also note that $I(u_t)= t^{4s+\alpha-N}I(u)=t^{4s+\alpha-N}$ 
and
$$
\frac{1}{\beta}\int_{\mathbb{R}^N}|u_t|^{\beta}dx= \frac{t^{\theta\beta-N}}{\beta}\int_{\mathbb{R}^N}|u|^{\beta}dx.
$$
As far as the last term, by \eqref{g sub near 0} we get
\begin{eqnarray*}
|\tilde{G}(u_t)|&\leq&\int_{\mathbb{R}^N}|G(|x|,u_t)|dx\,\leq\, a_7\int_{\mathbb{R}^N}|u_t|^{q_7}dx+  a_8\int_{\mathbb{R}^N}|u_t|^{q_8}dx
\\
&\leq&a_7t^{\theta q_7-N}\|u\|_{L^{q_7}}^{q_7}+  a_8t^{\theta q_8-N}\|u\|_{L^{q_8}}^{q_8}
\leq \tilde{a}_7t^{\theta q_7-N}+  \tilde{a}_8t^{\theta q_8-N},
\end{eqnarray*}
for all $u\in\mathcal{M}$ and $t>0$. 
Since
\begin{equation}\label{same sign}
    \theta q-N>0  \quad\mbox{if}\quad 4s+\alpha>N\quad\mbox{and}\quad q>p_{\rm rad},
    \end{equation}
we have $\theta q_i-N>\theta\beta-N>0$, for $i=7,8$, which yields
$$
\frac{|\tilde{G}(u_t)|}{t^{\theta\beta-N}}\leq \tilde{a}_7t^{\theta(q_7-\beta)}+  \tilde{a}_8t^{\theta(q_8-\beta)}\,\,\to 0\quad\mbox{as}\,\,t\to 0^+.
$$
Thus, as $4s+\alpha-N>\theta\beta-N$, we obtain the asymptotics
\begin{eqnarray}\label{Phi <0 near origin}
   \Phi(u_t)&=&t^{4s+\alpha-N} - \frac{t^{\theta\beta-N}}{\beta}\int_{\mathbb{R}^N}|u|^{\beta}dx
   - t^{\theta\beta-N}o(1)= - t^{\theta\beta-N}\left(\frac{1}{\beta}\int_{\mathbb{R}^N}|u|^{\beta}dx+o(1)\right)
\end{eqnarray}
with $o(1)\to 0$ as $t\to 0^+$ uniformly in  $u\in\mathcal{M}$. Now, for $k\geq1$, in order to estimate the minimax level $c_k$ from above, let us set
$$
M_t=\{u_t : u\in\tilde{\Psi}^{\lambda_k}\},\quad\mbox{for}\quad t>0.
$$
We have $i(M_t)=i(\tilde{\Psi}^{\lambda_k})$ as $u\mapsto u_t$ with $u\in \tilde{\Psi}^{\lambda_k}$, is an odd homeomorphism.  Let $m\geq1$ be such that $\lambda_k=\cdots=\lambda_{k+m-1}<\lambda_{k+m}$.
By Theorem \ref{lambdak}\ref{lambdak(iii)} we have $i(\tilde{\Psi}^{\lambda_k})=k+m-1$. This implies that $i(M_t)=k+m-1\geq k,$ hence $M_t\in\mathcal{F}_k$. By \eqref{Phi <0 near origin} and Lemma \ref{Lemma K estimate} we obtain
\begin{eqnarray*}
   \Phi(u_t)\leq  - t^{\theta\beta-N}\left(c+o(1)\right) <0\quad\mbox{for all}\quad u\in \tilde{\Psi}^{\lambda_k}
\end{eqnarray*}
for $t>0$ sufficiently small, % c depends on k, but this is not a problem.
and therefore 
\begin{equation*}
    c_k=\inf_{M\in \mathcal{F}_k}\sup_{u\in M}\Phi(u)\leq \sup_{u\in M_t}\Phi(u)=\sup_{u\in \tilde{\Psi}^{\lambda_k}}\Phi(u_t)<0,
\end{equation*}
for all $k\geq1$. By Proposition \ref{critical point result}, this concludes the proof.
\end{proof-th}

In the particular case $g\equiv0,$ we have the following

\begin{proof-th} \textbf{\textit{of Corollary \ref{cor1 sub near 0}.}}
    As shown in the proof of Theorem \ref{th sub near 0}, the functional $\Phi$ associated with this problem is coercive. In particular, any $(PS)$ sequence for $\Phi$ is bounded. Since we are dealing with a subcritical nonlinearity, $\Phi$ satisfies the $(PS)_c$ condition for any $c\in\mathbb{R}$. Thus, Theorem \ref{th sub near 0} can be applied, and this is enough to conclude the proof.
\end{proof-th}

%\begin{corollary}\label{cor2 sub near 0}
 % The equation
%   \begin{equation*}
%(-\Delta)^su+\left(I_\alpha*u^2\right)u=  |u|^{\beta-2}u+\lambda |u|^{2^*_{s,\alpha}-2}u-|u|^{q-2}u \quad\textrm{in}\quad\mathbb{R}^N
%\end{equation*}
%    has infinitely many solutions at negative levels for any $\lambda\in\mathbb{R}$ whenever $ p_{\rm rad}<\beta<2^*_{s,\alpha}$ and $q$ is such that
 %  $$
 %      \beta<q \leq 2^*_{s} \quad\mbox{for}\,\,\lambda\leq0\quad\mbox{or}\quad
 %       2^*_{s,\alpha}<q \leq 2^*_{s}\quad\mbox{for}\,\,\lambda>0.
 %  $$
%\end{corollary}
\begin{proof-th} \textbf{\textit{of Corollary \ref{cor2 sub near 0}.}}
  We recall that $g(t)=\lambda |t|^{2^*_{s,\alpha}-2}t-|t|^{q-2}t$, which is an odd Caratheodory function
 satisfying \eqref{g sub near 0}-\eqref{q7 q8 sub near 0}, and for any $p_{\rm rad}<\tilde\beta<2^*_{s,\alpha}$ we can find $\tilde{C}>0$ such that
$$
 G(t)=\frac{\lambda}{2^*_{s,\alpha}} |t|^{2^*_{s,\alpha}}-\frac{1}{q}|t|^{q}\leq \tilde{C} |t|^{\tilde\beta},\quad\forall t\in\mathbb{R},
 $$
namely \eqref{G sub near 0} also holds. Then, by the proof of Theorem \ref{th sub near 0}  the action functional 
\begin{eqnarray*}\label{Phi(un) sub near 0}
  \Phi(u)&=&\frac{1}{2}\int_{\mathbb{R}^N}|(-\Delta)^\frac{s}{2}u|^2dx+\frac{C_\alpha}{4}\iint_{\mathbb{R}^N\times \mathbb{R}^N}\frac{u^2(x)u^2(y)}{|x-y|^{N-\alpha}}dxdy - \frac{1}{\beta}\int_{\mathbb{R}^N}|u|^{\beta}dx\\
  && -\frac{\lambda}{2^*_{s,\alpha}}\int_{\mathbb{R}^N}|u|^{2^*_{s,\alpha}}dx + \frac{1}{q}\int_{\mathbb{R}^N}|u|^{q}dx,
\end{eqnarray*}
 is coercive. As a consequence any $(PS)$ sequence for $\Phi$ is bounded. Hence, for any $q <2^*_s$, $\Phi$ satisfies the $(PS)_c$ in correspondence of every energy level $c\in\mathbb{R}$. As far as the critical case $q=2^*_s$ is concerned, let us pick  a $(PS)_c$ sequence $(u_n)$ sequence for $\Phi$. Since $(u_n)$ is bounded, we may write $\Phi'(u_n)u_n=o(1)$, namely
 \begin{eqnarray}\label{Phi'(un)un sub near 0}
   o(1)&=&\int_{\mathbb{R}^N}|(-\Delta)^\frac{s}{2}u_n|^2dx+{C_\alpha}\iint_{\mathbb{R}^N\times \mathbb{R}^N}\frac{u_n^2(x)u_n^2(y)}{|x-y|^{N-\alpha}}dxdy - \int_{\mathbb{R}^N}|u_n|^{\beta}dx\nonumber\\
   &&-\lambda \int_{\mathbb{R}^N}|u_n|^{2^*_{s,\alpha}}dx+\int_{\mathbb{R}^N}|u_n|^{2^*_s}dx.
\end{eqnarray}
 Again by the boundedness of $(u_n)$ is bounded, we may assume, up to a subsequence, that $u_n\rightharpoonup u$ in $E$ and in $L^{2^*_s}(\mathbb{R}^N)$, $u_n\rightarrow u$ in $L^r(\mathbb{R}^N)$ for any $ p_{\rm rad}<r< 2^*_s$, and $u_n(x)\rightarrow u(x)$ almost everywhere. As in the proof of Lemma \ref{verify H7}, the weak convergence in $E$ ensures that
$$
  \mathcal{A}(u_n)v= \int_{\mathbb{R}^N}(-\Delta)^\frac{s}{2}u_n(-\Delta)^\frac{s}{2}vdx +C_\alpha\iint_{\mathbb{R}^N\times\mathbb{R}^N}\frac{u_n^2(x)u_n(y)v(y)}{|x-y|^{N-\alpha}}dxdy\to \mathcal{A}(u)v,
   $$
 for all $v\in E$, and for any $p_{\rm rad}<r\leq 2^*_s$,  
 $$
 \int_{\mathbb{R}^N}|u_n|^{r-2}u_nvdx\to \int_{\mathbb{R}^N}|u|^{r-2}uvdx.
 $$
Thus, $\Phi'(u)v=\lim_{n\to\infty}\Phi'(u_n)v=0$  for all $v\in E$. Testing in particular with $v=u,$ we get
\begin{eqnarray}\label{Phi'(u)u sub near 0}
   0=\int_{\mathbb{R}^N}|(-\Delta)^\frac{s}{2}u|^2dx+{C_\alpha}\iint_{\mathbb{R}^N\times \mathbb{R}^N}\frac{u^2(x)u^2(y)}{|x-y|^{N-\alpha}}dxdy - \int_{\mathbb{R}^N}|u|^{\beta}dx -\lambda\int_{\mathbb{R}^N}|u|^{2^*_{s,\alpha}}dx
   +\int_{\mathbb{R}^N}|u|^{2^*_s}dx.
\end{eqnarray}
Finally, subtracting \eqref{Phi'(u)u sub near 0} by \eqref{Phi'(un)un sub near 0}, using the classical Brezis-Lieb Lemma and its nonlocal variant (see \cite[Proposition 4.1]{Mercuri-Moroz-VS-2016}), we obtain
 \begin{eqnarray*}
   o(1)\geq\int_{\mathbb{R}^N}|(-\Delta)^\frac{s}{2}(u_n-u)|^2dx+{C_\alpha}\iint_{\mathbb{R}^N\times \mathbb{R}^N}\frac{(u_n-u)^2(x)(u_n-u)^2(y)}{|x-y|^{N-\alpha}}dxdy 
   +\int_{\mathbb{R}^N}|u_n-u|^{2^*_s}dx,
\end{eqnarray*}
which gives us the strong convergence $u_n\to u$ in $E,$ namely $\Phi$ satisfies the $(PS)$ condition for $q=2^*_s.$ By Theorem \ref{th sub near 0} this concludes the proof.
\end{proof-th}

In the next proof we apply Proposition \ref{critical point result} in a context where $\Phi$ is not bounded from below.

 \begin{proof-th} \textbf{\textit{of Theorem \ref{subscaled-NE}.}}\addcontentsline{toc}{subsection}{Proof of Theorem~\ref{subscaled-NE}}
 We recall that our action functional is in this case
 \begin{eqnarray*}\label{Phi sub near 0 NE}
  \Phi(u)&=&\frac{1}{2}\int_{\mathbb{R}^N}|(-\Delta)^\frac{s}{2}u|^2dx+\frac{C_\alpha}{4}\iint_{\mathbb{R}^N\times \mathbb{R}^N}\frac{u^2(x)u^2(y)}{|x-y|^{N-\alpha}}dxdy - \frac{1}{\beta}\int_{\mathbb{R}^N}|u|^{\beta}dx-\frac{\lambda}{2^*_{s,\alpha}}\int_{\mathbb{R}^N}|u|^{2^*_{s,\alpha}}dx\\
  &=&\Phi_\lambda(u)- \frac{1}{\beta}\int_{\mathbb{R}^N}|u|^{\beta}dx.
\end{eqnarray*}
For $p_{\rm rad}<\beta< 2^*_{s,\alpha}$ we define $h:\mathbb{R}\to\mathbb{R}$ as $h(t)=|t|^{\beta-2}t$ and by
  $$
  \tilde{h}(u)v=\int_{\mathbb{R}^N}h(u)vdx
  $$   
its associated potential. By Lemma \ref{o(t)}\eqref{o(t)(iii)} we get
  $$
  \tilde{h}(u_t)v_t=o(t^{4s+\alpha-N})\|v\|\quad\mbox{as}\,\,t\to\infty,
  $$
uniformly in $u$ on bounded sets of $E$, for all $v\in E$. We observe that Lemma 2.26 of \cite{Mercuri-Perera} ensures that 
$\Phi$ satisfies the $(PS)$ condition when $\lambda$ is not an eigenvalue of \eqref{NE}. 
Let us consider $c_k$ as in \eqref{ck sub near 0}. Note that, as $\lambda\notin \sigma(\mathcal{A},\mathcal{B})$, we may have $\lambda<\lambda_1,$, as well as $\lambda_{k_0-1}<\lambda<\lambda_{k_0}$ for some $k_0\geq2$.
Let us estimate $c_k$ for $k\geq k_0\geq 1$ and show that $c_k<0$. 
By \eqref{same sign}, for all $u\in\mathcal{M}$ we have
\begin{eqnarray}\label{Phi <0 near origin 2}
   \Phi(u_t)&=&t^{4s+\alpha-N}\left(1-\frac{\lambda}{\tilde{\Psi}(u)}\right) - \frac{t^{\theta\beta-N}}{\beta}\int_{\mathbb{R}^N}|u|^{\beta}dx
   = - t^{\theta\beta-N}\left(\frac{1}{\beta}\int_{\mathbb{R}^N}|u|^{\beta}dx+o(1)\right)
\end{eqnarray}
with $o(1)\to 0$ as $t\to 0^+$ uniformly in  $u\in\mathcal{M}$. As in the proof of Theorem \ref{th sub near 0},  setting
$
M_t=\{u_t : u\in\tilde{\Psi}^{\lambda_k}\}
$
we have $M_t\in\mathcal{F}_k$ and so, by \eqref{Phi <0 near origin 2} and Lemma \ref{Lemma K estimate}, we get
\begin{equation*}
    c_k=\inf_{M\in \mathcal{F}_k}\sup_{u\in M}\Phi(u)\leq \sup_{u\in M_t}\Phi(u)=\sup_{u\in \tilde{\Psi}^{\lambda_k}}\Phi(u_t)<0,
\end{equation*}
for $t>0$ sufficiently small.  On the other hand we may estimate $c_k$ from below, as follows. By the definition of $\mathcal{F}_k$ in Proposition \ref{critical point result},  for each $M\in\mathcal{F}_k$ we have
\begin{equation}\label{M in Fk}
    i(M)\geq k\geq k_0,
\end{equation}
for all $k\geq k_0\geq 1$. Setting $Y=\{u_t : u\in\tilde{\Psi}_{\lambda_{k_0}},\,t\geq0\},$ we claim that 
\begin{equation}\label{M Y}
    M\cap Y\neq\emptyset \quad\mbox{for all}\,\, M\in\mathcal{F}_k\,\,\mbox{and}\,\,k\geq k_0.
\end{equation}
In fact, if this were not true, we would have $M\cap Y=\emptyset$ for some $M\in\mathcal{F}_k$ and $k\geq k_0$. So $\pi_{|_M}:M\to \mathcal{M}\backslash \tilde{\Psi}_{\lambda_{k_0}}$
would be an odd continuous map, and hence we would have
$
i(M)\leq i(\mathcal{M}\backslash \tilde{\Psi}_{\lambda_{k_0}}).
$
On the other hand, if $\lambda<\lambda_1$, by Theorem \ref{lambdak}\ref{lambdak(i)} we have $\tilde{\Psi}_{\lambda_{1}}=\mathcal{M}$ so that
$
i(\mathcal{M}\backslash \tilde{\Psi}_{\lambda_{1}})=i(\emptyset)=0,
$
whereas if $\lambda_{k_0-1}<\lambda<\lambda_{k_0}$ for some $k_0\geq2$, the same Theorem \ref{lambdak}\ref{lambdak(iii)} yields
$
i(\mathcal{M}\backslash \tilde{\Psi}_{\lambda_{k_0}})=k_0-1.
$
In both cases, we would conclude that
$$
i(M)\leq i(\mathcal{M}\backslash \tilde{\Psi}_{\lambda_{k_0}})\leq k_0-1,
$$
which is a contradiction by  \eqref{M in Fk}. Thus, \eqref{M Y} holds. Therefore, for each ${M\in \mathcal{F}_k}$
$$
\sup_{u\in M}\Phi(u)\geq \sup_{u\in M\cap Y}\Phi(u)\geq\inf_{u\in M\cap Y}\Phi(u)
\geq \inf_{u\in  Y}\Phi(u). %=\inf_{u\in\tilde{\Psi}_{\lambda_{k_0}},\,t\geq0}\Phi(u_t).
$$
Since $\tilde{\Psi}(u)\geq\lambda_{k_0}>\lambda$ for all $u\in\tilde{\Psi}_{\lambda_{k_0}}$ and $\mathcal{M}$ is bounded in $L^\beta(\mathbb{R}^N)$ and in $L^{2^*_{s,\alpha}}(\mathbb{R}^N)$, \eqref{Phi <0 near origin 2} yields $\inf_{u\in  Y}\Phi(u):=-C>-\infty$. 
Hence
$$ 
c_k=\inf_{M\in \mathcal{F}_k}\sup_{u\in M}\Phi(u)\geq \inf_{u\in  Y}\Phi(u)\geq -C,\quad\forall k\geq k_0,
$$
and by Proposition \ref{critical point result}  we may conclude the proof.
 \end{proof-th}

%\begin{theorem}\label{th sub near 0 critical}
%Consider $N/4<s<1$ and $\beta\in (p_{\rm rad}, 2^*_{s,\alpha})$. Then there exists $\mu^*>0$ such that
%   \begin{equation*}
% (-\Delta)^su+\left(I_\alpha*u^2\right)u=  \mu|u|^{\beta-2}u + |u|^{2^*_{s}-2}u  \quad\textrm{in}\quad\mathbb{R}^N
%\end{equation*}
%has a sequence of solutions at negative levels, for all $\mu\in(0,\mu^*)$.
%\end{theorem}
\begin{proof-th} \textbf{\textit{of Theorem \ref{th sub near 0 critical}.}}\addcontentsline{toc}{subsection}{Proof of Theorem~\ref{th sub near 0 critical}}
The proof of this theorem is also based on Proposition \ref{critical point result}. 
As in the proof of Theorem 1.37 in \cite{Mercuri-Perera}, we are going to use a truncation of the functional $\Phi_\mu$ associated with this problem, which is 
 \begin{eqnarray*}
  \Phi_\mu(u)=\frac{1}{2}\int_{\mathbb{R}^N}|(-\Delta)^\frac{s}{2}u|^2dx+\frac{C_\alpha}{4}\iint_{\mathbb{R}^N\times \mathbb{R}^N}\frac{u^2(x)u^2(y)}{|x-y|^{N-\alpha}}dxdy - \frac{\mu}{\beta}\int_{\mathbb{R}^N}|u|^{\beta}dx-\frac{1}{2^*_{s}}\int_{\mathbb{R}^N}|u|^{2^*_{s}}dx.
\end{eqnarray*}
Arguing as in \cite[Lemma 3.1]{Ianni-Ruiz-2012} (see also \cite[Lemma 2.1]{Feng-Su-2024}) we verify that for any $p\in(p_{\rm rad}, 2^*_{s}]$ there exists $c>0$ such that
\begin{equation}\label{Lp by I(u)}
\int_{\mathbb{R}^N}|u|^pdx\leq c[I(u)]^\frac{p\theta-N}{4s+\alpha-N},\quad\forall u\in E.
\end{equation}
Thus, denoting $\gamma_\mu(t)=t- \tilde{c}_1\mu t^\frac{\beta\theta-N}{4s+\alpha-N}-\tilde{c}_2t^\frac{2^*_s\theta-N}{4s+\alpha-N}$, for $t\geq0$, we have
\begin{eqnarray*}
  \Phi_\mu(u)&\geq& I(u)- \tilde{c}_1\mu[I(u)]^\frac{\beta\theta-N}{4s+\alpha-N}-\tilde{c}_2[I(u)]^\frac{2^*_s\theta-N}{4s+\alpha-N}=\gamma_\mu(I(u)),\quad\forall u\in E.
\end{eqnarray*}
 Recalling that $2^*_s>2^*_{s,\alpha}>\beta$ implies that $\frac{\beta\theta-N}{4s+\alpha-N}\in(0,1)$ and $\frac{2^*_s\theta-N}{4s+\alpha-N}>1$, we note that is possible to find $\mu^*>0$ such that for each $\mu\in(0,\mu^*)$ there exist $R_1(\mu),R_2(\mu)>0$ such that
\[
\gamma_\mu(t)<0,\,\,\forall t\in[0,R_1(\mu))\cup(R_2(\mu),\infty)\qquad\mbox{and}\qquad \gamma_\mu(t)\geq0,\,\,\forall t\in[R_1(\mu)),(R_2(\mu)].
\]
Observe that if $\mu\in(0,\mu^*)$ it holds that $\gamma_\mu(t)\geq \gamma_{\mu^*}(t)$ for all $t\in[R_1(\mu^*)),(R_2(\mu^*)],$. hence, $R_1(\mu)\leq R_1(\mu^*)$ and $R_2(\mu^*)\leq R_2(\mu)$.
Pick now $\xi_\mu:[0,\infty)\to[0,1]$ smooth and satisfying 
$$
\xi_\mu\equiv1\,\,\mbox{in}\,\, [0,R_1(\mu)]\quad\mbox{and}\quad \xi_\mu\equiv0\,\,\mbox{in}\,\,[R_2(\mu),\infty),
$$
and consider the truncated functional $\tilde{\Phi}_\mu:E\to \mathbb{R}$ defined as
$$
\tilde{\Phi}_\mu(u)=\xi_\mu(I(u)){\Phi}_\mu(u).
$$
Note that if $\tilde{\Phi}_\mu(u)<0$ then $I(u)\in(0,R_1(\mu)),$ and, by the continuity of $I$, $I(v)\in(0,R_1(\mu))$ for $v$ in some neighborhood of $u$ in $E$. So
\begin{equation}\label{truncation<0}
    \tilde{\Phi}_\mu(u)={\Phi}_\mu(u)<0\quad\mbox{and}\quad \tilde{\Phi}'_\mu(u)={\Phi}'_\mu(u),
\end{equation}
hence critical points for $\tilde{\Phi}_\mu$ at negative levels are actual critical points for ${\Phi}_\mu$ corresponding to the same energy levels.
With this observation and Proposition \ref{critical point result} at hand, we focus on finding a sequence of critical points for $\tilde{\Phi}_\mu$ whose energy is negative. To this aim we start verifying that  $\tilde{\Phi}_\mu$ satisfies the $(PS)_c$ condition for $c<0$, provided $\mu\in(0,\mu^*)$ for some $\mu^*$ sufficiently small. Let us consider then a sequence $(u_n)$ in $E$ with the property 
$$
\tilde{\Phi}_\mu(u_n)\to c<0\quad{and}\quad \tilde{\Phi}'_\mu(u_n)\to 0.
$$
Since $\tilde{\Phi}_\mu(u_n)<0$ for large $n$, we have $I(u_n)\in(0,R_1(\mu^*))$, which forces $(u_n)$ to be uniformly bounded for $\mu\in(0,\mu^*)$. Passing if necessary to a subsequence, we may assume $u_n\rightharpoonup u$ in $E$.  Since  $I(u_n)\in(0,R_1(\mu^*))$   we get the uniform estimate $I(u)\leq R_1(\mu^*)$, for $\mu\in(0,\mu^*),$ which by \eqref{Lp by I(u)} implies, in turn, the uniform boundedness of $\|u\|_{L^\beta}$.
Note that \eqref{truncation<0} also implies that $(u_n)$ is a $(PS)_c$ sequence for ${\Phi}_\mu$. 
Arguing as in Lemma \ref{PSlocal}, if there is no subsequence of $(u_n)$ converging to $u$ in $E$, we see that
\begin{eqnarray*}
 c & \geq&\frac{s}{N}\mathbb{S}^\frac{N}{2s}
   -\mu \tilde{d}\int_{\mathbb{R}^N}|u|^{\beta}dx
  +\frac{s}{N}\int_{\mathbb{R}^N}|u|^{2^*_{s}}dx
\end{eqnarray*}
for some positive constant $\tilde{d}$, which does not depend neither on $u$ nor on $\mu$ (recall \eqref{PS level estimate} and that here $p_{\rm rad}<\beta<2^*_{s,\alpha}$).

Hence, the uniform boundedness of $\|u\|_{L^\beta}$  and the fact that $c<0$ gives us
\begin{equation*}
 \mu d'\geq \mu \tilde{d}\int_{\mathbb{R}^N}|u|^{\beta}dx \geq\frac{s}{N}\mathbb{S}^\frac{N}{2s}
\end{equation*}
for some $d'>0,$ which is violated for some possibly smaller $\mu^*>0$ and $\mu\in(0,\mu^*)$. We may finally conclude that $\tilde{\Phi}_\mu$ satisfies the $(PS)_c$ condition for any $c<0$ and $\mu\in(0,\mu^*)$. \\In order to apply Proposition \ref{critical point result}, let us pick $\mu\in(0,\mu^*)$ and estimate the levels $c_k$ defined in \eqref{ck sub near 0}. Since $\tilde{\Phi}_\mu$ is bounded, we have $c_k>-\infty$ for all $k\in\mathbb{N}$. On the other hand, 
recalling that $2^*_{s}\theta-N>4s+\alpha-N>\theta\beta-N>0$, for all $u\in\mathcal{M}$ we get the asymptotics
\begin{eqnarray*}\label{Phi <0 near origin critical}
   \Phi_\mu(u_t)&=&t^{4s+\alpha-N}- \frac{\mu t^{\theta\beta-N}}{\beta}\int_{\mathbb{R}^N}|u|^{\beta}dx
   -\frac{t^{\theta 2^*_{s}-N}}{2^*_{s}}\int_{\mathbb{R}^N}|u|^{2^*_{s}}dx
   =-t^{\theta\beta-N}\left(\frac{\mu}{\beta}\int_{\mathbb{R}^N}|u|^{\beta}dx+o(1)\right)
\end{eqnarray*}
with $o(1)\to 0$ as $t\to 0^+$ uniformly in  $u\in\mathcal{M}$, as well as  the bound $I(u_t)<R_1(\mu)$ for all $u\in\mathcal{M}$, if $t>0$ is small.  Arguing as in the proof of Theorem \ref{th sub near 0},  setting 
$
M_t=\{u_t : u\in\tilde{\Psi}^{\lambda_k}\}
$
we note that $M_t\in\mathcal{F}_k,$ and so, by Lemma \ref{Lemma K estimate}, we obtain $\sup_{u\in \tilde{\Psi}^{\lambda_k}}\Phi(u_t)<0$ for $t>0$ sufficiently small,
hence $\tilde\Phi_\mu(u)=\Phi_\mu(u)$ for all $u\in M_t$ and
\begin{equation*}
    c_k=\inf_{M\in \mathcal{F}_k}\sup_{u\in M}\tilde\Phi_\mu(u)\leq \sup_{u\in M_t}\tilde\Phi_\mu(u)=\sup_{u\in \tilde{\Psi}^{\lambda_k}}\Phi_\mu(u_t)<0,
\end{equation*}
and by Proposition \ref{critical point result}, this concludes the proof. 
\end{proof-th}

The next proof, based on Corollary \ref{Corollary 2.34 MP}, with $c^*=\infty,$ deals with an asymptotically scaled problem at infinity, involving a negative term which is subscaled at the origin.

\begin{proof-th} \textbf{\textit{of Theorem \ref{sub near 0, asympt, subcritical}.}}\addcontentsline{toc}{subsection}{Proof of Theorem~\ref{sub near 0, asympt, subcritical}}
 Since $\lambda\notin\sigma(\mathcal{A},\mathcal{B})$, by \cite[Lemma 2.26]{Mercuri-Perera} it follows that the action functional
 \begin{eqnarray*}
  \Phi(u)=\frac{1}{2}\int_{\mathbb{R}^N}|(-\Delta)^\frac{s}{2}u|^2dx+\frac{C_\alpha}{4}\iint_{\mathbb{R}^N\times \mathbb{R}^N}\frac{u^2(x)u^2(y)}{|x-y|^{N-\alpha}}dxdy + \frac{\mu}{\beta}\int_{\mathbb{R}^N}|u|^{\beta}dx-\frac{\lambda}{2^*_{s,\alpha}}\int_{\mathbb{R}^N}|u|^{2^*_{s,\alpha}}dx,
\end{eqnarray*}
satisfies the $(PS)$ condition. 
For any $\lambda\in(\lambda_k,\infty)\backslash\sigma(\mathcal{A},\mathcal{B})$ we may find $m\in\mathbb{N}$ such that $\lambda_{k+m-1}<\lambda<\lambda_{k+m}$. Hence, by Theorem \ref{lambdak}\ref{lambdak(iii)} 
we have
$$
i(\mathcal{M}\backslash\tilde\Psi_{\lambda})=k+m-1\geq k.
$$
Pick $A_0\subset \mathcal{M}\backslash\tilde\Psi_{\lambda}$ a compact symmetric subset of index $k+m-1$ (see the proof of Proposition 3.1 in \cite{De-Lan-2007}). For some $R>\rho>0$ to be chosen later, we set
$$
A=\{u_R:u\in A_0\}\quad\mbox{and}\quad X=\{u_t:u\in A_0,\,t\geq0\}.
$$
For all $u\in\mathcal{M}$ we have
\begin{eqnarray}\label{Phi asympt  subcritical}
   \Phi(u_t)=t^{4s+\alpha-N}\left(1-\frac{\lambda}{\tilde\Psi(u)}\right)+ \frac{t^{\theta\beta-N}}{\beta}\int_{\mathbb{R}^N}|u|^{\beta}dx.
\end{eqnarray}
Hence, by Theorem \ref{lambdak}\ref{lambdak(i)} and Lemma \ref{Lemma K estimate} we obtain
\begin{eqnarray*}
   \Phi(u_t)\geq t^{4s+\alpha-N}\left(1-\frac{\lambda}{\lambda_{1}}\right)+ ct^{\theta\beta-N}\quad \forall u\in\tilde\Psi^{\lambda_{k+m}}, \forall t\geq0,
   \end{eqnarray*}
for some $c>0$. Moreover, note that
\begin{eqnarray*}
   \Phi(u_t)\geq t^{4s+\alpha-N}\left(1-\frac{\lambda}{\lambda_{k+m}}\right)\quad \forall u\in\mathcal{M}\backslash \tilde\Psi^{\lambda_{k+m}}, \forall t\geq0.
   \end{eqnarray*}
Since $4s+\alpha-N>\theta\beta-N>0$ we find that
$$
\inf_{u\in\mathcal{M}}\Phi(u_\rho)>0
$$
provided $\rho>0$ is small enough. 
On the other hand, as $A_0\subset \mathcal{M}\backslash\tilde\Psi_{\lambda}$ is compact, there exists $\tilde{c}<{\lambda}$ such that $\tilde\Psi(u)\leq \tilde{c}$ for any $u\in A_0$. This yields, by the boundedness of $\|u\|_{L^\beta}$ for $u\in\mathcal{M}$, and \eqref{Phi asympt  subcritical}, the estimate
\begin{eqnarray*}
   \Phi(u_t)\leq t^{|4s+\alpha-N|}\left(1-\frac{\lambda}{\tilde{c}}\right)+ c't^{|\theta\beta-N|}\quad \forall u\in A_0, \forall t\geq0.
   \end{eqnarray*}
Finally, picking $R>\rho$ sufficiently large, we obtain
$$
\sup_{u\in A}\Phi(u)=\sup_{u\in A_0}\Phi(u_R)\leq0
$$
and
$$
\sup_{u\in X}\Phi(u)=\sup_{u\in A_0,t\in[0,R]}\Phi(u_t)<\infty,
$$
which by \ref{Corollary 2.34 MP} is enough to ensure the existence of $k+m-1(\geq k)$ pairs of critical points for $\Phi$ at positive energy levels. This concludes the proof.
\end{proof-th}

We now deal with a proof, again based on Corollary \ref{Corollary 2.34 MP}, for a critical growth problem.

\begin{proof-th} \textbf{\textit{of Theorem \ref{sub near 0, asympt, critical}.}}\addcontentsline{toc}{subsection}{Proof of Theorem~\ref{sub near 0, asympt, critical}}
Recall that the action functional is
 \begin{eqnarray*}
  \Phi(u)&=&\frac{1}{2}\int_{\mathbb{R}^N}|(-\Delta)^\frac{s}{2}u|^2dx+\frac{C_\alpha}{4}\iint_{\mathbb{R}^N\times \mathbb{R}^N}\frac{u^2(x)u^2(y)}{|x-y|^{N-\alpha}}dxdy-\frac{\lambda}{2^*_{s,\alpha}}\int_{\mathbb{R}^N}|u|^{2^*_{s,\alpha}}dx\\
  &&+ \frac{\mu}{\beta}\int_{\mathbb{R}^N}|u|^{\beta}dx-\frac{1}{2^*_{s}}\int_{\mathbb{R}^N}|u|^{2^*_{s}}dx\\
  &=& \Phi_\lambda(u)+ \frac{\mu}{\beta}\int_{\mathbb{R}^N}|u|^{\beta}dx-\frac{1}{2^*_{s}}\int_{\mathbb{R}^N}|u|^{2^*_{s}}dx.
\end{eqnarray*}
As in the preceding proof, we first show that $\Phi$ satisfies the $(PS)_c$ condition for all $c\in(0,s\mathbb{S}^\frac{N}{2s}/N)$ and $\mu>0$. Arguing as in Lemma \ref{PSlocal}, let us consider a  $(PS)_c$ sequence $(u_n)$, and assume it  is unbounded in $E.$ Setting $\tilde{u}_n=(u_n)_{t_{u_n}}\in\mathcal{M}$, and $\tilde{t}_n=t_{u_n}^{-1}=[I(u_n)]^\frac{1}{4s+\alpha-N}\to\infty$, by the fact that $\Phi(u_n)\to c,$ we get (see equation \eqref{eqPhiLn3})
\begin{eqnarray}\label{Phi(un) critical sub near 0}
(\tilde{t}_n)^{4s+\alpha-N}\Phi_\lambda(\tilde u_n)&
=&-\frac{\mu(\tilde{t}_n)^{\theta \beta-N}}{\beta}\int_{\mathbb{R}^N}|\tilde u_n|^{\beta}dx
+ \frac{(\tilde{t}_n)^{\theta 2^*_{s}-N}}{2^*_{s}}\int_{\mathbb{R}^N}|\tilde u_n|^{2^*_{s}}dx+c+o(1).
\end{eqnarray}
Note that, if $\beta\in(p_{\rm rad}, 2^*_{s,\alpha})$, it holds that
$
\theta2^*_{s} -N>4s+\alpha-N>\theta\beta -N>0. %>\alpha>1
$
Hence, by the boundedness of $(\Phi_\lambda(\tilde{u}_n))$ and $(\|\tilde{u}_n\|_{L^\beta})$ we get
$$
\frac{(\tilde{t}_n)^{\theta (2^*_{s}-2^*_{s,\alpha})}}{2^*_{s}}\int_{\mathbb{R}^N}|\tilde u_n|^{2^*_{s}}dx
=\Phi_\lambda(\tilde{u}_n)+\frac{\mu}{\beta(\tilde{t}_n)^{\theta( 2^*_{s,\alpha}-\beta)}}\int_{\mathbb{R}^N}|\tilde u_n|^{\beta}dx+o(1)\leq c_1,
$$
for some $c_1>0$. As a consequence, $\tilde{u}_n\to0$ in $L^{2^*_{s}}(\mathbb{R}^N),$ and by the boundedness in $L^{\beta}(\mathbb{R}^N)$ and interpolation, we get $\tilde{u}_n\to0$ in $L^{2^*_{s,\alpha}}(\mathbb{R}^N)$.
Arguing as for \eqref{eqPhi'Lncri}, we obtain
\begin{eqnarray}\label{eqPhi'Lncri sub}
(\tilde{t}_n)^{(4s+\alpha-N)}\Phi'_\lambda(\tilde u_n)\tilde{u}_n=
-\mu(\tilde{t}_n)^{\theta \beta-N}\int_{\mathbb{R}^N}|\tilde u_n|^{\beta}dx+(\tilde{t}_n)^{\theta 2^*_s-N}\int_{\mathbb{R}^N}|\tilde u_n|^{2^*_s}dx+o((\tilde{t}_n)^{(4s+\alpha-N)/2}).
\end{eqnarray}
Multiplying now \eqref{Phi(un) critical sub near 0} by $2^*_s$, subtracting by \eqref{eqPhi'Lncri sub} and dividing by $(\tilde{t}_n)^{(4s+\alpha-N)}$ we finally get
\begin{eqnarray*}
  & &\left(\frac{2^*_s}{2}-1\right)\int_{\mathbb{R}^N}|(-\Delta)^\frac{s}{2}\tilde u_n|^2dx+C_\alpha\left(\frac{2^*_s}{4}-1\right) \iint_{\mathbb{R}^N\times\mathbb{R}^N}\frac{\tilde u_n^2(x)\tilde u_n^2(y)}{|x-y|^{N-\alpha}}dxdy \\
  &&\qquad\qquad=\lambda\left(\frac{2^*_s}{2^*_{s,\alpha}} -1\right)\int_{\mathbb{R}^N}|\tilde u_n|^{2^*_{s,\alpha}}dx
  -\left(\frac{2^*_s}{\beta} -1\right)\frac{\mu}{(\tilde{t}_n)^{4s+\alpha-\theta \beta}}\int_{\mathbb{R}^N}|\tilde u_n|^{\beta}dx+o(1)\\
  && \qquad\qquad=\,\, o(1),\quad\mbox{as}\,\,n\to\infty,
\end{eqnarray*}
and hence, as $2^*_s>4$ for $s>N/4,$ that $\tilde u_n\to0,$ a contradiction with the fact that $\tilde u_n\in\mathcal{M}.$
We may conclude then that $(u_n)$ is bounded in $E$. Passing if necessary to a subsequence, we can assume that $u_n\rightharpoonup u$ in $E$. If $(u_n)$ does not have a convergent subsequence, 
as in the proof of Lemma \ref{PSlocal}, we  get that
\begin{eqnarray*}
 c & \geq&\frac{s}{N}\mathbb{S}^\frac{N}{2s}
   +\mu d\int_{\mathbb{R}^N}|u|^{\beta}dx
  +\frac{s}{N}\int_{\mathbb{R}^N}|u|^{2^*_{s}}dx\geq\frac{s}{N}\mathbb{S}^\frac{N}{2s}.
\end{eqnarray*}
for some positive constant $d$ (see \eqref{PS level estimate}). Hence, if $c<\frac{s}{N}\mathbb{S}^\frac{N}{2s}$ any $(PS)_c$ sequence for $\Phi$ in $E$ possesses a convergent subsequence, namely $\Phi$ satisfies the $(PS)_c$ condition for any $c<s\mathbb{S}^\frac{N}{2s}/N$, for $\mu>0$.

Arguing now as in Theorem \ref{sub near 0, asympt, subcritical}, we shall apply Corollary \ref{Corollary 2.34 MP} with $c^*=s\mathbb{S}^\frac{N}{2s}/N$.

For any $\lambda\in(\lambda_k,\infty)$ we may find $m\in\mathbb{N}$ such that $\lambda_{k+m-1}<\lambda\leq\lambda_{k+m}$. We fix $\tilde{\lambda}\in(\lambda_{k+m-1},\lambda)$ and  $\lambda'\in(\lambda,\infty)$. By Theorem \ref{lambdak}\ref{lambdak(iii)} we have
$$
i(\mathcal{M}\backslash\tilde\Psi_{\tilde\lambda})=k+m-1\geq k.
$$
Pick $A_0\subset \mathcal{M}\backslash\tilde\Psi_{\tilde\lambda}$ a compact symmetric subset of index $k+m-1$ (see the proof of Proposition 3.1 in \cite{De-Lan-2007}). For any $R>\rho>0$ to be chosen later, we set
$$
A=\{u_R:u\in A_0\}\quad\mbox{and}\quad X=\{u_t:u\in A_0,\,t\geq0\}.
$$
For all $u\in\mathcal{M}$ we have
\begin{eqnarray}\label{Phi sub-superscaled critical}
   \Phi(u_t)=t^{(4s+\alpha-N)}\left(1-\frac{\lambda}{\tilde\Psi(u)}\right)+ \frac{\mu t^{(\theta\beta-N)}}{\beta}\int_{\mathbb{R}^N}|u|^{\beta}dx-\frac{t^{(\theta 2^*_{s}-N)}}{2^*_{s}}\int_{\mathbb{R}^N}|u|^{2^*_{s}}dx.
\end{eqnarray}
Note that as $\lambda'>\lambda>\lambda_1$, the boundedness of $\|u\|_{L^{2^*_s}}$ for $u\in\mathcal{M}$, by Theorem \ref{lambdak}\ref{lambdak(i)} and Lemma \ref{Lemma K estimate} give us the estimate
\begin{eqnarray*}
   \Phi(u_t)\geq -t^{(4s+\alpha-N)}\left(\frac{\lambda}{\lambda_{1}}-1\right)+ c_1\mu t^{(\theta\beta-N)}-c_2t^{(\theta 2^*_s-N)}\quad \forall u\in\tilde\Psi^{\lambda'}, \forall t\geq0,
   \end{eqnarray*}
for some $c_1,c_2>0$ which do not depend on $\mu$. Also observe that 
\begin{eqnarray*}
   \Phi(u_t)\geq t^{(4s+\alpha-N)}\left(1-\frac{\lambda}{\lambda'}\right)-c_2t^{(\theta 2^*_s-N)}\quad \forall u\in\mathcal{M}\backslash \tilde\Psi^{\lambda'}, \forall t\geq0.
   \end{eqnarray*}
Recalling that $\theta 2^*_{s}-N>4s+\alpha-N>\theta\beta-N$ we conclude that
$$
\inf_{u\in\mathcal{M}}\Phi(u_\rho)>0
$$
provided $\rho>0$ is small enough. 
On the other hand, since $A_0\subset \mathcal{M}\backslash\tilde\Psi_{\tilde\lambda}$  we have $\tilde\Psi(u)< \tilde{\lambda}$ for any $u\in A_0$. Moreover, $\inf_{u\in A_0}\|u\|_{L^{2^*_s}}>0$, as $A_0$ is compact.
Hence, using the boundedness of $\|u\|_{L^\beta}$ for $u\in\mathcal{M}$, by \eqref{Phi sub-superscaled critical} we get
\begin{eqnarray*}
   \Phi(u_t)\leq -t^{(4s+\alpha-N)}\left(\frac{\lambda}{\tilde{\lambda}}-1\right)+ c'\mu t^{(\theta\beta-N)}-c_3t^{(\theta 2^*_s-N)}\quad \forall u\in A_0, \forall t\geq0,
   \end{eqnarray*}
for some $c',c_3>0$. Picking now $R>\rho$ sufficiently large, we obtain
$$
\sup_{u\in A}\Phi(u)=\sup_{u\in A_0}\Phi(u_R)\leq0.
$$
Note also that, for $u\in A_0$ and $t\geq0$, we have
\begin{eqnarray*}
   \Phi(u_t)\leq c'\mu t^{(\theta\beta-N)}-c_3t^{(\theta 2^*_s-N)}\leq \theta(2^*_s-\beta)\left[\frac{\theta\beta-N}{c_3}\right]^\frac{\theta\beta-N}{\theta(2^*_s-\beta)}
   \left[\frac{c'\mu}{\theta2^*_s-N}\right]^\frac{\theta2^*_s-N}{\theta(2^*_s-\beta)}.
   \end{eqnarray*}
As a consequence, for $\mu\in(0,\mu^*)$ with $\mu^*$ sufficiently small, it holds that
$$
\sup_{u\in X}\Phi(u)=\sup_{u\in A_0,t\in[0,R]}\Phi(u_t)<\frac{s}{N}\mathbb{S}^\frac{N}{2s},
$$
and therefore, by Corollary \ref{Corollary 2.34 MP} there exist $k+m-1(\geq k)$ pairs of critical points for $\Phi$, at positive energy levels. This concludes the proof.
\end{proof-th}

\appendix

\section{Appendix: Density of $C^\infty(\mathbb R^N)$ in the fractional Coulomb-Sobolev spaces}

Our proof is based on the following

\begin{lemma}\label{suaves}
    Let $s>0$, $q\geq 1$, $0<\alpha<N$. Then $L^\infty(\mathbb R^N)\cap C^\infty(\mathbb R^N)\cap \mathcal E^{s,\alpha,q}(\mathbb R^N)$ is dense in $\mathcal E^{s,\alpha,q}(\mathbb R^N)$.
\end{lemma}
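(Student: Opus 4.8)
The plan is to reach $L^\infty\cap C^\infty\cap\mathcal E^{s,\alpha,q}(\mathbb R^N)$ from an arbitrary $u\in\mathcal E^{s,\alpha,q}(\mathbb R^N)$ in two steps — first a pointwise truncation $u\mapsto u_k$ to get boundedness, then a mollification $u_k\mapsto u_k\ast\rho_\varepsilon$ to get smoothness while preserving the $L^\infty$ bound — and to close with a diagonal choice $\varepsilon=\varepsilon(k)$. Since the Coulomb part of the norm is not linear, it is convenient to work throughout with the positive bilinear form
\[
D(f,g)=\iint_{\mathbb R^N\times\mathbb R^N}\frac{f(x)\,g(y)}{|x-y|^{N-\alpha}}\,dx\,dy,\qquad \|u\|_{Q^{\alpha,q}}^{2q}=D(|u|^q,|u|^q),
\]
and with its Riesz representation $\|u\|_{Q^{\alpha,q}}^{2q}=c\,\|I_{\alpha/2}\ast|u|^q\|_{L^2}^2$ coming from $I_{\alpha/2}\ast I_{\alpha/2}=c'I_\alpha$ (valid since $\alpha<N$); this reduces the Coulomb statements to $L^2$ statements about $I_{\alpha/2}\ast|u|^q$, where Fatou, the Hardy--Littlewood--Sobolev inequality and, when $q>1$, the Radon--Riesz property of the uniformly convex space $Q^{\alpha,q}$ are available. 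We will also use the triangle inequality for $\|\cdot\|_{Q^{\alpha,q}}$ (it is a norm, as recalled in the excerpt).

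\textbf{Step 1: truncation.} Take smooth odd $\tau_k:\mathbb R\to\mathbb R$ with $0\le\tau_k'\le1$, $\tau_k(t)=t$ for $|t|\le k$, $|\tau_k(t)|\le k+1$, and set $u_k=\tau_k(u)$. Then $|u_k|\le|u|$ pointwise, so $u_k\in Q^{\alpha,q}(\mathbb R^N)$ with $\|u_k\|_{Q^{\alpha,q}}\le\|u\|_{Q^{\alpha,q}}$; since $u_k$ and $u$ have the same sign, $|u-u_k|=|u|-|u_k|\le|u|$, and the integrand $\frac{(|u|-|u_k|)^q(x)(|u|-|u_k|)^q(y)}{|x-y|^{N-\alpha}}$ is dominated by $\frac{|u|^q(x)|u|^q(y)}{|x-y|^{N-\alpha}}\in L^1$ and tends to $0$ a.e., so dominated convergence gives $\|u-u_k\|_{Q^{\alpha,q}}\to0$. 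For the Sobolev part, when $s\in(0,1)$ one uses the Gagliardo seminorm $\|(-\Delta)^{s/2}v\|_{L^2}^2=c_{N,s}\iint\frac{|v(x)-v(y)|^2}{|x-y|^{N+2s}}$: because $t\mapsto t-\tau_k(t)$ is $1$-Lipschitz and vanishes pointwise as $k\to\infty$, the integrand for $u-u_k$ is $\le\frac{|u(x)-u(y)|^2}{|x-y|^{N+2s}}\in L^1$ and tends to $0$ a.e., giving $u_k\in\dot H^s$ and $\|(-\Delta)^{s/2}(u-u_k)\|_{L^2}\to0$. Hence $u_k\to u$ in $\mathcal E^{s,\alpha,q}$ and each $u_k\in L^\infty\cap\mathcal E^{s,\alpha,q}$.

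\textbf{Step 2: mollification.} With $\rho\in C_c^\infty(\mathbb R^N)$, $\rho\ge0$, $\int\rho=1$, $\rho_\varepsilon(x)=\varepsilon^{-N}\rho(x/\varepsilon)$, put $u_{k,\varepsilon}=u_k\ast\rho_\varepsilon\in C^\infty(\mathbb R^N)$ with $\|u_{k,\varepsilon}\|_{L^\infty}\le\|u_k\|_{L^\infty}$. Jensen's inequality gives $|u_{k,\varepsilon}|^q\le|u_k|^q\ast\rho_\varepsilon$, hence $I_{\alpha/2}\ast|u_{k,\varepsilon}|^q\le(I_{\alpha/2}\ast|u_k|^q)\ast\rho_\varepsilon$ and therefore $\|u_{k,\varepsilon}\|_{Q^{\alpha,q}}\le\|u_k\|_{Q^{\alpha,q}}$, so $u_{k,\varepsilon}\in\mathcal E^{s,\alpha,q}$. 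On the Fourier side $\widehat{u_{k,\varepsilon}}=\widehat\rho(\varepsilon\,\cdot)\,\widehat{u_k}$ with $|\widehat\rho|\le1$, $\widehat\rho(0)=1$, so $\int|\xi|^{2s}|\widehat\rho(\varepsilon\xi)-1|^2|\widehat{u_k}(\xi)|^2d\xi\to0$ by dominated convergence; this gives $\|(-\Delta)^{s/2}(u_{k,\varepsilon}-u_k)\|_{L^2}\to0$ as $\varepsilon\to0$ for every $s>0$. For the Coulomb part, split $u_k=u_k\mathbf 1_{\{|x|\le R\}}+u_k\mathbf 1_{\{|x|>R\}}$: the tail has $\|u_k\mathbf 1_{\{|x|>R\}}\|_{Q^{\alpha,q}}\to0$ as $R\to\infty$ (dominated convergence in $D$), and since convolution with $\rho_\varepsilon$ is a contraction on $Q^{\alpha,q}$ the corresponding error is controlled uniformly in $\varepsilon$; the compactly supported bounded piece lies in $L^{2Nq/(N+\alpha)}$, so by $\|I_{\alpha/2}\ast h\|_{L^2}\lesssim\|h\|_{L^{2N/(N+\alpha)}}$ (HLS) and the Lebesgue-mollifier convergence $\|v_R\ast\rho_\varepsilon-v_R\|_{L^{2Nq/(N+\alpha)}}\to0$ we get $\|v_R\ast\rho_\varepsilon-v_R\|_{Q^{\alpha,q}}\to0$. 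A triangle-inequality argument (choose $R$ first, then $\varepsilon$) yields $\|u_{k,\varepsilon}-u_k\|_{Q^{\alpha,q}}\to0$. Thus $u_{k,\varepsilon}\to u_k$ in $\mathcal E^{s,\alpha,q}$, and a diagonal choice $\varepsilon=\varepsilon(k)\to0$ finishes the proof.

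\textbf{Main obstacle and caveats.} The only genuinely delicate point is the behaviour of the homogeneous fractional Sobolev seminorm under the nonlinear truncation $u\mapsto\tau_k\circ u$: the Gagliardo argument is transparent exactly for $s\in(0,1)$ (the range in which the paper's applications live), whereas for $s\ge1$ composition with a Lipschitz — or even $C^\infty$ — function need not preserve $\dot H^s$ without an \emph{a priori} $L^\infty$ bound, so one would need a different truncation scheme or an induction on the integer part of $s$; I would either restrict the cleanest version to $s\in(0,1)$ or treat $s\ge1$ separately. A secondary, mostly technical point is that $\|\cdot\|_{Q^{\alpha,q}}$ is not obviously subadditive at the level of $|u|^q$, which is why everything is phrased through $D$ and the identity $\|u\|_{Q^{\alpha,q}}^{2q}=c\,\|I_{\alpha/2}\ast|u|^q\|_{L^2}^2$; the endpoint $q=1$ (where uniform convexity of $Q^{\alpha,1}$ is unavailable) also deserves a separate elementary check, though the compact-support-plus-HLS argument above already avoids needing it.
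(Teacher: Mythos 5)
Your two-step truncate-then-mollify scheme is a genuinely different route from the paper's, and the truncation step is exactly where a gap opens: you treat the $\dot H^s$ seminorm of $u-\tau_k(u)$ via the Gagliardo double integral, which is available only for $s\in(0,1)$, whereas the lemma is stated — and cited — for all $s>0$. You flag this yourself in your caveats, but flagging it does not close it; as written your argument proves a strictly weaker statement than the one claimed. (Your Step~2, by contrast, works for every $s>0$, since the mollification is handled purely on the Fourier side.)

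The paper bypasses truncation altogether, and this is the key idea you are missing. Setting $v_k=\eta_k*u$ directly for $u\in\mathcal E^{s,\alpha,q}(\mathbb R^N)$, the paper observes (citing \cite[Proposition~2.6]{Mercuri-Moroz-VS-2016}) that $v_k$ is automatically in $L^\infty$: it is $C^\infty$, hence locally bounded, and $v_k(x)\to0$ as $|x|\to\infty$. The decay at infinity is a free byproduct of the Coulomb condition alone. Indeed, if $D(|u|^q,|u|^q)<\infty$ then for a ball $B_r(x)$ of fixed radius $r$ one has the elementary bound
\[
\Bigl(\int_{B_r(x)}|u|^q\Bigr)^2
\le (2r)^{N-\alpha}\iint_{B_r(x)\times B_r(x)}\frac{|u(y)|^q|u(z)|^q}{|y-z|^{N-\alpha}}\,dy\,dz,
\]
and the right-hand side tends to $0$ as $|x|\to\infty$ since the full double integral is finite; combined with H\"older, $|(\eta_k*u)(x)|\le\|\eta_k\|_{L^{q'}}\|u\|_{L^q(B_{1/k}(x))}\to0$. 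So the $L^\infty$ bound on the approximant costs nothing, the whole of your Step~1 is superfluous, and — crucially — with it goes the only $s$-dependent ingredient of your proof. The remaining convergences are then $\|(-\Delta)^{s/2}(v_k-u)\|_{L^2}\to0$ by dominated convergence on the Fourier side ($\widehat{v_k}=\widehat{\eta_k}\,\widehat u$, $|\widehat{\eta_k}|\le1$, $\widehat{\eta_k}\to1$ pointwise), valid for any $s>0$, and $\|v_k-u\|_{Q^{\alpha,q}}\to0$, which the paper again takes from \cite{Mercuri-Moroz-VS-2016}. Your tail/HLS split for the Coulomb convergence in Step~2 is a correct, if heavier, alternative to that citation; the contraction $\|w*\rho_\varepsilon\|_{Q^{\alpha,q}}\le\|w\|_{Q^{\alpha,q}}$ via Jensen and the Riesz composition is a clean observation worth keeping.
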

\begin{proof} Pick $\eta\in C^\infty_c(\mathbb{R}^N)$ such that $0\leq\eta(x)\leq1$, $supp(\eta)\subset B_1(0)$ and $\int_{\mathbb{R}^N}\eta(x)dx=1$. For each $n\in\mathbb{N}$, set $\eta_k(x)=k^N\eta(kx)$.  For  $u\in \mathcal E^{s,\alpha,q}(\mathbb R^N)$ we denote $v_k= \eta_k*u$. Since $u\in L_{loc}^1(\mathbb{R}^N),$ we have that $v_k\in C^\infty(\mathbb{R}^N)$. As in \cite[Proposition 2.6]{Mercuri-Moroz-VS-2016} we see that 
$$
\iint_{\mathbb{R}^N\times\mathbb{R}^N}\frac{|(v_k-u)(x)|^q|(v_k-u)(y)|^q}{|x-y|^{N-\alpha}}dxdy \to 0\quad{as}\,\,k\to\infty,
$$
and, for each fixed $k$, it holds
$$
\lim_{|x|\to\infty}|(\eta_k*u)(x)|=0.
$$
In particular, $\eta_k*u\in L^\infty(\mathbb{R}^N)$. On the other hand, as tempered distributions, we have $\widehat{v_k}=\widehat{\eta_k}\widehat{u}$ with $\widehat{\eta_k}(\xi)=\widehat{\eta}(\xi/k)$. Since $\eta_k\in C^\infty_c(\mathbb{R}^N)\subset \mathcal{S}(\mathbb{R}^N)$ (the Schwartz space), we have $\widehat{\eta_k}\in \mathcal{S}(\mathbb{R}^N)$ and
$$
\widehat{\eta_k}(\xi)=\int_{\mathbb{R}^N}e^{-2\pi i x\cdot\xi}\eta_k(x)dx \quad\Rightarrow 
\quad|\widehat{\eta_k}(\xi)|\leq1,\quad\forall \xi\in\mathbb R^N.
$$
Moreover, since $\widehat{\eta_k}$ is a continuous function, we see that
$$
\widehat{\eta_k}(\xi)= \widehat{\eta}(\xi/k) \to\widehat{\eta}(0)=1\quad\mbox{as}\,\,k\to\infty,
$$
for all $\xi\in\mathbb{R}^N$. Hence $\widehat{v_k}\to \widehat{u}$, a.e. on $\mathbb{R}^N$. Since $|\xi|^{2s}|\widehat u(\xi)|^2\in L^1(\mathbb{R}^N$)
 and 
 $$
 |\xi|^{2s}|\widehat{(v_k-u)}(\xi)|^2=|\xi|^{2s}|(\widehat{\eta_k}(\xi)-1)\widehat{u}(\xi)|^2\leq 2|\xi|^{2s}|\widehat{u}(\xi)|^2
 $$
for all $\xi\in\mathbb{R}^N$ and all $k\in\mathbb N$, we may conclude by the dominated convergence theorem that
 $$
 \|v_k-u\|^2_{\dot{H}^s(\mathbb{R}^N)}=\int_{\mathbb{R}^N}|\xi|^{2s}|\widehat{(v_k-u)}(\xi)|^2d\xi\to0\quad{as}\quad k\to\infty,
 $$
and this concludes the proof.
\end{proof}

\vspace{2mm}

We now assume $0<s<1,$ as this allows us to use Gagliardo seminorms.

\begin{proposition}\label{densityE} 
For all $s\in (0,1)$, $q\geq1$ and $\alpha\in(0,N),$  $C^\infty_c(\mathbb{R}^N)$ is dense in $\mathcal E^{s,\alpha,q}(\mathbb R^N)$. 
\end{proposition}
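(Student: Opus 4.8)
The plan is to combine Lemma \ref{suaves} with a single spatial truncation, carried out at one large scale $R$, and then to let $R\to\infty$. By Lemma \ref{suaves} it suffices to approximate, in the norm of $\mathcal E^{s,\alpha,q}(\mathbb R^N)$, an arbitrary $u\in L^\infty(\mathbb R^N)\cap C^\infty(\mathbb R^N)\cap\mathcal E^{s,\alpha,q}(\mathbb R^N)$ by functions of $C^\infty_c(\mathbb R^N)$. So fix $\chi\in C^\infty_c(\mathbb R^N)$ with $0\le\chi\le1$, $\chi\equiv1$ on $B_1(0)$, $\operatorname{supp}\chi\subset B_2(0)$, put $\chi_R(x)=\chi(x/R)$ and $\psi_R=1-\chi_R$. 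Since $u$ is smooth, $\chi_R u\in C^\infty_c(\mathbb R^N)$, and it remains to show $\|\chi_R u-u\|_{\mathcal E^{s,\alpha,q}}\to0$, i.e.\ $\|(-\Delta)^{s/2}(\psi_R u)\|_{L^2}\to0$ and $\|\psi_R u\|_{Q^{\alpha,q}}\to0$. The Coulomb part is immediate from dominated convergence: the integrand of $\iint_{\mathbb R^N\times\mathbb R^N}|\psi_R(x)u(x)|^q|\psi_R(y)u(y)|^q|x-y|^{-(N-\alpha)}\,dx\,dy$ is dominated by $|u(x)|^q|u(y)|^q|x-y|^{-(N-\alpha)}\in L^1$ (as $u\in Q^{\alpha,q}$) and tends to $0$ pointwise, hence the integral, and therefore $\|\psi_R u\|_{Q^{\alpha,q}}$, tends to $0$.

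For the $\dot H^s$ part I would use, since $0<s<1$, the Gagliardo representation $\|(-\Delta)^{s/2}v\|_{L^2}^2=c_{N,s}\iint_{\mathbb R^N\times\mathbb R^N}|v(x)-v(y)|^2|x-y|^{-N-2s}\,dx\,dy$ together with the pointwise identity $(\psi_R u)(x)-(\psi_R u)(y)=\psi_R(x)\bigl(u(x)-u(y)\bigr)+\bigl(\chi_R(y)-\chi_R(x)\bigr)u(y)$. Applying $|a+b|^2\le 2|a|^2+2|b|^2$, the matter reduces to proving that $I_R:=\iint\psi_R(x)^2|u(x)-u(y)|^2|x-y|^{-N-2s}\,dx\,dy$ and $II_R:=\iint(\chi_R(x)-\chi_R(y))^2u(y)^2|x-y|^{-N-2s}\,dx\,dy$ both tend to $0$. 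The term $I_R$ goes to $0$ by dominated convergence, exactly as for the Coulomb part: the integrand is dominated by $|u(x)-u(y)|^2|x-y|^{-N-2s}\in L^1$ (because $u\in\dot H^s$) and $\psi_R(x)^2\to0$ pointwise.

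The genuine obstacle is $II_R$, which a crude estimate only bounds rather than makes small; here one must quantify the decay of $u$ at infinity. I would first establish, using $|\chi_R(x)-\chi_R(y)|\le\min\{1,\|\nabla\chi\|_\infty|x-y|/R\}$ and splitting $\{|x-y|\le R\}$ from $\{|x-y|>R\}$ (where $2-2s>0$ is exactly what makes the near-diagonal integral converge), the uniform bound $K_R(y):=\int_{\mathbb R^N}(\chi_R(x)-\chi_R(y))^2|x-y|^{-N-2s}\,dx\le C_{N,s}\,R^{-2s}$, and, since $\chi_R\equiv0$ off $B_{2R}(0)$, the far-field bound $K_R(y)\le C\,R^{N}|y|^{-N-2s}$ for $|y|>3R$; by Tonelli this yields $II_R\le C\,R^{-2s}\int_{B_{3R}}u^2\,dx+C\,R^{N}\int_{|y|>3R}u(y)^2|y|^{-N-2s}\,dy$. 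To finish I would invoke the fractional Sobolev embedding $\dot H^s\hookrightarrow L^{2^*_s}$ with $2^*_s=2N/(N-2s)$, so $u\in L^{2^*_s}(\mathbb R^N)$; Hölder on dyadic annuli gives $\int_{B_{2^{k+1}}\setminus B_{2^k}}u^2\,dx\le C\,2^{2sk}\,\|u\|_{L^{2^*_s}(B_{2^{k+1}}\setminus B_{2^k})}^2$, and since the tails $\|u\|_{L^{2^*_s}(B_{2^{k+1}}\setminus B_{2^k})}\to0$, summing the dyadic pieces produces $\int_{B_R}u^2\,dx=o(R^{2s})$ and $R^N\int_{|y|>3R}u^2|y|^{-N-2s}\,dy=o(1)$; hence $II_R\to0$ and the proof is complete. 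I expect the only delicate points to be this dyadic–tail bookkeeping and the verification of the uniform and far-field estimates on $K_R$; everything else is dominated convergence.
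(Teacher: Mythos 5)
Your proof is correct, and the overall strategy coincides with the paper's: reduce to $v\in L^\infty\cap C^\infty\cap\mathcal E^{s,\alpha,q}(\mathbb R^N)$ via Lemma~\ref{suaves}, truncate with a cutoff $\chi_R$, and let $R\to\infty$; the Coulomb part is handled by dominated convergence in both versions. The only genuine difference is in the $\dot H^s$ piece: the paper invokes \cite[Lemma~1.4.8]{book-Ambrosio} to obtain the convergence of the Gagliardo seminorm $[\chi_R v-v]\to0$, whereas you give a self-contained proof of that fact, splitting the seminorm into the dominated-convergence term $I_R$ and the commutator term $II_R$, deriving the uniform kernel bound $K_R(y)\le CR^{-2s}$ and the far-field bound $K_R(y)\le CR^N|y|^{-N-2s}$ for $|y|>3R$, and closing with the Sobolev embedding $\dot H^s\hookrightarrow L^{2^*_s}$ together with dyadic-annulus estimates giving $\int_{B_R}u^2=o(R^{2s})$ and $R^N\int_{|y|>3R}u^2|y|^{-N-2s}\,dy=o(1)$. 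Your version is longer but fully explicit and effectively reproduces the content of the cited lemma; in particular it makes transparent that the decay $u\in L^{2^*_s}$ (i.e.\ $N>2s$) is precisely what makes the truncation estimate close, a point the citation leaves implicit.
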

\begin{proof}
For any arbitrary $u\in\mathcal E^{s,\alpha,q}(\mathbb R^N)$ and $\varepsilon>0$, by Lemma \ref{suaves} we can pick $k_0$ such that $\|\eta_{k_0}*u-u\|<\varepsilon/2$.
Fix a function $\varphi\in C^\infty_c(\mathbb{R}^N)$ such that $0\leq\varphi(x)\leq1$, $\varphi\equiv1$ in $B_1(0)$ and $\varphi\equiv0$ in $\mathbb{R}^N\setminus B_2(0)$. For each $k\in\mathbb{N}$ we set $\varphi_k(x)=\varphi(x/k)$.
It is a standard fact that for $ \dot{H}^s$
    if $s\in(0,1)$ (see e.g. \cite[Proposition 1.37]{Bahouri-Chemin-Danchin}) it holds that $\|v\|_{\dot{H}^s}^2=C_{N,s}[v]^2$, where
  \begin{equation*}
     [v]:=\left(\iint_{\mathbb{R}^N\times\mathbb{R}^N}\frac{|v(x)-v(y)|^2}{|x-y|^{N+2s}}dxdy\right)^\frac{1}{2},
  \end{equation*}
  for $v\in \dot{H}^s$.  By \cite[Lemma 1.4.8]{book-Ambrosio}, for each $u\in\mathcal E^{s,\alpha,q}(\mathbb R^N)$
we have that $[\varphi_kv-v]\to0$ as $k\to\infty$. Thus  $\|\varphi_kv-v\|_{\dot{H}^s}\to0$. On the other hand, since $\varphi_kv(x)\to v(x)$ a.e. and $|\varphi_kv(x)-v(x)|\leq|v(x)|$ by the dominated convergence theorem we have 
$\|\varphi_kv-v\|_{Q^{\alpha,q}(\mathbb R^N)}\to0,$ hence $\varphi_kv\to v$ in $\mathcal E^{s,\alpha,q}(\mathbb R^N)$.
In particular, setting $v=\eta_{k_0}*u$, we can pick $k_1$ such that $\|\varphi_{k_1}(\eta_{k_0}*u)-\eta_{k_0}*u\|<\varepsilon/2,$ and therefore $\|\varphi_{k_1}(\eta_{k_0}*u)-u\|<\varepsilon.$ Since $\varphi_{k_1}(\eta_{k_0}*u)\in C^\infty_c(\mathbb{R}^N),$ this concludes the proof.
\end{proof}

\section{Appendix: Regularity results}\label{regularityappendix}

The regularity results we develop here in this fractional Coulomb-Sobolev framework are built based on the classical work of Silvestre \cite{Silvestre}, which we recall for reader's convenience (see Propositions \ref{regularity1} and \ref{regularity2} below). We first prove, in the following lemma, a useful $L^\infty$ estimate in the spirit of the classical work of Trudinger \cite{Trudinger},  which applies to critical growth nonlinearities like $f(x,u)=|u|^{2^*_s-2}u$. See also Proposition 3.2.14 \cite{book-Ambrosio}.

\begin{lemma}\label{L infty 4s+a>N}
Let $s\in(0,1)$, $N\geq2$ and  $\alpha\in(0,N)$ such that $4s+\alpha>N$.
Suppose that $u\in \mathcal{E}^{s,\alpha,2}(\mathbb{R}^N)$ is a solution to
\begin{equation*}\label{eq regularity}
(-\Delta)^su+\left(I_\alpha*u^2\right)u= f(x,u), \quad\textrm{in}\quad\mathbb{R}^N
\end{equation*}
where  $f:\mathbb{R}^N\times\mathbb R\rightarrow\mathbb R$ is a Caratheodory function satisfying
$$
|f(x,t)|\leq c(|t|^{p-1}+|t|^{q-1}),\quad\forall (x,t)\in \mathbb{R}^N\times\mathbb R
$$
for $p,q\in[2^*_{s,\alpha},2^*_s].$ Then $u\in L^r(\mathbb{R}^N)$ for any  $r\in[2^*_{s,\alpha},\infty]$.
\end{lemma}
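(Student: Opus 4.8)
The strategy is a Moser–Brezis–Kato bootstrap adapted to the nonlocal setting, exploiting that the Coulomb term $(I_\alpha*u^2)u$ has a favorable sign when tested against suitable truncated powers of $u$, and that $4s+\alpha>N$ guarantees $2^*_{s,\alpha}<2^*_s$ so there is genuine room to iterate. First I would record that, since $u\in\mathcal E^{s,\alpha,2}(\mathbb R^N)$ and $4s+\alpha>N$, the embedding results quoted in the introduction give $u\in L^r(\mathbb R^N)$ for all $r\in[2^*_{s,\alpha},2^*_s]$; this is the base case of the iteration. Because $(I_\alpha*u^2)(x)\ge 0$ pointwise, for any nonnegative test function $\varphi$ we have
\[
\int_{\mathbb R^N}(-\Delta)^{\frac s2}u\,(-\Delta)^{\frac s2}\varphi\,dx \le \int_{\mathbb R^N} f(x,u)\,\varphi\,dx,
\]
so the Coulomb term can simply be dropped from the estimates and one is left with exactly the situation of the fractional Brezis–Kato / Moser argument for $(-\Delta)^s$ with a critical-growth right-hand side.

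The core step is the iteration. For $L>0$ and $\gamma\ge 1$ set $u_L=\min\{|u|,L\}$ and test the (inequality form of the) equation with $\varphi=u\,u_L^{2(\gamma-1)}$, which is an admissible bounded perturbation once truncated. Using the standard fractional inequality
\[
\int_{\mathbb R^N}(-\Delta)^{\frac s2}u\,(-\Delta)^{\frac s2}\!\bigl(u\,u_L^{2(\gamma-1)}\bigr)\,dx \;\ge\; \frac{c}{\gamma^2}\,\bigl[\,|u|\,u_L^{\gamma-1}\,\bigr]_{\dot H^s}^2 \;\ge\; \frac{c}{\gamma^2}\,\mathbb S\,\bigl\||u|u_L^{\gamma-1}\bigr\|_{L^{2^*_s}}^2,
\]
(which one derives from convexity, as in Lemma 3.2.14 of \cite{book-Ambrosio}), together with the growth bound $|f(x,t)|\le c(|t|^{2^*_{s,\alpha}-1}+|t|^{2^*_s-1})$ and Hölder on the right, one obtains that if $u\in L^{\gamma\,2^*_{s,\alpha}}(\mathbb R^N)$ with appropriate control, then $|u|u_L^{\gamma-1}\in L^{2^*_s}$ uniformly in $L$; letting $L\to\infty$ by monotone convergence gives $u\in L^{\gamma\, 2^*_s}(\mathbb R^N)$, with a quantitative bound. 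Since $2^*_s/2^*_{s,\alpha}>1$, iterating $\gamma_{n+1}=\gamma_n\cdot(2^*_s/2^*_{s,\alpha})$ starting from $\gamma_0=1$ raises the integrability exponent geometrically and yields $u\in L^r(\mathbb R^N)$ for every finite $r\ge 2^*_{s,\alpha}$.

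For the endpoint $r=\infty$, once $u\in L^r$ for all finite $r$ the right-hand side $f(x,u)$ lies in $L^r$ for all large $r$ (in particular in $L^r$ with $r>N/(2s)$), so $(I_\alpha*u^2)u+(-\Delta)^su\in L^r_{\mathrm{loc}}$; moving the Coulomb term — now also in high $L^r$ since $u^2\in L^{N/\alpha}$ and higher, so $I_\alpha*u^2\in L^\infty$ by the Hardy–Littlewood–Sobolev inequality once $u\in L^{2N/\alpha}$ — to the right, one has $(-\Delta)^su\in L^r(\mathbb R^N)$ with $r>N/(2s)$, and the fractional regularity result of Silvestre \cite{Silvestre} (Propositions \ref{regularity1}–\ref{regularity2}) gives $u\in C^{0,\beta}$, in particular $u\in L^\infty(\mathbb R^N)$. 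I expect the main obstacle to be the bookkeeping in the iteration: verifying that the truncated test function $u\,u_L^{2(\gamma-1)}$ genuinely lies in $\mathcal E^{s,\alpha,2}$ at each stage and that the constants in the Moser estimate can be absorbed so that the recursion closes (the classical subtlety being to pass the $L\to\infty$ limit before the bound degenerates), plus confirming the HLS exponents line up so that $I_\alpha*u^2$ becomes bounded — both are standard but require the hypothesis $4s+\alpha>N$ in an essential way to keep $2^*_{s,\alpha}$ strictly below $2^*_s$.
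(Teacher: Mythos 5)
Your overall strategy (Moser iteration with truncated powers, dropping the nonnegative Coulomb term, then a separate argument for $L^\infty$) is the same broad scheme as the paper's, but there are two concrete gaps that prevent the argument from closing.

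\textbf{The critical-growth absorption step is missing.} The lemma allows $q=2^*_s$, and when you test with $u\,u_L^{2(\gamma-1)}$ the right-hand side contains $\int |u|^{2^*_s-2}\bigl(u\,u_L^{\gamma-1}\bigr)^2\,dx$. A plain H\"older application with exponents $N/(2s)$ and $2^*_s/2$ gives
\[
\int |u|^{2^*_s-2}\bigl(u\,u_L^{\gamma-1}\bigr)^2\,dx \;\le\; \|u\|_{L^{2^*_s}}^{\,2^*_s-2}\,\bigl\|u\,u_L^{\gamma-1}\bigr\|_{L^{2^*_s}}^2,
\]
and the constant $\|u\|_{L^{2^*_s}}^{2^*_s-2}$ is a fixed number that has no reason to be small, so it cannot be absorbed into the left-hand side. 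Your phrase ``with appropriate control'' is precisely where the Brezis--Kato device must go. The paper introduces $h=2|u^+|^{2^*_s-2}\chi_{\{u\geq 1\}}\in L^{N/(2s)}$ and, for a level $K=K(\beta)$, splits $\int(2+h)(u^+|u_L|^\beta)^2$ into the region $\{h\leq K\}$ (controlled by $(2+K)\|u^+\|_{L^{2(\beta+1)}}^{2(\beta+1)}$) and the tail $\{h\geq K\}$, where $\bigl(\int_{\{h\geq K\}}h^{N/(2s)}\bigr)^{2s/N}$ can be made as small as desired, so that its contribution, H\"oldered against $\|u^+|u_L|^\beta\|_{L^{2^*_s}}^2$, is absorbed into the left-hand side. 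Without this splitting the iteration does not start.

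\textbf{The $L^\infty$ step is circular as you have set it up.} You propose to conclude boundedness from Silvestre's regularity results, but the two propositions quoted in the paper (Propositions \ref{regularity1} and \ref{regularity2}) both take $u\in L^\infty(\mathbb{R}^N)$ as a hypothesis; they cannot be used to \emph{prove} $u\in L^\infty$. (There are related potential-theoretic bounds giving $L^\infty$ from $(-\Delta)^s u\in L^r$ with $r>N/(2s)$, but on $\mathbb{R}^N$ these require two-sided $L^r$ control of the right-hand side and are not the cited tools.) The paper instead completes the Moser iteration a second time: once $u\in L^r$ for all finite $r$, the weight $h$ lies in $L^{N/s}$, which lets one choose the absorption parameter $\varepsilon$ so that the resulting constant in
\[
\|(u^+)^{\beta+1}\|_{L^{2^*_s}} \le C_1^{1/(\beta+1)} e^{1/\sqrt{\beta+1}}\,\|(u^+)^{\beta+1}\|_{L^{2}}
\]
is \emph{uniform in $\beta$} up to a summable correction; iterating $2(\beta_{k+1}+1)=2^*_s(\beta_k+1)$ then yields $\|u^+\|_{L^\infty}\le C_2\|u^+\|_{L^{2^*_s}}$ because $\sum 1/(\beta_j+1)$ and $\sum 1/\sqrt{\beta_j+1}$ converge. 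You should either carry out this uniform-constant iteration or replace the Silvestre reference by a result that does not presuppose boundedness.

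Two smaller remarks: the paper works with a signed truncation and splits $u$ into $u^+$ and $u^-$ to make the monotonicity and the Stroock--Varopoulos type inequality transparent; and the paper's geometric ratio is $2^*_s/2$ rather than your $2^*_s/2^*_{s,\alpha}$, though either one iterates since both exceed $1$ under $4s+\alpha>N$. These are cosmetic; the two issues above are the substantive ones.
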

\begin{proof}
For any $t\in\mathbb R$, set
  \[
    t_{L} =
      \begin{cases}
        t,          & |t|<L,\\
        \operatorname{sgn}(t)\,L, & |t|\ge L,
      \end{cases}
    \qquad
    \gamma(t)=t\,|t_{L}|^{2\beta},
  \]
  where $\beta>0$ and $L>0.$ Define
  \[
    \gamma^{+}(t):=t^{+}\,|t_{L}|^{2\beta},
    \qquad
    \gamma^{-}(t):=t^{-}\,|t_{L}|^{2\beta},
    \qquad
    t^{+}:=\max\{t,0\},
    \quad
    t^{-}:=\min\{t,0\}.
  \]
  Note that $\gamma^{+}$ and $\gamma^{-}$ are non-decreasing.
 Let us define $\Gamma^{\pm}\colon\mathbb R\to\mathbb R$ as
  \[
    \Gamma^{\pm}(t)
      =\pm\int_{0}^{t}
           \bigl[\,(\gamma^{\pm})'(\tau)\bigr]^{1/2}\,d\tau,
  \]
 Observe that
  \[
      \Gamma^{+}(t)=0,\;\; \forall t\le 0;\qquad
      \Gamma^{-}(t)=0,\;\; \forall t\ge 0;\qquad
      \Gamma^{\pm}\ge 0.
  \]
  Moreover, 
  %\[
   %   \bigl(\Gamma^{\pm}\bigr)'(t)
   %     =\pm\bigl[(\gamma^{\pm})'(t)\bigr]^{1/2},
   %     \qquad t\in\mathbb R
 % \]
  \begin{equation}\label{Gammaporcima}
      \Gamma^{\pm}(t)
        \;\ge\;
        \frac{1}{\beta+1}\,
        |t^{\pm}|\,|t_{L}|^{\beta},
        \qquad\forall t\in\mathbb R,
\end{equation}
and for all \(a,b\in\mathbb R\) it holds that
  \begin{equation}\label{desiGamma}
      (a-b)\,\bigl(\gamma^{\pm}(a)-\gamma^{\pm}(b)\bigr)
      \;\ge\;
      \bigl|\Gamma^{\pm}(a)-\Gamma^{\pm}(b)\bigr|^{2}.
  \end{equation}
  
With these preliminaries in place, we now estimate the positive and negative parts of $u$ separately, in the spirit of \cite[Proposition 3.2.14]{book-Ambrosio}. We deal only with $u^{+},$ as the same argument applies to $u^{-}.$

\medskip

Testing the PDE with $\gamma^+(u)\in \mathcal{E}^{s,\alpha,2}(\mathbb{R}^N),$ we obtain
$$
C_{N,s}\iint_{\mathbb R^N\times\mathbb R^N}\frac{(u(x)-u(y))(\gamma^+(u(x))-\gamma^+(u(y))}{|x-y|^{N+2s}}dxdy+\int_{\mathbb R^N}(I_\alpha*u^2)u\gamma^+(u)dx=\int_{\mathbb R^N}f(x,u)\gamma^+(u)dx.
$$
Since 
$(I_\alpha*u^2)u\gamma^+(u)\geq0,$ by classical Sobolev's embedding of $\dot H^s(\mathbb R^N)$ and by   \eqref{desiGamma}, we obtain
\begin{align}
\nonumber\|\Gamma^+(u)\|^2_{L^{2^*_s}}\leq&\  C\iint_{\mathbb R^N\times\mathbb R^N}\frac{|\Gamma^+(u(x))-\Gamma^+(u(y))|^2}{|x-y|^{N+2s}}dxdy\\\label{vaivoltar}
\leq&\  C\iint_{\mathbb R^N\times\mathbb R^N}\frac{(u(x)-u(y))(\gamma^+(u(x))-\gamma^+(u(y))}{|x-y|^{N+2s}}dxdy\\
\nonumber\leq&\ C\int_{\mathbb R^N}|f(x,u)|\gamma^+(u)dx\leq C\int_{\mathbb R^N}(|u|^{p-2}+|u|^{q-2})(u^+)^2|u_L|^{2\beta}dx,
\end{align}
where $u_L(x)=(u(x))_L$. Hereafter $\beta>0$ is such that $(u^+)^{\beta+1}\in L^2(\mathbb R^N)$. We set $h=2|u^+|^{2^*_s-2}\chi_{\{u\geq1\}}$. Since $p,q\leq 2^*_s$,  by \eqref{Gammaporcima} and \eqref{vaivoltar}, we have
\begin{align}\label{bdd basic ineq}
\|u^+|u_L|^\beta\|_{L^{2^*_s}}^2\leq&\  C(\beta+1)^2\int_{\mathbb R^N}(2+h)(u^+|u_L|^\beta)^2dx,\quad\forall L>0.
\end{align}
Then, using $h\in L^{N/2s}(\mathbb R^N)$, we obtain
\begin{align*}
\|u^+|u_L|^\beta\|_{L^{2^*_s}}^2
\leq&\ C(\beta+1)^2\left((2+K)\int_{\mathbb R^N}(u^+|u_L|^\beta)^2dx+\int_{\{h\geq K\}}h(u^+|u_L|^\beta)^2dx\right)\\
\leq&\ C(\beta+1)^2\left((2+K)\int_{\mathbb R^N}(u^+)^{2\beta+2}dx+\left(\int_{\{h\geq K\}}|h|^{N/2s}dx\right)^{\frac{2s}{N}}\|u^+|u_L|^\beta\|^2_{L^{2^*_s}}\right),
\end{align*}
for any $K>0$. Hence, picking $K=K(\beta)$ such that $C(\beta+1)^2\left(\int_{\{h\geq K\}}|h|^{N/2s}dx\right)^{\frac{2s}{N}}<\frac{1}{2},$ we get
\begin{equation}\label{interation1}
\|u^+|u_L|^\beta\|_{L^{2^*_s}}^2\leq C(\beta)\int_{\mathbb R^N}(u^+)^{2(\beta+1)}dx.
\end{equation}
Therefore, with $\beta=\beta_1$ such that $2(\beta_1+1)=2^*_s$ and letting $L\rightarrow\infty$ in \eqref{interation1}, we conclude that $u^+\in L^{2_s^*(\beta_1+1)}(\mathbb R^N)$. \newline
Going through the same argument again we obtain \eqref{interation1} with $\beta=\beta_2$ such that $2(\beta_2+1)=2_s^*(\beta_1+1)$ and $\beta_2-\beta_1>2s/(N-2s).$ By iteration we may therefore construct a sequence $\beta_n\rightarrow\infty$ such that $u^+\in L^{2_s^*(\beta_n+1)}(\mathbb R^N),$ namely $u^+\in L^r(\mathbb R^N)$ for all $r\in[2^*_{s,\alpha},\infty)$. \newline It is observed at this stage of the proof that at each iteration step $K$ depends on $\beta_n.$ To obtain an $L^\infty$ estimate we now use that $(u^+)^r$ is summable for any $r\geq 2_s^*,$ hence $h\in L^{N/s}(\mathbb R^N),$ and letting $L\rightarrow\infty$ in \eqref{bdd basic ineq}, we get
\begin{eqnarray*}\label{basic ineq for beta}
\|(u^+)^{\beta+1}\|^2_{L^{2^*_s}}
\leq C(\beta+1)^2\int_{\mathbb R^N}
\left(2+h\right)(u^+)^{2(\beta+1)}dx.
\end{eqnarray*}
Holder's inequality now yields
\begin{eqnarray*}
\|(u^+)^{\beta+1}\|^2_{L^{2^*_s}}
\leq C(\beta+1)^2\left[2\|(u^+)^{\beta+1}\|_{L^{2}}^2+
\|h\|_{L^{N/s}}\|(u^+)^{\beta+1}\|_{L^{2}}\|(u^+)^{\beta+1}\|_{L^{2^*_s}}\right].
\end{eqnarray*}
We stress that $C$ in  \eqref{bdd basic ineq} does not depend on $\beta$. By the Cauchy-Schwarz inequality we get, for all $\varepsilon>0,$ that
\begin{eqnarray*}
\|(u^+)^{\beta+1}\|_{L^{2}}\|(u^+)^{\beta+1}\|_{L^{2^*_s}}\leq \frac{1}{4\varepsilon}\|(u^+)^{\beta+1}\|_{L^{2}}^2+\varepsilon\|(u^+)^{\beta+1}\|_{L^{2^*_s}}^2.
\end{eqnarray*}
Picking $\varepsilon>0$ such that
$$
\varepsilon C(\beta+1)^2
\|h\|_{L^{N/s}}=\frac{1}{2},
$$
we obtain
\begin{eqnarray*}
\|(u^+)^{\beta+1}\|^2_{L^{2^*_s}}
\leq C^2(\beta+1)^4
\left(1+\|h\|_{L^{N/s}}^2\right)\|(u^+)^{\beta+1}\|_{L^{2}}^2.
\end{eqnarray*}
Setting $\tilde{C}^2=C^2(1+\|h\|_{L^{N/s}}^2)$, as 
$$
\tilde{C}(1 +\beta)^2 \leq C_1e^{\sqrt{\beta+1}},\quad\forall \beta>0,
$$
for some $C_1>1$ independent of $\beta$, we have
\begin{eqnarray}\label{ind on beta}
\|u^+\|_{L^{2^*_s(\beta+1)}}
%\leq [\tilde{C}(\beta+1)^2]^{\frac{1}{\beta+1}}\|u^+\|_{L^{2(\beta+1)}}
\leq [C_1]^{\frac{1}{\beta+1}}e^{\frac{1}{\sqrt{\beta+1}}}\|u^+\|_{L^{2(\beta+1)}}.
\end{eqnarray}
By means of this estimate we can now conclude that $u^+\in L^\infty(\mathbb{R}^N)$ with a standard argument following e.g. \cite[Proposition 3.2.14]{book-Ambrosio} and \cite[Lemma 5.4]{Ub-Pra-Ca}). We pick $\beta=\beta_1=(2^*_s/2)-1$ and construct a sequence $\beta=\beta_k$ satisfying
$$
2(\beta_{k+1}+1)=2^*_s(\beta_k+1)
$$
and by \eqref{ind on beta}
\begin{eqnarray*}
\|u^+\|_{L^{2^*_s(\beta_k+1)}}
\leq C_1^{\sum_{j=1}^k\frac{1}{\beta_j+1}} e^{\sum_{j=1}^k\frac{1}{\sqrt{\beta_j+1}}} \|u^+\|_{L^{2^*_s}}.
\end{eqnarray*}
Since $\beta_{k}+1=(2^*_s/2)^k$ we have
$$
\sum_{j=1}^\infty\frac{1}{\beta_j+1}<\infty\quad\mbox{and}\quad \sum_{j=1}^\infty\frac{1}{\sqrt{\beta_j+1}}<\infty,
$$
hence
$$
\|u^+\|_{L^\infty}=\lim_{k\to\infty}\|u^+\|_{L^{2^*_s(\beta_k+1)}}
\leq C_2\|u^+\|_{L^{2^*_s}},
$$
and this concludes the proof.
\end{proof}

\begin{remark}
When working with radial functions, by the same argument we may relax the growth condition as $|f(|x|,t)|\leq c(|t|^{p-1}+|t|^{q-1})$ with $p,q\in(p_{\rm{rad}},2^*_s],$ provided $\alpha\in(1,N).$ 
\end{remark}

We now deal with the boundedness of the Riesz potential $I_{\alpha}*u^{2}$ when $u$ lies in some Coulomb-Sobolev space. 

\begin{lemma}\label{Phi_u in L infinite}
Assume that $u\in L^{r}(\mathbb R^N)$, for some $r>2N/\alpha$, and one of the following cases occur:
\begin{description}
    \item[i)] $\alpha\in(1,N)$ and $u\in \mathcal{E}^{s,\alpha,2}_{rad}(\mathbb{R}^N)$;
    \item[ii)] $\alpha\in(0,N)$, $u\in \mathcal{E}^{s,\alpha,2}(\mathbb{R}^N)$ with $s\in(0,1)$ such that $4s+\alpha<N$;
    \item[iii)] $\alpha\in(0,N)$ , $u\in \mathcal{E}^{s,\alpha,2}(\mathbb{R}^N)$ for $s\in(0,1)$ such that $4s+\alpha>N$ and $1>\frac{2s(2\alpha-N)}{\alpha(N-\alpha)}$;
\end{description}
   Then, $I_\alpha*u^2\in L^\infty(\mathbb R^N)$.
\end{lemma}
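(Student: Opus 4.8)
The plan is to split the Riesz potential into a \emph{near} part and a \emph{far} part relative to an arbitrary point $x_0\in\mathbb R^N$, and to bound each part by a suitable integrability property of $u^2$. Fix $R>0$ and write
\[
(I_\alpha*u^2)(x_0)=C_\alpha\int_{|x_0-y|<R}\frac{u^2(y)}{|x_0-y|^{N-\alpha}}\,dy+C_\alpha\int_{|x_0-y|\ge R}\frac{u^2(y)}{|x_0-y|^{N-\alpha}}\,dy=:N(x_0)+F(x_0).
\]
For the far part, $F(x_0)\le C_\alpha R^{-(N-\alpha)}\|u\|_{L^2}^2$, and $u\in L^2(\mathbb R^N)$ holds in all three cases since $2$ lies in the relevant embedding range (indeed $p_{\rm rad}<2<2^*_{s,\alpha}$ in case (i), and in cases (ii)--(iii) one checks $2$ belongs to the $L^p$ embedding interval for $\mathcal E^{s,\alpha,2}$). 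So $F$ is bounded uniformly in $x_0$.

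The main work is the near part. Choose an exponent $\rho>1$ with conjugate $\rho'$ such that $(N-\alpha)\rho'<N$, i.e. $\rho'<N/(N-\alpha)$, equivalently $\rho>N/\alpha$; then $|x_0-\cdot|^{-(N-\alpha)}\in L^{\rho'}(B_R(x_0))$ with a bound depending only on $R$, $N$, $\alpha$. By Hölder,
\[
N(x_0)\le C_\alpha\Big(\int_{B_R(x_0)}|x_0-y|^{-(N-\alpha)\rho'}dy\Big)^{1/\rho'}\Big(\int_{\mathbb R^N}|u|^{2\rho}dy\Big)^{1/\rho}=C(R,N,\alpha)\,\|u\|_{L^{2\rho}}^{2},
\]
again uniform in $x_0$. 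Hence it suffices to produce a single exponent $2\rho$ with $\rho>N/\alpha$ in which $u$ is known to lie. The hypothesis $u\in L^r$ with $r>2N/\alpha$ gives exactly $u\in L^{2\rho}$ with $\rho=r/2>N/\alpha$, so this choice already works \emph{provided} $2\rho$ also falls in (or can be interpolated into) a range where $u$ is controlled. Since $u$ is a priori only assumed in $\mathcal E^{s,\alpha,2}$ plus $L^r$, I would instead interpolate: $u\in L^2$ always (far-part step) and $u\in L^r$ by hypothesis, so $u\in L^p$ for every $p\in[2,r]$; in particular $u\in L^{2\rho}$ for any $\rho\in[1,r/2]$, and we just fix $\rho\in(N/\alpha,r/2]$, which is nonempty precisely because $r>2N/\alpha$. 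Combining the two bounds, $\|I_\alpha*u^2\|_{L^\infty}\le C_\alpha\big(C(R)\|u\|_{L^{2\rho}}^2+R^{-(N-\alpha)}\|u\|_{L^2}^2\big)<\infty$, and the proof is complete.

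The role of the three alternative structural hypotheses (i)--(iii) is only to guarantee the two endpoint integrabilities $u\in L^2$ and $u\in L^r$ make sense within the functional framework and that $L^r$ with $r>2N/\alpha$ is actually attainable — in case (i) via the radial embedding $E\hookrightarrow L^p$ for $p$ up to $2^*_{s,\alpha}$ (here $\alpha>1$ ensures $2^*_{s,\alpha}$ exceeds $2N/\alpha$ is \emph{not} automatic, so the extra hypothesis $u\in L^r$ is genuinely used), and in cases (ii)--(iii) via the corresponding non-radial Sobolev-type embeddings, where the arithmetic condition $1>\tfrac{2s(2\alpha-N)}{\alpha(N-\alpha)}$ in (iii) is what forces the top embedding exponent past $2N/\alpha$. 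I do not expect a serious obstacle here: the only point requiring care is checking, case by case, that the supplied $L^r$-hypothesis together with the ambient space really does place $u$ in \emph{some} $L^{2\rho}$ with $\rho>N/\alpha$; once that is in hand, the near/far splitting is routine. Thus the lemma follows.
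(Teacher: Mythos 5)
Your near-part argument is fine, but the far-part bound has a genuine gap: you claim $u\in L^2(\mathbb R^N)$ ``in all three cases'' on the grounds that $p_{\rm rad}<2$, and this is false. Since
\[
p_{\rm rad}=2+\frac{4s(N-\alpha)}{2s(N+\alpha-2)+N-\alpha}>2
\]
whenever $s>0$, $N\geq 2$, $\alpha<N$, the exponent $2$ is strictly below the lower endpoint of every embedding range $E\hookrightarrow L^p$ quoted in the paper, and elements of $\mathcal E^{s,\alpha,2}(\mathbb R^N)$ need \emph{not} lie in $L^2$. This is precisely what makes the Coulomb--Sobolev framework delicate: you cannot write $F(x_0)\leq C_\alpha R^{-(N-\alpha)}\|u\|_{L^2}^2$, nor interpolate between $L^2$ and $L^r$ as you propose in the final paragraph.

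The correct far-part estimate, and the one in the paper, is a second Hölder application rather than the trivial $L^1$-to-$L^\infty$ bound: choose an exponent $q<2N/\alpha$ which lies in the embedding range (or equals $2^*_{s,\alpha}$ in cases (ii)--(iii)), so that the tail of the kernel $|z|^{-(N-\alpha)}$ lies in $L^{(q/2)'}(B_1^c)$ precisely because $(N-\alpha)(q/2)'>N$. The case distinction (i)--(iii) exists to ensure such a $q$ is actually available: in (i) one uses $p_{\rm rad}<2N/\alpha$ (which you should verify holds for all $s>0$, $\alpha\in(1,N)$, $N\geq 2$), and in (ii)--(iii) one checks the arithmetic condition forcing $2^*_{s,\alpha}<2N/\alpha$. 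Your remaining steps — the near-part Hölder with $\rho=r/2>N/\alpha$ — match the paper's argument and need no interpolation, since $u\in L^r$ is given directly.
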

\begin{proof}
Observe that,  if we can find  $q<2N/\alpha$ such that $u\in L^{q}(\mathbb R^N)$ then 
\begin{eqnarray*}
    I_\alpha*u^2(x)&=&\int_{B_1(x)}\frac{u^2(y)}{|x-y|^{N-\alpha}}dy+\int_{B_1^c(x)}\frac{u^2(y)}{|x-y|^{N-\alpha}}dy\\
    &\leq& \|u\|^2_{L^r}\left(\int_{B_1(0)}\frac{1}{|z|^{(N-\alpha)(\frac{r}{2})'}}dz\right)^{(\frac{r}{2})'}+\|u\|^2_{L^q}\left(\int_{B^c_1(0)}\frac{1}{|z|^{(N-\alpha)(\frac{q}{2})'}}dz\right)^{(\frac{q}{2})'}\leq C
\end{eqnarray*}
 almost everywhere on $\mathbb{R}^N$. 
%We don't need $u\in L^{\infty}(\mathbb R^N)$, it could be $u\in L^{r}(\mathbb R^N)$ for some  $r>2N/\alpha$  and we can perform the ball estimate.
When $u\in \mathcal{E}^{s,\alpha,2}_{rad}(\mathbb{R}^N)$, with $\alpha\in(1,N)$, this is always possible for $4s+\alpha\neq N$, since  $p_{\rm  rad}<2N/\alpha$ and $u\in L^{p}(\mathbb R^N)$ for all $p_{\rm rad}<p\leq r$. Now, assume that (ii) holds. Since $4s+\alpha<N$ implies $2^*_{s,\alpha}<2N/\alpha$, we can use $q=2^*_{s,\alpha}$ to conclude that $I_\alpha*u^2\in L^\infty(\mathbb R^N)$. Finally, in the case (iii), we also have $2^*_{s,\alpha}<2N/\alpha$ and this concludes the proof. 
\end{proof}

As anticipated, we recall in the next two proposition, some regularity results from Silvestre  \cite{Silvestre}, based on which we will state and prove the main regularity results of this section.

\begin{proposition}[{\cite[Proposition 2.8]{Silvestre}}]\label{regularity1}
Let \(w=(-\Delta)^{s}u\).  
Assume \(w\in C^{0,\tau}(\mathbb{R}^{N})\) and \(u\in L^{\infty}(\mathbb{R}^{N})\) for
\(\tau\in(0,1]\) and $0<s<1$.

\begin{itemize}
\item If \(\tau+2s\le 1\), then \(u\in C^{0,\tau+2s}(\mathbb{R}^{N})\).
      Moreover,
      \[
         \|u\|_{C^{0,\tau+2s}}
           \le C\bigl(\|u\|_{L^{\infty}}
                     +\|w\|_{C^{0,\tau}}\bigr),
      \]
      where \(C\) depends only on \(N,\tau,s\).

\item If \(\tau+2s>1\), then \(u\in C^{1,\tau+2s-1}(\mathbb{R}^{N})\).
      Moreover,
      \[
         \|u\|_{C^{1,\tau+2s-1}}
           \le C\bigl(\|u\|_{L^{\infty}}
                    +\|w\|_{C^{0,\tau}}\bigr),
      \]
      for a constant \(C\) depending only on \(N,\tau,s\).
\end{itemize}
\end{proposition}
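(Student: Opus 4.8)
Since the statement is quoted verbatim from \cite[Proposition 2.8]{Silvestre}, the intended \emph{proof} is simply a reference to that paper, and in the present work the estimate is used only as a black box. Nonetheless, let me indicate how one would reprove it.

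The plan is to localise first. Fixing $x_0$ and a cutoff $\chi\in C_c^\infty(\mathbb{R}^N)$ with $\chi\equiv 1$ on $B_1(x_0)$, I would write $u=\chi u+(1-\chi)u$ and observe that $(-\Delta)^s\big((1-\chi)u\big)$ is, on $B_{1/2}(x_0)$, a smooth function with all derivatives bounded by a constant times $\|u\|_{L^\infty}$, since the tail of the kernel $|z|^{-N-2s}$ is integrable away from the origin. Hence it suffices to treat $v:=\chi u$, which is bounded and compactly supported and solves $(-\Delta)^s v=g$ with $g=w+(\text{smooth error controlled by }\|u\|_{L^\infty})\in C^{0,\tau}$, compactly supported. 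Next I would represent $v=\mathcal{N}*g$ via the fundamental solution $\mathcal{N}(x)\sim c_{N,s}\,|x|^{2s-N}$ of $(-\Delta)^s$ (with the usual logarithmic/affine modification when $2s\ge N$) and carry out the classical Riesz-potential Hölder estimate: for small $|h|$,
\[
(\mathcal{N}*g)(x+h)-(\mathcal{N}*g)(x)=\int_{\mathbb{R}^N}\mathcal{N}(y)\big[g(x+h-y)-g(x-y)\big]\,dy,
\]
which I would split over $\{|y|\le 2|h|\}$ — controlled using $|g(a)-g(b)|\le[g]_{C^{0,\tau}}|a-b|^\tau$ together with the local integrability of $\mathcal{N}$ — and over $\{|y|>2|h|\}$ — controlled using the mean-value bound $|\mathcal{N}(y-h)-\mathcal{N}(y)|\lesssim |h|\,|y|^{2s-N-1}$. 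Summing the two pieces produces a gain of exactly $2s$ derivatives, landing in $C^{0,\tau+2s}$ when $\tau+2s\le 1$; differentiating once under the convolution (using $\nabla\mathcal{N}$ in place of $\mathcal{N}$) and repeating the estimate gives the $C^{1,\tau+2s-1}$ conclusion when $\tau+2s>1$, with all constants depending only on $N,\tau,s$ through the kernel bounds.

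The hard part will be bookkeeping rather than ideas: making the localisation error genuinely lower order uniformly in $x_0$, treating the borderline exponent $\tau+2s=1$ (where the natural target is Lipschitz, $C^{0,1}$, rather than $C^1$), and handling the kernel analysis when $2s$ is not strictly below $N$. For this reason a cleaner route — and the one that makes Silvestre's estimate most transparent — is the Caffarelli–Silvestre extension: realise $u$ as the trace on $\{y=0\}$ of the solution $U$ of $\operatorname{div}(y^{1-2s}\nabla U)=0$ in $\mathbb{R}^{N+1}_+$ with $-\lim_{y\to 0^+}y^{1-2s}\partial_y U=\kappa_{N,s}\,w$, and invoke the by-now standard Schauder theory for this degenerate-elliptic operator; its boundary regularity yields precisely $u\in C^{0,\tau+2s}$ or $u\in C^{1,\tau+2s-1}$ with the stated dependence of the constants, which I would take as the actual proof, referring to \cite{Silvestre} for the details.
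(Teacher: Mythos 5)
You correctly observe that the paper provides no proof of this proposition: it is quoted verbatim from Silvestre, used as a black box, and the intended justification is simply the citation. Your sketch --- localising with a cutoff, reducing to $(-\Delta)^s v=g$ with $g$ compactly supported and H\"older, representing $v=\mathcal{N}*g$ via the fundamental solution, and splitting the difference quotient over near and far field --- is essentially the argument Silvestre himself gives for his Proposition~2.8, and the bookkeeping points you flag (the endpoint $\tau+2s=1$, where the target is $C^{0,1}$ rather than $C^1$, and the behaviour of the kernel when $2s\ge N$) are precisely the cases handled by explicit case analysis in that reference. One small caveat: the identity $v=\mathcal{N}*g$ requires a Liouville-type argument to rule out nontrivial bounded $s$-harmonic corrections, though this additive constant is harmless for the seminorm estimates. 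The Caffarelli--Silvestre extension route you also mention is valid and gives the same conclusion via weighted Schauder theory, but it is not the route taken in the cited source.
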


\begin{proposition}[{\cite[Proposition 2.9]{Silvestre}}]\label{regularity2}
Let \(w=(-\Delta)^{s}u\).  
Assume \(w\in L^{\infty}(\mathbb{R}^{N})\) and \(u\in L^{\infty}(\mathbb{R}^{N})\) for $0<s<1$.

\begin{itemize}
\item If \(2s\le 1\), then \(u\in C^{0,\tau}(\mathbb{R}^{N})\) for every \(\tau<2s\).
      Moreover,
      \[
         \|u\|_{C^{0,\tau}}
           \le C\bigl(\|u\|_{L^{\infty}(\mathbb{R}^{N})}
                    +\|w\|_{L^{\infty}}\bigr),
      \]
      where \(C\) depends only on \(N,\tau,s\).

\item If \(2s>1\), then \(u\in C^{1,\tau}(\mathbb{R}^{N})\) for every \(\tau<2s-1\).
      Moreover,
      \[
         \|u\|_{C^{1,\tau}}
           \le C\bigl(\|u\|_{L^{\infty}}
                  +\|w\|_{L^{\infty}}\bigr),
      \]
      with \(C\) depending only on \(N,\tau,s\).
\end{itemize}
\end{proposition}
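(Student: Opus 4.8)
Strictly speaking Proposition~\ref{regularity2} is quoted verbatim from Silvestre \cite{Silvestre}, so the plan is to invoke it directly; but let me describe the argument one would reproduce, since it is the blueprint for the bootstrap applied to our equation. The only genuine difficulty is the nonlocality of $(-\Delta)^s$, which forbids a naive localization of the equation. The strategy is therefore to split $w=(-\Delta)^s u$ into a part supported near the point of interest and a far part, to invert the near part through the Riesz potential of order $2s$, and to observe that the remainder is $s$-harmonic — hence $C^\infty$ — on a smaller ball.

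Concretely, one fixes $x_0\in\R^N$ and writes $w=w_1+w_2$ with $w_1=w\chi_{B_1(x_0)}$ and $w_2=w\chi_{\R^N\setminus B_1(x_0)}$. Put $v_1:=I_{2s}*w_1$, the Riesz potential of order $2s$ of $w_1$. Since $N\ge2>2s$, the kernel $|z|^{2s-N}$ is locally integrable and decays at infinity, so $v_1\in L^\infty(\R^N)$, and on the Fourier side $\widehat{v_1}=|\xi|^{-2s}\widehat{w_1}$ gives $(-\Delta)^s v_1=w_1$. The key mapping property is that convolving a bounded, compactly supported function with $I_{2s}$ gains $2s$ derivatives up to an arbitrarily small loss: $v_1\in C^{0,\tau}_{loc}$ for all $\tau<2s$ when $2s\le1$, and $v_1\in C^{1,\tau}_{loc}$ for all $\tau<2s-1$ when $2s>1$, with norms controlled by $\|w\|_{L^\infty}$. (Had $w$ been Hölder, as in Proposition~\ref{regularity1}, one would recover the borderline exponent; the strict inequality $\tau<2s$ here is precisely the price of only assuming $w\in L^\infty$.)

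Next, $r:=u-v_1\in L^\infty(\R^N)$ solves $(-\Delta)^s r=w_2$, which vanishes on $B_1(x_0)$, so $r$ is $s$-harmonic there; interior estimates for $s$-harmonic functions (via the Poisson kernel of the ball, or the Caffarelli--Silvestre extension) give $r\in C^\infty(B_{1/2}(x_0))$ with $\|r\|_{C^k(B_{1/2}(x_0))}\le C_k\|r\|_{L^\infty(\R^N)}\le C_k(\|u\|_{L^\infty}+\|v_1\|_{L^\infty})$. Writing $u=v_1+r$ on $B_{1/2}(x_0)$ and summing the two bounds yields the claimed $C^{0,\tau}$ (resp.\ $C^{1,\tau}$) estimate near $x_0$; since the constants are independent of $x_0$, a covering argument upgrades it to the global estimate on $\R^N$. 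The step I would be most careful about — and the real heart of the matter — is the Riesz-potential mapping property above, in particular the sharp behaviour of the constant as $\tau\uparrow2s$ (resp.\ $\tau\uparrow2s-1$); the cutoff bookkeeping and the interior smoothing of $s$-harmonic functions are routine by comparison.
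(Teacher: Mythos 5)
The paper does not prove Proposition~\ref{regularity2}; it is quoted as an external result from Silvestre~\cite{Silvestre}, so there is no internal argument to compare against, and you are right to flag this at the outset. Your blueprint — split $w=w_1+w_2$ near/far relative to $x_0$, invert the near part via $v_1=I_{2s}*w_1$, observe that $r=u-v_1$ is $s$-harmonic on $B_1(x_0)$ and hence $C^\infty$ on $B_{1/2}(x_0)$ with interior bounds by $\|r\|_{L^\infty}$, then sum — is the standard and correct proof strategy (and indeed essentially what Silvestre does). One small remark on your parenthetical: the strict inequality $\tau<2s$ in the statement is not uniformly "the price of $w\in L^\infty$." A direct estimate shows $I_{2s}*w_1\in C^{0,2s}$ for $w_1\in L^\infty_c$ when $2s<1$ (split $\int\bigl||x-z|^{2s-N}-|y-z|^{2s-N}\bigr|\,dz$ into $|z-x|\lesssim|x-y|$ and its complement; both contribute $O(|x-y|^{2s})$), and similarly $I_{2s}*w_1\in C^{1,2s-1}$ for $1<2s<2$; the genuine endpoint loss occurs only at $2s=1$, where a logarithm appears. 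Thus your method actually yields the endpoint except at the integer case; the strict inequality as stated is simply the safe formulation Silvestre uses, and is all that is needed for the bootstrap in Proposition~\ref{regularitytheorem}.
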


We are now in the position to state and prove our main regularity result.

\begin{proposition}\label{regularitytheorem}
Suppose $u\in \mathcal{E}^{s,\alpha,2}(\R^N)\cap L^{\infty}(\mathbb R^N)$ is a weak solution to
\begin{equation}\label{eq daregularity}
(-\Delta)^su+\left(I_\alpha*u^2\right)u= f(u), \quad\textrm{in}\quad\R^N
\end{equation} 
where $s\in(0,1)$,  $\alpha\in(0,N)$ and $f\in C^{1,\tau}_{\rm{loc}}(\mathbb R)$ for all $0<\tau<1$. Suppose also that $I_{\alpha}*u^2\in L^\infty(\Omega)$. Then, $u\in C^{2,\tau}(\mathbb R^N)$ for all $0<\tau<2s$ if $s\leq 1/2$ and for all $0<\tau<2s-1$ if $s> 1/2$.
\end{proposition}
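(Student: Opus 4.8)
The plan is to recast \eqref{eq daregularity} as a \emph{linear} fractional equation $(-\Delta)^s u = w$ with right-hand side $w := f(u)-(I_\alpha*u^2)\,u$, and then run a bootstrap on the Hölder scale by alternately re-estimating $w$ and invoking Silvestre's Propositions~\ref{regularity1} and~\ref{regularity2}. As a starting point, since $u\in L^\infty(\mathbb R^N)$ and $f$ is continuous, $f(u)\in L^\infty(\mathbb R^N)$; by hypothesis $I_\alpha*u^2\in L^\infty$, so $w\in L^\infty(\mathbb R^N)$. One first checks that a weak solution of \eqref{eq daregularity} is a distributional solution of $(-\Delta)^s u=w$ (this is where the density result of Appendix~A and the $L^\infty$ information enter), so Proposition~\ref{regularity2} applies and yields $u\in C^{0,\tau}(\mathbb R^N)$ for every $\tau<2s$ when $s\le 1/2$, and $u\in C^{1,\tau}(\mathbb R^N)$ for every $\tau<2s-1$ when $s>1/2$.

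The inductive step records how Hölder regularity of $u$ transfers to $w$. Suppose $u\in C^{0,\beta}(\mathbb R^N)$, $0<\beta\le 1$. Because $f\in C^{1,\tau}_{\mathrm{loc}}(\mathbb R)$ and $u$ is bounded, $f$ is Lipschitz on the range of $u$, so $f(u)\in C^{0,\beta}(\mathbb R^N)$. For the nonlocal term I would establish a Schauder-type estimate for the Riesz convolution: splitting $\int \bigl(I_\alpha(x-y)-I_\alpha(x'-y)\bigr)u^2(y)\,dy$ into the ball $B_{2|x-x'|}(x)$ and its complement, using $u^2\in L^\infty$ near the singularity, $|\nabla I_\alpha(z)|\le C|z|^{\alpha-N-1}$ away from it, and $u^2\in L^{q}$ for some $q>1$ (which holds since $u\in L^\infty$ and, by the embeddings recalled in Section~1, $u\in L^p$ for $p$ in a nontrivial range) to control the tail, one gets $I_\alpha*u^2\in C^{0,\gamma}_{\mathrm{loc}}$ for every $\gamma<\min\{1,\alpha\}$, and more generally that if $u^2\in C^{0,\gamma}$ then $I_\alpha*u^2$ gains $\alpha$ on the Hölder scale (it is $C^{0,\gamma+\alpha}$, resp.\ $C^{1,\gamma+\alpha-1}$, etc.), with \emph{global} bounds since the relevant norms of $u^2$ are finite and translation-uniform. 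Combining this with $u^2,u\in C^{0,\beta}$ and the product rule for Hölder spaces gives $(I_\alpha*u^2)\,u\in C^{0,\beta}(\mathbb R^N)$, hence $w\in C^{0,\beta}(\mathbb R^N)$; the same bookkeeping applied to $\partial_i u$ via $(-\Delta)^s\partial_i u=\partial_i w$ shows that once $u\in C^{1,\beta}$ one has $w\in C^{1,\beta}$.

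Feeding $w\in C^{0,\beta}$ into Proposition~\ref{regularity1} returns $u\in C^{0,\beta+2s}$ (if $\beta+2s\le 1$) or $u\in C^{1,\beta+2s-1}$ (if $\beta+2s>1$), so iterating these two steps climbs the Hölder scale in increments of $2s$. Since at every stage composition with $f$ and the Riesz convolution produce a right-hand side at least as regular as $u$ (the convolution even gaining $\alpha>0$), the only ceiling is $f\in C^{1,\tau}_{\mathrm{loc}}$: the bootstrap stabilizes at $w\in C^{1,\tau}$ for every $\tau<1$, and one last application of Proposition~\ref{regularity1} to the derivatives of $u$ delivers $u\in C^{2,\tau}$ with $\tau<2s$ if $s\le 1/2$ and $\tau<2s-1$ if $s>1/2$ — precisely the dichotomy of Silvestre's statements, as this is the exponent produced by $(-\Delta)^{-s}$ acting once more on a right-hand side of maximal regularity.

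The main obstacle is the inductive step: establishing the Schauder-type mapping properties of $v\mapsto I_\alpha*v$ in the full range $\alpha\in(0,N)$ with global (not merely interior) Hölder bounds, so that Silvestre's results — which are stated with estimates on all of $\mathbb R^N$ — can be applied at each iteration, and checking that the scheme does not stall for small $\alpha$ (it does not, because each convolution gains a fixed amount $\alpha$ while the fractional Laplacian inversion gains $2s$, so finitely many rounds suffice). A secondary technical point, already mentioned, is justifying that the weak solution is a solution in the sense required by Propositions~\ref{regularity1}--\ref{regularity2}.
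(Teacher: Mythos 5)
Your proposal is correct and has the same overall architecture as the paper's proof (recast as $(-\Delta)^s u = w$, start from $w\in L^\infty$ via Proposition~\ref{regularity2}, then bootstrap Hölder regularity by alternating between $u$ and $w$ via Proposition~\ref{regularity1}, finally apply the scheme to $\partial_i u$ to land in $C^{2,\tau}$). The genuine divergence is how you propose to upgrade the regularity of $I_\alpha*u^2$. You want to prove a direct Schauder-type estimate for the Riesz convolution by splitting the integral near and away from the singularity and invoking $|\nabla I_\alpha(z)|\lesssim |z|^{\alpha-N-1}$ together with $u^2\in L^\infty\cap L^p$ to control the tail — and you correctly flag the global (translation-uniform) bound as the delicate point. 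The paper sidesteps this entirely: it observes that $v:=I_\alpha*u^2$ \emph{solves the Poisson-type equation} $(-\Delta)^{\alpha/2}v=u^2$, so Silvestre's Propositions~\ref{regularity1}--\ref{regularity2} (already in hand) can be applied a second time directly to $v$, giving $v\in C^{0,\tau+\alpha}$ or $C^{1,\tau+\alpha-1}$ as appropriate; when $\alpha/2>1$ one factors $(-\Delta)^{\alpha/2}=(-\Delta)^k(-\Delta)^\eta$ and interleaves ordinary elliptic regularity. This buys two things: it avoids having to state and prove a separate Schauder lemma for the Riesz kernel with global bounds, and it treats the whole range $\alpha\in(0,N)$ — including $\alpha\ge 2$, where a hand-built kernel estimate would need to track several orders of derivatives of $I_\alpha$ — without additional bookkeeping. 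Your route is viable but would require writing out the claimed convolution estimate carefully, precisely the obstacle you name; the paper's PDE reformulation makes that step free.
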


\begin{proof}
 We have $(-\Delta)^su=f(u)-\left(I_\alpha*u^2\right)u:=\tilde f(x)$, with $\tilde f\in L^\infty(\mathbb R^N)$ as $u\in L^\infty(\mathbb R^N)$. Using Proposition \ref{regularity2}, we distinguish two cases: (1) if $s>1/2$ then $u\in C^{1,\tau}(\mathbb R^N)$ for every $0<\tau<2s-1$; (2) if $s\leq 1/2,$ then  $u\in C^{0,\tau}(\mathbb R^N)$ for every $0<\tau<2s$. 

\medskip

Let us assume $s\leq 1/2,$ and use the fact that $I_\alpha*u^2$ weakly solves the fractional Laplacian equation $(-\Delta)^{\alpha/2}v=u^2.$  Since $I_\alpha*u^2\in L^\infty(\mathbb R^N)$ and $u^2\in C^{0,\tau}(\mathbb R^N)$, we may apply Proposition \ref{regularity1}, to get $I_\alpha*u^2\in C^{0,\tau+\alpha}(\mathbb R^N)$ in the case $\tau+\alpha\leq 1$ or 
$I_\alpha*u^2\in C^{1,\tau+\alpha-1}(\mathbb R^N)$ in the case $\tau+\alpha> 1.$ When $\alpha/2>1$ then write $(-\Delta)^{\alpha/2}$ as $(-\Delta)^k(-\Delta)^\eta$ with $\alpha/2=k+\eta$, $k\in\mathbb N$, $0<\eta<1$ and combine standard elliptic regularity with Proposition \ref{regularity1}. 

\medskip

Since in all cases $I_\alpha*u^2\in C^{0,\tilde \tau}(\mathbb R^N)$ for all $\tilde \tau<\min\{2s+\alpha,1\}$ (in fact, $\tilde\tau$ can be 1 if $2s+\alpha>1$), it follows that $\tilde f(x):= f(x,u)-\left(I_\alpha*u^2\right)\in C^{0,\tau}(\mathbb R^N)$ for all $0<\tau<2s$ (since $\tilde\tau>\tau$). This implies, again by Proposition \ref{regularity1} that either $u\in C^{0,\tau+2s}(\mathbb R^N)$ if $\tau+2s\leq1,$ or $u\in C^{1,\tau+2s-1}(\mathbb R^N)$ if $\tau+2s>1$. If $\tau+2s\leq1$ still occurs,  we can repeat the process to infer that $I_\alpha*u^2\in C^{0,\tilde \tau}(\mathbb R^N)$ for all $\tilde \tau<\min\{\tau+2s+\alpha,1\}$ and then either $u\in C^{0,\tau+4s}(\mathbb R^N)$ if $\tau+4s\leq1,$ or $u\in C^{1,\tau+4s-1}(\mathbb R^N)$ if $\tau+4s>1$. After a finite number of steps, say $k,$ we obtain $\tau+2ks>1$ and hence $u\in C^{1,\tau}(\mathbb R^N),$ for any $\tau$ satisfying $0<\tau<2s$ if $s\leq 1/2,$ or $0<\tau<2s-1$ if $s\geq 1/2$. Moreover, we have $I_\alpha*u^2\in C^{1,\tilde \tau}(\mathbb R^N)$ for every $0<\tilde\tau<\alpha$  if $\alpha\leq1,$ or $0<\tilde\tau<\alpha-1$ if $\alpha>1$.

\medskip

To conclude, we note that $\partial u/\partial x_i$ is a solution to $(-\Delta)^s(\partial u/\partial x_i)=\partial[f(u)-\left(I_\alpha*u^2\right)u]/\partial x_i:=\bar f(x)$. Hence, by Proposition \ref{regularity1} and Proposition $\ref{regularity2}$  $\partial u/\partial x_i$ and $\bar f$ are essentially bounded and so carrying out the same arguments above we get $\partial u/\partial x_i\in C^{1,\tau}(\mathbb R^N)$ every $0<\tau<2s$ or $2s-1$ depending on $s$ as in the statement, and this concludes the proof.
\end{proof}

\section{Appendix: Pohozaev Identity}

This section is devoted to proving a Pohozaev-type identity related to PDEs studied in this paper. The proof  employes the celebrated Caffareli-Silvestre extension and the arguments are in the spirit of \cite{book-Ambrosio,Moroz-Van-2013}. 

\begin{lemma}(Pohozaev identity)\label{pohozaev}
Suppose that $u\in \mathcal{E}^{s,\alpha,2}(\mathbb{R}^N)\cap C^{2,\gamma}_{loc}(\mathbb R^N)$, for some $0<\gamma<1$, is a solution to
\begin{equation}\label{eq poh}
(-\Delta)^su+\left(I_\alpha*u^2\right)u= f(u), \quad\textrm{in}\quad\mathbb{R}^N
\end{equation}
{where $s\in(0,1)$, $N\geq2$,  $\alpha\in(0,N)$, $4s+\alpha\neq N$ and $f:\mathbb R\rightarrow\mathbb R$ is continuous, such that $f(0)=0$ and $F(u)\in L^1(\mathbb R^N)$}. Then $u$ satisfies
\begin{equation}\label{Poh general}
\frac{N-2s}{2}\|(-\Delta)^{\frac{s}{2}}u\|_2^2+\frac{C_\alpha(N+\alpha)}{4}\iint_{\mathbb{R}^N\times\mathbb{R}^N}\frac{u^2(x)u^2(y)}{|x-y|^{N-\alpha}}dxdy
=N\int_{\mathbb{R}^N} F(u)dx.
\end{equation}
\end{lemma}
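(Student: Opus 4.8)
The strategy is the classical dilation (Pohozaev) argument carried out on the Caffarelli--Silvestre extension, which converts the nonlocal operator $(-\Delta)^s$ into a weighted local elliptic problem on the half-space $\mathbb{R}^{N+1}_+ = \mathbb{R}^N\times(0,\infty)$. Given the solution $u$ of \eqref{eq poh}, let $U=U(x,y)$ denote its $s$-harmonic extension, so that $\operatorname{div}(y^{1-2s}\nabla U)=0$ in $\mathbb{R}^{N+1}_+$, $U(\cdot,0)=u$, and $-\kappa_s\lim_{y\to 0^+}y^{1-2s}\partial_y U = (-\Delta)^s u = f(u)-(I_\alpha * u^2)u$ on $\mathbb{R}^N$, for the appropriate normalizing constant $\kappa_s$. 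The regularity hypothesis $u\in C^{2,\gamma}_{loc}$, together with the $L^\infty$-type and Hölder estimates in Appendix~B (Lemma~\ref{L infty 4s+a>N}, Lemma~\ref{Phi_u in L infinite}, Proposition~\ref{regularitytheorem}), guarantees that $U$ is smooth enough up to the boundary to justify the integrations by parts below, and that all the boundary and bulk integrals that appear are finite.

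\textbf{Key steps.} First I would multiply the extension equation $\operatorname{div}(y^{1-2s}\nabla U)=0$ by the Pohozaev vector field $(x,y)\cdot\nabla U = x\cdot\nabla_x U + y\,\partial_y U$ and integrate over the half-ball $B_R^+\subset\mathbb{R}^{N+1}_+$. Integrating by parts and using the homogeneity of the weight $y^{1-2s}$, the interior terms collapse (the bulk contribution of $\operatorname{div}(y^{1-2s}\nabla U)$ against the dilation field produces, after a standard computation, a multiple of $\int y^{1-2s}|\nabla U|^2$ that is absorbed into boundary terms) and one is left with boundary integrals over $\{y=0\}\cap B_R$ and over the spherical cap $\partial B_R\cap\mathbb{R}^{N+1}_+$. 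Letting $R\to\infty$, the spherical-cap integrals vanish because $U\in \dot H^s$ in the trace sense forces $\int y^{1-2s}|\nabla U|^2<\infty$, so a subsequence of radii kills the cap contribution (the usual argument: if the tail integral is finite, there is a sequence $R_j\to\infty$ along which $R_j$ times the cap integral tends to zero). The surviving flat-boundary term yields, via the Caffarelli--Silvestre identification of the Neumann data, the identity
\[
\frac{N-2s}{2}\|(-\Delta)^{s/2}u\|_2^2 = \int_{\mathbb{R}^N}\Big(x\cdot\nabla u\Big)\big[f(u)-(I_\alpha*u^2)u\big]\,dx.
\]
Next I would handle the right-hand side termwise. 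For the local term, $\int (x\cdot\nabla u) f(u)\,dx = \int x\cdot\nabla[F(u)]\,dx = -N\int F(u)\,dx$ after integration by parts, using $F(u)\in L^1$ and $f(0)=0$ to discard boundary contributions at infinity. For the Riesz--Coulomb term, the computation
\[
\int_{\mathbb{R}^N}(x\cdot\nabla u)\,(I_\alpha*u^2)u\,dx
\]
is handled by writing $u(x\cdot\nabla u) = \tfrac12 x\cdot\nabla(u^2)$ and exploiting the scaling of the double integral $D[v]:=\iint \frac{v(x)v(y)}{|x-y|^{N-\alpha}}\,dx\,dy$: a direct change of variables shows $\frac{d}{dt}\big|_{t=1} D[v(t\cdot)] = (\alpha - 2N)D[v] = -\,2\cdot\frac{N+\alpha}{2}D[v] + \dots$; more concretely, integrating by parts in the symmetric double integral with $v=u^2$ gives $\int (x\cdot\nabla(u^2))(I_\alpha*u^2)\,dx = -(N+\alpha)\,C_\alpha^{-1}\cdot C_\alpha \iint\frac{u^2(x)u^2(y)}{|x-y|^{N-\alpha}}\,dx\,dy$ (carefully tracking the kernel's homogeneity degree $-(N-\alpha)$, which contributes $+(N-\alpha)$, plus the two factors of $u^2$ each contributing $-N$, totaling $-(N+\alpha)$). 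Collecting: $\int (x\cdot\nabla u)(I_\alpha*u^2)u\,dx = -\tfrac{N+\alpha}{2}\iint\frac{u^2(x)u^2(y)}{|x-y|^{N-\alpha}}\,dx\,dy$. Substituting both computations and multiplying through by $C_\alpha/2$ on the Coulomb term (absorbing the constant $\kappa_s$ correctly into the $\dot H^s$ normalization) yields exactly \eqref{Poh general}.

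\textbf{Main obstacle.} The delicate point is the rigorous justification of the limiting argument on the extension: showing that the spherical-cap boundary integrals over $\partial B_R^+$ genuinely vanish along a suitable sequence $R_j\to\infty$, and that the flat-boundary integral converges to the expected trace pairing. This requires the decay/integrability of $y^{1-2s}|\nabla U|^2$ near infinity in the half-space, which in turn relies on $u\in\mathcal{E}^{s,\alpha,2}$ (hence $(-\Delta)^{s/2}u\in L^2$) and on the boundedness of $I_\alpha*u^2$ and $f(u)$ supplied by Appendix~B, so that the Neumann data lie in a space for which the extension has controlled global energy. A secondary technical nuisance is the termwise integration by parts for the Riesz term, where one must confirm that the truncated integrals $\iint_{|x|\le R}$ converge with vanishing boundary flux; this is where $4s+\alpha\neq N$ enters, ensuring $u\in L^p$ for $p$ in a range that makes the relevant cut-off error terms tend to zero. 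Once these convergence issues are dispatched, the algebraic bookkeeping of the homogeneity exponents is routine.
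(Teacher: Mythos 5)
Your proposal follows essentially the same route as the paper's proof: pass to the Caffarelli--Silvestre extension, pair the degenerate elliptic equation with the Pohozaev field $(x,y)\cdot\nabla U$, integrate by parts, identify the flat-boundary term with $\int (-\Delta)^s u\,(x\cdot\nabla u)\,dx$, and then handle the local and Riesz terms separately (the latter by the homogeneity of the kernel, exactly in the spirit of the Moroz--Van Schaftingen lemma the paper cites). The one technical difference is that you truncate by half-balls $B_R^+$ and argue that the spherical-cap flux vanishes along a subsequence $R_j\to\infty$, whereas the paper truncates by the slab $\{y\ge\delta\}$ together with a smooth dilated cutoff $\varphi_\varepsilon$, taking $\delta\to 0$ first and $\varepsilon\to 0$ second; both are standard and equivalent here, but the smooth-cutoff version avoids subsequence extraction and gives a cleaner control of the error term $I_{3,\delta,\varepsilon}$.

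One arithmetic caution: your intermediate Riesz identity, $\int (x\cdot\nabla(u^2))(I_\alpha*u^2)\,dx = -(N+\alpha)\iint u^2(x)u^2(y)/|x-y|^{N-\alpha}\,dx\,dy$, is off. Writing $K(z)=C_\alpha |z|^{-(N-\alpha)}$ and differentiating $D[v_t]=t^{-(N+\alpha)}D[v]$ at $t=1$ (with $D[v]=\iint K(x-y)v(x)v(y)$, $v=u^2$) gives, after symmetrizing, $\int (x\cdot\nabla v)(I_\alpha*v)\,dx = -\tfrac{N+\alpha}{2}C_\alpha\iint \tfrac{v(x)v(y)}{|x-y|^{N-\alpha}}\,dx\,dy$ — i.e.\ a factor $C_\alpha/2$ different from what you wrote, and your subsequent ``multiply through by $C_\alpha/2$'' is not a coherent repair (that would rescale the other terms too). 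Combined with $u(x\cdot\nabla u)=\tfrac12 x\cdot\nabla(u^2)$, the correct value is $\int(x\cdot\nabla u)(I_\alpha*u^2)u\,dx = -\tfrac{C_\alpha(N+\alpha)}{4}\iint\tfrac{u^2(x)u^2(y)}{|x-y|^{N-\alpha}}\,dx\,dy$, which is exactly what the identity \eqref{Poh general} requires; the scaling argument is the right one, so this is a bookkeeping slip rather than a conceptual gap.
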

\begin{proof}
As it is customary for proving this kind of identities, we integrate \eqref{eq poh} after multiplying by $x\cdot \nabla u$. To this aim we use the characterization of $(-\Delta)^su$ by means of the Caffarelli-Silvestre extension (see \cite{Caffarelli-Silvestre}). \newline

We recall that on $\mathbb R^N\times\mathbb R$, the Poisson kernel $P_s(x, y)$ is defined as
\[
P_s(x, y) = p_{N,s} \frac{y^{2s}}{(|x|^2 + y^2)^{\frac{N + 2s}{2}}}, \quad 
p_{N,s} = \pi^{-\frac{N}{2}} \frac{\Gamma\left(\frac{N + 2s}{2}\right)}{\Gamma(s)},
\]

and that the $s$‑harmonic extension \(v(x,y)=P_s(x,y)*u(x)\) of \(u\) solves
\begin{equation}\label{vextu}
\begin{cases}
\displaystyle\operatorname{div}\bigl(y^{1-2s}\nabla v\bigr)=0 & \text{in }\mathbb{R}^{N+1}_{+},\\[4pt]
v(\cdot,0)=u & \text{on }\mathbb{R}^{N}\simeq \partial\mathbb R^{N+1}_+,\\[4pt]
\displaystyle\frac{\partial v}{\partial\nu^{1-2s}}=\kappa_s\,(-\Delta)^su & \text{on }\mathbb{R}^{N},
\end{cases}
\end{equation}
where \(\kappa_s = \Gamma(1 - s)2^{2s - 1}/ \Gamma(s)\) and 
\[
\frac{\partial v}{\partial \nu^{1-2s}}(x, 0) = - \lim_{y \to 0} y^{1 - 2s} \frac{\partial v}{\partial y}(x, y).
\]
Moreover, it holds that
\begin{equation}\label{normaextension}
\iint_{\mathbb{R}_+^{N+1}} y^{1-2s} |\nabla v|^2 \, dx \, dy = \kappa_s \int_{\mathbb{R}^N} \left| (-\Delta)^{\frac{s}{2}} u \right|^2 dx,
\end{equation}
$v(\cdot,\delta)\rightarrow u$ in $\dot{H}^s(\mathbb R^N)$ and $-\delta^{1-2s}\displaystyle\frac{\partial v}{\partial y}(\cdot,\delta)\rightarrow k_s(-\Delta)^su$ in $\dot{H}^{-s}(\mathbb R^N)$ as $\delta\rightarrow 0$. 

\medskip

Now, since $u\in C^{2,\gamma}_{loc}(\mathbb R^N)$, regularity theory gives $v\in C^2\left(\overline{\mathbb R^{N+1}_+}\right)$. So, we can perform point-wise calculations on the first equation of \eqref{vextu}. Following e.g. \cite[Theorem 3.5.1]{book-Ambrosio}, we multiply this equation by $(x,y)\cdot\nabla v$, integrating in half-balls $D_{R,\delta}=\{|(x,y)|\leq R\}\cap\{y\geq\delta\}$, and take the limit letting $\delta\rightarrow0$ first, and then $R\rightarrow\infty.$ The delicate issue after taking the first limit is to handle an integral in $B_R(0)$ involving the term $(x\cdot\nabla u)I_\alpha*u^2$. In the spirit of \cite{Moroz-Van-2013}, we use a smooth cut-off function  $\varphi\in C^\infty_c(\mathbb R^{N+1})$ such that $\varphi\equiv 1$ in $B_1(0)$, $\varphi\equiv 0$ in $\mathbb R^{N+1}\backslash B_2(0)$ and for any $\varepsilon>0,$ set $\varphi_\varepsilon(x,y)=\varphi(\varepsilon(x,y))$. Multiply the first equation in \eqref{vextu} by $((x,y)\cdot\nabla v)\varphi_\varepsilon$. Define
$$
P:=y^{1-2s}\left[((x,y)\cdot\nabla v)\nabla v-\frac{|\nabla v|^2}{2}(x,y)\right],
$$
$\Omega_\delta=\mathbb R^N\times[\delta,+\infty)$
and observe that
\begin{align}\nonumber
0&=\iint_{\Omega_\delta}\operatorname{div}\bigl(y^{1-2s}\nabla v\bigr)((x,y)\cdot\nabla v)\varphi_\varepsilon dxdy=\iint_{\Omega_\delta}\varphi_\varepsilon\operatorname{div}P+\varphi_\varepsilon \frac{N-2s}{2}y^{1-2s}|\nabla v|^2 dxdy\\\label{I1+I2+I3=0}
&=\iint_{\Omega_\delta}\operatorname{div}(\varphi_\varepsilon P) dxdy+\frac{N-2s}{2}\iint_{\Omega_\delta}\varphi_\varepsilon y^{1-2s}|\nabla v|^2 dxdy-\iint_{\Omega_\delta} \nabla\varphi_\varepsilon \cdot Pdxdy\\\nonumber
&=I_{1,\delta,\varepsilon}+\frac{N-2s}{2}I_{2,\delta,\varepsilon}-I_{3,\delta,\varepsilon}.
\end{align}
Let us now handle these three integrals above separately, and for a fixed $\varepsilon>0.$  In order to perform $\delta\rightarrow 0,$ set $\tilde{\varphi}_\varepsilon(x)=\varphi_\varepsilon(x,0)$ in $\mathbb R^N=\partial\mathbb R_+^{N+1}$ and note that since

\begin{align*}
I_{1,\delta,\varepsilon}&=\int_{\partial\Omega_\delta}\varphi_\varepsilon(P\cdot\nu) dS\\ 
&= \int_{\{y=\delta\}}\varphi_\varepsilon\delta^{1-2s}\left(-\frac{\partial v}{\partial y}(x,\delta)\right)((x,y)\cdot\nabla v)dS+\frac{\delta}{2}\int_{\{y=\delta\}}\varphi_\varepsilon y^{1-2s}|\nabla v|^2dS\\
%&=\int_{\{y=\delta\}}\varphi_\varepsilon\delta^{1-2s}\left(-\frac{\partial v}{\partial y}(x,\delta)\right)((x,y)\cdot\nabla v)dS+o_\delta(1)\\
&=\int_{\{y=\delta\}}\varphi_\varepsilon\delta^{1-2s}\left(-\frac{\partial v}{\partial y}(x,\delta)\right)\left(x\cdot\nabla_x v(x,\delta)+\delta\frac{\partial v}{\partial y}(x,\delta)\right)dS+o_\delta(1)\\
%&=\int_{\{y=\delta\}}\varphi_\varepsilon\delta^{1-2s}\left(-\frac{\partial v}{\partial y}(x,\delta)\right)\left(x\cdot\nabla_x v(x,\delta)\right)dS+\delta\int_{\{y=\delta\}}\varphi_\varepsilon\delta^{1-2s}\frac{\partial v}{\partial y}(x,\delta)dS+o_\delta(1)\\
&=\int_{\{y=\delta\}}\varphi_\varepsilon\delta^{1-2s}\left(-\frac{\partial v}{\partial y}(x,\delta)\right)\left(x\cdot\nabla_x v(x,\delta)\right)dS+o_\delta(1),
\end{align*}
by the properties of the extension function $v$, it holds that
\begin{equation}\label{I1epsilon}
\lim_{\delta\rightarrow0}I_{1,\delta,\varepsilon}=k_s\int_{\mathbb R^N}\tilde{\varphi}_\varepsilon(-\Delta)^su(x\cdot\nabla u)dx.
\end{equation} 
On the other hand, by \eqref{normaextension}, we get
\begin{equation}\label{I2epsilon}
\lim_{\delta\rightarrow0}I_{2,\delta,\varepsilon}=\iint_{\mathbb R_+^{N+1}}\varphi_\varepsilon y^{1-2s}|\nabla v|^2 dxdy.
\end{equation}
Hence, by \eqref{I1+I2+I3=0} for any $\varepsilon>0,$ the $\lim_{\delta\rightarrow0}I_{3,\delta,\varepsilon}$ exists. Moreover, 
\begin{align*}
|I_{3,\delta,\varepsilon}|&\leq\iint_{R^{N+1}_+}|P||\nabla\varphi_\varepsilon|dxdy\leq \varepsilon\|\nabla\varphi\|_{L^\infty}\frac{3}{2}\iint_{\left\{\frac{1}{\varepsilon}\leq|(x,y)|\leq\frac{2}{\varepsilon}\right\}}y^{1-2s}|\nabla v|^2|(x,y)|dxdy\\
&\leq3\|\nabla\varphi\|_{L^\infty}\iint_{\left\{|(x,y)|\geq\frac{1}{\varepsilon}\right\}}y^{1-2s}|\nabla v|^2dxdy.
\end{align*}
As a consequence, letting $\delta\rightarrow 0$ first and then $\varepsilon\rightarrow 0,$ $I_{3,\delta,\varepsilon}$ goes to zero. Furthermore, as far as $I_{2,\delta,\varepsilon},$ by \eqref{normaextension} and \eqref{I2epsilon} we have
$$
\lim_{\epsilon\rightarrow 0}\lim_{\delta\rightarrow0}I_{2,\delta,\varepsilon}=\kappa_s \int_{\mathbb{R}^N} \left| (-\Delta)^{\frac{s}{2}} u \right|^2 dx.
$$
Putting all these together and letting $\delta\rightarrow 0$  first and then $\varepsilon\rightarrow 0$ in \eqref{I1+I2+I3=0}, by \eqref{I1epsilon} we get
\begin{equation}\label{pohofirstpart}
\lim_{\varepsilon\rightarrow 0}\int_{\mathbb R^N}\tilde{\varphi}_\varepsilon(-\Delta)^su(x\cdot\nabla u)dx=
\frac{2s-N}{2}\int_{\mathbb{R}^N} \left| (-\Delta)^{\frac{s}{2}} u \right|^2 dx.
\end{equation}
We stress that so far we have used only that  $u\in C^{2,\gamma}_{loc}(\mathbb R^N)\cap \dot{H}^s(\mathbb R^N).$ To conclude, we finally take into account that $u\in \mathcal{E}^{s,\alpha,2}(\mathbb{R}^N)$ and satisfies \eqref{eq poh}, by which we obtain
\begin{align*}
\int_{\mathbb R^N}\tilde{\varphi}_\varepsilon(-\Delta)^su(x\cdot\nabla u)dx=&\int_{\mathbb R^N}f(u)\tilde{\varphi}_\varepsilon(x\cdot\nabla u)dx-\int_{\mathbb R^N}(I_\alpha*u^2)u\tilde{\varphi}_\varepsilon(x\cdot\nabla u)dx\\
=&I_{4,\varepsilon}-I_{5,\varepsilon}.
\end{align*}
We also note that arguing as in \cite[Proposition 3.1]{Moroz-Van-2013} we have
\begin{equation*}
\lim_{\varepsilon\rightarrow 0}I_{5,\varepsilon}=-\frac{N+\alpha}{4}\int_{\mathbb R^N}(I_\alpha*u^2)u^2dx.
\end{equation*}
Finally, since
\begin{align*}
    I_{4,\varepsilon}=&\int_{B_{2/\varepsilon}(0)}\big[\operatorname{div}(\tilde{\varphi}_\varepsilon F(u)x)-N\tilde\varphi_\varepsilon F(u)-(\nabla\tilde\varphi_\varepsilon,x)F(u)\big]dx\\
    =&-N\int_{\mathbb R^N}\tilde\varphi_\varepsilon F(u)dx-\int_{\{\frac{1}{\varepsilon}\leq|x|\leq\frac{2}{\varepsilon}\}}(\nabla\tilde\varphi_\varepsilon,x)F(u)dx,
\end{align*}
and that by the fact $F(u)\in L^1{(\mathbb R^N)}$, the last integral above goes to zero as $\varepsilon\rightarrow 0.$ 
Hence
$$
I_{4,\varepsilon}\rightarrow -N\int_{\mathbb R^N} F(u)dx,
$$
and therefore
$$
\lim_{\varepsilon\rightarrow 0}\int_{\mathbb R^N}\tilde{\varphi}_\varepsilon(-\Delta)^su(x\cdot\nabla u)dx=
-N\int_{\mathbb R^N} F(u)dx+\frac{N+\alpha}{4}\int_{\mathbb R^N}(I_\alpha*u^2)u^2dx,
$$
which combined with \eqref{pohofirstpart} yields the conclusion. 
\end{proof}

\begin{remark}
Conditions which guarantee the required regularity for this Pohozaev identity in the case $4s + \alpha > N$ are provided in Appendix \ref{regularityappendix}. The case $4s + \alpha < N$ seems an interesting open question to be addressed in future. 
\end{remark}

%\bibliography{references}

\begin{thebibliography}{10}
\addcontentsline{toc}{section}{References}

\bibitem{Ambrosetti}
A. Ambrosetti,
\textit{On {S}chr\"{o}dinger-{P}oisson systems},
 J. Math., \textbf{76}:257--274, 2008.

\bibitem{book-Ambrosio}
V. Ambrosio,
\textit{Nonlinear fractional Schr\"odinger equations in $\mathbb{R}^N$},
Front. Elliptic Parabol. Probl., Birkh\"auser/Springer, Cham, 2021, xvii+662 pp.

\bibitem{Bahouri-Chemin-Danchin}
H. Bahouri, J.-Y. Chemin and R. Danchin,
\textit{Fourier analysis and nonlinear partial differential equations},
Grundlehren der Mathematischen Wissenschaften, Springer, Heidelberg, 2011.

\bibitem{BE-FR-VI-2014}
J. Bellazzini, R. L. Frank and N. Visciglia,
\textit{Maximizers for Gagliardo-Nirenberg inequalities and related non-local problems},
Math. Ann. \textbf{360} (2014), 653--673.

\bibitem{BE-GHI-MER-MO-SC-2018}
J. Bellazzini, M. Ghimenti, C. Mercuri, V. Moroz and J. Van Schaftingen,
\textit{Sharp Gagliardo-Nirenberg inequalities in fractional Coulomb-Sobolev spaces},
Trans. Amer. Math. Soc. \textbf{370} (2018), 8285--8310.


\bibitem{V. Benci}
V. Benci,
\textit{On critical point theory for indefinite functionals in the presence of symmetries},
Trans. Amer. Math. Soc. \textbf{274} (1982), 533--572.

\bibitem{Bokanowski et al.}
O. Bokanowski, J. L. L\'{o}pez, J. Soler, 
\textit{On an exchange interaction model for quantum transport: the
  {S}chr\"{o}dinger-{P}oisson-{S}later system},
 Math. Models Methods Appl. Sci., \textbf{13}(10):1397--1412, 2003.

 \bibitem{Caffarelli-Silvestre}
L. A. Caffarelli and L. Silvestre,
\textit{An extension problem related to the fractional Laplacian},
Commun. Partial Differ. Equ. \textbf{32} (2007), 1245--1260.

\bibitem{Ub-Pra-Ca}
J. A. Cardoso, D. S. dos Prazeres and U. B. Severo,
\textit{Fractional Schr\"odinger equations involving potential vanishing at infinity and super-critical exponents},
Z. Angew. Math. Phys. \textbf{71} (2020), Article No. 27, 14 pp.


 \bibitem{Catto et al.}
I.~Catto, J.~Dolbeault, O.~S\'{a}nchez, and J.~Soler,
\textit{Existence of steady states for the
  {M}axwell-{S}chr\"{o}dinger-{P}oisson system: exploring the applicability of
  the concentration-compactness principle},
Math. Models Methods Appl. Sci., \textbf{23}(10):1915--1938, 2013.

 
\bibitem{De-Lan-2007}
M. Degiovanni and S. Lancelotti,
\textit{Linking over cones and nontrivial solutions for p-Laplace equations with p-superlinear nonlinearity},
Ann. Inst. H. Poincar\'e Anal. Non Lin\'eaire \textbf{24} (2007), 907--919.

\bibitem{Fadell-Rabinowitz-1978}
E. R. Fadell and P. H. Rabinowitz,
\textit{Generalized cohomological index theories for Lie group actions with an application to bifurcation questions for Hamiltonian systems},
Invent. Math. \textbf{45} (1978), 139--174.

\bibitem{Feng-Su-2024}
Z. Feng and Y. Su,
\textit{Ground state solution of Fractional Schr\"odinger-Poisson-Slater equation: Double critical case},
Discrete Contin. Dyn. Syst. S.\textbf{18} (2025), 3390–-3416.



\bibitem{Hu-Li-Zhao-2021}
H. Hu, Y. Li and D. Zhao,
\textit{Ground state for fractional Schr\"odinger-Poisson equation in Coulomb-Sobolev space},
Discrete Contin. Dyn. Syst. S \textbf{14} (2021), 1899--1916.

\bibitem{Ianni-Ruiz-2012}
I. Ianni and D. Ruiz,
\textit{Ground and bound states for a static Schr\"odinger-Poisson-Slater problem},
Commun. Contemp. Math. \textbf{14} (2012), Article No. 1250043, 22 pp.


\bibitem{Jia-Feng-Xiao-Qing} L. Jia-Feng and H. Xiao-Qing,
\textit{Existence and asymptotic behavior for ground state sign-changing solutions of fractional Schr\"odinger-Poisson system with steep potential well}, Commun. Anal. Mech. \textbf{16} (2024), 307–333.

\bibitem{Le Bris Lions}
C. Le~Bris and P. L. Lions.
\textit{ From atoms to crystals: a mathematical journey},
Bull. Amer. Math. Soc. (N.S.), \textbf{42}(3):291--363, 2005.

\bibitem{Lieb}
E.~H. Lieb.
\textit{ Thomas-{F}ermi and related theories of atoms and molecules},
Rev. Modern Phys., \textbf{53}(4):603--641, 1981.

\bibitem{Liebb}
E.~H. Lieb.
\textit{ Erratum: ``{T}homas-{F}ermi and related theories of atoms and
  molecules''},
Rev. Modern Phys., \textbf{54}(1):311, 1982.



\bibitem{Lieb-Seiringer}
E. H. Lieb and R. Seiringer,
\textit{The stability of matter in quantum mechanics},
Cambridge Uni-
versity Press, Cambridge, 2010.


\bibitem{Liu-Zhang} 
Z. Liu and J. Zhang, 
\textit{Multiplicity and concentration of positive solutions for the fractional
Schr\"odinger-Poisson systems with critical growth}, ESAIM Control Optim. Calc. Var. \textbf{23} (2017),
1515--1542.

\bibitem{Mauser}
N.~J. Mauser.
\textit{The {S}chr\"{o}dinger-{P}oisson-{$X\alpha$} equation},
Appl. Math. Lett., \textbf{14}(6):759--763, 2001.

\bibitem{Mercuri-Moroz-VS-2016}
C. Mercuri, V. Moroz and J. Van Schaftingen,
\textit{Groundstates and radial solutions to nonlinear Schr\"odinger–Poisson–Slater equations at the critical frequency},
Calc. Var. Partial Differential Equations \textbf{55} (2016), Article No. 146.

\bibitem{Mercuri-Perera}
C. Mercuri and K. Perera,
\textit{Variational methods for scaled functionals with applications to the Schr\"odinger–Poisson–Slater equation},
https://doi.org/10.48550/arXiv.2411.15887 (2024).

\bibitem{Moroz-Van-2013}
V. Moroz and J. Van Schaftingen,
\textit{Groundstates of nonlinear Choquard equations: Existence, qualitative properties and decay asymptotics},
J. Funct. Anal. \textbf{265} (2013), 153--184.

\bibitem{Murcia-Siciliano} 
E. Murcia and G. Siciliano, 
\textit{Positive semiclassical states for a fractional Schr\"odinger-Poisson
system}, Differ. Integr. Equ. \textbf{30} (2017), 231--258.

\bibitem{Perera-book}
K. Perera, R. P. Agarwal and D. O'Regan,
\textit{Morse theoretic aspects of p-Laplacian type operators},
Math. Surv. Monogr., vol. 161, Amer. Math. Soc., Providence, RI, 2010.

\bibitem{Ruiz-2006} 
D. Ruiz, \textit{The Schr\"odinger-Poisson equation under the effect of a nonlinear local term}, J. Funct. Anal. \textbf{237} (2006), 655--674.

\bibitem{Ruiz-ARMA-2010}
D. Ruiz,
\textit{On the Schr\"odinger-Poisson-Slater system: behavior of minimizers, radial and nonradial cases},
Arch. Ration. Mech. Anal. \textbf{198} (2010), 349--368.

\bibitem{Silvestre}
L. Silvestre,
\textit{Regularity of the obstacle problem for a fractional power of the Laplace operator},
Comm. Pure Appl. Math. \textbf{60} (2007), 67--112.

\bibitem{Teng-2016} K. Teng, \textit{Existence of ground state solutions for the nonlinear fractional Schr\"odinger–Poisson system
with critical Sobolev exponent},
J. Differential Equations \textbf{261} (2016), 3061--3106.


\bibitem{Teng-2019}
K. Teng,
\textit{Ground state solutions for the non-linear fractional Schr\"odinger–Poisson system},
Appl. Anal. \textbf{98} (2019), 1959--1996.

\bibitem{Trudinger}
N. Trudinger, \textit{Remarks concerning the conformal deformation of Riemannian structures on compact manifolds}, Ann. Scuola Norm. Sup. Pisa Cl. Sci. (3) \textbf{22} (1968), 265–274.

\bibitem{Wang-Wang} 
Q. Wang and Z. Wang, 
\textit{Existence of ground state for a fractional Schr\"odinger-Poisson-Slater system}, 
Adv. Math. (China) \textbf{52} (2023),  351–357.

\bibitem{Zh-doO-Sq-2016}
J. Zhang, J. M. do \'O and M. Squassina,
\textit{Fractional Schr\"odinger–Poisson Systems with a General Subcritical or Critical Nonlinearity},
Adv. Nonlinear Stud. \textbf{16} (2016), 15--30.


\end{thebibliography}
%\bibliographystyle{abbrv}
\section*{Acknowledgements}
Elisandra Gloss and Bruno Ribeiro would like to thank Conselho Nacional de Desenvolvimento Científico e Tecnol\'ogico (CNPq) for the support; in particular, for the grants 201454/2024-6 and 443594/2023-6 (E.G.),  grants 314111/2023-9, 443594/2023-6, 201452/2024-3 (B.R.). Carlo Mercuri is a member of the group GNAMPA of Istituto Nazionale di Alta Matematica (INdAM).

\section*{Conflict of interest statement}

The authors have no relevant financial or non-financial interests to disclose.
The authors have no conflicts of interest to declare that are relevant to the content of this article.
All authors certify that they have no affiliations with or involvement in any organization or entity with any financial interest or non-financial interest in the subject matter or materials discussed in this manuscript.
The authors have no financial or proprietary interests in any material discussed in this article.

\section*{Data availability statement}
This manuscript has no associated data.

\section*{Human and animal studies}
The content of this manuscript did not involve studies on humans nor animals.

\end{document}